\providecommand{\U}[1]{\protect\rule{.1in}{.1in}}
\newtheorem{theorem}{Theorem}[section]
\theoremstyle{plain}
\newtheorem{question}{Question}
\newtheorem{claim}{Claim}
\newtheorem*{fact}{Fact}
\newtheorem{corollary}[theorem]{Corollary}
\newtheorem{lemma}[theorem]{Lemma}
\newtheorem{proposition}[theorem]{Proposition}
\theoremstyle{definition}
\newtheorem{definition}[theorem]{Definition}
\newtheorem{remark}[theorem]{Remark}
\newtheorem{notation}{Notation}
\newtheorem{example}{Example}
\numberwithin{equation}{section}
\begin{document}
\title[Boundaries for semidirect products with $%
\mathbb{Z}
$]{Group boundaries for semidirect products with $%
\mathbb{Z}
$}
\author{Craig R. Guilbault}
\address{Department of Mathematical Sciences\\
University of Wisconsin-Milwaukee, Milwaukee, WI 53201}
\email{craigg@uwm.edu}
\author{Brendan Burns Healy}
\address{Department of Mathematical Sciences\\
University of Wisconsin-Milwaukee, Milwaukee, WI 53201}
\email{healyb@uwm.edu}
\author{Brian Pietsch}
\address{The Adecco Group}
\email{brian.pietsch@adeccogroup.com}
\thanks{This research was supported in part by Simons Foundation Grant 427244, CRG}
\date{\today}
\keywords{Semidirect product, Z-structure, EZ-structure, Z-boundary, EZ-boundary}

\begin{abstract}
Bestvina's notion of a $\mathcal{Z}$-structure provides a general framework
for group boundaries that includes Gromov boundaries of hyperbolic groups and
visual boundaries of CAT(0) groups as special cases. A refinement, known as an
$E\mathcal{Z}$-structure has proven useful in attacks on the Novikov
Conjecture and related problems. Characterizations of groups admitting a
$\mathcal{Z}$- or $E\mathcal{Z}$-structure are longstanding open problems. In
this paper, we examine groups of the form $G\rtimes_{\phi}%
\mathbb{Z}
$. For example, we show that, if $G$ is torsion-free and admits a
$\mathcal{Z}$-structure, then so does every semidirect product of this type.
We prove a similar theorem for $E\mathcal{Z}$-structures, under an additional hypothesis.

As applications, we show that all closed 3-manifold groups admit $\mathcal{Z}%
$-structures, as do all strongly polycyclic groups and all groups of
polynomial growth. In those latter cases our $\mathcal{Z}$-boundaries are
always spheres. This allows one to make strong conclusions about the group
cohomology and end invariants of those groups. In another direction, we expand
upon the notion of an $E\mathcal{Z}$-structure and discuss new applications to
the Novikov Conjecture.

\end{abstract}
\maketitle
\tableofcontents

\setcounter{tocdepth}{1}

\section{Introduction\label{Section: Introduction}}

Bestvina \cite{Bes96} introduced the notion of a $\mathcal{Z}$\emph{-boundary}
for a group as a means to both unify the study of Gromov boundaries of
hyperbolic groups with visual boundaries of CAT(0) groups and to provide a
framework for assigning boundaries to a wider class of groups. Initially
restricted to torsion-free groups, work by Dranishnikov \cite{Dra06} expanded
Bestvina's definition to allow for groups with torsion. Roughly speaking, a
groups $G$ admits a $\mathcal{Z}$\emph{-structure} if it acts nicely (properly
and cocompactly) on a nice space $X$ (an absolute retract or AR) which admits
a nice compactification $\overline{X}$ (a $\mathcal{Z}$\emph{-}set
compactification) such that compact subsets of $X$ vanish in $\overline{X}$ as
they are pushed toward the $\mathcal{Z}$\emph{-boundary}, $Z:=\overline{X}-X$,
by elements of $G$. Work from those papers, as well as \cite{GeOn07} and
\cite{GuMo19}, highlights some of the useful properties of a $\mathcal{Z}$-boundary.

In order to admit a $\mathcal{Z}$-boundary, $G$ must first admit a proper
cocompact action on an absolute retract $X$. This rules out many groups. For
example, a torsion free group admits this type of action if and only if it is
\emph{type F. }The general class of groups which admit such an action have
been defined to be \emph{type F}$_{\text{AR}}^{\ast}$. (When $X$ is a CW
complex and the action is cellular, it is simply called \emph{type F}$^{\ast}%
$.) A significant open question asks whether all type F or type F$_{\text{AR}%
}^{\ast}$ groups admit $\mathcal{Z}$-structures. Current progress has focused
on special classes of groups. For example, in addition to hyperbolic and
CAT(0) groups, all systolic and all generalized Baumslag-Solitar groups have
been shown to admit $\mathcal{Z}$-boundaries. In a more general direction, all
free and direct products of groups which admit $\mathcal{Z}$-boundaries are
known to admit $\mathcal{Z}$-boundaries (references to be provided later). One
of the main results in this paper is in that spirit.

\begin{theorem}
\label{Theorem A} If a torsion-free group $G$ admits a $\mathcal{Z}$-structure
with boundary $Z$, then every semidirect product $G\rtimes_{\phi}${$%
\mathbb{Z}
$} admits a $\mathcal{Z}$-structure with boundary equal to the suspension of
$Z$.
\end{theorem}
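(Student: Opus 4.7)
The plan is to build the desired $\mathcal{Z}$-structure on $H := G \rtimes_\phi \mathbb{Z}$ by compactifying a twisted product of $X$ with $\mathbb{R}$ so that the boundary is the unreduced suspension of $Z$. Since $G$ is torsion-free and admits a free, proper, cocompact action on the AR $X$, it is of type F, so $\phi$ is induced by a self-homotopy equivalence of a compact $K(G,1)$. Lifting produces a $\phi$-equivariant homotopy equivalence $\tilde{f}: X \to X$ satisfying $\tilde{f}(gx) = \phi(g)\tilde{f}(x)$. To put a genuine action of $H$ on a model of $X \times \mathbb{R}$, I would either work with the infinite mapping telescope of $\tilde{f}$ (on which $H$ acts tautologically) or replace $X$ by $X \times Q$ (with $Q$ the Hilbert cube) and appeal to a Chapman-type theorem to straighten $\tilde{f}$ to a $\phi$-equivariant homeomorphism. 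Either way one obtains a contractible AR $Y$, homeomorphic to $X \times \mathbb{R}$, carrying a free, proper, cocompact action of $H$ in which a generator $t$ of $\mathbb{Z}$ acts by $(x,s) \mapsto (\tilde{f}(x), s+1)$.

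I would then compactify $Y$ to $\overline{Y}$ with boundary $\Sigma Z$. The natural first guess is $\overline{Y} := (\overline{X} \times [-1,1])/{\sim}$, collapsing $\overline{X} \times \{\pm 1\}$ to two points $p_{\pm}$ and identifying the interior $X \times (-1,1)$ with $Y$ via a homeomorphism $\tau: \mathbb{R} \to (-1,1)$. This produces the suspension $\Sigma \overline{X}$, with boundary $\Sigma Z$. It is a compact AR because collapsing an AR subspace of an AR to a point preserves the AR property, and $\Sigma Z$ should be a $\mathcal{Z}$-set, verified by combining the $\mathcal{Z}$-set pushing homotopy for $Z \subset \overline{X}$ with a radial pinch in the $[-1,1]$-direction. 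The subtle point is nullity: a translate $\gamma C$ for $\gamma = g t^n$ with $n$ bounded and $g \to \infty$ in $G$ has $X$-coordinate pinched toward a single point of $Z$, but essentially fixed $\mathbb{R}$-span, so the corresponding tube in $\overline{Y}$ would have fixed positive diameter under the naive product topology. The resolution, I expect, is to warp the identification $\tau$ so that the effective $\mathbb{R}$-length of a tube contracts to zero as its $X$-coordinate approaches $Z$; equivalently, to refine the topology of $\overline{Y}$ near the side boundary $Z \times (-1,1)$ so that radial convergence in $X$ dominates transverse translation in $\mathbb{R}$.

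The remaining verifications---freeness, properness, and cocompactness of the action on $Y$, the AR properties of $\overline{Y}$, and the $\mathcal{Z}$-set property of $\Sigma Z$---are largely formal once the space has been set up, as is nullity for large $|n|$ (the shift in the $\mathbb{R}$-factor alone drives $\gamma C$ into arbitrarily small neighborhoods of $p_{\pm}$). The main obstacle is therefore twofold: promoting the $\phi$-equivariant map $\tilde{f}$ to a genuine equivariant homeomorphism (or managing a telescope substitute coherently with the compactification), and choosing the topology on $\overline{Y}$ carefully enough that nullity holds for all translates, including those with bounded $\mathbb{Z}$-component. Once these two issues are handled, the theorem follows from the standard framework.
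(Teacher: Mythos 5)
Your high-level strategy — lift $\phi$ to a $\phi$-variant self map $\tilde f$ of $X$, use a mapping-telescope model for the action of $G\rtimes_\phi\mathbb{Z}$, compactify with a suspension of $Z$, and warp the topology near the side of the suspension to rescue nullity — is precisely the paper's strategy, and your diagnosis that the nullity failure for translates $g t^n$ with $n$ bounded and $g\to\infty$ is the crux of the matter is exactly right. But several specific steps in your plan have gaps that the actual proof is built to circumvent.

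First, the claim that $\operatorname{Tel}_{\tilde f}(X)$ is homeomorphic to $X\times\mathbb{R}$ is false in general: a mapping telescope of a non-homeomorphism is only \emph{proper homotopy equivalent} to the product, not homeomorphic to it. Your proposed fix via a Chapman-type straightening after multiplying by the Hilbert cube does not obviously respect $\phi$-variance; there is no off-the-shelf theorem promoting a $\phi$-variant homotopy equivalence to a $\phi$-variant homeomorphism, and the paper does not attempt this. Instead, the paper compactifies $X\times\mathbb{R}$ directly (with a custom topology) and then transfers the boundary to $Y=\operatorname{Tel}_f(X)$ via the level-preserving proper homotopy equivalence $v\colon Y\to X\times\mathbb{R}$, using the boundary-swapping lemma (Lemma~\ref{Lemma: Boundary swap}). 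Because you posit a direct identification $Y\cong X\times\mathbb{R}$, your plan as written has nothing to swap across; this missing ``pull-back compactification'' step is the central structural ingredient your outline lacks.

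Second, ``warping the identification $\tau$'' is the right instinct but is not strong enough. Reparametrizing only the $\mathbb{R}$-factor cannot by itself make the $\eta$-sized tubes go to zero: the topology near a finite-slope boundary point $\langle z,\mu\rangle$ must compare the $\mathbb{R}$-coordinate to a carefully chosen function $p(x)$ of $d(x,x_0)$, so that the ``effective width'' in the $\mathbb{R}$-direction grows with $d(x,x_0)$. This is the slope function $\mu(x,r)=r/p(x)$ of Section~\ref{sec:controlled}, where $p$ is built from a function $\psi$ forced to grow at least exponentially ($\psi(s+1)\ge 3\psi(s)$) and to dominate both the compactification data of $\overline X$ (the $\lambda$ function) and a control function $\eta$ that records the diameters of the $v$-images of translated fundamental domains and of the tracks of the homotopies $H$ and $J$. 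That $\eta$ is tailored to the specific geometry of $f$ and the homotopies — see Definition~\ref{DefineEta} and Remark~\ref{Remark: about distortion function} — is what ultimately makes nullity and the continuity arguments go through; nothing in a pure rescaling of $[-1,1]$ encodes this. Relatedly, calling the remaining verifications ``largely formal'' underestimates the work: Proposition~\ref{Prop: Y-hat is a Z-compactification} (continuity of $\overline u$ and $\overline H$) and the nullity verification at the end of Section~\ref{sec:proof} lean heavily on condition (\ddag) of Theorem~\ref{Theorem: Controlled Z-compactification of X x R} and on the control built into $\eta$. In short: same skeleton, but the two things you flag as ``to be handled'' — straightening $\tilde f$ and getting the right topology — are exactly the places where the actual proof does something genuinely different (boundary swapping across a proper homotopy equivalence) and substantially more intricate (the slope-function compactification).
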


A significant special case of Theorem \ref{Theorem A} includes all free-by-$%
\mathbb{Z}
$ groups---a well-studied class that is notable because it contains some
hyperbolic groups \cite{BeFe92}, some CAT(0) groups, and \emph{many} groups
which are neither. Hyperbolic group experts might be surprised that, for this
collection of groups, our $\mathcal{Z}$-boundary is \emph{always} a suspended
Cantor set---an illustration that even hyperbolic groups need not admit unique
$\mathcal{Z}$-boundaries.

With the aid of Theorem \ref{Theorem A} we obtain a variety of new results.

\begin{theorem}
\label{Theorem C}Every closed 3-manifold group admits a $\mathcal{Z}$-structure.
\end{theorem}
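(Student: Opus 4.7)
The plan is to first apply the Kneser--Milnor prime decomposition and use closure of the $\mathcal{Z}$-structured class under free products (cited in the introduction) to reduce to the case where $M$ is a prime 3-manifold, and then to split into cases by Thurston's geometrization, with \thmref{Theorem A} as the main tool for geometries that are neither hyperbolic nor CAT(0).

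Several sub-cases follow from prior work. Non-irreducible primes give $\pi_1(M) \cong \mathbb{Z}$ (the $S^2 \times S^1$ case), spherical space forms give finite $\pi_1$, the $\mathbb{H}^3$ geometry gives a hyperbolic group with a Gromov boundary, and the $\mathbb{E}^3$ and $\mathbb{H}^2 \times \R$ geometries give CAT(0) groups with visual boundaries. The Sol and Nil geometries are direct applications of \thmref{Theorem A}: in each case $\pi_1(M)$ is presented as $\mathbb{Z}^2 \rtimes_{\phi} \mathbb{Z}$ (Anosov $\phi$ for Sol, a Heisenberg shear for Nil), and since $\mathbb{Z}^2$ is torsion-free with its standard CAT(0) $\mathcal{Z}$-structure on $\R^2$, \thmref{Theorem A} yields a $\mathcal{Z}$-structure whose boundary is the suspension of $S^1$, namely a $2$-sphere.

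The crux is the $\psl$ geometry, where $\pi_1(M)$ is a non-split central $\mathbb{Z}$-extension of a cocompact Fuchsian group and therefore is not literally of the form $G \rtimes \mathbb{Z}$. Because the Seifert fibration has nonzero Euler number, no finite cover splits the central extension, so \thmref{Theorem A} does not apply to $\pi_1(M)$ or to any obvious subgroup. A plausible workaround is to exploit the diffeomorphism $\psl \cong \mathbb{H}^2 \times \R$ and to build an explicit $\pi_1(M)$-equivariant compactification (for instance $\overline{\mathbb{H}}^{\,2} \times [-1,1]$), verifying the $\mathcal{Z}$-structure axioms directly; alternatively, one can prove a separate preservation theorem for central $\mathbb{Z}$-extensions in parallel with \thmref{Theorem A} and invoke it here. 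Finally, closed non-geometric 3-manifolds are handled through the JSJ decomposition: all vertex groups are Seifert or hyperbolic and thus already covered, and the assembly step requires a graphs-of-groups $\mathcal{Z}$-structure result. I expect the main obstacle to lie precisely in these last two items---the $\psl$ case, where the extension is not a semidirect product, and the JSJ assembly, which goes beyond the one-step semidirect product framework of \thmref{Theorem A}.
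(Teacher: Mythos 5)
Your overall plan---prime decomposition, closure under free products, geometrization, and Theorem~\ref{Theorem A} for $Nil$ and $Sol$---matches the paper's. But the two obstacles you flag are in fact resolved by citing existing theorems, while the points that actually require care are different from the ones you anticipate. For the $\psl$ geometry, the paper does not build a new equivariant compactification or prove a preservation result for central extensions; it invokes the theorem of Gersten (see also Rieffel) that $\psl$ is \emph{quasi-isometric} to $\Hyp^2\times\R$ (not merely diffeomorphic), so the boundary-swapping machinery of \cite{GuMo19} transports the visual $2$-sphere of $\Hyp^2\times\R$ onto $\psl$ directly. For the non-geometric prime manifolds, no graphs-of-groups assembly is required: such a manifold contains an essential torus and is therefore Haken, so Leeb supplies a nonpositively curved Riemannian metric in the mixed case and Kapovich--Leeb a bi-Lipschitz homeomorphism of the universal cover to a nonpositively curved one in the graph-manifold case. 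Either way the fundamental group is CAT(0) or boundary-swappable to CAT(0), and the JSJ pieces are never glued at the level of $\mathcal{Z}$-structures.

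Two genuine gaps remain in your sketch. First, your treatment of $Nil$ and $Sol$ covers only lattices literally of the form $\Z^2\rtimes_\phi\Z$, but not every closed $Nil$- or $Sol$-manifold has a fundamental group of this form (Seifert fibrations with exceptional fibers produce others). The paper instead builds the $\mathcal{Z}$-structure via Theorem~\ref{Theorem A} on one such lattice, then boundary-swaps it onto the Lie group $Nil$ (resp.\ $Sol$) itself---to which that lattice is quasi-isometric by \v{S}varc--Milnor---so that every cocompact lattice, acting geometrically on the group, inherits the structure. Second, you omit the nonorientable case of prime manifolds that are irreducible but not $\mathbb{P}^2$-irreducible; the paper gives a covering-space argument to show such a manifold must be $\mathbb{P}^2\times\mathbb{S}^1$, so the group is $\Z\times\Z_2$ and acts geometrically on $\R$.
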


\begin{theorem}
\label{Theorem B}Every strongly polycyclic and every finitely generated
nilpotent group $G$ admits a $\mathcal{Z}$-structure with a $k$-sphere as
boundary, for some $k\geq-1$.
\end{theorem}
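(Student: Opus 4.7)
The plan is to reduce everything to Theorem~\ref{Theorem A} via induction. For a strongly polycyclic group $G$, fix a subnormal series $1 = G_0 \triangleleft G_1 \triangleleft \cdots \triangleleft G_n = G$ with each $G_i/G_{i-1} \cong \mathbb{Z}$ and induct on $n$. The base case $n = 0$ is the trivial group, for which $(\{*\}, \emptyset)$ is a $\mathcal{Z}$-structure with boundary $S^{-1}$. For the inductive step, $G_{n-1}$ is strongly polycyclic of length $n-1$, hence torsion-free (an extension of a torsion-free group by $\mathbb{Z}$), so by the inductive hypothesis it carries a $\mathcal{Z}$-structure with boundary $S^{n-2}$. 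Since $\mathbb{Z}$ is free the extension $1 \to G_{n-1} \to G \to \mathbb{Z} \to 1$ splits and $G \cong G_{n-1} \rtimes_\phi \mathbb{Z}$, so Theorem~\ref{Theorem A} produces a $\mathcal{Z}$-structure on $G$ with boundary $\Sigma S^{n-2} \cong S^{n-1}$.

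For a finitely generated torsion-free nilpotent group $N$, I would invoke the classical fact that $N$ is strongly polycyclic --- one refines the upper central series until every factor is infinite cyclic --- and apply the previous paragraph to obtain a $\mathcal{Z}$-structure with boundary $S^{h-1}$, where $h$ is the Hirsch length of $N$. If $N$ has nontrivial torsion, the torsion elements form a finite characteristic subgroup $T \triangleleft N$, and $N/T$ is torsion-free nilpotent, hence admits a $\mathcal{Z}$-structure $(\bar{X}, Z)$ with $Z$ a sphere. I would transport this to $N$ via the quotient map $N \twoheadrightarrow N/T$: the lifted $N$-action on $X$ has the same orbits as the $N/T$-action (hence is cocompact); its point stabilizers are extensions of finite $N/T$-stabilizers by the finite group $T$ (hence finite, so the action is proper); and each compact set in $X$ has exactly the same family of $N$- and $N/T$-translates, so the nullity condition at $Z$ passes to the lifted action unchanged.

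The heavy lifting is done by Theorem~\ref{Theorem A}; modulo it, the strongly polycyclic step is formal induction. The one substantive point to pin down is the last observation above --- that pulling a $\mathcal{Z}$-structure back through a quotient by a finite normal subgroup preserves every axiom of a $\mathcal{Z}$-structure. I expect this to be routine, since each requirement on the lifted action reduces directly to the corresponding property of the original $N/T$-action, but it is the one place where anything genuinely new needs to be verified beyond the inductive use of Theorem~\ref{Theorem A}.
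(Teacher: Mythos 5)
Your argument matches the paper's proof essentially step for step: induction on the subnormal series via Theorem~\ref{Theorem A} for strongly polycyclic groups (using the paper's boundary formula $SZ$ to land on spheres), the classical reduction of finitely generated torsion-free nilpotent to strongly polycyclic, and finally lifting the $\mathcal{Z}$-structure from $\Gamma/T(\Gamma)$ to $\Gamma$ through the quotient by the finite torsion subgroup. The only cosmetic difference is that you spell out why properness and the nullity condition survive the pull-back, whereas the paper states this in a single sentence; the underlying reasoning is the same.
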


By combining Theorem \ref{Theorem B} with Gromov's theorem on groups of
polynomial growth \cite{Gro81}, the main result of \cite{KMN09}, and a
boundary swapping trick, Theorem \ref{Theorem B} can be pushed further to obtain:

\begin{theorem}
\label{Theorem E}Every group $G$ of polynomial growth admits a $\mathcal{Z}%
$-structure with a $k$-sphere as boundary, for some $k\geq-1$.
\end{theorem}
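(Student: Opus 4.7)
The plan is to assemble the three ingredients advertised in the paragraph preceding the statement: Gromov's polynomial growth theorem, Theorem~\ref{Theorem B}, and a commensurability/boundary-swapping transfer. The overall strategy is to reduce from a group of polynomial growth to a finite-index nilpotent subgroup, solve the problem there using Theorem~\ref{Theorem B}, and then push the resulting $\mathcal{Z}$-structure back up to the ambient group without altering the boundary.

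First I would apply Gromov's theorem \cite{Gro81}: every finitely generated group of polynomial growth is virtually nilpotent, so $G$ contains a finite-index subgroup $N$ that is finitely generated nilpotent. Theorem~\ref{Theorem B} then yields a $\mathcal{Z}$-structure $(\widehat{X},Z)$ on $N$ in which $Z$ is homeomorphic to $S^{k}$ for some $k\geq -1$. This uses nothing beyond what has already been proved earlier in the paper.

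The substantive step is to promote the structure from $N$ to $G$ while keeping the sphere as the boundary. This is exactly the role played by the main result of \cite{KMN09}, which (in effect) shows that admitting a $\mathcal{Z}$-structure passes from a finite-index subgroup to the ambient group. Invoking that machinery on $N \leq G$ produces a proper cocompact $G$-action on a (still contractible) AR $\widehat{X}'$ carrying a $\mathcal{Z}$-compactification. The \emph{a priori} boundary one obtains is only known to be some compact space related to $Z$; the boundary-swapping trick, already used elsewhere in the paper, is what identifies this new boundary with $Z\cong S^{k}$. The upshot is a $\mathcal{Z}$-structure on $G$ whose boundary is the same $k$-sphere produced for $N$.

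The main obstacle I anticipate is the compatibility step when extending the $N$-action to a $G$-action on the compactification: one must verify that the extended $G$-action is continuous on $\widehat{X}'$ and still satisfies the nullity condition for compacta pushed toward the boundary. Because $N$ has finite index in $G$, coset representatives let one formally reduce to finitely many translates of what already works for $N$, and the framework of \cite{KMN09} together with boundary swapping is tailored precisely to carry out this verification. Once those tools are in place, the conclusion (including the fact that the boundary remains $S^{k}$) follows with no further input.
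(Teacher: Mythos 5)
Your overall plan—reduce to a finite-index nilpotent subgroup via Gromov, apply Theorem~\ref{Theorem B} there, then transfer to $G$—matches the paper's strategy, but you have mischaracterized the role of \cite{KMN09}, and this mischaracterization hides a real gap. You write that the main result of \cite{KMN09} ``shows that admitting a $\mathcal{Z}$-structure passes from a finite-index subgroup to the ambient group.'' That is not what \cite{KMN09} proves, and indeed no such general finite-index transfer theorem for $\mathcal{Z}$-structures is available; it is essentially an open problem. What \cite{KMN09} actually supplies is an \emph{existence} statement for classifying spaces: the paper first observes that $G$, being virtually nilpotent, is polycyclic-by-finite (hence elementary amenable), and therefore satisfies condition (x) of \cite[Th.~1.1]{KMN09}, which by that theorem is equivalent to condition (i), the existence of a cocompact $\underline{E}G$-complex $Y$. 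This is the crucial step your proposal is missing: you need a contractible AR on which $G$ itself acts properly and cocompactly before you can even talk about a $\mathcal{Z}$-compactification for $G$, and the $N$-space $\widehat{X}$ carries no a priori $G$-action. Your remark that ``coset representatives let one formally reduce to finitely many translates of what already works for $N$'' does not address this; finitely many cosets do not conjure a $G$-action out of an $N$-action.

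Once the cocompact $\underline{E}G$-complex $Y$ is in hand, the transfer is accomplished not by \cite{KMN09} but by the Generalized Boundary Swapping Theorem, \cite[Cor.~7.2]{GuMo19}: since $N\leq G$ has finite index and both $N\curvearrowright \widehat{X}$ and $G\curvearrowright Y$ are geometric, $\widehat{X}$ and $Y$ are coarsely equivalent, and boundary swapping moves the $\mathcal{Z}$-compactification (and its boundary $S^k$) from $\widehat{X}$ to $Y$. So the two distinct pieces of machinery you conflated—\cite{KMN09} for the existence of the $G$-space, and boundary swapping for moving the boundary onto it—must be separated and applied in that order. Without the first, your argument never produces a proper cocompact $G$-AR, so the construction cannot get off the ground.
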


By applying standard properties of $\mathcal{Z}$-structures (see \cite{Bes96})
and some recent extensions (see \cite{GuMo21}), one may immediately deduce:

\begin{corollary}
\label{Corollary: applications to group cohomology}Every group $G$ that is
strongly polycyclic or of polynomial growth, has the same $%
\mathbb{Z}
G$-cohomology as $%
\mathbb{Z}
^{n}$, for some $n$. In addition, $G$ is semistable and has the same
pro-homotopy groups at infinity as $%
\mathbb{Z}
^{n}$. If $G$ is torsion-free, it is a Poincar\'{e} duality group.
\end{corollary}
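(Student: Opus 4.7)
The plan is to deduce the corollary by taking the $\mathcal{Z}$-structure with $k$-sphere boundary produced by Theorem~\ref{Theorem B} or Theorem~\ref{Theorem E} and running it through the standard dictionary between $\mathcal{Z}$-boundaries and the invariants listed. Specifically, if $G$ admits a $\mathcal{Z}$-structure $(\overline{X},Z)$ with $Z\cong S^{k}$, one compares $G$ with $\mathbb{Z}^{n}$ for $n=k+1$, which admits the standard $\mathcal{Z}$-structure $(\overline{\mathbb{R}^{n}},S^{n-1})$. Since both groups have $\mathcal{Z}$-boundaries of the same shape (a sphere is an ANR, so its shape agrees with its homotopy type), the invariants that are known to depend only on the boundary up to shape must coincide.

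First I would invoke Bestvina's formula relating group cohomology with $\mathbb{Z}G$-coefficients to the reduced \v{C}ech cohomology of the boundary, namely $H^{\ast}(G;\mathbb{Z}G)\cong \check{H}^{\ast-1}(Z;\mathbb{Z})$ (from \cite{Bes96}). Applied with $Z\cong S^{k}$, this immediately matches the cohomology of $\mathbb{Z}^{k+1}$. Next, for the pro-homotopy groups at infinity and for semistability, I would cite the results of \cite{Bes96} (and, where torsion is present, the extensions in \cite{GuMo21}) which show that $\text{pro-}\pi_{\ast}(\overline{X}\setminus X\text{-neighborhoods})$ is determined by the shape of $Z$; when $Z$ is a sphere this pro-system is stable, yielding both semistability of $G$ and agreement of its pro-homotopy groups at infinity with those of $\mathbb{R}^{n}\setminus\{0\}$, i.e.\ with those of $\mathbb{Z}^{n}$.

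Finally, for the Poincar\'{e} duality assertion in the torsion-free case, I would apply the fact (due to Bestvina, in the torsion-free $\mathcal{Z}$-structure setting) that a torsion-free group admitting a $\mathcal{Z}$-structure whose boundary is a homotopy $(n-1)$-sphere is a Poincar\'{e} duality group of formal dimension $n$: the cohomology computation already gives $H^{n}(G;\mathbb{Z}G)\cong\mathbb{Z}$ concentrated in a single degree, and the $\mathcal{Z}$-set/AR hypotheses supply the required finiteness to upgrade this to a PD$_{n}$ structure.

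The only step that requires real care, rather than straight citation, is making sure the hypotheses of the cited theorems are actually met in the generality claimed --- in particular, that the $\mathcal{Z}$-structures produced by Theorems~\ref{Theorem B} and~\ref{Theorem E} satisfy the additional assumptions (e.g.\ the nullity condition for non-torsion-free $G$, or the shape-theoretic setup used in \cite{GuMo21}) that make the cohomology and pro-homotopy formulas valid. I expect this bookkeeping, rather than any new argument, to be the main obstacle; once it is verified, the corollary is a direct transcription of the sphere boundary into the invariants of $G$.
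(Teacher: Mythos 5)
Your proposal matches the paper's own treatment: the paper also proves this by citing Bestvina \cite{Bes96} for the torsion-free facts (cohomology with $\mathbb{Z}G$ coefficients computed from $\check{H}^{\ast-1}$ of the boundary, semistability, pro-homotopy at infinity, Poincar\'e duality) together with \cite{Dra06}, \cite{GuMo19}, \cite{GuMo21} for the extensions to groups with torsion, and notes the alternative route through \cite[Prop.\ 17.3.1]{Geo08} for the strongly polycyclic case. Your emphasis on verifying hypotheses is well placed but adds no new argument beyond what the paper's one-paragraph citation proof already implicitly assumes.
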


Several of the assertions covered by Corollary
\ref{Corollary: applications to group cohomology} are known by other methods.
Details and references will be provided in Section \ref{nilpotent}.

The reader will notice that the above results include groups with torsion.
Indeed, we will prove a more general version of Theorem \ref{Theorem A} which
can be applied whenever there exists a cocompact $\underline{E}G$-space. As a
corollary, if $G$ is hyperbolic, CAT(0), or systolic, then $G\rtimes_{\phi}${$%
\mathbb{Z}
$} admits a $\mathcal{Z}$-structure; and by applying our own Theorem
\ref{Theorem E}, $G\rtimes_{\phi}${$%
\mathbb{Z}
$} admits a $\mathcal{Z}$-structure whenever $G$ is of polynomial growth. We
save the most general statements for Section
\ref{sec: proof with torsion permitted}.\medskip

Farrell and Lafont \cite{FaLa05} defined $E\mathcal{Z}$-structures---a
refinement of $\mathcal{Z}$-structures which adds a requirement that the
$G$-action on $X$ extends to $\overline{X}$. Their main application was to
show that each torsion-free group that admits an $E\mathcal{Z}$-structure
satisfies the famous Novikov Conjecture. For that and other reasons, we will
expend significant effort proving the existence of $E\mathcal{Z}$-structures
whenever possible. The following is a corollary of a more general theorem that
will be proved in Section \ref{Section: EZ-structures}.

\begin{theorem}
\label{Theorem D}Suppose $G$ is a hyperbolic group, a finitely generated
abelian group, or a CAT(0) group with the isolated flats property. Then, for
any $\phi\in\operatorname*{Aut}(G)$, $G\rtimes_{\phi}${$%
\mathbb{Z}
$} admits an $E\mathcal{Z}$-structure with boundary equal to the suspension of
the Gromov or visual boundary of $G$.
\end{theorem}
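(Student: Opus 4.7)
The plan is to deduce Theorem \ref{Theorem D} directly from the EZ-structure analogue of Theorem \ref{Theorem A} promised in Section \ref{Section: EZ-structures}. I expect that refinement to read: if $G$ admits an EZ-structure $(\overline{X},Z)$ and the automorphism $\phi$ lifts to a self-homeomorphism $\widehat{\phi}$ of $\overline{X}$ that conjugates the $G$-action to itself via $\phi$, then $G\rtimes_{\phi}\mathbb{Z}$ admits an EZ-structure with boundary $\Sigma Z$. The underlying space would be a twisted mapping telescope over $X$, with the two cone points of $\Sigma Z$ arising as the endpoints of the $\R$-factor; the EZ-condition is inherited because the $G$-action extends by hypothesis and the generator of the $\mathbb{Z}$-factor extends as the shift-plus-$\widehat{\phi}$ map.

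Granted that tool, the proof reduces to producing an EZ-structure on $G$ and verifying that every $\phi\in\Aut(G)$ extends continuously to the compactification in the manner required. In the hyperbolic case, the Bestvina--Mess EZ-structure on the Rips complex has boundary the Gromov boundary $\partial G$; an automorphism is a $(1,C)$-quasi-isometry of $G$, hence extends canonically to $\partial G$, and the resulting map lifts to the Rips compactification via standard interpolation arguments. In the finitely generated abelian case, one splits off any finite torsion factor and reduces to $G=\mathbb{Z}^{n}$ acting by translation on $\R^{n}$ with visual compactification; any $\phi\in GL(n,\mathbb{Z})$ is linear, hence extends to $\overline{\R^{n}}$ with induced homeomorphism of $S^{n-1}$.

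The delicate case is that of a CAT(0) group $G$ with isolated flats acting geometrically on a CAT(0) space $X$, compactified visually to $\overline{X}=X\cup\partial X$. In general CAT(0) settings, the Croke--Kleiner examples show the visual boundary is not a quasi-isometry invariant, so algebraic automorphisms need not extend to $\partial X$. The isolated flats hypothesis is designed to rule out that pathology: by Hruska--Kleiner, $\partial X$ is a well-defined topological invariant in this setting, and one obtains the required $\phi$-equivariant extension by combining this rigidity with an analysis of how $\phi$ must permute the peripheral flat subgroups.

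The principal obstacle is this final step in the isolated flats case---producing a continuous, $\phi$-equivariant self-extension of the visual compactification and checking it interacts correctly with the $G$-action. The hyperbolic and abelian cases are essentially routine once the structural theorem is in place, and in all three classes the boundary of the resulting EZ-structure on $G\rtimes_{\phi}\mathbb{Z}$ is the suspension of $\partial G$ as claimed.
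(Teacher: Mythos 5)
Your approach is the same as the paper's: deduce Theorem~\ref{Theorem D} from the $E\mathcal{Z}$-analogue of Theorem~\ref{Theorem A} (the paper's Theorem~\ref{Theorem: EZ-structures}) by verifying its hypothesis separately for the three families, citing the same facts---extension of quasi-isometries to the Gromov boundary for hyperbolic groups, linearity of automorphisms of $\mathbb{Z}^{n}$ for abelian groups, and Hruska--Kleiner for CAT(0) groups with isolated flats.

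One imprecision worth flagging: you anticipate that the key tool should require $\phi$ to lift to a \emph{self-homeomorphism} $\widehat{\phi}$ of $\overline{X}$ conjugating the $G$-action to itself. The paper asks for less. Theorem~\ref{Theorem: EZ-structures} requires only a $\phi$-variant map $f:X\to X$ and a $\phi^{-1}$-variant map $h:X\to X$, each admitting a continuous extension over $\overline{X}$; neither map nor its extension is required to be a homeomorphism (only the induced map on $Z$ ends up being one, via Lemma~\ref{Lemma: continuous extensions 2}). This distinction matters in the hyperbolic and CAT(0) cases: an automorphism of $G$ is a quasi-isometry but generally not an isometry, so the $\phi$-variant self-map of a Rips complex or simplicial model it induces is not a homeomorphism of that space or its compactification. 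Your own verification for hyperbolic groups produces only a continuous extension, which satisfies the paper's hypothesis but not the stronger one you posited. Once the tool is stated in the paper's weaker form, your argument and the paper's coincide.
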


By \cite{FaLa05}, Theorem \ref{Theorem D} implies the Novikov Conjecture, for
the groups covered there, whenever they are torsion-free. (Other proofs of
Novikov are known in these particular cases. See Section \ref{subsec:novikov}
for details.)

A related approach to the Novikov Conjecture, that can be applied to groups
with torsion, has been developed by Rosenthal \cite{Ros04}, \cite{Ros06},
\cite{Ros12}. In order to apply that work, we develop a refinement of
$E\mathcal{Z}$-structures which we call $\underline{E\mathcal{Z}}$-structures.
The additional requirement is that, for each finite $H\leq G$, the fixed set
$X^{H}$ is an absolute retract whose closure in $\overline{X}$ is a
$\mathcal{Z}$-compactification of $X^{H}$. By existing work, to be detailed
later, all hyperbolic, CAT(0), and systolic groups support
$\underline{E\mathcal{Z}}$-structures. We will prove that, under appropriate
hypotheses, an $\underline{E\mathcal{Z}}$-structure on $G$ implies the
existence of an $\underline{E\mathcal{Z}}$-structure on $G\rtimes_{\phi}%
\mathbb{Z}
$. Special cases include all of the groups mentioned in Theorem
\ref{Theorem D}.\medskip

The layout of this paper is as follows. In Section \ref{sec:background} we
provide much of the necessary background, including definitions and a variety
of notational conventions to be used throughout the paper. In Section
\ref{Section: Mapping tori and telescopes as classifying spaces} we explain
the role of mapping tori and mapping telescopes in the study of $G\rtimes
_{\phi}%
\mathbb{Z}
$ (thereby explaining why the groups themselves are sometimes called mapping
tori). All of this works best when $G$ is torsion-free, so that is our focus
in this section. From there we turn to the proof of Theorem \ref{Theorem A}
which occupies Sections \ref{sec:controlled} and \ref{sec:proof}. This is the
heart of the paper and also the most technical portion. In Section
\ref{sec: proof with torsion permitted} material from the previous sections is
generalized---to the extent possible---to allow for groups with torsion. In
Sections \ref{Section: EZ-structures} and \ref{Section: EZ-bar structures} we
develop and discuss $E\mathcal{Z}$-structures and $\underline{E\mathcal{Z}}%
$-structures. From there we turn to applications of our main theorems. In
particular, Section \ref{sec:applications} contains a detailed discussion of
implications to the Novikov Conjecture. From there, we move on to proofs of
Theorems \ref{Theorem C}-\ref{Theorem E}. We close the paper with a discussion
of some open questions. In addition, we include an appendix which looks at an
interesting special case of the work presented here---the integral Heisenberg
group. That discussion can be viewed as motivation for the more general
methods used elsewhere. Before delving into the more abstract constructions
and proofs, the reader might benefit from a quick look at this appendix.

This paper is an expansion of work begun in the dissertation of the third
author, Brian Pietsch, written under the direction of the first author at the
University of Wisconsin-Milwaukee \cite{Pie18}. All three authors wish to
acknowledge Chris Hruska, Boris Okun, Hoang Nguyen, Mike Mihalik, Ian Leary,
and an anonymous referee for useful comments and suggestions during and at the
completion of this project. Theorem \ref{Theorem A} was anticipated by
Bestvina in \cite[Ex.3.1]{Bes96}. A portion of the work presented in this
paper can be viewed as a completion of the argument sketched there.



\section{Background}

\label{sec:background}


\subsection{Semidirect products with $\mathbb{Z}$}

Here we establish some conventions regarding semidirect products with $%
\mathbb{Z}
$. First note that a group $\Gamma$ is a semidirect product of $G$ with $%
\mathbb{Z}
$ if and only if there exists a short exact sequence%
\[
1\rightarrow G\rightarrow\Gamma\rightarrow%
\mathbb{Z}
\rightarrow1
\]
In other words (in this special case) a semidirect product is the same as a
group extension. Viewed as a semidirect product, $\Gamma$ is determined by a
single element $\phi\in\operatorname*{Aut}\left(  G\right)  $ which then
determines the homomorphism $\Phi:%
\mathbb{Z}
\rightarrow\operatorname*{Aut}\left(  G\right)  $ taking $n$ to $\phi^{n}$.
This group is also an HNN\ extension. It can be defined by the presentation%
\[
G\rtimes_{\phi}%
\mathbb{Z}
=\left\langle G,t\mid t^{-1}gt=\phi\left(  g\right)  \text{ }\forall g\in
G\right\rangle
\]
Alternatively, if $G=\left\langle S\mid R\right\rangle $, then
\[
G\rtimes_{\phi}%
\mathbb{Z}
=\left\langle S,t\mid R,\ t^{-1}st=\phi\left(  s\right)  \text{ }\forall s\in
S\right\rangle
\]
Some authors use an alternative convention that uses relators $tgt^{-1}%
=\phi(g)$. We prefer the above presentation for geometric reasons that will
become apparent as we proceed. Those geometric reasons also explain why some
authors (\cite{BeFe92}, \cite{FeHa99}) refer to this group as a \emph{mapping
torus}. We reserve that terminology for the associated topological construction.

\subsection{Absolute retracts, group actions, and $\phi$-variant maps}


A locally compact separable metric space $X$ is an \emph{absolute neighborhood
retract (ANR)} if, whenever $X$ is embedded as a closed subset of a space $Y$,
some neighborhood of $X$ retracts onto $X$. A contractible ANR is called an
\emph{absolute retract (AR)}. Note that all ARs and ANRs in this paper are
assumed to be locally compact, separable, and metrizable.

\begin{definition}
Let $G$ be a group and $X$ a topological space [respectively AR]. We say that
$X$ is a $G$\emph{-space} [respectively $G$\emph{-AR}] when there is a
specified homomorphism $i:G\rightarrow\operatorname*{Homeo}(X)$. When no
confusion can arise, we write $g\cdot x$ to denote $i\left(  g\right)  \left(
x\right)  $ (omitting the notation $i$).
\end{definition}

\begin{definition}
The action associated to a $G$-space is \emph{cocompact} if the quotient space
$G\backslash X$ is compact, where $x\sim g\cdot x$ for all $x,g$. The action
is \emph{proper} if, for any compact set $K\subset X$, $\{g\in G\mid g\cdot
K\cap K\neq\emptyset\}$ is a finite set.
\end{definition}

Note that our definition of \textquotedblleft proper\textquotedblright%
\ matches the more general definition for topological groups whenever $G$ is
discrete---a condition that will be satisfied throughout this paper.

\begin{definition}
A $G$-space is called a $G$\emph{-complex} if it is a $CW$-complex and the
action is cellular. It is said to be a \emph{rigid} $G$-complex if the action
has the property that if $\sigma$ is a cell for which $g\cdot\sigma=\sigma$,
then $\sigma$ is point-wise fixed by $g$.
\end{definition}

We are also interested in maps between $G$-spaces which respect the action, or
which are compatible with a `twisted' version of the action induced by an automorphism.

\begin{definition}
\label{def:phivar} Let $X,Y$ be $G$-spaces. A map $f:X\rightarrow Y$ is
\emph{$G$-equivariant} (or just \emph{equivariant} when the group is
understood) if for all $x\in X$, $f(g\cdot x)=g\cdot f(x)$.

More generally, for $\phi\in\operatorname*{Aut}\left(  G\right)  $, we say $f$
is $\phi$\emph{-variant} if
\[
f(g\cdot x)=\phi(g)\cdot f(x)
\]
for all $g\in G$ and $x\in X$.
\end{definition}

\begin{remark}
Clearly a $G$-equivariant map is just a $\phi$-variant map where
$\phi=\operatorname*{id}_{G}$. Conversely, a $\phi$-variant map can be viewed
as a $G$-equivariant map, where the $G$-action on $Y$ has been twisted by
precomposing with $\phi$. For our purposes, $\phi$-variance is an extremely
useful concept.
\end{remark}


\subsection{$\mathcal{Z}$-sets and $\mathcal{Z}$%
-structures\label{Subsection: Z-sets and Z-structures}}


A collection $\mathcal{A}$ of subsets of a compact space $Y$ is a \emph{null
family} if, for any open cover $\mathcal{U}$ of $Y$, there exists a finite
subcollection $\mathcal{B}\subseteq\mathcal{A}$ such that for all
$A\in\ \mathcal{A}-\mathcal{B}$, there exists $U\in\mathcal{U}$ such that
$A\subseteq U$. By the Lebesgue Covering Theorem, if $Y$ is metrizable,
$\mathcal{A}$ is a null family if and only if, for any $\varepsilon>0$, there
exists a finite subcollection $\mathcal{B\subseteq A}$ such that
$\operatorname*{diam}(A)<\varepsilon$ for all $A\in\ \mathcal{A}-\mathcal{B}$.
This condition is independent of the metric chosen, but different metrics may
require different choices of $\mathcal{B}$.

A closed set $Z\subseteq Y$ is a \textbf{$\mathcal{Z}$}\emph{-set} if, there
exists a homotopy $\alpha:Y\times\lbrack0,1]\rightarrow Y$ such that
$\alpha_{0}=\operatorname*{id}_{Y}$ and $\alpha_{t}(Y)\subseteq Y-Z$ for all
$t>0$. In this case, we call $\alpha$ a \textbf{$\mathcal{Z}$}\emph{-set
homotopy. }A compactification $\overline{X}$ of a space $X$ is a
\textbf{$\mathcal{Z}$}\emph{-compactification }if $Z:=\overline{X}-X$ is a
\textbf{$\mathcal{Z}$}-set in $\overline{X}$. In that case, we call $Z$ a
\textbf{$\mathcal{Z}$}\emph{-boundary }for $X$ (keeping in mind this boundary
may not coincide with other boundaries, such as the visual boundary of a
CAT(0) or Gromov hyperbolic space).

By an application of Hanner's Theorem \cite{Han51}, a $\mathcal{Z}%
$-compactification of an ANR is always an ANR (see \cite{Tir11} for a detailed
discussion); hence, a \textbf{$\mathcal{Z}$}-compactification of an AR is an
AR. When $X$ is an AR, we can choose a \textbf{$\mathcal{Z}$}-set homotopy
which contracts $\overline{X}$ to a point $x_{0}\in X$, keeping $x_{0}$ fixed
throughout \cite[Lemma 1.10]{Tir11}. For a more detailed discussion of ANRs
and \textbf{$\mathcal{Z}$}-sets, see \cite{GuMo19}.


\begin{definition}
\label{DefineZStructure} A \textbf{$\mathcal{Z}$}\emph{-structure} on a group
$G$ is a pair of spaces $(\overline{X},Z)$ satisfying:

\begin{enumerate}
\item $\overline{X}$ is a compact AR,

\item $Z$ is a $\mathcal{Z}$-set in $\overline{X}$,

\item \label{Defn: Z-structure Item3}$X=\overline{X}-Z$ admits a proper,
cocompact action by $G$, and

\item (nullity condition) for any compact set $K\subseteq X$, the collection
$\{gK\ |\ g\in G\}$ of subsets of $X$ is a null family in $\overline{X}$.
(Informally, compact subsets of $X$ get small when translated toward $Z$.)
\end{enumerate}

When $(\overline{X},Z)$ is \textbf{$\mathcal{Z}$}-structure on $G$, we call
$Z$ a \textbf{$\mathcal{Z}$}\emph{-boundary} for $G$. A \textbf{$\mathcal{Z}$%
}-structure for which the $G$-action on $X$ extends to an action on
$\overline{X}$ is called an $E$\textbf{$\mathcal{Z}$}\emph{-structure.}
\end{definition}

\begin{fact}
\label{fact:isometric} Without loss of generality, we can require that the
$G$-action in item (\ref{Defn: Z-structure Item3}) to be geometric with
respect to some proper metric on $X$. See \cite[Remark 8]{GuMo19}.
\end{fact}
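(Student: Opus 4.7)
The plan is to upgrade the given proper cocompact $G$-action on $X$ (which, as an AR, is in particular locally compact, separable, and metrizable) to an action by isometries for some proper admissible metric on $X$; combined with the properness and cocompactness already assumed in item (3), this is exactly what it means for the action to be \emph{geometric}. So the only real task is to exhibit a $G$-invariant proper metric inducing the given topology.

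The first step is to fix any compatible proper metric $\rho_0$ on $X$, which exists since $X$ is locally compact, separable, and metrizable. The second step uses cocompactness to select a compact $K\subseteq X$ with $GK=X$ and properness to observe that $S=\{g\in G\mid gK\cap K\neq\emptyset\}$ is finite and that $\{gK\}_{g\in G}$ is a locally finite closed cover of $X$. The third step constructs the desired $G$-invariant metric $d$ from this combinatorial data, for example by a path-style definition
\[
d(x,y)=\inf\sum_{i=0}^{n-1}\rho_0(h_i^{-1}x_i,\,h_i^{-1}x_{i+1}),
\]
where the infimum ranges over chains $x=x_0,x_1,\dots,x_n=y$ together with auxiliary elements $h_i\in G$ chosen so that each pair $(h_i^{-1}x_i,h_i^{-1}x_{i+1})$ lies in a fixed bounded neighborhood of $K$. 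The shape of the definition makes $d$ manifestly $G$-invariant, because acting on a chain by $g\in G$ can be absorbed into a uniform shift $h_i\mapsto gh_i$ that leaves the sum unchanged. An equally natural alternative is to choose a $G$-equivariant partition of unity subordinate to a $G$-invariant refinement of $\{gK\}$ and use it to build a length structure from local pieces of $\rho_0$.

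The main obstacle is the simultaneous verification of three properties of $d$: that it is a genuine metric (not merely a pseudometric), that it induces the original topology on $X$, and that it is proper. These three conditions are intertwined: separation uses the discreteness of $G$-orbits under the proper action, topological equivalence uses that $\rho_0$ and $d$ agree up to a bounded factor on each compact set (via the trivial one-step chain), and properness follows from cocompactness together with the finite local combinatorics encoded by $S$. All of this is carried out carefully in \cite[Remark 8]{GuMo19}, and since the fact is standard and not central to the constructions that follow, the authors rightly cite the earlier reference rather than reproducing the argument.
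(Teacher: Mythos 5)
The paper itself offers no argument here; the Fact is stated with a pointer to \cite[Remark~8]{GuMo19} (and, elsewhere in the paper, Proposition~6.3 of the same reference), so there is no internal proof to compare against. Your reconstruction takes the right high-level approach: replacing the metric on $X$ does not alter its topology, hence does not affect the $\mathcal{Z}$-compactification $\overline{X}$ or any clause of Definition~\ref{DefineZStructure}, so it suffices to produce a compatible proper $G$-invariant metric. The chain/length construction you sketch is a standard way to do this, and your verification of $G$-invariance (absorb $g$ into a uniform shift $h_i\mapsto gh_i$) is correct.

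The weak point is your treatment of topological compatibility and separation, where the claim that the trivial one-step chain shows ``$\rho_0$ and $d$ agree up to a bounded factor on each compact set'' is misleading. The one-step chain yields only the inequality $d\le\rho_0$ near $K$; it says nothing about the reverse direction, namely that $d$-small implies $\rho_0$-small, which is exactly what topological compatibility (and hence separation, since $\rho_0$ is already a metric) requires. Establishing that direction uses the finiteness of $S=\{g\in G\mid gK'\cap K'\neq\emptyset\}$ to confine the group elements appearing in a short chain issuing from $K'$ to a finite set, and then compares $\rho_0$ with the finitely many pulled-back functions $\rho_0(h^{-1}\cdot,h^{-1}\cdot)$ uniformly on the compact set $K'$. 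You gesture at this (``separation uses the discreteness of $G$-orbits'') but do not carry it out, and it is the real content of the cited lemma rather than a consequence of the trivial chain estimate. The remaining assertions---properness of $d$ from cocompactness and the finite combinatorics of $S$, and the final conclusion that the action is geometric---are reasonable and correctly deferred to the reference.
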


The question of which groups admit $(E)$\textbf{$\mathcal{Z}$}-structures is
very much open, but a variety of special cases are now understood, including
hyperbolic groups \cite{BeMe91}, CAT(0) groups, Baumslag-Solitar groups
\cite{GMT19} and \cite{GMS22}, systolic groups \cite{OsPr09}, and certain
relatively hyperbolic groups \cite{Dah03}. Work of Tirel \cite{Tir11}
demonstrates that this class is closed under direct and free products. The
latter of these results can also be obtained from Dahmani's theorem.


\subsection{Mapping cylinders, mapping tori, and mapping
telescopes\label{Subsection: Mapping cylinders, mapping tori, and mapping telescopes}%
}


The following definitions, notation, and background material will play a
primary role in our main constructions.

\begin{definition}
\label{Defn: Mapping cylinder, torus and telescope}Let $X$ be a space and
$f:X\rightarrow X$ a continuous map.

\begin{enumerate}
\item \label{Defn: Item 1 Mapping cylinder}The \emph{mapping cylinder of }$f$
based on the interval $\left[  a,b\right]  $ is the quotient space
$\mathcal{M}_{[a,b]}(f)=(X\times\lbrack a,b])\sqcup X/\sim$ where $\sim$ is
generated by the rule $(x,b)\sim f(x)$. For each $t\in\lbrack a,b)$, the
quotient map $q_{[a,b]}:(X\times\lbrack a,b])\sqcup X\rightarrow
\mathcal{M}_{[a,b]}(f)$ restricts to an embedding of $X\times
\{t\}\hookrightarrow\mathcal{M}_{[a,b]}(f)$ whose image will be denoted
$X_{t}$. The quotient map also restricts to an embedding on the disjoint copy
of $X$; we denote its image in $\mathcal{M}_{[a,b]}(f)$ by $X_{b}$ and refer
to it as the \emph{range end} of $\mathcal{M}_{[a,b]}(f)$. Similarly, $X_{a}$
is the \emph{domain end} of $\mathcal{M}_{[a,b]}(f)$.

\item \label{Defn: Item 2 Mapping torus}The \emph{mapping torus of }$f$ is the
quotient space $\operatorname*{Tor}_{f}(X)=X\times\lbrack0,1]/\sim$ where
$\sim$ is generated by the rule $(x,1)\sim(f(x),0)$.

\item \label{Defn: Item 3 Mapping telescope}The \emph{bi-infinite mapping
telescope} corresponding to $f$ is the space
\[
\operatorname*{Tel}\nolimits_{f}(X)=\dots\cup\mathcal{M}_{[-1,0]}%
(f)\cup\mathcal{M}_{[0,1]}(f)\cup\mathcal{M}_{[1,2]}(f)\cup\mathcal{M}%
_{[2,3]}(f)\dots
\]
Implicit in this notation is the identification of the range end of each
$\mathcal{M}_{[k-1,k]}(f)$ with the domain end of $\mathcal{M}_{[k,k+1]}(f)$,
both denoted by $X_{k}$. Using the notation from
(\ref{Defn: Item 1 Mapping cylinder}), there is a quotient map $\lambda
:\operatorname*{Tel}\nolimits_{f}(X)\rightarrow%
\mathbb{R}
$ such that $\lambda^{-1}(r)=X_{r}$ for each $r$.
\end{enumerate}
\end{definition}

\begin{remark}
The reader is warned that the \textquotedblleft direction\textquotedblright%
\ of our mapping cylinders and telescopes is the reverse of what is found in
some of the literature, such as \cite{Gui14} and \cite{Gui16}.
\end{remark}

\begin{definition}
\label{Defn: Suspension}The \emph{suspension}\textbf{ }of a space $Z$ is the
quotient space $SZ=Z\times\left[  -\infty,\infty\right]  /\sim$ where
$(z,\infty)\sim(z^{\prime},\infty)$ and $(z,-\infty)\sim(z^{\prime},-\infty)$
for all $z,z^{\prime}\in X$.
\end{definition}

\begin{notation}
\label{Notation: points in a telescope}We frequently find ourselves working
with a Cartesian product $X\times%
\mathbb{R}
$, a mapping telescope $\operatorname*{Tel}\nolimits_{f}(X)$, and a related
suspension $SZ$, all at the same time. To aid in distinguishing between points
of these spaces, we adopt the following notational conventions.

\begin{enumerate}
\item A point in $X\times%
\mathbb{R}
$ is represented in the usual way, as an ordered pair $\left(  x,r\right)  $
enclosed in ordinary parentheses.

\item A point in $\operatorname*{Tel}\nolimits_{f}(X)$ is represented by
$\lceil x,r \rceil$ when it is the equivalence class of $\left(  x,r\right)  $
in $\mathcal{M}_{[k,k+1]}(f)\subseteq\operatorname*{Tel}\nolimits_{f}(X)$ and
$k\leq r<k+1$. In particular, when a point projects to $k\in%
\mathbb{Z}
$ under $\lambda:\operatorname*{Tel}\nolimits_{f}(X)\rightarrow%
\mathbb{R}
$, it takes its coordinates from the \emph{domain end} of $\mathcal{M}%
_{[k,k+1]}(f)$. This gives a bijective correspondence between symbols
$\left\{  \left\lceil x,r\right\rceil \mid x\in X\text{ and }r\in%
\mathbb{R}
\right\}  $ and points of $\operatorname*{Tel}\nolimits_{f}(X)$. Under this
convention, a sequence $\left\{  \left\lceil x,k+\frac{1}{i+1}\right\rceil
\right\}  _{i=1}^{\infty}$ converges to $\left\lceil x,k\right\rceil $ while
$\left\{  \left\lceil x,k+\frac{i}{i+1}\right\rceil \right\}  _{i=1}^{\infty}$
converges to $\left\lceil f\left(  x\right)  ,k+1\right\rceil $.

\item A point in $SZ$ will be represented by $\left\langle z,r\right\rangle $
when it is the equivalence class of $\left(  z,r\right)  $. As such,
equivalence classes $Z\times\left\{  \infty\right\}  $ and $Z\times\left\{
-\infty\right\}  $ have non-unique representations which we sometimes
abbreviate to $\left\langle \infty\right\rangle $ and $\left\langle
-\infty\right\rangle $. In either case, the delimiters $\left\langle
\ ,\ \right\rangle $ indicates an element of a suspension.
\end{enumerate}
\end{notation}

For later use, we compile some basic facts about mapping cylinders, mapping
tori, mapping telescopes, and suspensions, tailored to our present needs.

\begin{lemma}
\label{Lemma: basic facts about cylinders and telescopes} Let $f:X\rightarrow
X$ be a proper self map of a locally compact, separable metrizable space. Then

\begin{enumerate}
\item \label{Item 1: Lemma on basic facts on cylinders and telescopes}%
$\mathcal{M}_{[a,b]}(f)$, $\operatorname*{Tor}_{f}(X)$, and
$\operatorname*{Tel}\nolimits_{f}(X)$ are locally compact, separable, and metrizable,

\item \label{Item 2: Lemma on basic facts on cylinders and telescopes}if $X$
is contractible, then $\mathcal{M}_{[a,b]}(f)$ and $\operatorname*{Tel}%
\nolimits_{f}(X)$ are contractible,

\item \label{Item 3: Lemma on basic facts on cylinders and telescopes}if $X$
is an ANR, then $\mathcal{M}_{[a,b]}(f)$, $\operatorname*{Tor}_{f}(X)$, and
$\operatorname*{Tel}\nolimits_{f}(X)$ are ANRs,

\item \label{Item 4: Lemma on basic facts on cylinders and telescopes}if $X$
is an AR, then $\mathcal{M}_{[a,b]}(f)$ and $\operatorname*{Tel}%
\nolimits_{f}(X)$ are ARs,

\item \label{Item 5: Lemma on basic facts on cylinders and telescopes}if $X$
is a locally finite CW complex and $f$ is a cellular map, then $\mathcal{M}%
_{[a,b]}(f)$, $\operatorname*{Tor}_{f}(X)$, and $\operatorname*{Tel}%
\nolimits_{f}(X)$ can be endowed with corresponding cell structures making
each a locally finite CW complex.
\end{enumerate}
\end{lemma}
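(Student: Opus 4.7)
The plan is to reduce each item to a standard fact---quotient-space topology for (1), Hanner's theorem for (3), explicit deformation retractions plus Whitehead's theorem for (2), and product CW structure for (5)---with (4) dropping out as a corollary of (2) and (3).

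For (1), I would write $\mathcal{M}_{[a,b]}(f)$ as an adjunction space: the quotient of $X\times[a,b]\sqcup X$ by the closed relation $(x,b)\sim f(x)$. Continuity of $f$ together with Hausdorffness of $X$ keeps the relation closed, so the quotient is Hausdorff, and properness of $f$ guarantees that compact neighborhoods descend to compact neighborhoods; local compactness, second countability (hence separability), and metrizability then come from the standard theory of adjunction spaces of metric spaces along continuous maps defined on closed subsets. The mapping torus is the analogous closed quotient of $X\times[0,1]$. The telescope is a locally finite union of mapping cylinders meeting along copies of $X$, and all three properties pass cleanly through such a union.

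For (3) and (5), the key tool is Hanner's theorem. The cylinder is the pushout of
\[
X\;\xleftarrow{\,f\,}\;X\times\{b\}\;\hookrightarrow\;X\times[a,b],
\]
and the inclusion $X\times\{b\}\hookrightarrow X\times[a,b]$ is a closed cofibration; a pushout of ANRs along a closed cofibration of ANRs is an ANR (\cite{Han51}), so $\mathcal{M}_{[a,b]}(f)$ is an ANR whenever $X$ is. The torus is obtained from $\mathcal{M}_{[0,1]}(f)$ by a further closed-cofibration identification $X_0\sim X_1$ and remains an ANR. For the telescope, every point has a neighborhood contained in the union of at most two adjacent cylinders, so $\operatorname*{Tel}\nolimits_f(X)$ is locally an ANR; the local-to-global direction of Hanner's theorem then upgrades this to ANR-hood. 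Item (5) follows by giving $[a,b]$ the CW structure with $0$-cells $\{a\},\{b\}$ and one $1$-cell $(a,b)$ and invoking cellularity of $f$ to ensure the identifications respect cells; local finiteness of $X$ propagates through each construction.

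For (2), the cylinder admits the explicit deformation retraction $(x,s)\mapsto(x,(1-t)s+tb)$ onto its range end $X_b$, so it inherits contractibility from $X$. For the telescope, iterating this retraction cylinder-by-cylinder from right to left shows that each finite window $\lambda^{-1}([-M,N])$ deformation retracts onto $X_N$ and is therefore contractible; since every compact set in $\operatorname*{Tel}\nolimits_f(X)$ lies in some such window, the telescope is weakly contractible. Combining this with the ANR conclusion of (3) and the fact that ANRs have the homotopy type of CW complexes, Whitehead's theorem upgrades weak contractibility to genuine contractibility, and (4) is then immediate: a contractible ANR is an AR. The main obstacle I anticipate is the bookkeeping needed to turn finite-window retractions of the bi-infinite telescope into a global contractibility statement---no direct contraction seems to be available without stronger hypotheses on $f$---while the cylinder and torus cases are essentially routine.
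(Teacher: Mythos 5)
Your plan is largely correct, and you correctly identify (and resolve for the cases that matter in this paper) the genuine difficulty, which is the bi-infinite telescope in item (2). But there is a logical gap relative to the lemma as stated. Item (2) only assumes that $X$ is contractible (within the ambient hypotheses: locally compact, separable, metrizable, $f$ proper); it does \emph{not} assume $X$ is an ANR. Your route---weak contractibility of the telescope, then item (3) to get ANR-hood, then ``ANRs have CW homotopy type,'' then Whitehead---therefore proves the conclusion only under the strictly stronger hypothesis that $X$ is a contractible ANR, i.e., you have in effect proved item (4) but not item (2) in its stated generality. The paper proceeds differently: having shown that every finite subtelescope $\lambda^{-1}([-M,N])$ deformation retracts onto a copy of $X$ and is therefore contractible, it thickens these subtelescopes to contractible open subsets exhausting $\operatorname*{Tel}_f(X)$ and invokes \cite{AnEd16} to deduce contractibility of the union directly, with no detour through ANR theory or Whitehead's theorem. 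If you want your proof to cover item (2) as written, you need a replacement for that last step that does not presuppose $X$ is an ANR.

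Two smaller points. First, you correctly retract $\mathcal{M}_{[a,b]}(f)$ onto the \emph{range} end $X_b$; the paper's text says ``domain end $X_a$,'' which appears to be a slip, since retraction onto the domain end requires $f$ to be a homeomorphism. (Your ``right to left'' description of the telescope sweep is the wrong direction for a retraction onto $X_N$---it should sweep left to right---but this is only a wording issue.) Second, the assertion that a pushout of ANRs along a closed cofibration of ANRs is an ANR is correct, but \cite{Han51} is better cited for the local-to-global principle you use for the telescope; the adjunction-space result is what the paper cites \cite[p.178]{Hu65} for, and that is the more appropriate reference for items (3) and (4).
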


\begin{proof}
Item \ref{Item 1: Lemma on basic facts on cylinders and telescopes} is an
exercise in general topology. In each case, the space in question is endowed
with the quotient topology induced by a map $q:Y\rightarrow Y/\sim$, where the
space $Y$ has all of the desired properties. By using the properness of $f$
one may view the quotient space as the result of an upper semicontinuous
decomposition of $Y$ in the sense of \cite{Dav86}. From there, the desired
conclusions can be deduced from results found in the first three sections of
that book.

Since $\mathcal{M}_{[a,b]}(f)$ strong deformation retracts onto its domain end
$X_{a}$, the first assertion of Item
\ref{Item 2: Lemma on basic facts on cylinders and telescopes} is clear. By
applying that deformation retraction inductively, one sees that any finite
subtelescope of $\operatorname*{Tel}\nolimits_{f}(X)$ is contractible. By
thickening these finite telescopes to contractible open subsets of
$\operatorname*{Tel}\nolimits_{f}(X)$, one may apply \cite{AnEd16} to deduce
the contractibility of $\operatorname*{Tel}\nolimits_{f}(X)$.

Items \ref{Item 3: Lemma on basic facts on cylinders and telescopes} and
\ref{Item 4: Lemma on basic facts on cylinders and telescopes} follow from
\cite[p.178]{Hu65}. Item
\ref{Item 5: Lemma on basic facts on cylinders and telescopes} is
standard---see, for example, \cite{FrPi90}.
\end{proof}

\begin{notation}
Finally, when working in a metric space $\left(  X,d\right)  $, we will let
$B_{d}\left(  x,\varepsilon\right)  $ and $B_{x}\left[  x,\varepsilon\right]
$ denote open and closed $\varepsilon$-balls centered at $x$.
\end{notation}



\section{Mapping tori and telescopes as classifying spaces for $G\rtimes
_{\phi}%
\mathbb{Z}
$}

\label{Section: Mapping tori and telescopes as classifying spaces}

The first step in placing a $\mathcal{Z}$-structure on any group is to find an
AR on which that group acts properly and cocompactly. In this paper, we most
frequently begin with a $\mathcal{Z}$-structure $(\overline{X},Z)$ on $G$ and
look to place one on $G\rtimes_{\phi}${$%
\mathbb{Z}
$}. As such, our first step is to construct an AR $Y$ on which $G\rtimes
_{\phi}${$%
\mathbb{Z}
$} acts properly and cocompactly. Due to basic covering space theory, this is
accomplished most easily and most intuitively when $G$ is torsion-free. For
that reason we deal only with the torsion-free case in this section. We will
return to the cases where $G$ has torsion in Section
\ref{sec: proof with torsion permitted}.

Throughout this section, prime symbols will be used for maps occurring
`downstairs', while their lifts to universal covers will be denoted without primes.

Beginning with a proper and cocompact $G$-action on an AR $X$, the assumption
that $G$ is torsion-free ensures that the quotient map $q:X\rightarrow
G\backslash X$ is a covering projection. Since the action is cocompact and $X$
is contractible, $G\backslash X$ is compact and aspherical. Furthermore, since
being an ANR is a local property, $G\backslash X$ is an ANR. We now use some
trickery to replace our generic AR $X$ with a locally finite contractible CW
complex (and a cellular $G$-action). This step is not strictly necessary for
much of what follows, but it leads to a more intuitive version of a
$(G\rtimes_{\phi}{%
\mathbb{Z}
})$-space $Y$. In addition, a CW structure is useful in some applications.

By a famous theorem of West \cite{Wes77}, $G\backslash X$ is homotopy
equivalent to a finite CW complex $K$, which is a $K(G,1)$ complex. The
universal cover of $K$, which we temporarily label as $X^{\ast}$, admits a
proper, cocompact, cellular $G$-action, and by a well-known boundary swapping
trick (see \cite{Bes96} or \cite{GuMo19}), we may attach $Z$ to $X^{\ast}$ to
obtain an alternative $\mathcal{Z}$-structure $(\overline{X^{\ast}},Z)$ on
$G$. For ease of notation we replace the $X$ with $X^{\ast}$ and omit the
star---assuming from now on that $X$ is this CW complex.

Without loss of generality, we may assume that the $2$-skeleton of $K$ is a
presentation $2$-complex for $G=\left\langle S\mid R\right\rangle $ with
vertex $v_{0}$. By asphericity, there is a cellular map $f^{\prime}%
:(K,v_{0})\rightarrow(K,v_{0})$ such that $f_{\ast}^{\prime}=\phi:G\rightarrow
G$. Since $\phi$ is an automorphism, $f^{\prime}$ is a homotopy equivalence.
The mapping torus $\operatorname*{Tor}_{f^{\prime}}(K)$ has fundamental group
$G\rtimes_{\phi}${$%
\mathbb{Z}
$}. In fact, if we give $\operatorname*{Tor}_{f^{\prime}}(K)$ the usual cell
structure consisting of $K$ together with a new $(k+1)$-cell for each $k$-cell
of $K$, then the $2$-skeleton of $\operatorname*{Tor}_{f^{\prime}}(K)$ is a
presentation $2$-complex for $G\rtimes_{\phi}{%
\mathbb{Z}
=}\left\langle S,\ t\mid R,\ t^{-1}st=\phi(s)\ \forall s\in S\right\rangle $.
The new $1$-cell (corresponding to the $0$-cell $v_{0}$) gives rise to $t$,
and each new $2$-cell (one for each $s\in S$) results in a relator
$t^{-1}st=\phi(s)$.

It is easy to see that the mapping telescope $\operatorname*{Tel}%
\nolimits_{f^{\prime}}(K)$ is an infinite cyclic cover of $\operatorname*{Tor}%
_{f^{\prime}}(K)$ with fundamental group corresponding to $G\trianglelefteq
G\rtimes_{\phi}${$%
\mathbb{Z}
$}. From there, the universal cover may be viewed as the mapping telescope
$\operatorname*{Tel}\nolimits_{f}(X)$ where $f:X\rightarrow X$ is a lift of
$f^{\prime}$. More specifically, let $x_{0}\in q^{-1}(v_{0})$ be a preferred
basepoint, where $q:X\rightarrow K$ is the covering projection, and choose $f$
taking $x_{0}$ to $x_{0}$. By lifting the CW structure of $\operatorname*{Tor}%
_{f^{\prime}}(K)$ to $\operatorname*{Tel}\nolimits_{f}(X)$, we can realize the
Cayley graph and Cayley $2$-complex of $G\rtimes_{\phi}${$%
\mathbb{Z}
$} as the $1$- and $2$-skeleta of $\operatorname*{Tel}\nolimits_{f}(X)$. By
Lemma \ref{Lemma: basic facts about cylinders and telescopes},
$\operatorname*{Tel}\nolimits_{f}(X)$ is contractible, and since locally
finite CW complexes are ANRs, $\operatorname*{Tel}\nolimits_{f}(X)$ is an AR.
This is the space $Y$ we set out to construct.

Before proceeding, we pause to examine the action of $G\rtimes_{ \phi}${$%
\mathbb{Z}
$} on $Y=\operatorname*{Tel}\nolimits_{f}(X)$. First we focus on the
$1$-skeleton, which is $\operatorname*{Cay}(G\rtimes_{\phi}{%
\mathbb{Z}
},S\cup\left\{  t\right\}  )$. For each integer $i$, $X_{i}$ is a copy of $X$,
so its $1$-skeleton is a copy of $\operatorname*{Cay}(G,S)$. By placing the
identity element of $G\rtimes_{\phi}${$%
\mathbb{Z}
$} at $x_{0}\in X_{0}$ we identify $X_{0}^{(1)}$ as the subgraph of
$\operatorname*{Cay}(G\rtimes_{\phi}{%
\mathbb{Z}
},S\cup\left\{  t\right\}  )$ corresponding to $G\trianglelefteq
G\rtimes_{\phi}${$%
\mathbb{Z}
$}. All edges of $\operatorname*{Cay}(G\rtimes_{\phi}{%
\mathbb{Z}
},S\cup\left\{  t\right\}  )$ not in some $X_{i}^{(1)}$ are labeled by $t$ and
are oriented in the positive $%
\mathbb{R}
$-direction with respect to the projection $\lambda:$ $\operatorname*{Tel}%
\nolimits_{f}(X)\rightarrow%
\mathbb{R}
$. As a homeomorphism of $Y$, $t$ maps each $X_{i}$ onto $X_{i+1}$ via the
\textquotedblleft identity\textquotedblright. As such, each subcomplex
$X_{i}^{(1)}$ corresponds to the left coset $t^{i}G$, and each vertex
$t^{i}g\in t^{i}G$ is taken to its counterpart $t^{i+1}g\in t^{i}G$. Left
multiplication by an element of $G$ is more interesting. For each vertex $g\in
X_{0}^{(1)}$, the outgoing $t$-edge ends at $gt=t\phi(g)$. So, if $ta\in
tG=X_{1}^{(0)}$, it is the terminal vertex of a $t$-edge emanating from
$\phi^{-1}(a)$. Left-multiplication by $g$ moves that edge to one emanating
from $g\phi^{-1}(a)$ and ending at $\phi(g\phi^{-1}(a))=\phi(g)a$. In other
words, from the perspective of $X_{1}^{(1)}$, left multiplication by $g$ is
replaced by left multiplication by $\phi(g)$. More generally,
left-multiplication by $g$ shows up in $X_{i}^{(1)}$ as left-multiplication by
$\phi^{i}(g)$.

\begin{figure}[th]
\begin{center}
\includegraphics[scale=0.5]{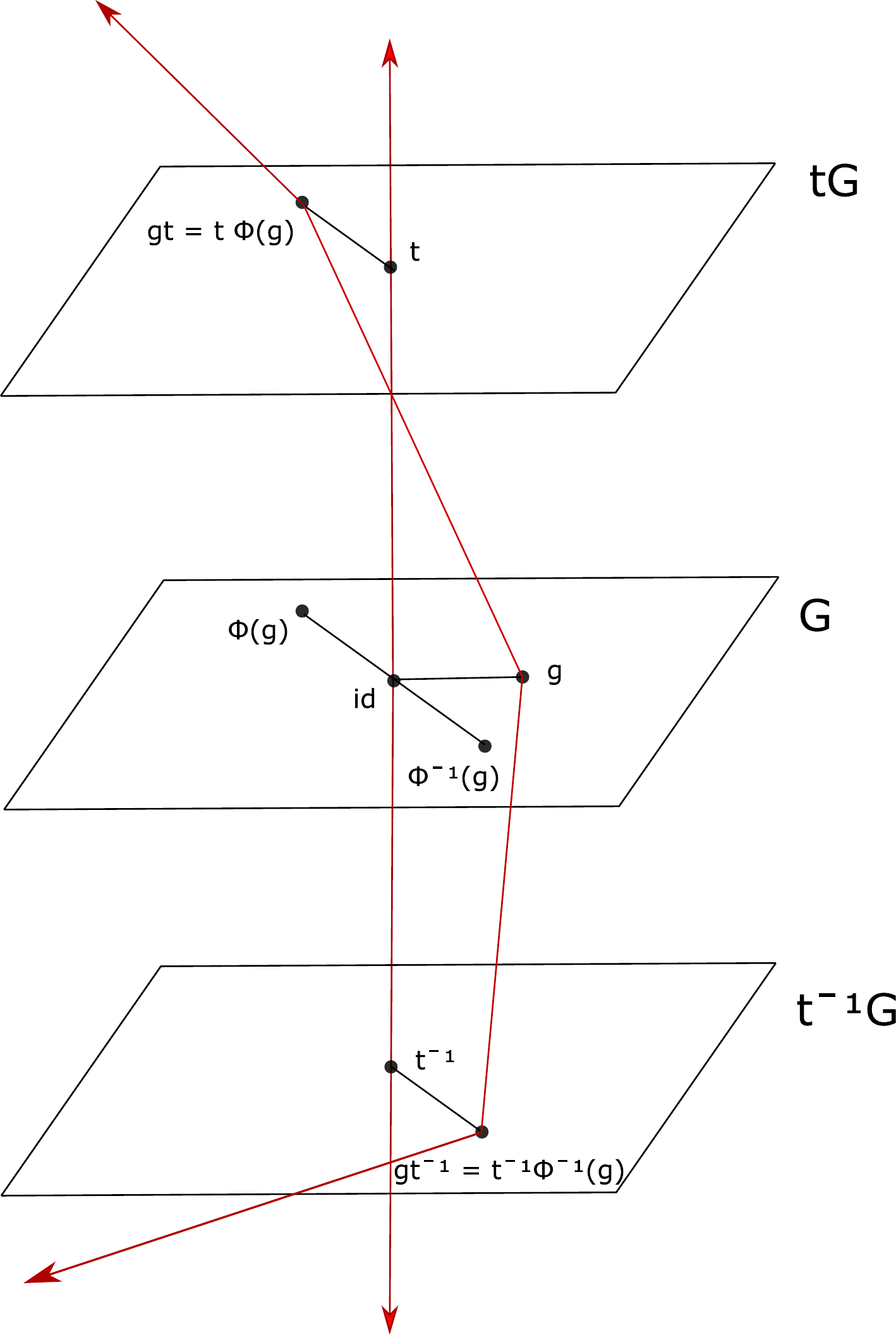} \label{fig:cayley}
\end{center}
\caption{A snapshot of the Cayley graph of a semidirect product with the
integers. Red edges are labeled by t, black by elements of G.}%
\label{fig:semidirect}%
\end{figure}

As a homeomorphism of $Y$, $\left.  g\right\vert _{X_{i}}=\phi^{i}( g) $, with
mapping cylinder lines being taken to mapping cylinder lines.

For another interesting perspective, note that the left coset $g\left\langle
t\right\rangle $ of the infinite cyclic subgroup generated by $t$ is the line
in the Cayley graph whose vertex set is the bi-infinite sequence $\left\{
\phi^{i}( g) \right\}  _{i=-\infty}^{\infty}$, and whose edges are all labeled
by $t$.

Let us now consider the geometry of $G\rtimes_{\phi}${$%
\mathbb{Z}
$}, again focusing on $\operatorname*{Cay}(G\rtimes_{\phi}{%
\mathbb{Z}
},S\cup\left\{  t\right\}  )=Y^{(1)}$. If we ignore the $t$-edges, this Cayley
graph is identical to that of the direct product $G\times${$%
\mathbb{Z}
$}; we have a copy $X_{i}^{(1)}$ of $\operatorname*{Cay}(G,S)$ over each
integer $i\in%
\mathbb{R}
$. But, unlike the Cayley graph of the direct product, the edges between
vertices of $X_{i}^{(1)}$ and $X_{i+1}^{(1)}$ are not \textquotedblleft
vertical\textquotedblright. Instead the $t$-edge emanating from a vertex
$t^{i}a$ has terminal point $t^{i+1}\phi(a)$ where $a$ and $\phi(a)$ can be
far apart as elements of $G$.
In addition, elements $t^{i}a$ and $t^{i}b$ which are far apart in
$X_{i}^{(1)}$ can be close together in $\operatorname*{Cay}(G\rtimes_{\phi}{%
\mathbb{Z}
},S\cup\left\{  t\right\}  )$, due to shortcuts made available by the
$t$-edges. In particular, depending on the automorphism $\phi$, the geometry
of $G\rtimes_{\phi}${$%
\mathbb{Z}
$} can be very different from that of $G\times%
\mathbb{Z}
$. See Figure~\ref{fig:semidirect}.

We have already noted that $Y$ is contractible, but we can be more precise. We
can build a proper homotopy equivalence between $Y$ and $X\times%
\mathbb{R}
$.

Since $f^{\prime}:(K,v_{0})\rightarrow(K,v_{0})$ is a homotopy equivalence,
there exists a cellular map $h^{\prime}:(K,v_{0})\rightarrow(K,v_{0})$ and
cellular homotopies $A^{\prime},B^{\prime}:K\times\left[  0,1\right]
\rightarrow K$ such that $A_{0}^{\prime}=\operatorname*{id}_{K}$,
$A_{1}^{\prime}=h^{\prime}f^{\prime}$, $B_{0}^{\prime}=\operatorname*{id}_{K}%
$, and $B_{1}=f^{\prime}h^{\prime}$. Since these maps are proper (all spaces
being compact), so are their lifts (see \cite[Theorem 10.1.23]{Geo08}). Hence
we obtain proper maps $f,h:(X,x_{0})\rightarrow(X,x_{0})$ and proper
homotopies $A,B:X\times\left[  0,1\right]  \rightarrow X$ such that
$A_{0}=\operatorname*{id}_{X}$, $A_{1}=hf$, $B_{0}=\operatorname*{id}_{X}$,
and $B_{1}=fh$. Moreover, $f$ is $\phi$-variant under the standard covering
action on $(X,x_{0})$ and $h$ is $\phi^{-1}$-variant.

To obtain the desired proper homotopy equivalence $v:Y\rightarrow X\times%
\mathbb{R}
$, we will first construct a proper homotopy equivalence $v^{\prime
}:\operatorname*{Tel}_{f^{\prime}}(K)\rightarrow K\times%
\mathbb{R}
$. That map will be lifted to universal covers to obtain $v$.

The building blocks of $v^{\prime}$ will be maps $v_{n}^{\prime}%
:\mathcal{M}_{[n,n+1]}(f^{\prime})\rightarrow K\times\left[  n,n+1\right]  $.
For the purposes of assuring continuity, recall that $\mathcal{M}%
_{[n,n+1]}(f^{\prime})=K\times\lbrack n,n+1]\sqcup K/\sim$ where $(x,n+1)\sim
f^{\prime}(x)$. We will first describe continuous maps of $K\times\lbrack
n,n+1]\sqcup K$ into $K\times\lbrack n,n+1]$, then observe that they respect
$\sim$ thereby inducing continuous maps on $\mathcal{M}_{[n,n+1]}(f^{\prime})$.

For integers $n\geq0$, $v_{n}^{\prime}$ is induced by the map%
\[%
\begin{tabular}
[c]{cccc}%
$\left(  x,r\right)  $ & $\mapsto$ & $\left(  h^{\prime n}\left(
A_{r-n}^{\prime}\left(  x\right)  \right)  ,r\right)  $ & for$\text{ }\left(
x,r\right)  \in K\times\lbrack n,n+1]$\\
$x$ & $\mapsto$ & $\left(  h^{\prime\left(  n+1\right)  }\left(  x\right)
,n+1\right)  $ & for$\text{ }x$ in range copy of $K$%
\end{tabular}
\
\]

\noindent Since each $\left(  x,n+1\right)  \in K\times\lbrack n,n+1]$ is sent
to $\left(  h^{\prime n}\left(  h^{\prime}f^{\prime}\left(  x\right)  \right)
,n+1\right)  =\left(  h^{\prime\left(  n+1\right)  }\left(  f^{\prime}\left(
x\right)  \right)  ,n+1\right)  $, which is also where it sends the point
$f^{\prime}\left(  x\right)  \in K$, we get a well-defined continuous map
$v_{n}^{\prime}:\mathcal{M}_{[n,n+1]}(f^{\prime})\rightarrow K\times\left[
n,n+1\right]  $. For integers $n<0$, use the following similar (but simpler)
rule.
\[%
\begin{tabular}
[c]{cccc}%
$\left(  x,r\right)  $ & $\mapsto$ & $(f^{\prime\left\vert n\right\vert
}(x),r)$ & for$\text{ }\left(  x,r\right)  \in K\times\lbrack n,n+1]$\\
$x$ & $\mapsto$ & $f^{\prime\left\vert n\right\vert -1}(x)$ & for$\text{ }x$
in range copy of $K$%
\end{tabular}
\ \
\]

Notice that, for each integer $n$, $v_{n-1}^{\prime}$ and $v_{n}^{\prime}$
agree on $K_{n}$, so these maps can be pasted together to obtain $v^{\prime
}:\operatorname*{Tel}_{f^{\prime}}(K)\rightarrow K\times%
\mathbb{R}
$. An argument similar to the one in \cite{Gui14} , shows that $v^{\prime}$ is
a proper homotopy equivalence with a proper homotopy inverse $u^{\prime
}:K\times%
\mathbb{R}
\rightarrow\operatorname*{Tel}_{f^{\prime}}(K)$ which takes $K\times\left\{
n\right\}  $ into $K_{n}$ and $K\times\left[  n,n+1\right]  $ into
$\mathcal{M}_{[n,n+1]}(f^{\prime})$ for each integer $n$. We say that
$v^{\prime}$ is \emph{level-preserving }and $u^{\prime}$ is \emph{nearly
level-preserving. }(A level-preserving version of $u^{\prime}$ can be provided
if need arises.)

For use later in this paper, we simplify the description of $v^{\prime}$. We
will make use of the \emph{floor function} for real numbers $\left\lfloor
r\right\rfloor $, and define $\mathring{r}=r-\left\lfloor r\right\rfloor
\in\left[  0,1\right]  $. Using the notation established in Section
\ref{Subsection: Mapping cylinders, mapping tori, and mapping telescopes} we have:%

\begin{equation}
v^{\prime}\left\lceil x,r\right\rceil =\left\{
\begin{tabular}
[c]{cc}%
$\left(  h^{\prime\left\lfloor r\right\rfloor }\left(  A_{\mathring{r}%
}^{\prime}\left(  x\right)  \right)  ,r\right)  $ & $\text{if }r\geq0$\\
$\left(  f^{\prime\left\vert \left\lfloor r\right\rfloor \right\vert }\left(
x\right)  ,r\right)  $ & $\text{if }r<0$%
\end{tabular}
\ \ \ \right.  \label{Definition of v'}%
\end{equation}
By associating the fundamental groups of $\operatorname*{Tel}_{f^{\prime}}(K)$
and $K\times%
\mathbb{R}
$ with $G$ via the inclusions of $K\times\left\{  0\right\}  $, and noting
that $v^{\prime}$ restricts to the identity on these subspaces, we may view
$v^{\prime}$ as inducing $\operatorname*{id}_{G}$ on fundamental groups. As
such, the lift to universal covers $v:\operatorname*{Tel}_{f}(X)\rightarrow
X\times%
\mathbb{R}
$ is a $G$-equivariant proper homotopy equivalence, where $X$ is the universal
cover of $K$ and $f:X\rightarrow X$ is the lift of $f^{\prime}$. This map is
level-preserving with nearly level-preserving $G$-equivariant proper homotopy
inverse $u:X\times%
\mathbb{R}
\rightarrow\operatorname*{Tel}_{f}(X)$. Letting $h:X\rightarrow X$ and
$A:X\times\left[  0,1\right]  \rightarrow X$ be the appropriately chosen lifts
of $f^{\prime}$ and $A^{\prime}$, and adapting the conventions used above, we
may specify $v$ by the formula%
\begin{equation}
v\left\lceil x,r\right\rceil =\left\{
\begin{tabular}
[c]{cc}%
$\left(  h^{\left\lfloor r\right\rfloor }\left(  A_{\mathring{r}}\left(
x\right)  \right)  ,r\right)  $ & $\text{if }r\geq0$\\
$\left(  f^{\left\vert \left\lfloor r\right\rfloor \right\vert }\left(
x\right)  ,r\right)  $ & $\text{if }r<0$%
\end{tabular}
\ \ \ \right.  \label{Definition of v}%
\end{equation}
For future use, let $H^{\prime}:\operatorname*{Tel}_{f^{\prime}}%
(K)\times\left[  0,1\right]  \rightarrow\operatorname*{Tel}_{f^{\prime}}(K)$
and $J^{\prime}:K\times%
\mathbb{R}
\rightarrow X$ be the near-level preserving proper homotopies $u^{\prime}\circ
v^{\prime}\overset{H^{\prime}}{\simeq}\operatorname*{id}_{\operatorname*{Tel}%
_{f^{\prime}}(K)}$ and $v^{\prime}\circ u^{\prime}\overset{J^{\prime}}{\simeq
}\operatorname*{id}_{K}$ promised above, and let $H$ and $J$ be their
$G$-equivariant, near level-preserving lifts $u\circ v\overset{H}{\simeq
}\operatorname*{id}_{\operatorname*{Tel}_{f}(X)}$ and $v\circ
u\overset{J}{\simeq}\operatorname*{id}_{X}$.

\begin{remark}
A benefit of constructing $u,v,H$, and $J$ as lifts of maps between
$\operatorname*{Tel}_{f^{\prime}}(K)$ and $K\times%
\mathbb{R}
$ (and related spaces) is that they are $G$-equivariant. One might ask: Why
not start even lower, i.e., begin with homotopy equivalences between
$\operatorname*{Tor}_{f^{\prime}}(K)$ and $K\times\mathbb{S}^{1}$, so as to
end up with $(G\rtimes_{\phi}%
\mathbb{Z}
)$-equivariant maps? In all but the simplest cases, that is impossible since
$\operatorname*{Tor}_{f^{\prime}}(K)$ and $K\times\mathbb{S}^{1}$ have
non-isomorphic fundamental groups. The spaces under consideration become
homotopy equivalent only after the \textquotedblleft$t$%
-factor\textquotedblright\ is unfurled.
\end{remark}

For later reference, we provide a summary of the many spaces and maps
introduced in this section.%

\[%
\begin{tabular}
[c]{|l|l|}\hline
\textbf{Spaces\medskip} & \\\hline
$K$ & a finite $K(G,1)$ complex\\\hline
$\operatorname*{Tor}_{f^{\prime}}(K)$ & a finite $K(G\rtimes_{\phi}%
\mathbb{Z}
)$ complex\\\hline
$\operatorname*{Tel}_{f^{\prime}}(K)$ & an infinite cyclic cover of
$\operatorname*{Tor}_{f^{\prime}}(K)$\\\hline
$X$ & the universal cover of $K$\\\hline
$Y=\operatorname*{Tel}\nolimits_{f}(X)$ & universal cover of
$\operatorname*{Tor}_{f^{\prime}}(K)$ and $\operatorname*{Tel}_{f^{\prime}%
}(K)$\\\hline
$SZ$ & The suspension of a space $Z$\\\hline
\end{tabular}
\ \ \
\]%
\[
\
\begin{tabular}
[c]{|l|l|}\hline
\textbf{Maps\medskip} & \\\hline
$\phi:G\rightarrow G$ & a group isomorphism with inverse $\overleftarrow{\phi
}$\\\hline
$f^{\prime}:(K,v_{0})\rightarrow(K,v_{0})$ & cellular map inducing $\phi$ on
fundamental groups\\\hline
$h^{\prime}:(K,v_{0})\rightarrow(K,v_{0})$ & a homotopy inverse for
$f$\\\hline
$A^{\prime}$ \ and \ $B^{\prime}$ & homotopies: $A^{\prime}:h^{\prime}\circ
f^{\prime}\simeq\operatorname*{id}_{K}$; \ $B^{\prime}:f^{\prime}\circ
h^{\prime}\simeq\operatorname*{id}_{K}$\\\hline
$v^{\prime}:\operatorname*{Tel}_{f^{\prime}}(K)\rightarrow K\times%
\mathbb{R}
$ & a level-preserving proper homotopy equivalence\\\hline
$u^{\prime}:K\times%
\mathbb{R}
\rightarrow\operatorname*{Tel}_{f^{\prime}}(K)$ & a nearly level-preserving
proper homotopy inverse for $v^{\prime}$\\\hline
$H^{\prime}$ \ and $\ J^{\prime}$ & homotopies; $H^{\prime}:u^{\prime}\circ
v^{\prime}\simeq\operatorname*{id}_{\operatorname*{Tel}_{f^{\prime}}(K)}$;
$J^{\prime}:v^{\prime}\circ u^{\prime}\simeq\operatorname*{id}_{K\times%
\mathbb{R}
}$\\\hline
$f,h:(X,x_{0})\rightarrow(X,x_{0})$ & lifts of $f^{\prime}$ and $h^{\prime}%
$which are $\phi$-variant\\
& and $\phi^{-1}$-variant, respectively\\\hline
$A$ \ and $\ B$ & lifts of $A^{\prime}$ and $B^{\prime}$; $\ G$-equivariant
homotopies\\
& $A:$ $h\circ f\simeq\operatorname*{id}_{X}$; $\ B:f\circ h\simeq
\operatorname*{id}_{X}$\\\hline
$v:Y\rightarrow X\times%
\mathbb{R}
$ & lift of $v$; a level-preserving\\
& $G$-equivariant proper homotopy equivalence\\\hline
$u:X\times%
\mathbb{R}
\rightarrow Y$ & lift of $u$; a $G$-equivariant proper homotopy inverse for
$v$\\\hline
$H$ \ and $\ J$ & lifts of $H^{\prime}$ and $J^{\prime}$; $\ G$-equivariant
homotopies\\
& $H:$ $u\circ v\simeq\operatorname*{id}_{Y}$; $J:v\circ u\simeq
\operatorname*{id}_{X\times%
\mathbb{R}
}$\\\hline
\end{tabular}
\ \
\]
\bigskip

It is now possible to give a rough outline of the proof of Theorem
\ref{Theorem A}.

\begin{enumerate}
\item Using the hypothesis that $G$ admits a $\mathcal{Z}$-structure $(
\overline{X},Z) $ and $\phi:G\rightarrow G$ is an isomorphism, obtain a nice
($\phi$-variant) continuous quasi-isometry $f:X\rightarrow X$ and use this map
to construct an AR, $Y=\operatorname*{Tel}\nolimits_{f}(X)$, on which
$G\rtimes_{\phi}${$%
\mathbb{Z}
$} acts properly and cocompactly.

\item Build a carefully controlled proper homotopy equivalence $v:Y\rightarrow
X\times${$%
\mathbb{R}
$}.

\item Using the $\mathcal{Z}$-compactifiability of $X$ we may $\mathcal{Z}%
$-compactify $X\times${$%
\mathbb{R}
$} by the addition of $SZ$. This compactification is not unique. Delicate
techniques from \cite{Tir11}, allow us to choose a compactification for which
specific collections of compact subsets of $X\times${$%
\mathbb{R}
$} become null families in $\overline{X\times%
\mathbb{R}
}$. In particular we are interested in the $v$-images of $(G\rtimes_{\phi}${$%
\mathbb{Z}
$}${\mathbb{)}}$-translates of compact subsets of $Y$.

\item Finally, we use \textquotedblleft boundary swapping\textquotedblright%
\ techniques developed in \cite{GuMo19} to pull back the above boundary onto
$Y$. Additional controls must be built into Step 3 to ensure that
$\overline{Y}=Y\sqcup SZ$ is an AR and that the nullity condition is satisfied.
\end{enumerate}

When successful with the above, we can also ask whether the $(G\rtimes_{\phi}%
${$%
\mathbb{Z}
$}${\mathbb{)}}$-action on $Y$ can be extended to $\overline{Y}$. That is the
topic of Section \ref{Section: EZ-structures}.


\section{A controlled $\mathcal{Z}$-compactification of $X\times%
\mathbb{R}
\label{sec:controlled}$}


Our proof of Theorem \ref{Theorem A} requires a $\mathcal{Z}$-compactification
of the $(G\rtimes_{\phi}{%
\mathbb{Z}
})$-space $Y=\operatorname*{Tel}_{f}(X)$ that satisfies the nullity condition
of Definition \ref{DefineZStructure}. The strategy is indirect. We will first
$\mathcal{Z}$-compactify $X\times%
\mathbb{R}
$, then use the map $v:Y\rightarrow X\times%
\mathbb{R}
$, constructed above, to swap the boundary onto $Y$ using a process like the
one described in \cite{GuMo19}. Unfortunately, our setup does not exactly
match the hypotheses found there--- $v$ is not a coarse equivalence and the
homotopies $H$ and $J$ are not bounded. To compensate, we show how to impose
some extreme controls on the $\mathcal{Z}$-compactification of $X\times%
\mathbb{R}
$ which can be adjusted to ensure that the boundary swapping procedure succeeds.

\begin{definition}
A \emph{controlled compactification} of a proper metric space $(Y,d)$ is a
compactification $\overline{Y}$ satisfying the following property:

For every $R>0$ and every open cover $\mathcal{U}$ of $\overline{Y}$, there is
a compact set $C\subset Y$ so that if $A\subset Y\setminus C$ and
$\operatorname*{diam}_{d}(A)<R$, then $A\subset U$ for some $U\in\mathcal{U}$.

A \emph{controlled }$\mathcal{Z}$\emph{-compactification }is one that is
simultaneously a controlled compactification and a $\mathcal{Z}$-compactification
\end{definition}

Whenever $\left(  \overline{Y},Z\right)  $ is a $\mathcal{Z}$-structure for a
group $G$ and $d$ is a metric under which the corresponding $G$-action on $Y$
is by isometries, $\overline{Y}$ is a controlled $\mathcal{Z}$%
-compactification of $\left(  Y,d\right)  $ (see Lemma~6.4 in \cite{GuMo19}).
Other examples (not requiring a group action) include the addition of the
Gromov boundary to a proper $\delta$-hyperbolic space or the visual boundary
to a proper CAT(0) space. For the remainder of this section, we set aside
group actions and focus on the following metric/topological goal.

\begin{theorem}
\label{Theorem: Controlled Z-compactification of X x R}Let $\overline
{X}=X\sqcup Z$ be a controlled $\mathcal{Z}$-compactification of a
contractible proper metric space $(X,d)$ and let $\eta:[0,\infty
)\rightarrow\lbrack0,\infty)$ be an arbitrary monotone increasing function
with $\lim_{r\rightarrow\infty}\eta(r)=\infty$. Then there is a $\mathcal{Z}%
$-compactification $\overline{X\times%
\mathbb{R}
}$ of $X\times%
\mathbb{R}
$ with boundary $SZ$ satisfying the following control condition:

\noindent(\ddag)$\quad$For each open cover $\mathcal{U}$ of $\overline
{X\times{%
\mathbb{R}
}}$ and $k\in%
\mathbb{Z}
$, there exists a compact set $Q\times\left[  -N,N\right]  \subseteq X\times%
\mathbb{R}
$, such that every set $B_{d}[x,\eta(\left\vert k\right\vert )]\times\lbrack
k,k+1]$ lying outside $Q\times\left[  -N,N\right]  $ is contained in some
$U\in\mathcal{U}.\medskip$
\end{theorem}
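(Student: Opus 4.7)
My plan is to build $\overline{X \times \mathbb{R}}$ as a suspension-style compactification and then establish the control condition (\ddag) via a Lebesgue-number argument, following the spirit of the techniques in \cite{Tir11}.

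First, set $\overline{X \times \mathbb{R}}$ equal to the quotient of $\overline{X} \times [-\infty, \infty]$ obtained by collapsing $\overline{X} \times \{+\infty\}$ and $\overline{X} \times \{-\infty\}$ each to a single point $\langle \pm \infty \rangle$. The result is compact, Hausdorff, and metrizable, contains $X \times \mathbb{R}$ as a dense open subset, and has complement canonically identified with $SZ$. To verify the $\mathcal{Z}$-set property for $SZ$, I would combine the $\mathcal{Z}$-set homotopy of $Z \subset \overline{X}$ (acting on the $X$-direction) with a homotopy of $[-\infty, \infty]$ pushing its endpoints inward; a preliminary contraction of $\overline{X} \times \{\pm \infty\}$ into the interior handles the descent to the quotient.

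For (\ddag), endow $\overline{X \times \mathbb{R}}$ with a metric $D$ coming from a product metric $\rho \oplus \sigma$ on $\overline{X} \times [-\infty, \infty]$, where $\rho$ induces the topology of $\overline{X}$ and $\sigma$ is a metric on $[-\infty, \infty]$ chosen so that its lengths on the unit intervals $[k, k+1]$ decay rapidly as $|k| \to \infty$. Given a cover $\mathcal{U}$ and $k \in \mathbb{Z}$, extract a finite subcover and let $\delta > 0$ be its Lebesgue number in $D$. Choose $N \geq |k|$ so that $[k, k+1] \subset [-N, N]$, which reduces the ``outside'' condition to $B_d[x, \eta(|k|)] \cap Q = \emptyset$. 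Now apply the controlled property of $\overline{X}$ with $R = 2\eta(|k|)$ and open cover $\{B_\rho(y, \delta/4) : y \in \overline{X}\}$ to obtain a compact $Q = Q_k \subset X$ such that every $B_d[x, \eta(|k|)]$ disjoint from $Q_k$ has $\rho$-diameter less than $\delta/2$. When the $\sigma$-length of $[k, k+1]$ is also less than $\delta/2$, the entire box has $D$-diameter less than $\delta$ and hence lies in some $U \in \mathcal{U}$.

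The main obstacle is the bounded-$|k|$ regime, where the $\sigma$-length of $[k, k+1]$ may fail the $<\delta/2$ bound for sufficiently fine covers. The resolution---this is the delicate step borrowed from \cite{Tir11}---is that the topology of $\overline{X \times \mathbb{R}}$ near $SZ$ cannot simply be read off a fixed product metric, but must be refined using the freedom to prescribe null families via neighborhood-basis choices. Basic neighborhoods of points of $SZ$ are chosen so that any open cover automatically carries slab-level Lebesgue control compatible with the fixed $\mathbb{R}$-length $1$ of the boxes, ensuring that boxes whose $X$-projection is sufficiently far from a compact $Q_k$ always sit inside a single cover element. The compact set $Q \times [-N, N]$ witnessing (\ddag) is then assembled from the $Q_k$ provided by $\overline{X}$'s controlled property together with a sufficiently large $N$ depending on $\mathcal{U}$ and $k$.
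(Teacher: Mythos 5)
Your proposed compactification—the quotient of $\overline{X}\times[-\infty,\infty]$ that collapses the two ends—does not satisfy $(\ddag)$, and the last paragraph of your proposal, which acknowledges the obstacle, does not actually resolve it. In the quotient topology a sequence $(x_i,r_i)$ converges to a boundary point $\langle z,r\rangle$ with $r$ finite if and only if $x_i\to z$ in $\overline{X}$ \emph{and} $r_i\to r$ in $\mathbb{R}$; consequently a box $B_d[x,\eta(|k|)]\times[k,k+1]$ always has full $\mathbb{R}$-width $1$ near the slice $Z\times\{r\}$, and no choice of compact set $Q\times[-N,N]$ can make such boxes fit inside arbitrarily fine cover elements whose closure meets $Z\times\{r\}$. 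You notice this in the "bounded-$|k|$ regime" remark, but you then assert that the basic neighborhoods "are chosen so that any open cover automatically carries slab-level Lebesgue control"---without saying what the choice is. That is the entire content of the theorem, and it is also where your construction becomes internally inconsistent: if the neighborhoods of $SZ$ are to be redefined, you are no longer working with the quotient of the product, so the compactness, metrizability, and $\mathcal{Z}$-set verification carried out in your first paragraph no longer apply to the space you are actually using.

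What is missing is the \emph{slope function}. The paper does not glue $SZ$ onto $X\times\mathbb{R}$ via the product topology; instead it builds a function $p:X\to[0,\infty)$ (depending on $\eta$, on the rate at which $\overline{X}$'s controlled property kicks in, and subject to a growth condition $\psi(s+1)\ge 3\psi(s)$) and declares $(x,r)$ to be near $\langle z,\mu\rangle$ precisely when $x$ is $\overline{d}$-close to $z$ and the ratio $r/p(x)$ is close to $\mu$. Because $p(x)\to\infty$ as $x\to Z$, a box $B_d[x,\eta(|k|)]\times[k,k+1]$ with $x$ far from the origin has slope-variation that can be forced below any prescribed $\delta$; the fixed $\mathbb{R}$-length $1$ is harmless because it is measured against a growing denominator. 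This topology is strictly coarser on $SZ$ than the quotient topology—many more sequences converge—and this coarsening is exactly what makes $(\ddag)$ achievable. The verification then proceeds case by case on the slope (large slope forces the box into a neighborhood of $\langle\pm\infty\rangle$; small slope forces it into a neighborhood of $\langle z,0\rangle$; intermediate slope requires the growth condition on $\psi$ to control the variation of $p$), and the $\mathcal{Z}$-set property is re-proved for this new topology by constructing an explicit contraction following the preferred-ray strategy, which is also more delicate than contracting a product. In short, your Lebesgue-number skeleton is pointing in the right direction, but the compactification you build it around is the wrong one, and the missing ingredient is not a refinement of the metric but a genuinely different boundary topology.
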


Observe that it will suffice to demonstrate this result under the assumption
that $\eta$ is a continuous function. Indeed, we may always replace an
arbitrary such function by a continuous function, which is also monotone
increasing and is larger than or equal to our original function. One benefit
of continuity is that $\eta$ surjects onto $[\eta(0),\infty)$.

The first order of business is to place an appropriate topology on $X\times%
\mathbb{R}
\sqcup SZ$. The following list of (semi-arbitrary) choices will be used:

\begin{enumerate}
\item Choose metrics $\overline{d}$ for $\overline{X}$ and $\overline{\sigma}$
for $\left[  -\infty,\infty\right]  =%
\mathbb{R}
\cup\left\{  \pm\infty\right\}  $.

\item Fix a basepoint $x_{0}\in X$ and choose a monotone increasing continuous
function $\lambda:[0,\infty)\rightarrow\lbrack0,\infty)$ with the property
that, for every $s>0$:

\begin{enumerate}
\item $\overline{d}( x,Z) \,<1/s$ for all $x\in X-B_{d}\left[  x_{0},\lambda(
s) \right]  $, and

\item every ball $B_{d}\left[  x,s\right]  $ in $( X,d) $ lying outside
$B\left[  x_{0},\lambda( s) \right]  $ has diameter $\leq1/s$ in $(
\overline{X},\overline{d}) $.
\end{enumerate}

\item \label{Item: Defn of psi}Let $\psi:[0,\infty)\rightarrow\lbrack
0,\infty)$ be a continuous monotone increasing function satisfying:

\begin{enumerate}
\item $\psi(s)\geq\max\left\{  \eta(s),\lambda( s) \right\}  $ , and

\item $\psi(s+1)\geq3\psi(s)$ for all $s\geq0$
\end{enumerate}

\item Define $p:X\rightarrow\lbrack0,\infty)$ by
\begin{equation}
p(x)=\log(\psi^{-1}(d(x,x_{0})+\psi(0))+1) \label{Defn: p(x)}%
\end{equation}

\item Finally, we arrive at the \emph{slope function} $\mu:X\times%
\mathbb{R}
\rightarrow\left[  -\infty,\infty\right]  $ defined by%
\begin{equation}
\mu(x,r)=\left\{
\begin{tabular}
[c]{ll}%
$\frac{r}{p(x)}$ & \quad if $p(x)>0$\\
$\infty$ & \quad if $p(x)=0$ and $r\geq0$\\
$-\infty$ & \quad if $p(x)=0$ and $r<0$%
\end{tabular}
\ \ \right.  \label{Defn: slope function}%
\end{equation}

\end{enumerate}

\begin{notation}
In what follows, an undecorated $x$ will indicate a point in $X$, while $z$
will denote a point in $Z$. Open and closed ball notation, $B_{d}(x,r)$ and
$B_{d}[x,r]$, will be used primarily for open and closed balls in $X$.

To differentiate points of $X\times${$%
\mathbb{R}
$} from those of $SZ $, we will use $(x,r)$ for the former and $\langle\bar
{x},\mu\rangle$ for the latter (recall our convention that $\mu\in\left[
-\infty,\infty\right]  $.) Equivalence classes of $\langle\bar{x}%
,-\infty\rangle$ and $\langle\bar{x},\infty\rangle$ will often (but not
always) be abbreviated to $\langle-\infty\rangle$ and $\langle\infty\rangle$, respectively.
\end{notation}

\begin{definition}
[The topology on $\overline{X\times{%
\mathbb{R}
}}$]\label{DefineProductBoundaryTopology} Let $\overline{X\times{%
\mathbb{R}
}}=(X\times${$%
\mathbb{R}
)$}$\sqcup SZ$ with the topology generated by all open subsets of $X\times${$%
\mathbb{R}
$} together with sets of the form:

\begin{itemize}
\item[i)] For $\langle z,\mu\rangle\in SX$ with $\mu\neq\pm\infty$ and
$\varepsilon>0$,
\[
U(\langle z,\mu\rangle,\varepsilon)=\{(x,r)\ |\ \overline{d}(x,z)<\varepsilon
,\ |\mu(x,r)-\mu|<\varepsilon\}\cup\{\langle z^{\prime},\mu^{\prime}%
\rangle\ |\ \overline{d}(z^{\prime},z)<\varepsilon,\ |\mu^{\prime}%
-\mu|<\varepsilon\}
\]

\item[ii)] For each $\varepsilon>0$,%
\begin{align*}
U(\langle\infty\rangle,\varepsilon)  &  =\{(x,r)|\ r>\frac{1}{\varepsilon
},\ \mu(x,r)>\frac{1}{\varepsilon}\}\cup\{\langle z^{\prime},\mu^{\prime
}\rangle|\ \mu^{\prime}>\frac{1}{\varepsilon}\}\text{, and\medskip}\\
U(\langle-\infty\rangle,\varepsilon)  &  =\{(x,r)\ |\ r<\frac{-1}{\varepsilon
},\ \mu(x,r)<\frac{-1}{\varepsilon}\}\cup\{\langle z^{\prime},\mu^{\prime
}\rangle\ |\ \mu^{\prime}<\frac{-1}{\varepsilon}\}
\end{align*}

\end{itemize}
\end{definition}

Proofs of the following two lemmas are nearly identical to their analogs in
\cite{Tir11}.

\begin{lemma}
\label{ProductWithJoinIsACompactification} $\overline{X\times{%
\mathbb{R}
}}$ is a compactification of $X\times${$%
\mathbb{R}
$.}
\end{lemma}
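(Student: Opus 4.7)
The plan is to verify in turn that the neighborhood sub-basis given in Definition \ref{DefineProductBoundaryTopology} defines a legitimate topology, that $X\times\mathbb{R}$ sits inside it as a dense open subspace, and finally that the total space is compact Hausdorff; since all ingredients are second countable, this will automatically produce a metrizable compactification, paralleling Tirel's treatment of the suspension boundary.

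First I would check that the sets $U(\langle z,\mu\rangle,\varepsilon)$ and $U(\langle\pm\infty\rangle,\varepsilon)$, together with the genuine open subsets of $X\times\mathbb{R}$, really do form a basis. The only non-trivial verification is that any two of the named neighborhoods of a boundary point contain a third, which follows from the continuity of the slope function $\mu$ and from the fact that the $\overline{d}$-balls in $Z$ and the $\overline{\sigma}$-balls in $[-\infty,\infty]$ form bases at each point of their respective factors. Density of $X\times\mathbb{R}$ is immediate because any basic neighborhood of $\langle z,\mu\rangle$, $\langle\infty\rangle$, or $\langle-\infty\rangle$ explicitly contains points $(x,r)\in X\times\mathbb{R}$: choose $x\in X$ with $\overline{d}(x,z)<\varepsilon$ (possible since $z$ is in the closure of $X$ in $\overline{X}$) and set $r=\mu\cdot p(x)$; for $\langle\pm\infty\rangle$, any $(x,r)$ with $|r|$ large and $p(x)$ bounded works.

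Next I would prove Hausdorffness in three cases. Two interior points are separated by the ambient product topology. An interior point $(x_0,r_0)$ and a boundary point are separated by noting that $(x_0,r_0)$ has a compact neighborhood in $X\times\mathbb{R}$ on which $\mu$ is bounded and $\overline{d}(\cdot,Z)$ is bounded below, so sufficiently small basic neighborhoods of the boundary point avoid it. For two boundary points $\langle z,\mu\rangle$ and $\langle z',\mu'\rangle$, either their slope coordinates differ (use slope bands to separate them, which works uniformly even against $\langle\pm\infty\rangle$) or their slopes agree and $z\neq z'$ (use disjoint $\overline{d}$-balls in $Z$, invoking the control condition on $\lambda$ from choice (2) to confirm these lift to disjoint product neighborhoods).

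For compactness I would argue sequentially. Given a sequence $\{(x_n,r_n)\}$ in $X\times\mathbb{R}$, pass to a subsequence so that $\{x_n\}$ converges in $\overline{X}$ (using compactness of $\overline{X}$) and $\{\mu(x_n,r_n)\}$ converges in $[-\infty,\infty]$. If $x_n\to x\in X$ and the slopes are finite, then $r_n$ converges and we have convergence in the interior; if $x_n\to z\in Z$ and the slopes converge to some $\mu\in(-\infty,\infty)$, the definition of the basic neighborhoods and the choice of $\psi,p$ give convergence to $\langle z,\mu\rangle$; and if the slopes tend to $\pm\infty$, then $r_n$ also tends to $\pm\infty$ (since $p$ is bounded on $d$-compact sets, and when $x_n$ escapes in $X$ the growth of $p$ is controlled by the logarithm in \eqref{Defn: p(x)}, so $r_n=\mu(x_n,r_n)\cdot p(x_n)$ still diverges to the same sign), yielding convergence to $\langle\pm\infty\rangle$.

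The main obstacle will be the slope-at-infinity case: one must ensure that the definition \eqref{Defn: p(x)} of $p$, together with the growth condition $\psi(s+1)\geq 3\psi(s)$, is strong enough that sequences with divergent $\mu$ but $x_n\to z\in Z$ have $r_n\to\pm\infty$ rather than staying bounded—and conversely that sequences with $r_n\to\infty$ but $x_n\to z$ with slow enough growth of $\mu$ still sort themselves into the correct equivalence class. This is where the logarithmic damping and the lower bound $\psi\geq\lambda$ from item (\ref{Item: Defn of psi}) earn their keep, and once checked, the standard arguments of \cite{Tir11} transcribe to give both the Hausdorff property and sequential compactness.
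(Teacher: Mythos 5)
The paper does not supply an explicit argument here; it remarks only that the proof is ``nearly identical to the analogs in \cite{Tir11}.'' So I will assess your argument on its own terms. The overall outline — verify the topology, show $X\times\mathbb{R}$ is open and dense, check Hausdorff, establish (sequential) compactness — is the right one and matches Tirel's strategy, and your density and Hausdorff steps are fine.

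However, there is a genuine error in the compactness step. You claim that ``if the slopes tend to $\pm\infty$, then $r_n$ also tends to $\pm\infty$,'' and justify this by noting $p$ is bounded on $d$-compact subsets. But boundedness \emph{above} of $p$ is of no use here; what you would need is a positive lower bound for $p(x_n)$, and this fails exactly when $x_n \to x_0$, since $p(x_0)=0$ by construction (this is built into formula \eqref{Defn: p(x)}: $p(x_0)=\log(\psi^{-1}(\psi(0))+1)=\log 1=0$). A concrete counterexample: take $x_n\to x_0$ with $x_n\neq x_0$ and put $r_n=\sqrt{p(x_n)}$; then $\mu(x_n,r_n)=p(x_n)^{-1/2}\to\infty$ while $r_n\to 0$, and the sequence converges to the interior point $(x_0,0)$, not to $\langle\infty\rangle$. (The degenerate constant sequence $(x_0,1)$ already shows the claim is false literally.) Your case enumeration also silently assumes that divergent slopes force escape from every compact set, which is not so.

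The repair is straightforward and worth making explicit: after passing to a subsequence with $x_n\to\bar{x}$ in $\overline X$ and $\mu(x_n,r_n)\to\mu$ in $[-\infty,\infty]$, also pass to a subsequence with $r_n\to\bar r$ in $[-\infty,\infty]$, and then branch on $(\bar x,\bar r,\mu)$. When $\bar x\in X$ and $\bar r\in\mathbb{R}$ you get interior convergence regardless of $\mu$; when $\bar r=\pm\infty$ one checks $\mu\to\pm\infty$ as well (trivially if $\bar x\neq x_0$, and directly from $\mu_n=r_n/p(x_n)$ with $p(x_n)\to 0$ if $\bar x=x_0$), giving convergence to $\langle\pm\infty\rangle$; and when $\bar x\in Z$ the fact that $d(x_n,x_0)\to\infty$ forces $p(x_n)\to\infty$, from which one reads off convergence to $\langle\bar x,\mu\rangle$ or $\langle\pm\infty\rangle$ depending on whether $\mu$ is finite. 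As a minor point, your appeal to the function $\lambda$ in the Hausdorff argument is unnecessary: once you know basic neighborhoods of a boundary point restrict to open subsets of $X\times\mathbb{R}$, disjoint $\bar d$-balls around $z\neq z'$ already produce disjoint basic neighborhoods.
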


\begin{lemma}
\label{LebesgueNumberOfBoundary} For any open cover $\mathcal{U}$ of
$\overline{X\times{%
\mathbb{R}
}}$ there exists $\delta>0$ such that for each $\langle z,\mu\rangle\in SZ$,
there is an element of $\mathcal{U}$ containing $U(\langle z,\mu\rangle
,\delta)$
\end{lemma}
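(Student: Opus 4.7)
The plan is to adapt the standard Lebesgue-number argument to the (non-metric) neighborhood basis $\{U(\langle z,\mu\rangle, \varepsilon)\}_{\varepsilon > 0}$ supplied by Definition~\ref{DefineProductBoundaryTopology}, exploiting the compactness of the suspension $SZ$.

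First, using that the sets $U(\langle z, \mu\rangle, \varepsilon)$ form a neighborhood basis at $\langle z, \mu\rangle$ as $\varepsilon \to 0$, I would select for each such point a member $U_{\langle z, \mu\rangle} \in \mathcal{U}$ containing it together with a radius $\delta_{\langle z, \mu\rangle} > 0$ satisfying $U(\langle z, \mu\rangle, 2\delta_{\langle z, \mu\rangle}) \subseteq U_{\langle z, \mu\rangle}$. Since $SZ$ is compact (as the suspension of the compact space $Z$), the open cover $\{U(\langle z, \mu\rangle, \delta_{\langle z, \mu\rangle}) \cap SZ\}$ admits a finite subcover, indexed by points $\langle z_1, \mu_1\rangle, \ldots, \langle z_n, \mu_n\rangle$ with corresponding radii $\delta_1, \ldots, \delta_n$. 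The candidate $\delta$ will be taken at most $\min_i \delta_i$, with additional smallness imposed by the case analysis below.

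The remaining step is to verify that whenever $\langle z, \mu\rangle \in U(\langle z_i, \mu_i\rangle, \delta_i)$, one has $U(\langle z, \mu\rangle, \delta) \subseteq U(\langle z_i, \mu_i\rangle, 2\delta_i) \subseteq U_{\langle z_i, \mu_i\rangle}$. When $\mu_i$ is finite, this is a routine triangle-inequality check applied simultaneously to the $\overline{d}$-coordinate on $\overline{X}$ and the usual absolute value on $[-\infty,\infty]$, both for interior points $(x,r)$ and for suspension points $\langle z', \mu'\rangle$. When $\mu_i$ and $\mu$ are both $+\infty$ (respectively both $-\infty$), monotonicity of the cone-neighborhood definition suffices.

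The main obstacle is the mixed case in which $\mu_i = \pm\infty$ while $\mu$ is finite: one must embed the ``product''-type neighborhood $U(\langle z, \mu\rangle, \delta)$ inside the ``cone''-type neighborhood $U(\langle \pm\infty\rangle, 2\delta_i)$. For an interior point $(x,r) \in U(\langle z, \mu\rangle, \delta)$, the bound $\overline{d}(x, z) < \delta$ combined with property~(2a) of the function $\lambda$ forces $d(x, x_0)$---and hence $p(x)$---to be large. Together with $|\mu(x, r) - \mu| < \delta$ and the defining identity $r = \mu(x,r)\, p(x)$, this forces both $\mu(x, r) > 1/(2\delta_i)$ and $r > 1/(2\delta_i)$ once $\delta$ is chosen smaller than $1/(2\delta_i)$ and smaller than the threshold required to guarantee $p(x) \geq 1$; a parallel (but simpler) check handles suspension points $\langle z',\mu'\rangle$ inside $U(\langle z, \mu\rangle, \delta)$. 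Since only finitely many such smallness constraints arise (one per index $i$), a single uniform $\delta > 0$ suffices, completing the proof. This mirrors the corresponding Lebesgue-number lemma in \cite{Tir11}.
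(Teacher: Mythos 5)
The paper does not spell out a proof here; it simply notes that the argument is ``nearly identical'' to the analogous lemma in \cite{Tir11}, so there is no in-paper argument to compare against line by line. Your proposal is the standard Lebesgue-number argument adapted to the basis of sets $U(\langle z,\mu\rangle,\varepsilon)$, and the overall strategy (finite subcover of the compact suspension $SZ$, halved radii, case analysis on whether $\mu_i$ is finite or $\pm\infty$) is sound and almost certainly parallels Tirel's approach.

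One technical step is misattributed. To get $p(x)\geq 1$ for interior points $(x,r)\in U(\langle z,\mu\rangle,\delta)$ you want the implication ``$\overline{d}(x,Z)<\delta$ for $\delta$ small $\Rightarrow$ $d(x,x_0)$ large,'' but you cite property (2a) of $\lambda$, which gives the \emph{converse} direction (``$d(x,x_0)$ large $\Rightarrow$ $\overline{d}(x,Z)$ small''). Neither (2a) nor its contrapositive yields what you need. The implication you actually want follows from properness of $(X,d)$ and compactness: for any $R>0$ the closed ball $B_d[x_0,R]$ is compact, hence at positive $\overline{d}$-distance from the closed disjoint set $Z$, so choosing $\delta<\overline{d}\bigl(B_d[x_0,R],Z\bigr)$ forces $d(x,x_0)>R$ whenever $\overline{d}(x,Z)<\delta$. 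Taking $R$ large enough that $p(x)\geq 1$ on $X\setminus B_d[x_0,R]$ closes the gap. With that correction, the remainder of your case analysis (including using $r=\mu(x,r)\,p(x)$ to pass from $\mu(x,r)>1/(2\delta_i)$ to $r>1/(2\delta_i)$, and the finitely-many-constraints remark) is fine.
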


\begin{remark}
We allow for the possibility that $\mu=\pm\infty$, i.e. $\langle z,\mu
\rangle=\left\langle \pm\infty\right\rangle $, in Lemma
\ref{LebesgueNumberOfBoundary}; so $U(\langle z,\mu\rangle,\varepsilon)$ can
denote a basic open set of type (i) or (ii).
\end{remark}

For the rest of this section, fix an open cover $\mathcal{U}$ of
$\overline{X\times{%
\mathbb{R}
}}$, and let $\delta$ be the value promised by Lemma
\ref{LebesgueNumberOfBoundary}.

Fix a function $\eta$ as in the statement of Theorem
\ref{Theorem: Controlled Z-compactification of X x R}. Then the following
three propositions verify the control condition ($\ddagger$). The goal is to
find a compact set $Q\times\left[  -N,N\right]  \subseteq X\times%
\mathbb{R}
$ such that all sets of the form $B_{d}\left[  x,\eta(|k|)\right]
\times\lbrack k,k+1]$ which lie outside $Q\times\left[  -N,N\right]  $ are
contained in some basic open set $U(\langle z,\mu\rangle,\delta)$.

\begin{proposition}
\label{prop:1of3} \label{JxPj} For each compact set of the form $Q=B_{d}%
[x_{0},\eta(M)]$ and $\delta>0$, there exists $N_{Q}\in%
\mathbb{N}
$ such that if $(B_{d}[x,\eta(|k|)]\times\lbrack k,k+1])\cap(Q\times\left[
-N_{Q},N_{Q}\right]  )=\varnothing$ and $Q\cap B_{d}[x,\eta(|k|)]\neq
\varnothing$, then $B_{d}[x,\eta(|k|)]\times\lbrack k,k+1])\subseteq
U(\langle\pm\infty\rangle,\delta)$.
\end{proposition}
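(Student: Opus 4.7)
The plan is to unpack what the two hypotheses force, then verify directly that $B_d[x,\eta(|k|)]\times[k,k+1]$ sits inside one of the basic neighborhoods $U(\langle\pm\infty\rangle,\delta)$. First, note that a point $(y,t)$ belongs to $(B_d[x,\eta(|k|)]\cap Q)\times([k,k+1]\cap[-N_Q,N_Q])$, so the stated disjointness together with the assumption $B_d[x,\eta(|k|)]\cap Q\neq\varnothing$ forces $[k,k+1]\cap[-N_Q,N_Q]=\varnothing$; equivalently $k>N_Q$ or $k<-N_Q-1$. By symmetry (replacing $\infty$ by $-\infty$), it suffices to handle the case $k>N_Q$ and aim for $U(\langle\infty\rangle,\delta)$.

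Recall that $U(\langle\infty\rangle,\delta)$ consists of points $(x',r)$ with $r>1/\delta$ and $\mu(x',r)>1/\delta$. The first inequality is immediate: for every $(x',r)$ in our product set, $r\geq k>N_Q$, so taking $N_Q>1/\delta$ gives $r>1/\delta$. For the slope condition, since $r\geq k$ it is enough to produce a bound $p(x')<\delta k$ valid for all $x'\in B_d[x,\eta(|k|)]$ when $k$ is sufficiently large. Because $B_d[x,\eta(|k|)]$ meets $Q=B_d[x_0,\eta(M)]$, we have $d(x,x_0)\leq\eta(|k|)+\eta(M)$, and then by the triangle inequality
\[
d(x',x_0)\leq d(x',x)+d(x,x_0)\leq 2\eta(|k|)+\eta(M).
\]

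Now invoke the properties of $\psi$. Since $\psi\geq\eta$ and $\psi$ is monotone, $2\eta(|k|)+\eta(M)+\psi(0)\leq 2\psi(|k|)+\psi(M)+\psi(0)$. Provided $|k|\geq M+1$, this is at most $4\psi(|k|)$. The multiplicative condition $\psi(s+1)\geq 3\psi(s)$ then yields $\psi(|k|+2)\geq 9\psi(|k|)>4\psi(|k|)$, so applying the monotone $\psi^{-1}$ gives
\[
\psi^{-1}\bigl(d(x',x_0)+\psi(0)\bigr)\leq |k|+2,
\]
whence $p(x')\leq\log(|k|+3)$ by the definition of $p$. Since $\log(k+3)/k\to 0$, we may fix $N_Q$ large enough (depending on $\delta$ and $M$) that $\log(k+3)<\delta k$ for every $k\geq N_Q$, and also $N_Q>1/\delta$ and $N_Q\geq M+1$. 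Then $p(x')<\delta k\leq\delta r$, giving $\mu(x',r)=r/p(x')>1/\delta$, as required.

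The only mildly delicate step is the last one: choosing $\psi$ to grow fast enough (here, the geometric condition $\psi(s+1)\geq 3\psi(s)$ baked into item (\ref{Item: Defn of psi})) is precisely what makes $p$ grow only logarithmically, which in turn beats the linear lower bound $\delta r\geq\delta k$ on the slope. No compactification-theoretic input is needed for this proposition; it is a purely metric/analytic accounting of how fast the radii $\eta(|k|)$ can blow up relative to the vertical coordinate $r$ before the product blocks escape into either cone point of the suspension.
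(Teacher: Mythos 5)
Your proof is correct and follows essentially the same route as the paper's: bound $d(x',x_0)$ via the triangle inequality and the hypothesis that $B_d[x,\eta(|k|)]$ meets $Q$, then use $\psi\geq\eta$ together with the geometric growth condition $\psi(s+1)\geq 3\psi(s)$ to force $p(x')\lesssim\log(|k|)$, so that the slope $r/p(x')$ beats $1/\delta$ once $N_Q$ is large. You are slightly more scrupulous than the paper in carrying the $\psi(0)$ term and in explicitly checking the $r>1/\delta$ clause of $U(\langle\infty\rangle,\delta)$, but these are bookkeeping refinements, not a different argument.
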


\begin{figure}[th]
\begin{center}
\includegraphics[scale=0.5]{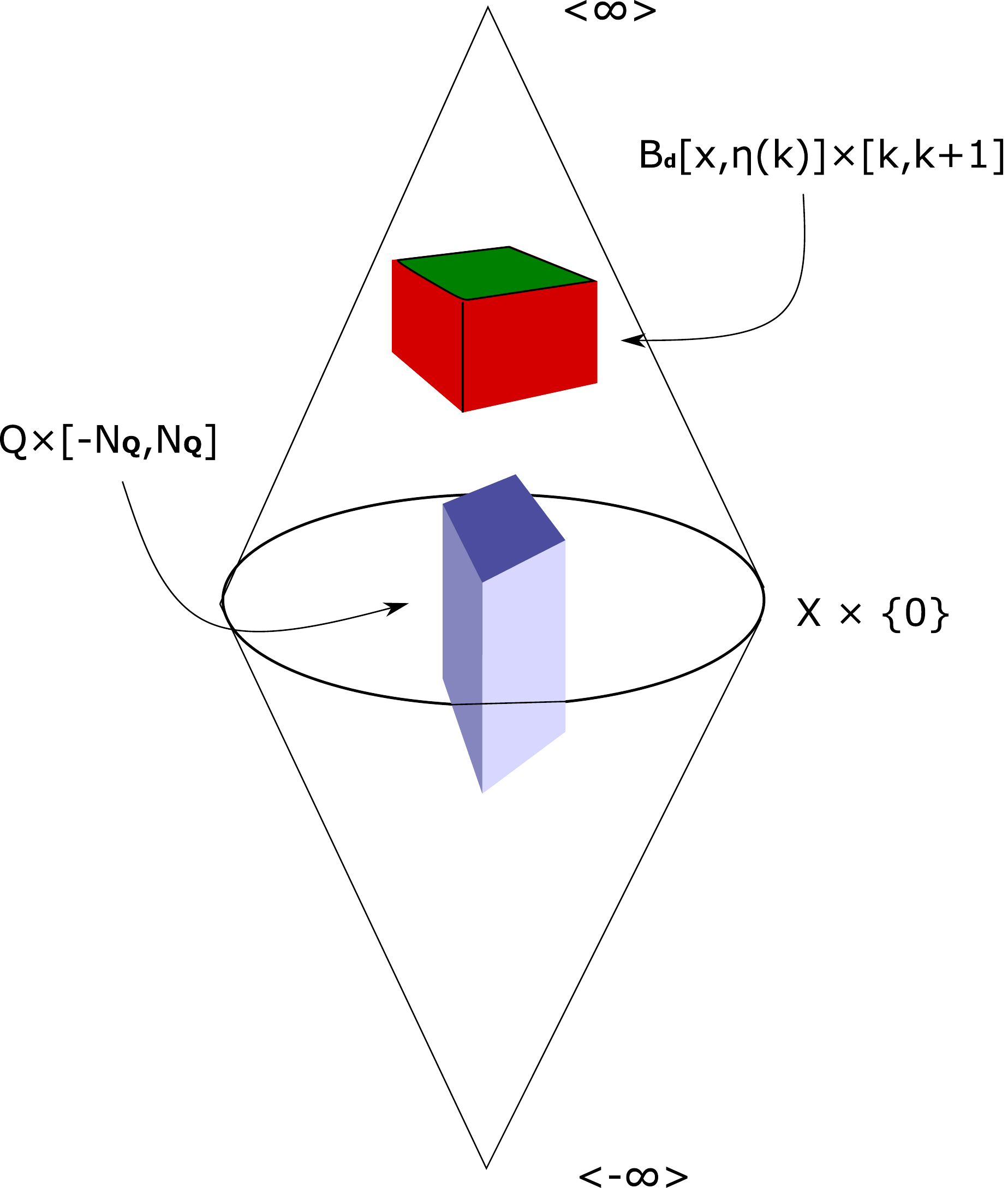} \label{fig:boxes}
\end{center}
\caption{A set meeting the criterion of the Proposition~\ref{prop:1of3}.}%
\label{fig:Qbox}%
\end{figure}

\begin{proof}
Choose $N_{Q}\geq M\text{ so large that }\frac{N_{Q}}{\log(N_{Q}+2)}>\frac
{1}{\delta}$. Let $k$ be any value such that $Q\cap B_{d}[x,\eta
(k)]\neq\varnothing$ and observe that it must be true that $|k|>N_{Q}$. Assume
first that $k>N_{Q}$. Then%
\begin{align*}
\min\{\mu(x^{\prime},r^{\prime}) &  \mid(x^{\prime},r^{\prime})\in
B(x,\eta(k))\times\lbrack k,k+1]\}\\
&  \geq\frac{\min\{r^{\prime}\mid r^{\prime}\in\lbrack k,k+1]\}}%
{\max\{p(x^{\prime})\mid x^{\prime}\in B(x,\eta(k))\}}\\
&  \geq\frac{k}{\log(\psi^{-1}(\eta(M)+2\eta(k))+1)}\\
&  \geq\frac{k}{\log(\psi^{-1}(3\eta(k))+1)}\\
&  \geq\frac{k}{\log(\psi^{-1}(3\psi(k))+1)}\\
&  \geq\frac{k}{\log(\psi^{-1}(\psi(k+1))+1)}>\frac{1}{\delta}%
\end{align*}
\newline Thus, $B(x,\eta(k))\times\lbrack k,k+1]\subseteq U(\left\langle
\infty\right\rangle ,\delta)$. The case where $k<-N_{Q}$ is similar, with a
conclusion that $B(x,\eta(k))\times\lbrack k,k+1]\subseteq U(\left\langle
-\infty\right\rangle ,\delta)$.
\end{proof}

\begin{proposition}
\label{QkxK} For each $\left[  -N,N\right]  \subseteq%
\mathbb{R}
$, there exists a compact set $Q_{N}\subseteq X$ such that if $(B_{d}%
[x,\eta(|k|)]\times\lbrack k,k+1])\cap(Q_{N}\times\left[  -N,N\right]
)=\varnothing$ and $\left[  -N,N\right]  \cap\lbrack k,k+1]\neq\varnothing$,
then there exists $z\in Z$ such that $B[x,\eta(k)]\times\lbrack
k,k+1]\subseteq U(\langle z,0\rangle,\delta)$.
\end{proposition}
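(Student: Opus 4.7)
The plan is to take $Q_N = B_d[x_0, R_N]$ for a sufficiently large radius $R_N$ depending on $N$, $\delta$, $\eta$, $\psi$, and $\lambda$. First I observe that $[-N,N]\cap[k,k+1]\neq\varnothing$ forces $-N-1\leq k\leq N$, so $|k|\leq N+1$ and $\eta(|k|)\leq\eta(N+1)$. Combined with the disjointness hypothesis this gives $B_d[x,\eta(|k|)]\cap Q_N=\varnothing$ outright. The conclusion $B_d[x,\eta(|k|)]\times[k,k+1]\subseteq U(\langle z,0\rangle,\delta)$ then reduces to producing a single $z\in Z$ satisfying two independent requirements: $\overline{d}(x',z)<\delta$ for every $x'\in B_d[x,\eta(|k|)]$, and $|\mu(x',r')|<\delta$ for every $(x',r')\in B_d[x,\eta(|k|)]\times[k,k+1]$.

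For the $\overline{d}$-closeness I will invoke both properties of $\lambda$. I plan to choose $s_0\geq\max\{\eta(N+1),\,3/\delta\}$ and then demand $R_N\geq \lambda(s_0)+s_0$. Under these choices, if $B_d[x,\eta(|k|)]$ is disjoint from $Q_N$ then $d(x,x_0)>\lambda(s_0)+s_0$, so the enclosing ball $B_d[x,s_0]$ lies outside $B_d[x_0,\lambda(s_0)]$; property (2b) then bounds its $\overline{d}$-diameter by $1/s_0\leq\delta/3$, and the same bound holds for the smaller ball $B_d[x,\eta(|k|)]\subseteq B_d[x,s_0]$. Property (2a) supplies a point $z\in Z$ with $\overline{d}(x,z)<1/s_0\leq\delta/3$, and the triangle inequality in $(\overline{X},\overline{d})$ then yields $\overline{d}(x',z)<2\delta/3<\delta$ for every $x'\in B_d[x,\eta(|k|)]$.

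For the slope condition, I use $\mu(x',r')=r'/p(x')$ together with $|r'|\leq N+1$; it suffices to force $p(x')>(N+1)/\delta$. Unwinding $p(x')=\log(\psi^{-1}(d(x',x_0)+\psi(0))+1)$, this is equivalent to $d(x',x_0)>\psi(e^{(N+1)/\delta}-1)-\psi(0)$. Since disjointness of $B_d[x,\eta(|k|)]$ from $Q_N$ forces $d(x',x_0)>R_N-\eta(N+1)$ for every $x'$ in the ball, I simply enlarge $R_N$ so that $R_N-\eta(N+1)$ exceeds this fixed quantity depending only on $N$ and $\delta$.

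Taking $R_N$ large enough to satisfy both lower bounds simultaneously, and setting $Q_N=B_d[x_0,R_N]$, yields the desired compact set. There is no real conceptual obstacle; the difficulty is purely bookkeeping. What makes the argument work is that the two halves of the basic open set $U(\langle z,0\rangle,\delta)$ — the $\overline{d}$-proximity condition on $X$ and the slope condition involving $\mu$ — are controlled by independent growth conditions on $\lambda$ and $\psi$, so each can be met by further inflating the single parameter $R_N$.
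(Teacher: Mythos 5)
Your proof is correct and takes the same route as the paper's: set $Q_N = B_d[x_0, R_N]$ for $R_N$ large, use the function $\lambda$ together with the disjointness hypothesis to control both $\overline{d}(B_d[x,\eta(|k|)], Z)$ and $\operatorname{diam}_{\overline{d}}(B_d[x,\eta(|k|)])$, and use the growth of $\psi$ (via the formula for $p$) together with $|r'|\le N+1$ to force $|\mu(x',r')|<\delta$. Your radius bookkeeping is in fact a bit more careful than the paper's throwaway suggestion ``$R\ge\lambda(2/\delta)$'': that choice alone handles $\overline{d}(x',Z)<\delta/2$ but does not by itself control $\operatorname{diam}_{\overline{d}}(B_d[x,\eta(|k|)])$ when $\eta(|k|)>2/\delta$, so property (2b) must be applied at a radius at least $\max\{\eta(N+1),2/\delta\}$ with a correspondingly enlarged ball about $x_0$ --- which is exactly what your choices $s_0\ge\max\{\eta(N+1),3/\delta\}$ and $R_N\ge\lambda(s_0)+s_0$ supply.
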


\begin{proof}
Observe that it follows from the hypotheses that $Q_{N}$ must be disjoint from
$B_{d}[x,\eta(|k|)]$. Thus we may choose $Q_{N}$ sufficiently large that
$\overline{d}(B_{d}[x,\eta(k)],Z)<\frac{\delta}{2}$ and $\operatorname*{diam}%
_{\overline{d}}(B_{d}[x,\eta(k)])<\frac{\delta}{2}$. For example, let
$Q_{N}=B_{d}\left[  x_{0},R\right]  $ where $R\geq\lambda(2/\delta)$.

Suppose now that $(B[x,\eta(k)]\times\lbrack k,k+1])\cap(Q_{N}\times\left[
-N,N\right]  )=\varnothing$ and $\left[  -N,N\right]  \cap\lbrack
k,k+1]\neq\varnothing$. Choose $z\in Z$ for which $\overline{d}(B_{d}%
[x,\eta(k)],z)<\delta/2$, and assume for the moment that $k\geq0$. Note that
for any $(x^{\prime},r^{\prime})\in B(x,\eta(k))\times\lbrack k,k+1]$,%
\[
\mu(x^{\prime},r^{\prime})\leq\frac{N+1}{\log(\psi^{-1}(R+\psi(0))+1)}%
\]

So, by choosing $R$ sufficiently large, we can ensure that $\mu(x^{\prime
},r^{\prime})<\delta$. In that case, $B[x,\eta(k)]\times\lbrack
k,k+1]\subseteq U(\langle z,0\rangle,\delta)$

The case where $k<0$ is similar.
\end{proof}

Now let $Q^{\prime}=B_{d}[x_{0},\eta(S)]$ where $S\gg0$ is so large that, for
all $x\in X-Q^{\prime}$:

\begin{itemize}
\item $p(x)>\frac{2}{\delta}$,

\item $\overline{d}(x,Z)<\frac{\delta}{2}$, and

\item $\log(\frac{\psi^{-1}(3\eta(S))+1}{\psi^{-1}(\eta(S))+1})<\delta$.
\end{itemize}

\noindent Then choose $N^{\prime}\gg0$ so large that:

\begin{itemize}
\item $\frac{1}{N^{\prime}}<\frac{\delta}{2}$,

\item $\frac{N^{\prime}}{\log(N^{\prime}+2)}>\frac{1}{\delta}$ (so $\frac
{|r|}{\log(|r|+2)}>\frac{1}{\delta}$ for all $\left\vert r\right\vert
>N^{\prime}$), and

\item $\overline{\sigma}(r,\left\{  \pm\infty\right\}  )<\frac{\delta}{2}$ for
all $r\in{%
\mathbb{R}
}-[-N^{\prime},N^{\prime}]$ (so $\operatorname*{diam}_{\overline{\sigma}%
}([k,k+1])<\frac{\delta}{2}$ whenever $[k,k+1]\cap\left[  -N^{\prime
},N^{\prime}\right]  =\varnothing$).
\end{itemize}

\noindent Given $Q^{\prime}\subseteq X$ as chosen above, choose $N_{Q^{\prime
}}>0$ in accordance with Proposition \ref{JxPj}; and given $N^{\prime}>0$ as
chosen above, choose $Q_{N^{\prime}}\subseteq X$ in accordance with
Proposition \ref{QkxK}. Let $N=\max\left\{  N^{\prime},N_{Q^{\prime}}\right\}
$ and $Q=Q^{\prime}\cup Q_{N^{\prime}}$

\begin{proposition}
\label{MainNullityProposition} If $(B(x,\eta(k))\times\lbrack k,k+1])\cap
(Q\times\left[  -N,N\right]  )=\varnothing$, then there exists $\langle
z,\mu\rangle\in SZ$ such that $B(x,\eta(k))\times\lbrack k,k+1]\subseteq
U(\langle z,\mu\rangle,\delta)$.
\end{proposition}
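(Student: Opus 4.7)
The plan is to analyze three cases based on which factor in $Q \times [-N,N]$ is avoided. Recall $Q = Q^{\prime} \cup Q_{N^{\prime}}$ and $N = \max\{N^{\prime}, N_{Q^{\prime}}\}$, so the hypothesis splits as either $B \cap Q = \emptyset$ or $[k,k+1] \cap [-N,N] = \emptyset$ (possibly both). If only $B \cap Q = \emptyset$ holds, then $[k,k+1]$ still meets $[-N,N]$ and Proposition \ref{QkxK} applies (with parameter $N^{\prime}$, noting $Q \supseteq Q_{N^{\prime}}$) to produce $z \in Z$ with $B \times [k,k+1] \subseteq U(\langle z, 0 \rangle, \delta)$. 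If only $[k,k+1] \cap [-N,N] = \emptyset$ holds, then $|k| > N \geq N_{Q^{\prime}}$ and $B$ meets the ball $Q$; writing $Q$ as $B_d[x_0, \eta(M)]$ for a suitable $M$, Proposition \ref{prop:1of3} yields containment in $U(\langle \pm\infty \rangle, \delta)$.

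The principal new content is the remaining case, where $B \cap Q = \emptyset$ and $|k| > N$ simultaneously. Here $B \cap Q^{\prime} = \emptyset$ forces $p(x') > 2/\delta$ and $\overline{d}(x', Z) < \delta/2$ for every $x' \in B$; combining $\psi \geq \lambda$ with property (b) of $\lambda$ gives $\operatorname{diam}_{\overline{d}}(B) < \delta/2$, so any $z \in Z$ with $\overline{d}(x, z) < \delta/2$ satisfies the $\overline{d}$-condition for $U(\langle z, \cdot \rangle, \delta)$. The key analytic estimate is that $|p(x) - p(x')| < \delta$ for $x' \in B$: this follows from the log inequality in the definition of $Q^{\prime}$ together with the consequence $\psi^{-1}(3M) \leq \psi^{-1}(M) + 1$ of the condition $\psi(s+1) \geq 3\psi(s)$, applied after noting that ball-disjointness $B \cap Q^{\prime} = \emptyset$ places the arguments of $p$ in the regime where this estimate applies.

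To choose the suspension coordinate $\mu$, I would split on the size of $|k|/p(x)$. When this ratio is large, both $|r'|$ and $\mu(x', r')$ exceed $1/\delta$ on the entire box (using the $p$-variation bound above and the threshold in the choice of $N^{\prime}$), placing $B \times [k,k+1]$ inside $U(\langle \pm\infty \rangle, \delta)$. When the ratio is bounded, set $\mu = k/p(x)$ and use the decomposition
\[
\mu(x', r') - \mu = \frac{r' - k}{p(x')} + \frac{k(p(x) - p(x'))}{p(x)p(x')};
\]
the first summand is small because $p(x') > 2/\delta$ and $|r' - k| \leq 1$, while the second is controlled by the $p$-variation bound together with the assumed ratio bound. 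The hardest part is this final slope estimate, where the temporal range $[k,k+1]$ couples nonlinearly with the spatial variation of $p$; the sub-case split on $|k|/p(x)$ is precisely the bookkeeping needed to separate the $\langle \pm\infty \rangle$ contribution from a finite suspension parameter, and the constants in the choices of $Q^{\prime}$ and $N^{\prime}$ are tuned so that all resulting inequalities sum to strict bounds below $\delta$.
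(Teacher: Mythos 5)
Your top-level three-way case split is sound and matches the paper's reduction: the first two cases dispatch to Propositions~\ref{JxPj} and \ref{QkxK} as you say, and all the content is in the case $B(x,\eta(k))\cap Q=\varnothing$ and $[k,k+1]\cap[-N,N]=\varnothing$. But in that case your argument has a genuine gap: you assert the $p$-variation bound $|p(x)-p(x')|<\delta$ as following ``from the log inequality in the definition of $Q'$'' together with $\psi^{-1}(3a)\le\psi^{-1}(a)+1$, \emph{before} your sub-case split, and then you use it in \emph{both} sub-cases. That bound does not hold unconditionally under ball-disjointness with $Q'$. Writing $\eta(M)=d(x,x_0)-\eta(k)$ as the paper does, the estimate one actually gets is
\[
|p(x')-p(x'')|\ \le\ \log\!\left(\frac{\psi^{-1}(\eta(M)+2\eta(k))+1}{\psi^{-1}(\eta(M))+1}\right),
\]
and to replace $\eta(M)+2\eta(k)$ by $3\eta(M)$ one needs $\eta(k)\le\eta(M)$. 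Ball-disjointness with $Q'$ only gives $\eta(M)\ge\eta(S)$, a \emph{fixed} lower bound, while $\eta(k)$ can be arbitrarily large; when $\eta(k)\gg\eta(M)$ the ratio above is on the order of $\log(k)$, so the bound fails.

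The paper's proof contains exactly the missing ingredient: a Claim that $\eta(M)>\psi(k)$, proved \emph{by contradiction from the Case~1 assumption} (namely, that some $(x',r')$ in the box has slope $\le 1/\delta$) together with the threshold $k>N'$: if instead $\psi(k)\ge\eta(M)$, then the minimal slope over the box is $\ge k/\log(k+2)>1/\delta$, contradicting Case~1. This claim is what makes both the $p$-variation bound and the $\overline d$-diameter bound $\operatorname{diam}_{\overline d}(B(x,\eta(k)))<\delta/2$ work, and it is not available outside Case~1. Your dichotomy on the quantity $|k|/p(x)$ at the center point $x$ is also not the right one: it bounds the slope only at $x$, and without the $p$-variation bound you cannot transfer that control to other points $x'$ in the ball. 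The paper instead splits on whether \emph{some} point of the box has slope $\le 1/\delta$ versus \emph{all} points have slope $>1/\delta$; the second case then needs no $p$-variation estimate at all, and the first delivers the Claim that unlocks the estimate. As written, your ``ratio large'' sub-case invokes the $p$-variation bound precisely where it is unavailable, and your ``ratio bounded'' sub-case does not derive the Claim that would justify it.
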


\begin{proof}
If $B(x,\eta(k))\cap Q$ or $[k,k+1]\cap\lbrack-N,N]$ is nonempty, the
conclusion follows from Proposition \ref{JxPj} or \ref{QkxK}, so we assume
that $B(x,\eta(k))\cap Q=\varnothing=[k,k+1]\cap\lbrack-N,N]$. For
convenience, assume also that $k>0$.

Let $M\in\lbrack0,\infty)$ be such that $d(x,x_{0})-\eta(k)=\eta(M)$. Such an
$M$ exists since $B(x,\eta(k))\cap Q=\varnothing$ and we assumed $\eta$ was
continuous and $\eta\rightarrow\infty$.\medskip

\noindent\textbf{Case 1.}\textit{ There exists }$( x^{\prime},r^{\prime}) \in
B(x,\eta(k))\times\lbrack k,k+1]$\textit{ such that }$\mu( x^{\prime},
r^{\prime}) \leq\frac{1}{\delta}$.\medskip

By the choice of $S$, there exists $z\in Z$ such that $\overline{d}(x^{\prime
},z)<\frac{\delta}{2}$. By the choice of $N$ (and since $k>0$), we know that
$\overline{\sigma}(r^{\prime},\infty)<\frac{\delta}{2}$.

For any other $(x^{\prime\prime},r^{\prime\prime})\in B(x,\eta(k))\times
\lbrack k,k+1]$,%

\begin{align*}
\left\vert \mu(x^{\prime\prime},r^{\prime\prime})-\mu(x^{\prime},r^{\prime
})\right\vert  &  =|\mu(x^{\prime\prime},r^{\prime\prime})-\mu(x^{\prime
\prime},r^{\prime})+\mu(x^{\prime\prime},r^{\prime})-\mu(x^{\prime},r^{\prime
})|\\
&  =\left\vert \frac{r^{\prime\prime}}{p(x^{\prime\prime})}-\frac{r^{\prime}%
}{p(x^{\prime\prime})}+\frac{r^{\prime}}{p(x^{\prime\prime})}-\frac{r^{\prime
}}{p(x^{\prime})}\right\vert \\
&  \leq\frac{1}{p(x^{\prime\prime})}\left\vert r^{\prime\prime}-r^{\prime
}\right\vert +\mu(x^{\prime},r^{\prime})\frac{|p(x^{\prime})-p(x^{\prime
\prime})|}{p(x^{\prime\prime})}\\
&  <\frac{\delta}{2}+\frac{1}{\delta}\cdot\frac{|p(x^{\prime})-p(x^{\prime
\prime})|}{2/\delta}%
\end{align*}
So if we can show that $\left\vert p(x^{\prime})-p(x^{\prime\prime
})\right\vert <\delta$, we may conclude that slopes of points in
$B(x,\eta(k))\times\lbrack k,k+1]$ differ by no more than $\delta$.\medskip

\noindent\emph{Claim.} $\eta(M)>\psi(k)$\newline Slopes of points in
$B(x,\eta(k))\times\lbrack k,k+1]$ are bounded below by
\[
\mu_{\text{min}}=\frac{k}{\log(\psi^{-1}(\eta(M)+2\eta(k))+1)}%
\]
By construction, $\psi(k)\geq\eta(k)$. Suppose that $\psi(k)\geq\eta(M)$.
Then,%
\begin{align*}
\mu_{\text{min}}  &  \geq\frac{k}{\log(\psi^{-1}(3\psi(k))+1)}\\
&  \geq\frac{k}{\log(\psi^{-1}(\psi(k+1))+1)}>\frac{1}{\delta}%
\end{align*}
contradicting the existence of $(x^{\prime},r^{\prime})\in B(x,\eta
(k))\times\lbrack k,k+1]$ with $\mu(x^{\prime},r^{\prime})\leq\frac{1}{\delta
}$. The claim follows.

Now%
\begin{align*}
\left\vert p(x^{\prime})-p(x^{\prime\prime})\right\vert  &  \leq\log(\psi
^{-1}(\eta(M)+2\eta(k))+1)-\log(\psi^{-1}(\eta(M))+1)\\
&  <\log(\frac{\psi^{-1}(3\eta(M))+1}{\psi^{-1}(\eta(M))+1})<\delta
\end{align*}
Since $\eta(M)>\psi(k)$, we are guaranteed that $\operatorname*{diam}%
_{\overline{d}}(B(x,\eta(k)))<\frac{1}{k}<\frac{\delta}{2}$; so by the
triangle inequality $B(x,\eta(k))\times\lbrack k,k+1]\subseteq U(\langle
\bar{x},\mu(x^{\prime},r^{\prime})\rangle,\delta)$.\medskip

\noindent\textbf{Case 2.}\textit{ There exists no }$( x^{\prime},r^{\prime})
\in B(x,\eta(k))\times\lbrack k,k+1]$\textit{ such that }$\mu( x^{\prime},
r^{\prime}) \leq\frac{1}{\delta}$.\medskip

Then $\mu(x^{\prime},r^{\prime})>\frac{1}{\delta}$ for all $(x^{\prime
},r^{\prime})\in B(x,\eta(k))\times\lbrack k,k+1]$. Since all of the slopes
are greater than $\frac{1}{\delta}$, the choice of $N$ guarantees that
$B(x,\eta(k))\times\lbrack k,k+1]\subseteq U(\langle\infty\rangle,\delta)$.
\end{proof}

To complete the proof of Theorem
\ref{Theorem: Controlled Z-compactification of X x R}, we need only prove the following:

\begin{proposition}
\label{ProductWithJoinIsANR} Given $X$, $\overline{X}$, and $\overline
{X\times{%
\mathbb{R}
}}$ as defined above, $SZ$ is a $\mathcal{Z}$-set in $\overline{X\times{%
\mathbb{R}
}}$. If $X$ is an AR, then so are $X\times%
\mathbb{R}
$, $\overline{X}$, and $\overline{X\times{%
\mathbb{R}
}}$.
\end{proposition}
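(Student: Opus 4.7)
The plan is to verify the AR assertions first, then the $\mathcal{Z}$-set claim, from which the AR property of $\overline{X\times\mathbb{R}}$ follows. That $X\times\mathbb{R}$ is an AR when $X$ is one is immediate, since it is a product of two contractible ANRs. That $\overline{X}$ is an AR is already recorded in Section~\ref{Subsection: Z-sets and Z-structures}: a $\mathcal{Z}$-compactification of an AR is an AR via Hanner's theorem. Once we establish that $SZ$ is a $\mathcal{Z}$-set in $\overline{X\times\mathbb{R}}$, the same Hanner-based argument gives that $\overline{X\times\mathbb{R}}$ is an AR.

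The core work is producing a $\mathcal{Z}$-set homotopy $\alpha\colon\overline{X\times\mathbb{R}}\times[0,1]\to\overline{X\times\mathbb{R}}$ with $\alpha_0=\mathrm{id}$ and $\alpha_s(\overline{X\times\mathbb{R}})\subseteq X\times\mathbb{R}$ for $s>0$. Two ingredients are needed: a $\mathcal{Z}$-set contraction $\beta\colon\overline{X}\times[0,1]\to\overline{X}$ collapsing $\overline{X}$ onto $x_0$ and fixing $x_0$ throughout (available by \cite[Lemma~1.10]{Tir11}), and a contraction $\gamma\colon[-\infty,\infty]\times[0,1]\to[-\infty,\infty]$ pushing $\pm\infty$ into $\mathbb{R}$, for instance $\gamma_s(r)=r/(1+s|r|)$. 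For a boundary point $\langle z,\mu\rangle\in SZ$ with $\mu$ finite, the natural image $\alpha_s\langle z,\mu\rangle=(\beta_s(z),\mu\cdot p(\beta_s(z)))$ lies in $X\times\mathbb{R}$ for $s>0$ and has slope exactly $\mu$, so it is consistent with the basic neighborhoods of Definition~\ref{DefineProductBoundaryTopology}. Interior points $(x,r)$ are moved by the analogous formula built from $\beta$, $\gamma$, and the slope function \eqref{Defn: slope function}, and continuity on $X\times\mathbb{R}$ reduces to continuity of $\beta$, $\gamma$, $p$, and $\mu$.

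The principal obstacle is continuity at the two suspension points $\langle\pm\infty\rangle$. A sequence $(x_n,r_n)\to\langle\infty\rangle$ imposes no constraint on the convergence of $x_n$ in $\overline{X}$ beyond forcing both $r_n\to\infty$ and $\mu(x_n,r_n)\to\infty$, so $\alpha_s(x_n,r_n)$ must converge to a single point independent of the behaviour of $x_n$. A naive product formula $(\beta_s(x),\gamma_s(r))$ fails this test because $\beta_s(x_n)$ can oscillate along with $x_n$. The remedy is to modulate the rate of the $\beta$-contraction by the slope, replacing $\beta_s$ by $\beta_{\min(1,\,s\cdot\rho(|\mu(x,r)|))}$ for a monotone function $\rho\to\infty$, so that once $|\mu(x,r)|$ exceeds a threshold depending on $s$ the $x$-coordinate has already been driven to $x_0$ and the image depends only on the slope and on $\gamma_s$. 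With this adjustment, $\alpha_s(x_n,r_n)\to(x_0,\gamma_s(\pm\infty))$ for every sequence $(x_n,r_n)\to\langle\pm\infty\rangle$, restoring continuity. The overall strategy mirrors Tirel's construction of $\mathcal{Z}$-compactifications of products in \cite{Tir11}, where a similar coordinated use of factor-wise $\mathcal{Z}$-set homotopies handles the analogous obstruction at the join points.
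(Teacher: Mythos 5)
Your opening reductions are fine: $X\times\mathbb{R}$ is an AR as a product of contractible ANRs, $\overline{X}$ is an AR by the Hanner-based argument already in the paper, and once $SZ$ is a $\mathcal{Z}$-set in $\overline{X\times\mathbb{R}}$ the last AR claim follows the same way. You also correctly identify that the real work is constructing a $\mathcal{Z}$-set homotopy and that the danger spots are the suspension points $\langle\pm\infty\rangle$. But the homotopy you sketch does not actually hang together, and the fix you propose does not repair it.

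First, your two fragments of a formula are mutually inconsistent. For boundary points $\langle z,\mu\rangle$ you take the second coordinate to be $\mu\cdot p(\beta_s(z))$ (slope-preserving), while your modulation remedy, in which ``the image depends only on the slope and on $\gamma_s$,'' is only meaningful if the second coordinate of interior points is $\gamma_s(r)$. For a sequence $(x_n,r_n)\to\langle z,\mu\rangle$ with $0<\mu<\infty$ one has $r_n\to+\infty$, so $\gamma_s(r_n)\to\gamma_s(+\infty)$, a number independent of $\mu$ and $z$, while $\mu\cdot p(\beta_s(z))$ varies with both; these do not match. Second, even taking $\gamma_s(r)$ as the second coordinate uniformly, the construction breaks at the \emph{equatorial} boundary points $\langle z,0\rangle$: a sequence $(x_n,r_n)\to\langle z,0\rangle$ only forces $x_n\to z$ and $\mu(x_n,r_n)\to 0$, so $r_n$ may oscillate between $+\infty$ and $-\infty$, and then $\gamma_s(r_n)$ oscillates between $\gamma_s(+\infty)$ and $\gamma_s(-\infty)$, which differ for your $\gamma_s(r)=r/(1+s|r|)$. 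Third, continuity as $s\to 0^{+}$ at a generic boundary point is not addressed: the slope of $\bigl(\beta_{\min(1,\,s\rho(|\mu|))}(z),\gamma_s(\pm\infty)\bigr)$ is a ratio of two quantities both tending to $\infty$ as $s\to 0^{+}$, and nothing in the construction forces that ratio to converge to $\mu$. Finally, the slope-preserving formula $\mu(x,r)\cdot p(\beta_s(x))$ is of the form $\infty\cdot 0$ at $x=x_0$ (where $p(x_0)=0$ and $\beta_s$ fixes $x_0$), so it is not even defined there.

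The real content of the paper's proof is precisely what your sketch elides: the two factor homotopies cannot be chosen independently. Before combining them, the paper reparameterizes the $\overline{X}$-homotopy $\widehat\alpha$ and the $[-\infty,\infty]$-homotopy $\widehat\beta$ so that $p(\widehat\alpha(z,t))$ and $|\widehat\beta(\pm\infty,t)|$ grow at the \emph{same} rate (roughly $1/t$), and then feeds the two factors time-arguments in the ratio $1:\mu$ (the factors $\frac{t}{\sqrt{\mu^2+1}}$ and $\frac{\mu t}{\sqrt{\mu^2+1}}$). This synchronization is exactly what makes the slope of the ray through $(x_0,0)$ converge to $\mu$, and it is what discharges all three continuity issues above simultaneously, including the one at $\langle\pm\infty\rangle$ you flagged. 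Your rate-modulation by a function $\rho(|\mu|)$ does not accomplish this: it drives the first coordinate to $x_0$ but leaves the slope of the image uncontrolled near $s=0$, and it does nothing to reconcile the two ends at $\mu=0$. So while your strategy is in the right spirit (and is indeed Tirel's template), the step that actually makes it work is the coordinated reparameterization, and that step is missing.
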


To prove Proposition \ref{ProductWithJoinIsANR}, we will construct a
contraction $\gamma:\overline{X\times{%
\mathbb{R}
}}\times\left[  0,1\right]  \rightarrow\overline{X\times{%
\mathbb{R}
}}$ with $\gamma_{0}=\operatorname*{id}_{\overline{X\times{%
\mathbb{R}
}}}$, $\gamma_{1}(\overline{X\times{%
\mathbb{R}
}})=\left\{  x_{0}\right\}  $, and $\gamma(\overline{X\times{%
\mathbb{R}
}}\times(0,1])\subseteq X\times{%
\mathbb{R}
}$. From there, the second sentence of the proposition follows from the
discussion of $\mathcal{Z}$-compactifications in Section
\ref{Subsection: Z-sets and Z-structures}. Our construction of $\gamma$
involves relatively minor modifications to the analogous construction in
Section~3 of \cite{Tir11}. The first modification is entirely superficial,
owing to the fact that she is working with a general product $X\times Y$ and
join $Z_{X}\ast Z_{Y}$ while we are working with a special case: $X\times%
\mathbb{R}
$ and $SZ$. The second modification is due to the more delicate nature of our
choice of the function $p:X\rightarrow\lbrack0,\infty)$, used to define the
slope function.

The strategy for contracting $\overline{X\times%
\mathbb{R}
}$ is motivated by the contraction of the visual compactification
$\overline{Y}$ of a proper CAT(0) space $Y$ to a basepoint $y_{0}$, whereby
interior points and boundary points are slid toward $y_{0}$ along geodesic
segments and geodesic rays, respectively. There it is useful to view---and
parameterize---the geodesic segments as \textquotedblleft eventually
constant\textquotedblright\ nonproper rays. In this way, one associates to the
points of $\overline{Y}$, a continuously varying family of rays $\gamma
_{y}:[0,\infty)\rightarrow Y$ beginning at $y_{0}$ and ending at $y\in Y$ or
else determining a point $y$ in the visual boundary by virtue of being a
geodesic ray. The definition of the cone topology allows us to extend each to
$\gamma_{y}:[0,\infty]\rightarrow\overline{Y}$ without losing continuity. The
contraction is obtained by applying a deformation retraction of $[0,\infty]$
onto $\left\{  0\right\}  $ simultaneously to all (extended) rays.

In our setting, we will identify a continuous family of preferred rays in
$X\times%
\mathbb{R}
$, all emanating from $(x_{0},0)$. Some will be eventually constant, ending at
a point $(x,r)\in X\times%
\mathbb{R}
%
$; others will limit to a point $\langle z,\mu\rangle\in SZ$. By choosing
these rays in coordination with the topology on $\overline{X\times%
\mathbb{R}
}$, we will be able to mimic the above strategy.

Let $\alpha:\overline{X}\times\left[  0,1\right]  \rightarrow\overline{X}$ be
a $\mathcal{Z}$-set homotopy that contracts $\overline{X}$ to $x_{0}\in X$
keeping $x_{0}$ fixed throughout, and $\beta:\left[  -\infty,\infty\right]
\times\left[  0,1\right]  \rightarrow\left[  -\infty,\infty\right]  $ be a
$\mathcal{Z}$-set homotopy that contracts $\left[  -\infty,\infty\right]  $ to
$0$ keeping $0$ fixed throughout.

The following lemma is inspired by \cite[Lemmas 3.6 \& 3.8]{Tir11}.

\begin{lemma}
\label{ReparameterizedRays} There are reparameterizations $\widehat{\alpha}$
and $\widehat{\beta}$ of $\alpha$ and $\beta$ so that $p(\widehat{\alpha
}(z,t))\in\lbrack\frac{1}{t}-1,\frac{1}{t}+2]$ and $\left\vert \widehat{\beta
}(\pm\infty,t)\right\vert \in\lbrack\frac{1}{t}-1,\frac{1}{t}+2]$ for all
$t\in(0,1]$ and for all $z\in Z$.
\end{lemma}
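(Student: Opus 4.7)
The plan is to construct the reparameterizations $\widehat{\alpha}$ and $\widehat{\beta}$ by composing $\alpha$ and $\beta$ with time-reparameterizations that depend on the first variable. The argument for $\widehat{\alpha}$ is the substantive one; $\widehat{\beta}$ follows from an essentially one-dimensional version of the same construction applied separately on the two rays $[0,\infty]$ and $[-\infty,0]$ of $[-\infty,\infty]$.

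First I would record the key continuity property driving everything. The function $F\colon \overline{X}\times(0,1]\to[0,\infty)$ defined by $F(x,t)=p(\alpha(x,t))$ is continuous: $\alpha(\overline{X}\times(0,1])\subseteq X$ by the $\mathcal{Z}$-set homotopy property, and $p$ is continuous on $X$. Moreover, $F(z,t)\to\infty$ \emph{uniformly} in $z\in Z$ as $t\to 0^+$. This combines three ingredients: uniform continuity of $\alpha$ on the compact set $\overline{X}\times[0,1]$ (so $\alpha(z,t)\to z$ uniformly in $z$); compactness of $\overline{X}$ together with properness of $d$ on $X$ (so $\overline{d}(y,Z)\to 0$ forces $d(y,x_0)\to\infty$); and the definition of $p$ as a monotone function of $d(\cdot,x_0)$ tending to $\infty$. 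Since $F(z,1)=p(x_0)=0$, the intermediate value theorem guarantees that for each $z\in Z$ and each $r\ge 0$, there is some $s\in(0,1]$ with $F(z,s)=r$.

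Next I would construct a continuous map $\tau\colon Z\times(0,1]\to(0,1]$ satisfying $F(z,\tau(z,t))\in[1/t-1,1/t+2]$. The naive choice $\tau(z,t)=\max\{s\in(0,1]\colon F(z,s)\ge 1/t\}$ is only upper semicontinuous, because $F(z,\cdot)$ may oscillate. To bypass this, I would first pass to a modified $\mathcal{Z}$-set homotopy $\alpha^{*}$, properly homotopic to $\alpha$, whose $p$-profile $s\mapsto p(\alpha^{*}(z,s))$ is monotonically non-increasing for each fixed $z$. One concrete route uses the continuous ``supremum path'' $P_{z}(s)=\sup_{s'\in[s,1]}p(\alpha(z,s'))$ as a template: replace $s\mapsto\alpha(z,s)$ by a path tracing through the level sets $p^{-1}(P_{z}(s))$, interpolating continuously within the AR $\overline{X}$ using the stopping-time $s^{*}(z,s)=\inf\{s'\in[s,1]\colon p(\alpha(z,s'))=P_{z}(s)\}$. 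Once $p\circ\alpha^{*}(z,\cdot)$ is monotone, the pseudo-inverse $\tau$ becomes continuous, and the slack of width $3$ in the bracket absorbs any small discrepancies that arise when $\tau$ is extended continuously to $\overline{X}\times[0,1]$ with $\tau(\cdot,0)\equiv 0$.

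Finally, set $\widehat{\alpha}(x,t)=\alpha^{*}(x,\tau(x,t))$. The construction of $\widehat{\beta}$ is the one-dimensional analogue: on each ray of $[-\infty,\infty]$ the contraction $\beta$ is a single path, so a single monotone reparameterization in $t$ alone, chosen so that $|\widehat{\beta}(\pm\infty,t)|=1/t$ (say), yields the required bracket; no AR-interpolation is needed. The main obstacle---and where the argument faithfully specializes \cite[Lemmas 3.6 \& 3.8]{Tir11} to our slope function $p$---is the monotonization step: producing $\alpha^{*}$ jointly continuously in $(x,t)$ while preserving that it is a $\mathcal{Z}$-set homotopy fixing $x_{0}$, with $p\circ\alpha^{*}(z,\cdot)$ non-increasing for every $z\in Z$.
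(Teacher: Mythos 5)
Your strategy has the right shape — uniform divergence of $p\circ\alpha$ in $t$ is the engine, oscillation of $s\mapsto p(\alpha(z,s))$ is the obstruction, and some kind of monotonization in advance of the time-change is the natural fix — but the monotonization step as proposed does not go through, and that is exactly the delicate part of the lemma.

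The supremum profile $P_z(s)=\sup_{s'\in[s,1]}p(\alpha(z,s'))$ is indeed continuous and non-increasing, but there is no continuous map $\alpha^*$ with $p(\alpha^*(z,s))=P_z(s)$ that you can manufacture from $\alpha$ and the stopping time $s^*(z,s)=\inf\{s'\in[s,1]:p(\alpha(z,s'))=P_z(s)\}$. The stopping time is only semicontinuous: whenever $p(\alpha(z,\cdot))$ has two successive local maxima, $s^*(z,\cdot)$ jumps, and at such a jump the only interpolation that comes from $\alpha$ itself is to run backwards through the segment of $\alpha(z,\cdot)$ between the two argmaxes — and along that segment the $p$-value dips \emph{all the way down} to the intermediate local minimum, which can be arbitrarily far below $P_z(s)$. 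The ``width-$3$ slack'' in the bracket $[\tfrac{1}{t}-1,\tfrac{1}{t}+2]$ is meant to absorb rounding in the time-change $\tau$, not unbounded drops in $p\circ\alpha^*$; it cannot repair this. The alternative of interpolating inside a level set $p^{-1}(P_z(s))$ also fails: level sets of the continuous proper function $p$ need not be path-connected, so ``interpolating within the AR $\overline{X}$'' has no way to stay on the level set. Finally, continuity of the whole family in the $z$-variable (not just for each fixed $z$) is asserted but not addressed, and $\alpha^*$ must be defined on all of $\overline{X}\times[0,1]$, not just $Z\times[0,1]$, as a genuine $\mathcal{Z}$-set homotopy fixing $x_0$; your sketch gives no construction away from $Z$.

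For comparison, the paper does not attempt pointwise monotonization of $p\circ\alpha$. It instead arranges the weaker, quantized condition (\dag\dag): there is a sequence $1=t_0>t_1>\cdots\to 0$ with $p(\alpha(Z\times[t_i,t_{i-1})))\subseteq(i-1,i+1]$, achieved by reparameterizing $\alpha$ (citing Tirel's construction of $p$ for the method), and then applies \cite[Lemma~3.8]{Tir11} as a black box to produce $\widehat{\alpha}$. Condition (\dag\dag) permits oscillation of width $2$ within each slab $[t_i,t_{i-1})$ — it does not require a monotone $p$-profile — which is precisely what makes it attainable. Your writeup would benefit from either aiming for (\dag\dag) instead of strict monotonicity, or from exhibiting a genuinely continuous $\alpha^*$ (for instance by a compactness-and-partition-of-unity argument that never tries to invert $p\circ\alpha$ pointwise) rather than relying on the discontinuous stopping time.
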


\begin{proof}
Our approach differs slightly from that taken in \cite{Tir11}. Whereas she
\emph{chose} her function $p:X\rightarrow\lbrack0,\infty)$ to have the
property:\medskip

(\dag\dag) \ For some sequence $1=t_{0}>t_{1}>\dots>0,$ $p(\alpha
(Z\times\lbrack t_{i},t_{i-1})))\subseteq(i-1,i+1]$\medskip

\noindent we, instead, begin with the function $p:X\rightarrow\lbrack
0,\infty)$ arrived at in defining our slope function, then arrange condition
(\dag\dag) by reparameterizing $\alpha$. This can be accomplished by using
methods similar to those used by Tirel to define her function $p$.

Once $\alpha$ has been adjusted to satisfy property (\dag\dag), we can
implement the proof of \cite[Lemma 3.8]{Tir11} to obtain $\widehat{\alpha}$.
Obtaining $\widehat{\beta}$ is much simpler.
\end{proof}

\begin{remark}
\label{Remark: defining a Z-homotopy}Notice that, aside from properness, no
specific properties of $p:X\rightarrow\lbrack0,\infty)$ are used in the above
proof. This fact will be useful in Section \ref{Section: EZ-bar structures}.
\end{remark}

In order to turn homotopy tracks into rays, we invert and stretch the
\textquotedblleft time\textquotedblright\ interval. Define
\[%
\begin{tabular}
[c]{ccc}%
$\xi:[0,\infty]\rightarrow\lbrack0,1]$ & by & $\xi(t)=\left\{
\begin{tabular}
[c]{cc}%
$\frac{1}{1+t}$ & if $t\in\lbrack0,\infty)$\\
0 & if $t=\infty$%
\end{tabular}
\ \right.  $\\
$\alpha^{\prime}:\overline{X}\times\lbrack0,\infty]\rightarrow\overline{X}$ &
by & $\alpha^{\prime}(w,t)=\widehat{\alpha}(w,\xi(t))$\\
$\beta^{\prime}:\left[  -\infty,\infty\right]  \times\lbrack0,\infty
]\rightarrow\overline{%
\mathbb{R}
}$ & by & $\beta^{\prime}(r,t)=\widehat{\beta}(r,\xi(t))$%
\end{tabular}
\ \ \
\]

\noindent where we allow $w$ to denote an element of either $X$ or $Z$. Notice
that for any $t\in\lbrack0,\infty)$ and $z\in Z$, $p(\alpha^{\prime}%
(z,t))\in(t-1,t+3)$; and similarly, $\left\vert \beta^{\prime}(\pm
\infty,t)\right\vert \in(t-1,t+3)$.

Define%
\[
\gamma^{\prime}:\overline{X\times{%
\mathbb{R}
}}\times\lbrack0,\infty)\rightarrow X\times{%
\mathbb{R}
}%
\]
by%
\[%
\begin{tabular}
[c]{lll}%
$((x,r),t)$ & $\mapsto$ & $(\alpha^{\prime}(x,\frac{t}{\sqrt{(\mu(x,r))^{2}%
+1}}),\beta^{\prime}(r,\frac{\mu(x,r)\cdot t}{\sqrt{(\mu(x,r))^{2}+1}%
}))\smallskip$\\
$(\langle z,\mu\rangle,t)$ & $\mapsto$ & $(\alpha^{\prime}(z,\frac{t}%
{\sqrt{\mu^{2}+1}}),\beta^{\prime}(\infty,\frac{\mu\cdot t}{\sqrt{\mu^{2}+1}%
}))\smallskip$\qquad(if $\mu\geq0$)\\
$(\langle z,\mu\rangle,t)$ & $\mapsto$ & $(\alpha^{\prime}(z,\frac{t}%
{\sqrt{\mu^{2}+1}}),\beta^{\prime}(-\infty,\frac{\mu\cdot t}{\sqrt{\mu^{2}+1}%
}))\medskip$\quad\ (if $\mu<0$)\\
$(\langle\infty\rangle,t)$ & $\mapsto$ & $(x_{0},\beta^{\prime}(\infty
,t))\medskip$\\
$(\langle-\infty\rangle,t)$ & $\mapsto$ & $(x_{0},\beta^{\prime}(-\infty,t))$%
\end{tabular}
\
\]
\bigskip

\noindent Clearly $\gamma^{\prime}$ extends continuously over $X\times{%
\mathbb{R}
}\times\lbrack0,\infty]$ by sending each $((x,r),\infty)$ to $(x,r)$. For
later use, let $\gamma_{(x,r)}^{\prime}=\left.  \gamma^{\prime}\right\vert
_{(x,r)\times\lbrack0,\infty]}$. Note that for each $\left\langle
z,\mu\right\rangle \in SZ$, the map $\gamma_{\left\langle z,\mu\right\rangle
}^{\prime}=\left.  \gamma^{\prime}\right\vert _{\left\langle z,\mu
\right\rangle \times\lbrack0,\infty)}$ is a proper ray in $X\times%
\mathbb{R}
$. We wish to observe that $\gamma_{\left\langle z,\mu\right\rangle }^{\prime
}(t)\rightarrow\left\langle z,\mu\right\rangle $ in $\overline{X\times{%
\mathbb{R}
}}$ as $t\rightarrow\infty$. In the special case that $\mu=\infty$, i.e.
$\left\langle z,\mu\right\rangle =\langle\infty\rangle$, then $\gamma
_{\langle\infty\rangle}^{\prime}(\langle\infty\rangle,t)=(x_{0},\beta^{\prime
}(\infty,t))$. Since $p(x_{0})=0$, then $\mu(x_{0},\beta^{\prime}%
(\infty,t))=\infty$; moreover $\beta^{\prime}(\infty,t)\rightarrow\infty$ as
$t\rightarrow\infty$. So $\gamma_{\langle\infty\rangle}^{\prime}%
(t)\rightarrow\langle\infty\rangle$ as $t\rightarrow\infty$. Similarly for
$\mu=-\infty$. For a generic boundary point $\langle z,\mu\rangle$ with
$0\leq\mu<\infty$, we have%
\begin{align*}
\mu(\gamma^{\prime}(\langle z,\mu\rangle,t))  &  =\frac{\beta^{\prime}%
(\infty,\frac{\mu\cdot t}{\sqrt{\mu^{2}+1}})}{p(\alpha^{\prime}(z,\frac
{t}{\sqrt{\mu^{2}+1}}))}\\
&  \in(\frac{\frac{\mu\cdot t}{\sqrt{\mu^{2}+1}}-2}{\frac{t}{\sqrt{\mu^{2}+1}%
}+3},\frac{\frac{\mu\cdot t}{\sqrt{\mu^{2}+1}}+3}{\frac{t}{\sqrt{\mu^{2}+1}%
}-2})\\
&  =(\frac{\mu\cdot t-2\sqrt{\mu^{2}+1}}{t+3\sqrt{\mu^{2}+1}},\frac{\mu\cdot
t+3\sqrt{\mu^{2}+1}}{t-2\sqrt{\mu^{2}+1}})
\end{align*}
\newline which implies that $\mu(\gamma^{\prime}(\langle z,\mu\rangle
,t))\rightarrow\mu$ as $t\rightarrow\infty$. In addition, $\alpha^{\prime
}(z,\frac{t}{\sqrt{\mu^{2}+1}})\rightarrow\alpha(z,0)=z$ in $\overline{X}$ as
$t\rightarrow\infty$. Thus $\gamma^{\prime}(\langle z,\mu\rangle
,t)\rightarrow\langle z,\mu\rangle$ in $\overline{X\times{%
\mathbb{R}
}}$, and we define $\gamma^{\prime}(\langle z,\mu\rangle,\infty)=\langle
z,\mu\rangle$. Calculations similar to the above show that, for small
$\varepsilon$, rays of the form $\gamma_{(x^{\prime},r)}^{\prime}$ and
$\gamma_{\left\langle z^{\prime},\mu^{\prime}\right\rangle }^{\prime}$ which
end in a basic open set $U(\langle z,\mu\rangle,\varepsilon)$, track together
in $\overline{X\times{%
\mathbb{R}
}}$. As such, we obtain a continuous function
\[
\gamma^{\prime}:\overline{X\times{%
\mathbb{R}
}}\times\lbrack0,\infty]\rightarrow\overline{X\times{%
\mathbb{R}
}}%
\]

Reversing and reparameterizing the interval once more, we get the desired
$\mathcal{Z}$-set homotopy $\gamma:\overline{X\times{%
\mathbb{R}
}}\times\lbrack0,1]\rightarrow\overline{X\times%
\mathbb{R}
}$.


\section{Proof of Theorem \ref{Theorem A}}

\label{sec:proof}

We now turn to the proof of Theorem \ref{Theorem A}. The goal is a
$\mathcal{Z}$-compactification $\overline{Y}=Y\sqcup SZ$ of the $(G\rtimes
_{\phi}%
\mathbb{Z}
)$-space $Y=\operatorname*{Tel}_{f}(X)$ that satisfies Definition
\ref{DefineZStructure}. The approach is to use the map $v:Y\rightarrow X\times%
\mathbb{R}
$ to swap the boundary from a controlled $\mathcal{Z}$-compactification
$\overline{X\times%
\mathbb{R}
}=\left(  X\times%
\mathbb{R}
\right)  \sqcup SZ$ onto $Y$. The key is to incorporate some of the geometry
of the $(G\rtimes_{\phi}%
\mathbb{Z}
)$-action on $Y$ into the choice of control function $\eta:[0,\infty
)\rightarrow\lbrack0,\infty)$ used in Theorem
\ref{Theorem: Controlled Z-compactification of X x R}.

Choose a $(G\rtimes_{\phi}{%
\mathbb{Z}
})$-equivariant metric $\rho$ on $Y$ (see \cite[\S 6]{GuMo19}). The
restriction of $\rho$ to any $X_{i}$, $i\in%
\mathbb{Z}
$, is a $G$-invariant metric on $X$ which we will designate as $d$. Give $%
\mathbb{R}
$ the usual metric, and let $d_{1}$ be the corresponding $\ell_{1}$ metric on
$X\times%
\mathbb{R}
$. As in the previous section, let $\overline{d}$ be a metric for
$\overline{X}$ and $\overline{\sigma}$ a metric for $\left[  -\infty
,\infty\right]  $.

Let $C_{Y}\subseteq\mathcal{M}_{[0,1]}(f)$ be a compact set whose $(
G\rtimes_{\phi}{%
\mathbb{Z}
}) $-translates cover $Y$. For example, let $C_{X}\subseteq X$ be a finite
subcomplex whose $G$-translates cover $X$, then let $C_{Y}$ be the sub-mapping
cylinder $\mathcal{M}_{[0,1]}(\left.  f\right\vert _{C_{X}})$, where the range
is restricted to a finite subcomplex containing $f( C_{X}) $. We will refer to
$C_{X}$ and $C_{Y}$ informally as \emph{fundamental domains} for the $G$- and
$( G\rtimes_{\phi}%
\mathbb{Z}
) $-actions on $X$ and $Y$, respectively.

\begin{definition}
[The control function]\label{DefineEta} Let $\eta:[0,\infty)\rightarrow
\lbrack0,\infty)$ be a function satisfying: For all $k\in\mathbb{%
\mathbb{Z}
}$,%
\[
\eta(\left\vert k\right\vert )\geq\text{max}%
\begin{cases}
\operatorname*{diam}_{d_{1}}(v(H(t^{\pm k}C_{Y}\times\lbrack0,1])))\\
\operatorname*{diam}_{d_{1}}(J(C_{X}\times\left[  k,k+1\right]  )\times
\lbrack0,1]))
\end{cases}
\]
Furthermore, choose $\eta$ to be monotonic and require that $\lim
_{r\rightarrow\infty}\eta(r)=\infty$.
\end{definition}

\begin{remark}
\label{Remark: about distortion function}A priori, $\eta(\left\vert
k\right\vert )$ provides a bound only on the diameters of $h(t^{\pm k}%
C\times\lbrack0,1])$ and $J(v(t^{\pm k}C)\times\lbrack0,1])$; however the fact
that $v$, $H$, and $J$ are $G$-equivariant and level preserving, means that
$\eta(\left\vert k\right\vert )$ bounds the diameters of all $(G\rtimes_{\phi
}{%
\mathbb{Z}
})$-translates of those sets which are contained in the $\left[  k,k+1\right]
$-level. Since $H_{0}$ and $J_{0}$ are identities, $\eta(\left\vert
k\right\vert )$ also bounds the diameters of $t^{\pm k}C$ and $v(t^{\pm k}C)$
and their translates contained in the $\left[  k,k+1\right]  $-level.
\end{remark}

Now apply Theorem \ref{Theorem: Controlled Z-compactification of X x R} to
obtain a $\mathcal{Z}$-compactification $(\overline{X\times{%
\mathbb{R}
}},SZ)$ of $X\times%
\mathbb{R}
$ satisfying the condition:\medskip

\noindent(\ddag)$\quad$\emph{For each open cover } $U$\emph{ of }%
$\overline{X\times{%
\mathbb{R}
}}$\emph{, there exists a compact set }$Q\times\left[  -N,N\right]  \subseteq
X\times%
\mathbb{R}
$\emph{, such that every set }$B_{d}[x,\eta(\left\vert k\right\vert
)]\times\lbrack k,k+1]$\emph{ lying outside }$Q\times\left[  -N,N\right]
$\emph{ is contained in some }$U\in U.\medskip$

Recall the map $v:Y\rightarrow X\times%
\mathbb{R}
$ defined in Section
\ref{Section: Mapping tori and telescopes as classifying spaces}. Let
$\overline{Y}=Y\sqcup SZ$ and define $\overline{v}=v\cup\operatorname*{id}%
_{SZ}:\overline{Y}\rightarrow\overline{X\times{%
\mathbb{R}
}}$, i.e.
\[
\overline{v}(z)=\left\{
\begin{tabular}
[c]{cc}%
$v(z)$ & if $z\in Y$\\
$z$ & if $z\in SZ$%
\end{tabular}
\ \ \ \right.
\]
Give $\overline{Y}$ the topology $\mathcal{T}$ generated by the open subsets
of $Y$ together with
\[
\left\{  \overline{v}^{-1}(\mathcal{U})\mid\mathcal{U}\text{ is open in
}\overline{X\times{%
\mathbb{R}
}}\right\}
\]

\begin{remark}
\label{Remark: pullback compactification}We call $\left(  \overline
{Y},\mathcal{T}\right)  $ the \emph{pull-back compactification} of $Y$ via the
map $v$. This construction can be applied more broadly whenever one has a
proper map $v:Y\rightarrow W$ between locally compact separable metric spaces
and a compactification $\overline{W}=W\sqcup A$. The result is a
compactification $\overline{Y}=Y\sqcup A$ (not always a $\mathcal{Z}%
$-compactification) of $Y$ and a continuous map $\overline{v}:\overline
{Y}\rightarrow\overline{W}$ which extends $v$ via the identity over $A$.
\end{remark}

\begin{proposition}
\label{prop:compactification} \label{Prop: Y-hat is a Z-compactification}%
$\overline{Y}$ is a $\mathcal{Z}$-compactification of $Y$.
\end{proposition}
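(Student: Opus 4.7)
The plan is to verify both conditions of a $\mathcal{Z}$-compactification: that (I) $\overline{Y}$ is a compactification of $Y$, i.e.\ compact, Hausdorff, with $Y$ open and dense; and that (II) $SZ$ is a $\mathcal{Z}$-set in $\overline{Y}$.

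For (I), I would first note that $\overline{v}:\overline{Y}\to\overline{X\times\mathbb{R}}$ is continuous by construction of the pull-back topology, and restricts to $v$ on $Y$ and to the identity on $SZ$. Hausdorffness follows by case analysis: two points of $Y$ are separated within the topology of $Y$; a pair involving a point of $SZ$ is separated by pulling back disjoint open neighborhoods from the Hausdorff space $\overline{X\times\mathbb{R}}$ (together with a $Y$-open set in the mixed case). For compactness, any open cover of $\overline{Y}$ is reduced by first covering $SZ$ with finitely many pull-back neighborhoods $\overline{v}^{-1}(\mathcal{U}_1),\dots,\overline{v}^{-1}(\mathcal{U}_n)$ using compactness of $SZ$ inside $\overline{X\times\mathbb{R}}$; the complement in $Y$ equals $v^{-1}(\overline{X\times\mathbb{R}}\setminus\bigcup\mathcal{U}_i)$, which is compact by properness of $v$, and hence is finitely covered by the remaining sets. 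Openness of $Y$ is immediate, and density follows because $SZ$ is a $\mathcal{Z}$-set (hence nowhere dense) in $\overline{X\times\mathbb{R}}$, so every pull-back neighborhood of a boundary point meets $X\times\mathbb{R}$ and therefore $Y$.

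For (II), the plan is to build a $\mathcal{Z}$-set homotopy on $\overline{Y}$ by pulling back the contraction $\gamma$ of $\overline{X\times\mathbb{R}}$ (from Theorem~\ref{Theorem: Controlled Z-compactification of X x R}) through $v$ and $u$. The crucial technical step is to show that $u:X\times\mathbb{R}\to Y$ extends to a continuous map $\overline{u}:\overline{X\times\mathbb{R}}\to\overline{Y}$ by setting $\overline{u}|_{SZ}=\operatorname{id}_{SZ}$. To verify continuity at $z\in SZ$, suppose $w_n\to z$ in $\overline{X\times\mathbb{R}}$ with each $w_n\in X\times\mathbb{R}$ at some level $[k_n,k_n+1]$. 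By Definition~\ref{DefineEta} and the $G$-equivariance of $J$, the $d_1$-diameter of $J(w_n\times[0,1])$ is at most $\eta(|k_n|)$; in particular $v(u(w_n))$ lies in a $d_1$-ball of radius $\eta(|k_n|)$ about $w_n$ at level $[k_n,k_n+1]$ (up to the small slippage allowed by $J$ being only nearly level-preserving). The control condition $(\ddagger)$ then forces such balls, for large $n$, into any prescribed small neighborhood of $z$ in $\overline{X\times\mathbb{R}}$, so $v(u(w_n))\to z$. By the pull-back topology on $\overline{Y}$, this is equivalent to $u(w_n)\to z$ in $\overline{Y}$, confirming continuity of $\overline{u}$. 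An entirely analogous argument, using the other bound built into Definition~\ref{DefineEta} (the one governing the $v$-image of $H$-tracks), extends $H$ to a continuous homotopy $\overline{H}:\overline{Y}\times[0,1]\to\overline{Y}$ by the identity on $SZ$.

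With $\overline{u}$ and $\overline{H}$ in hand, the $\mathcal{Z}$-set homotopy $\overline{\alpha}:\overline{Y}\times[0,1]\to\overline{Y}$ is built by a boundary-swapping construction in the spirit of \cite{GuMo19}: first reparameterize $\overline{H}$ to interpolate from $\operatorname{id}_{\overline{Y}}$ at $s=0$ to $\overline{u}\circ\overline{v}$ at $s=1/2$ (this is $u\circ v$ on $Y$ and $\operatorname{id}$ on $SZ$), then on $[1/2,1]$ apply $\overline{u}\circ\gamma_{2s-1}\circ\overline{v}$, which lands in $Y$ since $\gamma_t(\overline{X\times\mathbb{R}})\subseteq X\times\mathbb{R}$ for $t>0$. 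The main obstacle is arranging the first half so that the $\mathcal{Z}$-set condition $\overline{\alpha}_s(\overline{Y})\subseteq Y$ holds for every $s>0$ rather than only after $s=1/2$---i.e., $SZ$ must be pushed immediately off itself even though $\overline{H}$ fixes $S$Z pointwise. This is resolved by interleaving a small amount of the $\overline{u}\circ\gamma\circ\overline{v}$-flow into the first half parameterized jointly with $\overline{H}$, where continuity across the two halves and across the boundary $SZ\subseteq\overline{Y}$ is guaranteed precisely by the continuity of $\overline{u}$ and $\overline{H}$ established above, which in turn rested on the control function $\eta$ of Definition~\ref{DefineEta}.
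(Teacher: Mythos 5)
Your proposal follows essentially the same route as the paper: the central step is extending $u$ and $H$ to continuous maps $\overline{u}$ and $\overline{H}$ over $SZ$ using the control function $\eta$ of Definition~\ref{DefineEta} together with condition $(\ddag)$, which is exactly the content of Claims 1 and 2 in the paper's proof. The only substantive difference is that, once $\overline{u}$, $\overline{v}$, and $\overline{H}$ are established, the paper simply invokes the boundary-swap Lemma~\ref{Lemma: Boundary swap} (whose hypotheses are precisely what you have verified), whereas your final paragraph re-derives a version of that lemma via an interleaving construction that is correct in spirit but sketchier than necessary---citing the lemma directly is cleaner, and the two-set cover trick it relies on (choosing $\{\overline{X\times\mathbb{R}}-\operatorname{cl}(\overline{W}),\overline{V}\}$ to pin down which element of $\mathcal{U}$ a given ball lands in) is also what is implicitly needed to make your ``$(\ddag)$ forces such balls into any prescribed small neighborhood'' step rigorous.
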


Our proof will be obtained by applying the following lemma which was based on
\cite[Prop. 1.6]{Fer00}.

\begin{lemma}
[{see \cite[Lemma 3.1]{GuMo19}}]\label{Lemma: Boundary swap} Let $X$ and $Y$
be separable metric spaces and $f:(X,A)\rightarrow(Y,B)$ and
$g:(Y,B)\rightarrow(X,A)$ be continuous maps with $f(X-A)\subseteq Y-B$,
$g(Y-B)\subseteq X-A$, and $\left.  g\circ f\right\vert _{A}%
=\operatorname*{id}_{A}$. Suppose further that there is a homotopy
$K:X\times\lbrack0,1]\rightarrow X$ which is fixed on $A$ and satisfies:
$K_{0}=\operatorname*{id}_{X}$, $K_{1}=g\circ f$, and $K((X-A)\times
\lbrack0,1])\subseteq X-A$. If $B$ is a $\mathcal{Z}$-set in $Y$, then $A$ is
a $\mathcal{Z}$-set in $X$.
\end{lemma}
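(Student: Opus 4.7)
The plan is to apply Lemma~\ref{Lemma: Boundary swap} to transfer the $\mathcal{Z}$-set property of $SZ\subseteq\overline{X\times\mathbb{R}}$ (established in Proposition~\ref{ProductWithJoinIsANR}) to $SZ\subseteq\overline{Y}$. Before doing so, I would verify that $\overline{Y}$ genuinely is a compactification of $Y$. Second countability and the Hausdorff property are consequences of the pull-back topology together with the properness of $v$. For compactness, I would argue sequentially: a sequence in $\overline{Y}$ is either eventually in $SZ$, handled by compactness of $SZ$ and the fact that pull-back convergence at boundary points coincides with convergence in $\overline{X\times\mathbb{R}}$, or eventually in $Y$, in which case $\{v(y_n)\}$ has a subsequence converging in $\overline{X\times\mathbb{R}}$ whose limit is either interior, when properness of $v$ yields a convergent subsequence in $Y$, or in $SZ$, when $y_n$ converges to that point by definition of $\mathcal{T}$. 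Density of $Y$ in $\overline{Y}$ is obtained by choosing interior preimages under $u$ of interior points in any basic boundary neighborhood.

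Next, I would set up the boundary-swap data by taking $\overline{v}:(\overline{Y},SZ)\to(\overline{X\times\mathbb{R}},SZ)$ as the map $f$ of the lemma and defining $\overline{u}:\overline{X\times\mathbb{R}}\to\overline{Y}$ in the role of $g$ by extending $u$ via the identity on $SZ$. The required homotopy $K:\overline{Y}\times[0,1]\to\overline{Y}$ is obtained by extending $H:u\circ v\simeq\operatorname{id}_Y$ by the constant homotopy on $SZ$. Granting continuity of $\overline{u}$ and $K$, all remaining hypotheses of the lemma are immediate: $\overline{u}\circ\overline{v}$ is the identity on $SZ$, $K_0=\operatorname{id}_{\overline{Y}}$, $K_1=\overline{u}\circ\overline{v}$, $K$ is stationary on $SZ$, and $K(Y\times[0,1])\subseteq Y$. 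Since $SZ$ is a $\mathcal{Z}$-set in $\overline{X\times\mathbb{R}}$, the lemma then yields the proposition.

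The main obstacle is therefore verifying continuity of $\overline{u}$ and $K$ at points of $SZ$. For $\overline{u}$ at $\langle z,\mu\rangle$, starting from a basic neighborhood $\overline{v}^{-1}(W)$ in $\overline{Y}$, I would seek a smaller neighborhood $W'$ of $\langle z,\mu\rangle$ in $\overline{X\times\mathbb{R}}$ such that $v\circ u$ sends $W'\cap(X\times\mathbb{R})$ into $W$. The homotopy $J:v\circ u\simeq\operatorname{id}$ connects $(x,r)$ to $v(u(x,r))$ through a track whose $d_1$-diameter is bounded by $\eta(|k|)$ when $r\in[k,k+1]$, by Definition~\ref{DefineEta} and the $G$-equivariance observation of Remark~\ref{Remark: about distortion function}. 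Consequently that track sits in a product $B_d[x,\eta(|k|)]\times(\text{a bounded range of levels})$, which by the control condition (\ddag) of Theorem~\ref{Theorem: Controlled Z-compactification of X x R} is contained in a single element of any prescribed refinement of the cover of $\overline{X\times\mathbb{R}}$, provided $(x,r)$ is sufficiently far out. Continuity of $K$ at a point of $SZ\times[0,1]$ proceeds analogously, now using the $\operatorname{diam}_{d_1}v(H(t^{\pm k}C_Y\times[0,1]))$ part of Definition~\ref{DefineEta}. The delicacy is that the tracks of $J$ and $H$ need not be exactly level-preserving, so the mild level-spread must be absorbed into the $\eta(|k|)$ bound, which is precisely the reason $\eta$ was designed in advance of invoking Theorem~\ref{Theorem: Controlled Z-compactification of X x R}.
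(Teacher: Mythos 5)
Your proposal does not prove the statement you were given. The statement is the boundary-swap lemma itself (Lemma~\ref{Lemma: Boundary swap}): an abstract claim about arbitrary separable metric pairs $(X,A)$ and $(Y,B)$ with maps $f,g$ and a homotopy $K$, concluding that $A$ is a $\mathcal{Z}$-set in $X$ whenever $B$ is a $\mathcal{Z}$-set in $Y$. Your very first sentence is that you will \emph{apply} Lemma~\ref{Lemma: Boundary swap}, and everything that follows concerns the specific spaces $\overline{Y}$ and $\overline{X\times\mathbb{R}}$ and the maps $v,u,H,J,\eta$ from Sections~\ref{Section: Mapping tori and telescopes as classifying spaces}--\ref{sec:proof}. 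Relative to the assigned statement this is circular: the lemma is assumed, never established. (The paper does not prove it either; it is imported from \cite[Lemma 3.1]{GuMo19}, which is based on \cite[Prop. 1.6]{Fer00}. A genuine proof would have to start from a $\mathcal{Z}$-set homotopy $\alpha$ for $B$ in $Y$ and build one for $A$ in $X$ out of $K$ and compositions such as $g\circ\alpha_t\circ f$, using $g\circ f|_A=\operatorname{id}_A$ and $K((X-A)\times[0,1])\subseteq X-A$; no step of that kind appears in your write-up.)

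What you have actually written is a proof of Proposition~\ref{Prop: Y-hat is a Z-compactification}, and as such it tracks the paper's own argument closely: extend $u$ and $H$ over $SZ$ by the identity, verify continuity of $\overline{u}$ and $\overline{H}$ at points of $SZ$ using the control built into $\eta$ (Definition~\ref{DefineEta}) together with condition (\ddag), and then invoke Lemma~\ref{Lemma: Boundary swap} with $\overline{v},\overline{u},\overline{H}$ and Proposition~\ref{ProductWithJoinIsANR}. Your preliminary check that $\overline{Y}$ is a compactification is what the paper delegates to Remark~\ref{Remark: pullback compactification}, and your observation that the near-level-preserving spread of the $H$- and $J$-tracks must be absorbed into $\eta$ is consistent with the paper's use of $d_1$-diameters in Definition~\ref{DefineEta}. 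So the material is sound as a proof of that proposition, but the lemma you were asked to prove remains unproved.
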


\begin{proof}
[Proof of Proposition \ref{Prop: Y-hat is a Z-compactification}]Recall the
maps $u:X\times%
\mathbb{R}
\rightarrow Y$ and $H:Y\times\left[  0,1\right]  \rightarrow Y$ defined in
Section \ref{Section: Mapping tori and telescopes as classifying spaces}, and
let $\overline{u}:\overline{X\times%
\mathbb{R}
}\rightarrow\overline{Y}$ and $\overline{H}:\overline{Y}\times\left[
0,1\right]  \rightarrow\overline{Y}$ be extensions via the identity on $SZ$.
In order to apply Lemma \ref{Lemma: Boundary swap}, it suffices to show that
$\overline{u}$ and $\overline{H}$ are continuous.

We will use the following notational convention: Whenever $\overline{V}$
denotes a subset of $\overline{Y}$ [resp., $\overline{X\times%
\mathbb{R}
}$], $V$ will denote $\overline{V}\cap Y$ [resp., $\overline{V}\cap(X\times%
\mathbb{R}
)$]. \medskip

\noindent\emph{Claim 1.}\textit{\emph{ }}$\overline{u}$\textit{ }\emph{is
continuous.}\medskip

It suffices to verify continuity at points of $SZ$. Let $\langle z_{0},\mu
_{0}\rangle\in SZ$ and $\overline{v}^{-1}(\overline{V})$ be a basic open
neighborhood of $\overline{u}(\langle z_{0},\mu_{0}\rangle)=\langle z_{0}%
,\mu_{0}\rangle$ in $\overline{Y}$. The goal is to find a basic open
neighborhood $U(\left\langle z_{0},\mu\right\rangle ,\varepsilon_{0})$ of
$\langle z_{0},\mu_{0}\rangle$ in $\overline{X\times%
\mathbb{R}
}$ such that $\overline{u}(U(\left\langle z_{0},\mu\right\rangle
,\varepsilon_{0}))\subseteq\overline{v}^{-1}(\overline{V})$, i.e.,
$\overline{v}\left(  \overline{u}(U(\left\langle z_{0},\mu\right\rangle
,\varepsilon_{0}))\right)  \subseteq\overline{V}$.

To begin, assume that $\mu_{0}\neq\pm\infty$ and choose $j\in%
\mathbb{Z}
$ so that $j-1<\mu_{0}<j+1$. Let $\overline{W}$ of be an open neighborhood of
$\langle z_{0},\mu_{0}\rangle$ such that $\operatorname*{cl}(\overline
{W})\subseteq\overline{V}$. Then $\left\{  \overline{X\times%
\mathbb{R}
}-\operatorname*{cl}(\overline{W}),\overline{V}\right\}  $ is an open cover of
$\overline{X\times%
\mathbb{R}
}$, so by Lemma \ref{LebesgueNumberOfBoundary}, there exists $\varepsilon
^{\prime}>0$ such that, if $U(\left\langle z^{\prime},\mu^{\prime
}\right\rangle ,\varepsilon^{\prime})\cap\operatorname*{cl}(\overline{W}%
)\neq\varnothing$, then $U(\left\langle z^{\prime},\mu^{\prime}\right\rangle
,\varepsilon^{\prime})\subseteq\overline{V}$. Choose $Q\times\left[
-N,N\right]  $ be sufficiently large that $N\geq j+1$ and every set of the
form $B_{d}\left[  x,\eta(k)\right]  \times\left[  k,k+1\right]  $ which lies
outside $Q\times\left[  -N,N\right]  $ is contained in some $U(\left\langle
z,\mu\right\rangle ,\varepsilon^{\prime})$. Choose $\varepsilon_{0}>0$ so that:

\begin{itemize}
\item $U(\left\langle z_{0},\mu\right\rangle ,\varepsilon_{0})\subseteq
\overline{W}$,

\item $\varepsilon_{0}\leq\operatorname*{dist}( \mu_{0},\left\{
j-1,j+1\right\}  ) $, and

\item $d(x,Q)>2\eta(N)$ for all $(x,r)\in U(\left\langle z_{0},\mu
_{0}\right\rangle ,\varepsilon_{0})$.
\end{itemize}

Let $(x,r)\in U(\left\langle z_{0},\mu\right\rangle ,\varepsilon_{0})$. Then
$r\in\lbrack j-1,j+1]$. Assume $r\in\left[  j,j+1\right]  $ (the case
$r\in\lbrack j-1,j]$ is similar). Choose $g\in G$ so that $x\in gC_{X}$. Then%
\[
(x,r)\in gC_{X}\times\left[  j,j+1\right]  \subseteq J(gC_{X}\times\left[
j,j+1\right]  \times\left[  0,1\right]  )\subseteq B_{d}\left[  gx_{0}%
,\eta(j)\right]  \times\left[  j,j+1\right]
\]
Since $d(x,Q)>2\eta(N)>2\eta(j)$, then $B_{d}\left[  gx_{0},\eta(j)\right]
\times\left[  j,j+1\right]  $ lies outside $Q\times\left[  -N,N\right]  $,
hence entirely in some basic open set $U(\left\langle z^{\prime},\mu^{\prime
}\right\rangle ,\varepsilon^{\prime})$. This basic open set intersects
$\overline{W}$ at $\left(  x,r\right)  $, so by choice of $\varepsilon
^{\prime}$, $U(\left\langle z^{\prime},\mu^{\prime}\right\rangle
,\varepsilon^{\prime})\subseteq\overline{V}$. Furthermore, since $B_{d}\left[
gx_{0},\eta(j)\right]  \times\left[  j,j+1\right]  $ contains $J((x,r)\times
\left[  0,1\right]  )$, then $U(\left\langle z^{\prime},\mu^{\prime
}\right\rangle ,\varepsilon^{\prime})$ contains $\overline{v}\circ\overline
{u}(x)$. This means that $\overline{v}(\overline{u}(x,r))\in\overline{V}$, as
desired. Since $\overline{v}\circ\overline{u}$ is the identity on $SZ$, it
follows that $\overline{v}(\overline{u}(U(\left\langle z_{0},\mu\right\rangle
,\varepsilon_{0})))\subseteq\overline{V}$.

A similar, but easier, argument verifies continuity of $\overline{u}$ at
$\left\langle \pm\infty\right\rangle $.\medskip

\noindent\emph{Claim 2.}\textit{\emph{ }}$\overline{H}$\textit{ }\emph{is
continuous.}\medskip

Let $\langle z_{0},\mu_{0}\rangle\in SZ$ and $\overline{v}^{-1}(\overline{V})$
be is a basic open neighborhood of $\langle z_{0},\mu_{0}\rangle$ in
$\overline{Y}$. Then $\overline{V}$ is an open neighborhood of $\overline
{v}(\langle z_{0},\mu_{0}\rangle)=\langle z_{0},\mu_{0}\rangle$ in
$\overline{X\times%
\mathbb{R}
}$. Arguing in much the same way as above, we may choose a basic open
neighborhood $U(\left\langle z_{0},\mu_{0}\right\rangle ,\varepsilon
_{0})\subseteq\overline{V}$ so small that, if $y\in Y$ and $\overline{v}(y)\in
U(\left\langle z_{0},\mu_{0}\right\rangle ,\varepsilon_{0})$ then
$\overline{v}(H(y\times\left[  0,1\right]  ))\subseteq\overline{V}$. (This
uses the first control condition in Definition \ref{DefineEta}.) As such,
$\overline{v}(\overline{H}(\overline{v}^{-1}(U(\left\langle z_{0},\mu
_{0}\right\rangle ,\varepsilon_{0}))\times\left[  0,1\right]  ))\subseteq$
$\overline{V}$, so $\overline{H}(\overline{v}^{-1}(U(\left\langle z_{0}%
,\mu_{0}\right\rangle ,\varepsilon_{0}))\times\left[  0,1\right]
)\subseteq\overline{v}^{-1}(\overline{V})$, implying that $\overline{H}$ is
continuous at points of $\langle z_{0},\mu_{0}\rangle\times\left[  0,1\right]
$.
\end{proof}

We are now ready to complete the main task of this section.

\begin{proof}
[Proof of Theorem \ref{Theorem A}]{Thus far we }have a geometric action of
$G\rtimes_{\phi}%
\mathbb{Z}
$ on $Y=\operatorname*{Tel}_{f}(X)$, and a $\mathcal{Z}$-compactification
$\overline{Y}=Y\sqcup SZ$. We have already noted that $Y$ is a contractible
locally finite CW complex---hence an AR---and that a $\mathcal{Z}%
$-compactification of an AR is an AR. It remains only to show that
$(\overline{Y},SZ)$ satisfies the nullity condition. It suffices to show that
the $(G\rtimes_{\phi}%
\mathbb{Z}
)$-translates of the fundamental domain $C_{Y}$ form a null family in
$\overline{Y}$.

Let $\mathcal{V}$ be an open cover of $\overline{Y}$. By passing to an open
refinement, we may assume $\mathcal{V}$ consists entirely of basic open sets
and thus contains a subcollection $\left\{  \overline{v}^{-1}(U_{\alpha
})\right\}  _{\alpha\in A}$ which covers $SZ$. It follows that $\left\{
U_{\alpha}\right\}  _{\alpha\in A}$ covers $SZ$ in $\overline{X\times%
\mathbb{R}
}$. Choose a single open set $U_{0}=B_{d}(x_{0},r)\times(-r,r)$ sufficiently
large that $\mathcal{U=}\left\{  U_{0}\right\}  \cup\left\{  U_{\alpha
}\right\}  _{\alpha\in A}$ covers $\overline{X\times%
\mathbb{R}
}$. Then choose a compact set $Q\times\left[  -N,N\right]  \subseteq X\times%
\mathbb{R}
$ satisfying condition (\ddag). Let $Q^{\prime}$ be the closed $\eta
(N+1)$-neighborhood of $Q$ in $(X,d)$ and choose $P\subseteq Y$ to be a
compact set containing both $v^{-1}(Q^{\prime}\times\left[  -(N+1),N+1\right]
)$ and $v^{-1}(B_{d}[x_{0},r]\times\lbrack-r,r])$. Let $a\in G\rtimes_{\phi}%
\mathbb{Z}
$ such that $aC_{Y}\cap P=\varnothing$. If we use the standard presentation
for semidirect products to express $a$ as $gt^{k}$, then $aC_{Y}%
\subseteq\mathcal{M}_{\left[  k,k+1\right]  }(f)$ and $v(aC_{Y})\subseteq
X\times\left[  k,k+1\right]  $. Since $\operatorname*{diam}_{d_{1}}%
(v(aC_{Y}))\leq\eta(\left\vert k\right\vert )$, then $v(aC_{Y})\subseteq
B_{d_{1}}\left[  x,\eta(k)\right]  \times\left[  k,k+1\right]  $ for some
$x\in X$. Moreover, since $v(aC_{Y})\cap(Q^{\prime}\times\left[
-(N+1),N+1\right]  )=\varnothing$, then $(B_{d_{1}}\left[  x,\eta(k)\right]
\times\left[  k,k+1\right]  )\cap(Q\times\left[  -N,N\right]  )=\varnothing$.
Therefore $B_{d_{1}}\left[  x,\eta(k)\right]  \times\left[  k,k+1\right]  $,
and hence $v(aC_{Y})$, lies in some element of $\mathcal{U}$. That element
cannot be $U_{0}$ since $v(aC_{Y})\cap B_{d}[x_{0},r]\times\lbrack
-r,r]=\varnothing$, so $v(aC_{Y})\subseteq U_{\alpha}$ for some $\alpha\in A$.
Thus $aC_{Y}\subseteq v^{-1}(U_{\alpha})\in\mathcal{V}$. Since the action of
$G\rtimes_{\phi}%
\mathbb{Z}
$ on $Y$ is proper, the nullity condition follows.
\end{proof}


\section{$\mathcal{Z}$-structures for $G\rtimes_{\phi}%
\mathbb{Z}
$ when torsion is permitted\label{sec: proof with torsion permitted}}


We now turn to the general case where $G$ is permitted to have torsion. As
before, we assume a $\mathcal{Z}$-structure $(\overline{X},Z)$ on $G$, so
there is a proper, cocompact $G$-action on an AR $X$, but now the action need
not be free. As a result, the quotient map is not a covering projection. This
disables the tricks used at the beginning of Section
\ref{Section: Mapping tori and telescopes as classifying spaces} to obtain a
finite $K(G,1)$ complex, which was used to build: a finite $K(G\rtimes_{\phi}%
\mathbb{Z}
,1)$ complex; a corresponding universal cover; and a variety of maps and
homotopies. Without all the benefits of covering space theory, a more hands-on
approach is required.

Since the trick that allowed us to pass from ARs and ANRs to the category of
CW complexes is no longer available, we will deal directly with A(N)Rs
whenever possible. In several places, however, we can get slightly stronger
conclusions by assuming the existence of cell structures. Sometimes those
stronger conclusions are not needed, but in a few of the more delicate
applications they are crucial. For that reason, we include both A(N)R and
cellular versions in much of what follows.

In this section we will prove:

\begin{theorem}
\label{Theorem: Most general main theorem}Let $G$ be a group admitting a
$\mathcal{Z}$-structure $(\overline{X},Z)$ and let $\phi\in\operatorname*{Aut}%
\left(  G\right)  $. If there exist $\phi$-variant and $\phi^{-1}$-variant
self-maps of $X$ then $G\rtimes_{\phi}\mathbb{Z}$ admits a $\mathcal{Z}%
$-structure of the form $(\overline{Y},SZ)$.
\end{theorem}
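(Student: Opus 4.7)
The plan is to mirror the torsion-free argument of Sections 3–5, replacing the appeals to covering space theory (which produced a finite $K(G,1)$ and its universal cover) by direct upstairs constructions built from the hypothesized $\phi$-variant $f:X\to X$ and $\phi^{-1}$-variant $h:X\to X$. The overall architecture---form a telescope $Y$, build a controlled proper homotopy equivalence $v:Y\to X\times\mathbb{R}$, $\mathcal{Z}$-compactify the product by the suspension $SZ$ with a carefully chosen control function $\eta$, and pull the compactification back to $Y$---carries over intact; what must be redone is the construction of $Y$, the $(G\rtimes_{\phi}\mathbb{Z})$-action on it, and the quartet $(v,u,H,J)$.

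I would set $Y=\operatorname*{Tel}_{f}(X)$. By Lemma \ref{Lemma: basic facts about cylinders and telescopes}, $Y$ is a contractible locally compact separable metric ANR, hence an AR (assuming $f$ is proper, which may be arranged from the cocompactness of the action). Next I would define the $(G\rtimes_{\phi}\mathbb{Z})$-action on $Y$ directly: let $t$ act as the shift $\lceil x,r\rceil\mapsto\lceil x,r+1\rceil$, and for $g\in G$ set $g\cdot\lceil x,r\rceil=\lceil\phi^{\lfloor r\rfloor}(g)\cdot x,r\rceil$, extending along mapping cylinder lines in the evident piecewise way. The $\phi$-variance of $f$ is exactly what is needed for this rule to respect the gluing identifications $(x,k{+}1)\sim f(x)$, and a direct check realizes the defining relation $t^{-1}gt=\phi(g)$. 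Properness and cocompactness of the action on $Y$ then follow from those of the $G$-action on $X$, with fundamental domain $C_{Y}=\mathcal{M}_{[0,1]}(f|_{C_{X}})$ for any compact $C_{X}\subset X$ whose $G$-translates cover $X$.

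The real work, and the main obstacle, is producing $G$-equivariant proper homotopies $A:h\circ f\simeq\operatorname*{id}_{X}$ and $B:f\circ h\simeq\operatorname*{id}_{X}$. The compositions $h\circ f$ and $f\circ h$ are $G$-equivariant (a $\phi$-variant map composed with a $\phi^{-1}$-variant one is equivariant), and because $X$ is contractible they are each non-equivariantly homotopic to $\operatorname*{id}_{X}$; upgrading these to equivariant homotopies requires equivariant obstruction theory on the proper cocompact $G$-AR $X$, and this is the step where the torsion case genuinely differs from the torsion-free one. I would handle it by the standard arguments available when $X$ is (or can be swapped for) a cocompact $\underline{E}G$-space, which the introduction flags as the natural generality for this theorem. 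Once $A$ and $B$ are in hand, formula (\ref{Definition of v}) and its $u$-analogue, assembled piecewise over the cylinders $\mathcal{M}_{[k,k+1]}(f)$, produce the level-preserving $v$ and the nearly-level-preserving $u$; standard mapping cylinder deformations then furnish the proper homotopies $H$ and $J$, all $G$-equivariant from the equivariance of $f$, $h$, $A$, and $B$.

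With $(v,u,H,J)$ in place the remaining steps proceed without change. Choose $\eta$ as in Definition \ref{DefineEta} to dominate the diameters of $v(H(t^{\pm k}C_{Y}\times[0,1]))$ and $J(C_{X}\times[k,k+1]\times[0,1])$; apply Theorem \ref{Theorem: Controlled Z-compactification of X x R} to obtain a controlled $\mathcal{Z}$-compactification $(X\times\mathbb{R})\sqcup SZ$ satisfying condition (\ddag); pull it back through $\overline{v}$ as in Section \ref{sec:proof}; and invoke Lemma \ref{Lemma: Boundary swap} to conclude that $\overline{Y}=Y\sqcup SZ$ is a $\mathcal{Z}$-compactification. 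The nullity argument used in the proof of Theorem \ref{Theorem A} then shows, essentially word for word, that $(G\rtimes_{\phi}\mathbb{Z})$-translates of $C_{Y}$ form a null family in $\overline{Y}$, completing the $\mathcal{Z}$-structure $(\overline{Y},SZ)$.
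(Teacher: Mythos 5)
Your architecture is sound — telescope $Y=\operatorname*{Tel}_{f}(X)$ with the action $g\cdot\lceil x,r\rceil=\lceil\phi^{\lfloor r\rfloor}(g)\cdot x,r\rceil$, $t\cdot\lceil x,r\rceil=\lceil x,r+1\rceil$, then controlled compactification of $X\times\mathbb{R}$ pulled back along $v$ — and it matches the paper's. The gap is in how you propose to produce the homotopies $A:hf\simeq\operatorname*{id}_{X}$ and $B:fh\simeq\operatorname*{id}_{X}$: you insist they must be $G$-equivariant and, recognizing that equivariant obstruction theory on a bare $G$-AR gives no traction, you retreat to the hypothesis that $X$ is (or can be replaced by) a cocompact $\underline{E}G$-space. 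That hypothesis is not in the statement of this theorem; adding it would prove the weaker Theorem~\ref{Theorem: main theorem for groups with torsion}, not Theorem~\ref{Theorem: Most general main theorem}.

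The paper avoids this entirely by observing that \emph{bounded} homotopies suffice and that these come for free from coarse geometry. The composites $hf$ and $fh$ are $G$-equivariant (a $\phi$-variant map followed by a $\phi^{-1}$-variant one), so by cocompactness they are boundedly close to $\operatorname*{id}_{X}$. Equipping $X$ with a proper $G$-invariant metric makes it uniformly contractible with finite macroscopic dimension, and then \cite[Cor.~5.3]{GuMo19} produces bounded (hence proper) homotopies $A$ and $B$ with no equivariance whatsoever — this is Proposition~\ref{Prop: homotopies A and B in the torsion case}. The resulting $v$, $u$, $H$, $J$ are therefore \emph{not} $G$-equivariant in general (contrary to what your last paragraph assumes), but $v$ is $G$-equivariant when restricted to integer levels by Lemma~\ref{lem:integralequivariance}, which uses only the $\phi$- and $\phi^{-1}$-variance of $f$ and $h$, not any property of $A$ or $B$. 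That partial equivariance, together with the boundedness of $A$ and $B$, is exactly enough for the diameter bounds in Definition~\ref{DefineEta} and the nullity argument of Section~\ref{sec:proof} to go through unchanged. So the coarse-geometric replacement is not an optional shortcut; it is what makes the theorem true at the stated level of generality.
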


In the torsion-free case, the existence of $\phi$-variant and $\phi^{-1}%
$-variant maps was automatic (by covering space theory). As such Theorem
\ref{Theorem A} can be deduced as a corollary of this theorem. When torsion is
present, we are not sure if these maps always exist, but they do exist in a
wide variety of important cases to be discussed in this section. Most notably,
we will see that Theorem \ref{Theorem: Most general main theorem} can be
applied whenever $X$ is an $\underline{E}G$-complex.

\subsection{Constructing $\left(  G\rtimes_{\phi}%
\mathbb{Z}
\right)  $-spaces}

For our purposes, the key to constructing a nice $\left(  G\rtimes_{\phi}%
\mathbb{Z}
\right)  $-space is the existence of a\ $\phi$-variant map from the $G$-space
$X$ to itself.

\begin{theorem}
\label{Theorem: constructing a G-semi-Z AR}Let $X$ be a proper cocompact
$G$-space, $\phi\in\operatorname*{Aut}\left(  G\right)  $, and $f:X\rightarrow
X$ a $\phi$-variant map. Then $Y=\operatorname*{Tel}_{f}\left(  X\right)  $
admits a corresponding proper cocompact $(G\rtimes_{\phi}%
\mathbb{Z}
)$-action. Moreover,

\begin{enumerate}
\item \label{Assertion 1 of constructing a G-semi-Z AR}if $X$ is contractible
then so is $Y$,

\item \label{Assertion 2 of constructing a G-semi-Z AR}if $X$ is an AR then so
is $Y$, and

\item \label{Assertion 3 of constructing a G-semi-Z AR}if $X$ is a [rigid]
$G$-complex and $f$ is cellular, then $Y$ admits a cell structure under which
the $(G\rtimes_{\phi}%
\mathbb{Z}
)$-action is [rigid] cellular.
\end{enumerate}
\end{theorem}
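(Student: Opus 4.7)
The plan is to construct the $(G\rtimes_\phi\mathbb{Z})$-action explicitly on the telescope, then read off assertions (1)--(3) from the already-proved Lemma~\ref{Lemma: basic facts about cylinders and telescopes}. Using the notation $\lceil x,r\rceil$ from Notation~\ref{Notation: points in a telescope}, I define
\[
t\cdot\lceil x,r\rceil \;=\; \lceil x, r+1\rceil, \qquad g\cdot\lceil x,r\rceil \;=\; \bigl\lceil \phi^{\lfloor r\rfloor}(g)\cdot x,\; r\bigr\rceil \quad (g\in G).
\]
This formula is motivated by (indeed forced by) the description of the Cayley graph in Section~\ref{Section: Mapping tori and telescopes as classifying spaces}: on $X_i$, left multiplication by $g$ must act as $\phi^i(g)$, while $t$ shifts levels.

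The first step is well-definedness and continuity across the mapping-cylinder identifications. The $t$-formula commutes with the quotient relation $(x,k+1)\sim f(x)$ trivially. For $g$, the one-sided limit $\lceil x,r\rceil\to\lceil f(x),k+1\rceil$ as $r\to(k+1)^-$ must match the value of $g$ at level $k+1$. Computing the limit gives $\lceil f(\phi^k(g)\cdot x), k+1\rceil$, and by $\phi$-variance of $f$ this equals $\lceil \phi^{k+1}(g)\cdot f(x), k+1\rceil$, which is exactly $g\cdot\lceil f(x),k+1\rceil$. Here is where the hypothesis on $f$ is essential; without $\phi$-variance the formula does not descend to the telescope. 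Next I verify the HNN relation $t^{-1}gt=\phi(g)$: both $gt\cdot\lceil x,r\rceil$ and $t\phi(g)\cdot\lceil x,r\rceil$ collapse to $\lceil \phi^{\lfloor r\rfloor+1}(g)\cdot x, r+1\rceil$. Hence the formulas extend to a homomorphism $G\rtimes_\phi\mathbb{Z}\to\operatorname{Homeo}(Y)$.

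For cocompactness, pick a compact set $C\subseteq X$ whose $G$-translates cover $X$ and let $C_Y=\mathcal{M}_{[0,1]}(f|_C)$, which is compact because $f(C)$ is compact. The $G$-translates of $C_Y$ cover $\mathcal{M}_{[0,1]}(f)$, and the powers of $t$ sweep this fundamental strip over all of $Y$, so the $(G\rtimes_\phi\mathbb{Z})$-translates of $C_Y$ cover $Y$. Properness follows because any compact $K\subseteq Y$ meets only finitely many strips $\mathcal{M}_{[k,k+1]}(f)$, and the elements $gt^k$ with $gt^k K\cap K\ne\emptyset$ are restricted to finitely many $k$ (by the level-shifting of $t$) and then finitely many $g$ (by properness of the $G$-action on each $X_i$, which is the original $G$-action twisted by $\phi^i$).

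Assertions (1) and (2) are immediate from items~(\ref{Item 2: Lemma on basic facts on cylinders and telescopes}) and~(\ref{Item 4: Lemma on basic facts on cylinders and telescopes}) of Lemma~\ref{Lemma: basic facts about cylinders and telescopes}. For (3), item~(\ref{Item 5: Lemma on basic facts on cylinders and telescopes}) of that lemma supplies a locally finite CW structure on $Y$ whose cells are either the cells of each $X_i$ or product cells $\sigma\times[i,i+1]$ coming from cells $\sigma$ of $X$. The action is cellular because $t$ identifies $X_i$-cells with $X_{i+1}$-cells and cylinder cells with cylinder cells, while $g\in G$ acts on the $X_i$-factor by the cellular homeomorphism $\phi^i(g)$. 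The main thing to check for the rigid case is that no element $gt^k$ can fix a cell setwise without fixing it pointwise: if $k\neq 0$ then $gt^k$ strictly shifts levels and cannot fix any cell (each cell is contained in a single strip $\mathcal{M}_{[i,i+1]}(f)$), so $k=0$; then $g$ acts on the cell as $\phi^i(g)$, and rigidity of the $G$-action on $X$ forces this to be the pointwise identity. The one potentially tricky place—cells at integer levels, which are shared between adjacent strips—does not cause trouble since $t$ still strictly shifts them. The main obstacle throughout is the verification of continuity, which is the place where the $\phi$-variance hypothesis is used; everything else is bookkeeping built on Lemma~\ref{Lemma: basic facts about cylinders and telescopes}.
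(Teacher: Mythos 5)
Your proof is correct and follows essentially the same route as the paper: same action formulas on $\lceil x,r\rceil$, same use of $\phi$-variance to get continuity of $g$ across cylinder identifications, same computational check of $t^{-1}gt=\phi(g)$, and the same appeal to Lemma~\ref{Lemma: basic facts about cylinders and telescopes} for assertions (1)--(3). The only difference is that you fill in details the paper leaves to the reader (properness, cocompactness, and the rigidity check in item (3)), and those details are sound.
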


\begin{proof}
Lemma \ref{Lemma: basic facts about cylinders and telescopes} assures that $Y$
is locally compact, separable and metrizable. Applying Notation
\ref{Notation: points in a telescope}, for each $g\in G$, define%
\begin{equation}
g\cdot\lceil x,r\rceil=\lceil\phi^{\left\lfloor r\right\rfloor }\left(
g\right)  \cdot x,r\rceil\label{Defn: G acting on Y}%
\end{equation}
and let%
\begin{equation}
t\cdot\lceil x,r\rceil=\lceil x,r+1\rceil\label{Defn: t acting on Y}%
\end{equation}

Clearly $t$ determines a self-homeomorphism of $Y$ with inverse $t^{-1}%
\cdot\left(  x,r\right)  =\left(  x,r-1\right)  $. To see that each
$g:Y\rightarrow Y$ is continuous, note first that $g$ is continuous on the
individual subcylinders $\mathcal{M}_{[k,k+1)}\left(  f\right)  $.
Furthermore, for each mapping cylinder segment $\left\{  x\right\}
\times\lbrack k,k+1)$ in $\mathcal{M}_{[k,k+1)}\left(  f\right)  $ converging
to $\left(  f\left(  x\right)  ,k+1\right)  $ in $\mathcal{M}_{[k+1,k+2)}%
\left(  f\right)  $, the $g$-translate $\left\{  \phi^{k}\left(  g\right)
\cdot x\right\}  \times\lbrack k,k+1)$ converges to $\left(  f\left(  \phi
^{k}\left(  g\right)  \cdot x\right)  ,k+1\right)  $ in $\mathcal{M}%
_{[k+1,k+2)}\left(  f\right)  $. By $\phi$-variance, $\left(  f\left(
\phi^{k}\left(  g\right)  \cdot x\right)  ,k+1\right)  =\left(  \phi
^{k+1}\left(  g\right)  \cdot f\left(  x\right)  ,k+1\right)  $ which is
precisely $g\cdot\left(  f\left(  x\right)  ,k+1\right)  $. As such, $g$ is
continuous on $Y$.

Next note that
\begin{align*}
t^{-1}gt\cdot\left(  x,r\right)   &  =t^{-1}g\cdot\left(  x,r+1\right)  \\
&  =t^{-1}\cdot(\phi^{\left\lfloor r+1\right\rfloor }\left(  g\right)  \cdot
x,r+1)\\
&  =(\phi^{\left\lfloor r+1\right\rfloor }\left(  g\right)  \cdot x,r)\\
&  =(\phi^{\left\lfloor r\right\rfloor }\phi\left(  g\right)  \cdot
x,r)=\phi\left(  g\right)  \cdot(x,r)
\end{align*}
So the semidirect product relators are satisfied, and we have the desired
action. We leave it to the reader to check that this action is proper and cocompact.

Assertions \ref{Assertion 1 of constructing a G-semi-Z AR}%
)-\ref{Assertion 3 of constructing a G-semi-Z AR}) follow easily from further
application of Lemma \ref{Lemma: basic facts about cylinders and telescopes}.
\end{proof}

\subsection{Fixed sets and the existence $\phi$-variant
maps\label{Subsection: fixed sets and phi-variant maps}}

We now investigate situations where $\phi$-variant (and $\phi^{-1}$-variant)
maps can be shown to exist. Some commonly used assumptions about fixed point
sets will be useful.

\begin{definition}
Let $G$ be a group and $\mathcal{F}_{G}$ the collection of all finite
subgroups of $G$. A $G$-space $X$ is called $\mathcal{F}_{G}$%
\emph{-contractible} if, for every $H\in\mathcal{F}_{G}$, the fixed set
\[
X^{H}=\left\{  x\in X\mid hx=x\text{ for all }h\in H\right\}
\]
is contractible. If, in addition, each $X^{H}$ is an $AR$ the action is called
$\mathcal{F}_{G}^{AR}$\emph{-contractible}
\end{definition}

\begin{definition}
\label{Defn: EbarG spaces and complexes}A proper, rigid, $\mathcal{F}_{G}%
$-contractible $G$-complex is called an $\underline{E}G$\emph{-complex}. A
proper, $\mathcal{F}_{G}^{AR}$-contractible $G$-AR is called an $\underline{E}%
G_{\text{AR}}$\emph{-space}.
\end{definition}

\begin{remark}
By definition an $\underline{E}G$-complex $X$ is contractible, and by rigidity
each $X^{H}$ is a subcomplex. Since a cocompact $\underline{E}G$-complex $X$
is necessarily locally finite, it can be viewed as a special case of a
cocompact $\underline{E}G_{\text{AR}}$-space. (Recall that we require ARs to
be locally compact.)

\end{remark}

The following proposition provides some key examples; it will also be useful
for some of our later applications.

\begin{proposition}
\label{prop:ebarg} If $X$ is a proper cocompact $\mathcal{F}_{G}$-contractible
$G$-space, $\phi\in\operatorname*{Aut}\left(  G\right)  $, and $f:X\rightarrow
X$ is a $\phi$-variant map, then $Y=\operatorname*{Tel}_{f}\left(  X\right)  $
is a proper cocompact $\mathcal{F}_{G\rtimes_{\phi}%
\mathbb{Z}
}$-contractible $\left(  G\rtimes_{\phi}%
\mathbb{Z}
\right)  $-space. Moreover, if $X$ is an $\underline{E}G_{\text{AR}}$-space
then $Y$ an $\underline{E}(G\rtimes_{\phi}%
\mathbb{Z}
)_{\text{AR}}$-space and if $X$ is an $\underline{E}G$-complex and $f$ is
cellular, then $Y$ is an $\underline{E}(G\rtimes_{\phi}%
\mathbb{Z}
)$-complex.
\end{proposition}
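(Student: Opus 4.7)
The proper cocompact action of $G \rtimes_\phi \mathbb{Z}$ on $Y$ is provided by Theorem \ref{Theorem: constructing a G-semi-Z AR}, so the heart of the work lies in analyzing the fixed sets $Y^H$ for finite subgroups $H \leq G \rtimes_\phi \mathbb{Z}$. Since $\mathbb{Z}$ is torsion-free, every such $H$ lies in the kernel of the projection to $\mathbb{Z}$, so one need only consider finite subgroups $H \leq G$.

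Next I would describe $Y^H$ explicitly. Using the action formula from Theorem \ref{Theorem: constructing a G-semi-Z AR}, a point $\lceil x, r \rceil$ is fixed by every $h \in H$ if and only if $x \in X^{\phi^{\lfloor r \rfloor}(H)}$. Using $\phi$-variance of $f$, one checks that $f$ restricts to a map $f_k \colon X^{\phi^k(H)} \to X^{\phi^{k+1}(H)}$ for each $k \in \mathbb{Z}$, and hence the fixed set decomposes as
\[
Y^H \;=\; \bigcup_{k \in \mathbb{Z}} \mathcal{M}_{[k,k+1]}(f_k),
\]
a bi-infinite mapping telescope built from the fixed sets $X^{\phi^k(H)}$ glued by the restrictions $f_k$. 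Since $\phi^k(H)$ is again a finite subgroup of $G$, the $\mathcal{F}_G$-contractibility of $X$ makes each $X^{\phi^k(H)}$ contractible.

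From this description, contractibility of $Y^H$ follows by the same reasoning used in Lemma \ref{Lemma: basic facts about cylinders and telescopes}(\ref{Item 2: Lemma on basic facts on cylinders and telescopes}): each finite subtelescope deformation retracts onto one of the contractible spaces $X^{\phi^k(H)}$ at its domain end, and then the argument of \cite{AnEd16} upgrades this to contractibility of the whole bi-infinite telescope. For the $\underline{E}G_{\text{AR}}$ strengthening, each $\mathcal{M}_{[k,k+1]}(f_k)$ is an ANR by the same Hu-theoretic reasoning behind Lemma \ref{Lemma: basic facts about cylinders and telescopes}(\ref{Item 3: Lemma on basic facts on cylinders and telescopes}) (which does not actually require the map to be a self-map), so $Y^H$ is a contractible, locally compact ANR, hence an AR.

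For the cellular strengthening, rigidity of the $G$-action makes each $X^{\phi^k(H)}$ a subcomplex of $X$, and cellularity of $f$ makes each $f_k$ cellular, so $Y^H$ is a subcomplex in the CW structure on $Y$ provided by Theorem \ref{Theorem: constructing a G-semi-Z AR}(\ref{Assertion 3 of constructing a G-semi-Z AR}). For rigidity of the $(G \rtimes_\phi \mathbb{Z})$-action on $Y$, any torsion element lies in $G$, and every cell of $Y$ either lies inside some level $X_k$ (on which $G$ acts via $\phi^k$, hence rigidly) or has the form $\tau \times [k,k+1]$ for a cell $\tau$ of $X_k$; in both situations set-wise fixing forces pointwise fixing via the rigidity of $X$. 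The only technical point to watch—that the telescope defining $Y^H$ is inhomogeneous, with levels $X^{\phi^k(H)}$ varying with $k$—is not a genuine obstacle, since each of the ingredients quoted above (mapping-cylinder ANR theorems, deformation retractions onto domain ends, the direct-limit contractibility argument) applies equally well to arbitrary maps between ANRs.
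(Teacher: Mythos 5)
Your argument is correct and follows the paper's proof closely: both identify $Y^H$ as the inhomogeneous sub-telescope built from the fixed sets $X^{\phi^k(H)}$ connected by restrictions of $f$ (which exist by $\phi$-variance), and then re-run the contractibility/ANR/CW reasoning of Lemma~\ref{Lemma: basic facts about cylinders and telescopes}, with the correct observation that nothing there truly requires the bonding maps to be self-maps of a single space. One small imprecision in your rigidity check: rigidity must be verified for \emph{all} elements of $G\rtimes_\phi\mathbb{Z}$, not just torsion ones, so the operative point is not that torsion lies in $G$ but rather that any $gt^n$ with $n\neq 0$ shifts levels and therefore cannot set-wise preserve a cell of $Y$; this reduces the rigidity check to the $G$-action, which your cell-by-cell analysis then handles correctly.
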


\begin{proof}
For the initial assertion, let $H\leq G\rtimes_{\phi}%
\mathbb{Z}
$ be finite. Then $H\leq$ $G\leq G\rtimes_{\phi}%
\mathbb{Z}
$, and by hypothesis, $X^{H}$ is contractible, as is $X^{\phi^{i}(H)}$ for all
$i\in%
\mathbb{Z}
$. Furthermore, if $h\cdot x=x$, then $\phi\left(  h\right)  \cdot f\left(
x\right)  =f\left(  x\right)  $; so $f(X^{\phi^{i}(H)})\subseteq X^{\phi
^{i+1}(H)}$ for all $i\in%
\mathbb{Z}
$. It follows that $Y^{H}$ is the sub-mapping telescope defined by the
following subspaces and restriction maps.
\[
\cdots\overset{\left.  f\right\vert }{\longrightarrow}X^{\phi^{-2}%
(H)}\overset{\left.  f\right\vert }{\longrightarrow}X^{\phi^{-1}%
(H)}\overset{\left.  f\right\vert }{\longrightarrow}X^{H}\overset{\left.
f\right\vert }{\longrightarrow}X^{\phi(H)}\overset{\left.  f\right\vert
}{\longrightarrow}X^{\phi^{2}(H)}\overset{\left.  f\right\vert
}{\longrightarrow}\cdots
\]
By an application of Lemma
\ref{Lemma: basic facts about cylinders and telescopes}, since each
$X^{\phi^{i}(H)}$ is contractible, so is $Y^{H}$.

The additional assertions from similar reasoning.
\end{proof}

See \cite{LeNu03} for examples of groups that do not admit any cocompact
$\underline{E}G$-space but which do act cocompactly on a contractible CW-complex.

We now turn to the task of constructing $\phi$-variant maps.

\begin{proposition}
\label{PhiVariantMap} If $X$ is a proper cocompact $G$-space, $X^{\prime}$ is
an $\mathcal{F}_{G}$\emph{-contractible} $G$-space, and $\phi\in
\operatorname*{Aut}\left(  G\right)  $, then there exists a $\phi$-variant map
$f:X\rightarrow X^{\prime}$. If $X$ and $X^{\prime}$ are $\underline{E}%
G$-complexes, then $f$ can be chosen to be cellular.
\end{proposition}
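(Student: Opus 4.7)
The plan is to reformulate $\phi$-variance as ordinary $G$-equivariance with respect to a twisted action, and then construct the map by induction on skeleta, using contractibility of fixed sets at each step. Define $X'_{\phi}$ to be $X'$ equipped with the twisted action $g\ast y:=\phi(g)\cdot y$. Then $f:X\to X'$ is $\phi$-variant in the original sense if and only if $f:X\to X'_{\phi}$ is $G$-equivariant. For any finite $H\leq G$ the fixed set $(X'_{\phi})^{H}=(X')^{\phi(H)}$, and $\phi(H)$ is finite because $\phi$ is an automorphism, so $X'_{\phi}$ is again $\mathcal{F}_{G}$-contractible; if $X'$ is a rigid $G$-complex then $X'_{\phi}$ is as well, since the cells and stabilizers coincide. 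The problem thus reduces to producing a $G$-equivariant map $X\to X'_{\phi}$ with a possible cellularity upgrade.

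In the $\underline{E}G$-complex case, I would choose a set of $G$-orbit representatives for the cells of $X$ and build $f$ by induction on dimension. For a representative $0$-cell $v$ with (finite, by properness) stabilizer $G_{v}$, pick any point of $(X'_{\phi})^{G_{v}}=(X')^{\phi(G_{v})}$, which is nonempty because it is contractible. Extend to the orbit by $f(g\cdot v):=\phi(g)\cdot f(v)$; this is well-defined because $G_{v}$ fixes $f(v)$ under the twisted action. Given $f$ on the $(n-1)$-skeleton, for each orbit representative $n$-cell $\sigma$ with (finite) stabilizer $G_{\sigma}$, rigidity forces $G_{\sigma}$ to fix $\sigma$ pointwise, and the $\phi$-variance already imposed on $\partial\sigma$ forces $f(\partial\sigma)\subseteq (X')^{\phi(G_{\sigma})}$. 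Contractibility of that fixed set produces an extension over $\sigma$ staying inside the fixed set, which we then push equivariantly across the orbit. To obtain a cellular map, perform an equivariant cellular approximation on each orbit representative relative to its boundary; by rigidity the fixed sets $(X')^{\phi(G_{\sigma})}$ are subcomplexes of $X'$, so the approximation can be carried out inside them and then spread equivariantly, preserving $\phi$-variance.

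For the general statement, where no cell structure on $X$ or $X'$ is assumed, I would run the same induction after replacing $X$ by an equivariant CW-approximation (which exists for proper cocompact actions by results in the spirit of Illman's equivariant triangulation theorem), or alternatively use an equivariant partition of unity subordinate to a locally finite cover of $X$ by slices around orbit representatives, defining $f$ locally via the contractibility of the appropriate fixed sets and gluing. Either route reduces to the same inductive compatibility problem.

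The main obstacle, and the step that requires the hypotheses most critically, is the inductive compatibility: each extension over a new cell is constrained to land in the fixed set determined by prior boundary data, and the construction only closes up because rigidity plus $\mathcal{F}_{G}$-contractibility together guarantee that every such forced fixed set $(X')^{\phi(G_{\sigma})}$ is nonempty and contractible. The twist trick is what lets us quote $\mathcal{F}_{G}$-contractibility of $X'$ itself, rather than needing a separate hypothesis about fixed sets of images of finite subgroups under $\phi$.
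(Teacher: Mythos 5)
Your twist trick — turning $\phi$-variance into $G$-equivariance for the twisted action on $X'_\phi$ and observing that $X'_\phi$ remains $\mathcal{F}_G$-contractible — is exactly the reduction the paper makes (in Remark~\ref{Remark: Adjusting the action}). From there, the two arguments diverge in a substantive way.

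For the $\underline{E}G$-complex case, you rebuild from scratch the content of Theorem~\ref{FarrellJonesTheorem} (Farrell--Jones, Theorem A.2): skeleton-by-skeleton induction over orbit representatives, sending a representative $k$-cell $\sigma$ into the contractible fixed set $(X')^{\phi(G_\sigma)}$ and extending equivariantly over the orbit, with rigidity guaranteeing the stabilizer fixes $\sigma$ pointwise so the target fixed set is forced by the boundary data. This is correct and is essentially the standard proof of that theorem; the paper simply cites it. Reproving it costs you more ink but makes the argument self-contained, and you are right that rigidity plus $\mathcal{F}_G$-contractibility are exactly what close the induction.

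For the general case (where $X$ is merely a proper cocompact $G$-space, no cell structure), your sketch is under-specified in a way that matters. ``Replace $X$ by an equivariant CW-approximation'' does not by itself produce a map \emph{from} $X$: you would still need a $G$-equivariant map $X\to K$ for your CW model $K$, and Illman-type triangulation theorems apply to $G$-manifolds, not to arbitrary proper cocompact $G$-spaces. Your second suggestion (equivariant partition of unity, define $f$ locally, glue) is closer, but ``gluing local definitions in $X'$'' is not straightforward because $X'$ has no affine structure to average with. The paper's actual resolution is a clean two-step factorization that your sketch gestures at but does not quite articulate: build a $G$-invariant cover $\mathcal{U}$ of $X$ by small metric balls (using the discreteness of orbits and cocompactness to arrange local finiteness, finite dimension, and rigidity of the induced action on the nerve), take the nerve $N(\mathcal{U})$ — which \emph{is} a rigid, proper, cocompact $G$-simplicial complex — map $X\to |N(\mathcal{U})|$ by the barycentric/partition-of-unity map $\beta$ (this is where the partition of unity actually lives, producing barycentric coordinates rather than attempting to glue in $X'$), and then apply the CW-case result to get a $\phi$-variant map $h:|N(\mathcal{U})|\to X'$, setting $f = h\circ\beta$. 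That intermediary nerve is the missing ingredient that turns your hand-waving ``define locally and glue'' into a valid argument; without naming it, the general case of your proposal has a genuine gap.

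Finally, a small point worth flagging: in your inductive step you invoke properness to get finite cell stabilizers, which is correct for a rigid proper $G$-complex, and your observation that $(X'_\phi)^H = (X')^{\phi(H)}$ with $\phi(H)$ finite is exactly right and is what makes the twist trick harmless. Those parts of the argument are sound.
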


We will use the following special case of \cite[Theorem A.2]{FaJo93}.

\begin{theorem}
\label{FarrellJonesTheorem} Let $X$ be a rigid $G$-CW complex with finite cell
stabilizers, and let $X^{\prime}$ an $\mathcal{F}_{G}$-contractible $G$-space.
Then there is a $G$-equivariant map from $X$ to $X^{\prime}$, and any two such
maps are homotopic through $G$-maps. If $X^{\prime}$ is a rigid $G$-CW complex
(hence an $\underline{E}G$-complex), then the maps and homotopies can be
chosen to be cellular.
\end{theorem}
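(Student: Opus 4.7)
The plan is to prove this by equivariant obstruction theory, building the map $f\colon X\to X'$ inductively over a set of $G$-orbit representatives of the cells of $X$. The key use of the hypotheses is: (a) rigidity of the action on $X$ ensures that for each cell $\sigma$ with stabilizer $G_\sigma$, the group $G_\sigma$ fixes $\sigma$ pointwise, so any $G$-equivariant map must send $\sigma$ into the fixed set $X'^{G_\sigma}$; (b) finite cell stabilizers together with $\mathcal{F}_G$-contractibility of $X'$ guarantee that each such $X'^{G_\sigma}$ is contractible, hence accepts arbitrary extensions of maps off boundaries of disks.

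The construction proceeds as follows. Choose a representative $\sigma_\alpha$ in each $G$-orbit of cells of $X$. Define $f$ on the $0$-skeleton by sending each orbit-representative vertex $v$ to any point of $X'^{G_v}$ (nonempty, as it is contractible) and extending $G$-equivariantly by $f(g\cdot v)=g\cdot f(v)$; this is well-defined because the chosen image lies in $X'^{G_v}$. For the inductive step, suppose $f$ has been defined equivariantly and continuously on the $(n-1)$-skeleton. For each $n$-cell representative $\sigma_\alpha$ with characteristic map $\chi\colon D^n\to X$, the composition $f\circ\chi|_{\partial D^n}$ is $G_{\sigma_\alpha}$-equivariant, and since $G_{\sigma_\alpha}$ fixes $\sigma_\alpha$ pointwise, its image actually lies in $X'^{G_{\sigma_\alpha}}$. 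Contractibility of $X'^{G_{\sigma_\alpha}}$ allows us to extend to $D^n\to X'^{G_{\sigma_\alpha}}\subseteq X'$; then extend over the $G$-orbit of $\sigma_\alpha$ via the $G$-action, once again well-defined because the image is $G_{\sigma_\alpha}$-fixed.

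Uniqueness up to $G$-homotopy follows from the same argument applied to $X\times I$ (product action on the first factor, trivial action on the second), which is again a rigid $G$-CW complex with finite cell stabilizers; given $G$-maps $f_0,f_1$, construct the homotopy cell-by-cell, using contractibility of fixed sets to extend at each step. For the cellular refinement, when $X'$ is itself a rigid $G$-CW complex each fixed set $X'^H$ is a subcomplex, so the extensions above can be made cellular by applying ordinary cellular approximation inside the subcomplex $X'^{G_{\sigma_\alpha}}$, rel the already-cellular boundary; the same refinement applies to homotopies.

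I expect the main subtlety to be bookkeeping rather than novel difficulty: one must be careful that the equivariant extensions on orbit-representatives patch together to a globally continuous $G$-map (which is automatic from the CW topology and local finiteness of each orbit), and that in the cellular case the fixed sets are genuinely subcomplexes so that cellular approximation applies within them. Since this is exactly the content of \cite[Theorem A.2]{FaJo93}, an acceptable alternative is simply to invoke that reference; the sketch above is the route one would take to reprove it from scratch.
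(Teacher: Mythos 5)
The paper gives no argument for this statement at all: it is imported verbatim as a special case of \cite[Theorem A.2]{FaJo93}, so your closing remark that one may simply invoke that reference is precisely what the authors do. Your from-scratch sketch is the standard proof of the universal property of $\underline{E}G$-type spaces (induction over skeleta of orbit representatives, using contractibility of the fixed sets $X'^{H}$ for the finite cell stabilizers $H$), and it is sound. A few points are worth making explicit if you were to write it out: well-definedness of the equivariant extension over an orbit uses that, for a rigid cellular action, the stabilizer of any point of an open cell equals the cell stabilizer (if $g$ fixes a point of the open cell $\sigma$ then $g\sigma\cap\sigma\neq\varnothing$, so $g\sigma=\sigma$ and $g$ fixes $\sigma$ pointwise, hence also its closure by continuity); the extension of $f\circ\chi|_{\partial D^{n}}$ over $D^{n}$ needs only that every map of $S^{n-1}$ into a contractible space is nullhomotopic, so no ANR or CW hypothesis on $X'$ is required; continuity of the assembled map is automatic from the weak topology (local finiteness of orbits plays no role); uniqueness is the same relative extension argument applied to the rigid $G$-CW pair $\left(X\times[0,1],\,X\times\{0,1\}\right)$, whose cell stabilizers are again finite; and in the cellular case rigidity of $X'$ makes each $X'^{H}$ a subcomplex, so the relative cellular approximation theorem applied inside $X'^{G_{\sigma}}$, rel the already-cellular part, yields cellular maps and homotopies. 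So your approach buys a self-contained elementary proof, at the cost of this bookkeeping, whereas the paper simply outsources it to Farrell--Jones.
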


\begin{remark}
\label{Remark: Adjusting the action}If $X$ and $X^{\prime}$ are rigid $G$-CW
complexes, Proposition \ref{PhiVariantMap} follows almost immediately from
Theorem \ref{FarrellJonesTheorem}. Let $X^{\prime\prime}$ denote $X^{\prime}$
with the modified $G$-action $g\cdot x\equiv\phi(g)\cdot x$, then note that a
$G$-equivariant map from $X$ to $X^{\prime\prime}$ is a $\phi$-variant map
from $X$ to $X^{\prime}$. In the more general case of Proposition
\ref{PhiVariantMap}, more work is needed. Much of our strategy is borrowed
from \cite{Ont05}.
\end{remark}

\begin{proof}
[Proof of Proposition \ref{PhiVariantMap}]Choose a proper metric $d$ on $X$ so
that $G$ acts by isometries \cite[Proposition 6.3]{GuMo19}. Since the orbit
$Gx$ of any $x\in X$ is discrete, there is a radius $r_{x}$ such that the
closed ball $B_{d}[x,r_{x}]\cap(Gx)=\{x\}$. Then, for all $g\in G$,
$B(x,\frac{r_{x}}{2})\cap gB(x,\frac{r_{x}}{2})=\varnothing$ or $gx=x$, with
the latter implying that $B(x,\frac{r_{x}}{2})=gB(x,\frac{r_{x}}{2})$. By
cocompactness, there is a finite collection $\mathcal{V=}\left\{  B_{d}%
(x_{i},\frac{r_{x_{i}}}{4})\right\}  _{i=1}^{k}$ of balls, with $Gx_{i}\neq
Gx_{j}$ for $i\neq j$, such that $\mathcal{U}:=\{gV|\ g\in G,\ V\in
\mathcal{V}\}$ is an open cover of $X$.\medskip

\noindent\textsc{Claim.}\textbf{ }\emph{Every ball }$U\in\mathcal{U}$\emph{
intersects only finitely many elements in }$\mathcal{U}$\emph{.} \medskip

If not, then there would be some $B(x,\frac{r_{x}}{4})$ and $B(y,\frac{r_{y}%
}{4})$ along with a sequence $\{g_{i}\}\subseteq G$ such that infinitely many
distinct $g_{i}B(x,\frac{r_{x}}{4})$ all have nonempty intersection with
$B(y,\frac{r_{y}}{4})$. It can be assumed that $r_{x}>r_{y}$. Thus, if
$g_{i}B(x,\frac{r_{x}}{4})$ and $g_{j}B(x,\frac{r_{x}}{4})$ both intersect
$B(y,\frac{r_{y}}{4})$, then $y\in g_{i}B(x,\frac{r_{x}}{2})\cap
g_{j}B(x,\frac{r_{x}}{2})$. This contradicts our assumption that
$B(x,\frac{r_{x}}{2})\cap gB(x,\frac{r_{x}}{2})=\varnothing$ or $B(x,\frac
{r_{x}}{2})=gB(x,\frac{r_{x}}{2})$. The claim follows.\medskip

Let $N(\mathcal{U})$ be the nerve of $\mathcal{U}$ and note that, by the above
claim, $N(\mathcal{U})$ is locally finite and finite-dimensional. By
construction $N(\mathcal{U})$ admits a proper, cocompact, simplicial
$G$-action; and since $gU\cap U\neq\varnothing$ implies that $gU=U$, this
action is rigid. Apply Theorem \ref{FarrellJonesTheorem} and the trick
described in Remark \ref{Remark: Adjusting the action} to obtain a $\phi
$-variant map $h:\left\vert N(\mathcal{U)}\right\vert \rightarrow X^{\prime}$,
where $\left\vert N(\mathcal{U)}\right\vert $ is the geometric realization of
$N(\mathcal{U)}$.

Next let $\beta:X\rightarrow\left\vert N(\mathcal{U)}\right\vert $ be the
barycentric map. In other words, $\beta(x)=\sum_{U\in\mathcal{U}}\lambda
_{U}(x)v_{U}$, where $\left\{  \lambda_{U}\right\}  $ is the partition of
unity defined by
\[
\lambda_{U_{0}}(x)=d(x,X-U_{0})/(\sum_{U\in\mathcal{U}}d(x,X-U))
\]
and $v_{U}$ denotes the vertex in $\left\vert N(\mathcal{U)}\right\vert $
defined by $U$. By construction, this map is $G$-equivariant, so
$f=h\circ\beta:X\rightarrow X^{\prime}$ is $\phi$-variant.
\end{proof}

\subsection{Proof of Theorems
\ref{Theorem: main theorem for groups with torsion} and
\ref{Theorem: Most general main theorem}}

We are ready to complete the main task of this section. Roughly speaking, we
aim to mimic, to the extent possible, the proof used in the torsion-free case.
In addition to $\phi$-variant and $\phi^{-1}$-variant maps $f:X\rightarrow X$
and $h:X\rightarrow X$, we will need analogs of the homotopies $A$ and $B$
used in Section
\ref{Section: Mapping tori and telescopes as classifying spaces}.

\begin{proposition}
\label{Prop: homotopies A and B in the torsion case}Let $f:X\rightarrow
X^{\prime}$ and $h:X^{\prime}\rightarrow X$ be $\phi$-variant and $\phi^{-1}%
$-variant maps, respectively, between proper cocompact $G$-ARs. Then there
exist a pair of bounded homotopies $A:X\times\left[  0,1\right]  \rightarrow
X$ and $B:X^{\prime}\times\left[  0,1\right]  \rightarrow X^{\prime}$ with
$A_{0}=\operatorname*{id}_{X}$, $A_{1}=hf$, $B_{0}=\operatorname*{id}%
_{X^{\prime}}$ and $B_{1}=fh$. As a result, $A$ and $B$ are proper homotopies
and $f$ and $h$ are proper homotopy inverses of one another. If $X$ and
$X^{\prime}$ are $\underline{E}G$-complex and $f$ and $h$ are cellular maps,
then $A$ and $B$ can be chosen so that $A_{t}$ and $B_{t}$ are $G$-equivariant
for all $t$.
\end{proposition}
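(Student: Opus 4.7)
The plan is to exploit the fact that the composition $h \circ f : X \to X$ is $G$-equivariant: for any $g \in G$ and $x \in X$,
\[
h(f(g \cdot x)) = h(\phi(g) \cdot f(x)) = \phi^{-1}(\phi(g)) \cdot h(f(x)) = g \cdot h(f(x)),
\]
and analogously $f \circ h$ is $G$-equivariant on $X'$. So the task reduces to joining each of these $G$-equivariant composites to the corresponding identity by a bounded homotopy, making the homotopy $G$-equivariant whenever the $\underline{E}G$-complex hypotheses are in force.

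I would first handle the $\underline{E}G$-complex case, where everything lines up cleanly. With $f$ and $h$ cellular, the maps $\operatorname*{id}_X$ and $hf$ are two $G$-equivariant cellular self-maps of the rigid $\mathcal{F}_G$-contractible $G$-CW complex $X$. Theorem \ref{FarrellJonesTheorem} then supplies a cellular $G$-equivariant homotopy $A : \operatorname*{id}_X \simeq hf$; the same reasoning, applied with $X'$ in place of $X$, produces $B$. Boundedness in the equivariant case is then automatic from cocompactness: fix a $G$-invariant proper metric $d$ on $X$ (as in the proof of Proposition \ref{PhiVariantMap}) and a compact fundamental domain $K \subset X$. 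For any $x = g \cdot k$ with $k \in K$, equivariance of $A_t$ yields $d(A_t(x), x) = d(A_t(k), k)$, which is bounded above by $\max\{d(A_t(k), k) : (k,t) \in K \times [0,1]\} < \infty$ by the compactness of $K \times [0,1]$. Properness of $A$ (and of $B$) is then a formal consequence: for any compact $C \subset X$, $A^{-1}(C)$ lies inside the precompact set $N_D(C) \times [0,1]$, where $D$ is the uniform displacement bound. This in turn identifies $f$ and $h$ as proper homotopy inverses.

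The main obstacle lies in the general $G$-AR case, where we have neither a cellular structure nor $\mathcal{F}_G$-contractibility, and so cannot invoke Theorem \ref{FarrellJonesTheorem} directly. My plan there is to reduce to the cellular setting by means of the nerve trick used in the proof of Proposition \ref{PhiVariantMap}: from a $G$-equivariant locally finite cover $\mathcal{U}$ of $X$ by balls of uniformly bounded radius, build the rigid locally finite $G$-CW complex $|N(\mathcal{U})|$ together with its $G$-equivariant barycentric map $\beta : X \to |N(\mathcal{U})|$; run the equivariant homotopy argument there and transfer the result back to $X$ so that the (bounded) mesh of $\mathcal{U}$ controls the displacement of the final homotopy. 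The delicate point, and the real obstacle, is organizing this transfer so that the resulting map $A : X \times [0,1] \to X$ is continuous, bounded, and has the prescribed endpoints $\operatorname*{id}_X$ and $hf$; once that is arranged, properness and the homotopy-inverse conclusions follow exactly as in the cellular case.
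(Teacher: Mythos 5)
Your $\underline{E}G$-complex case matches the paper's: apply Theorem~\ref{FarrellJonesTheorem} to the two $G$-equivariant cellular maps $\operatorname*{id}_X$ and $hf$, note that equivariance plus cocompactness forces boundedness, and derive properness formally. That part is fine.

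The gap is in the general $G$-AR case, which is where the real content of the proposition lies. Your plan---push the problem through the nerve $|N(\mathcal{U})|$ via the barycentric map $\beta$, run the equivariant argument there, and transfer the homotopy back to $X$---does not go through. Theorem~\ref{FarrellJonesTheorem} requires the \emph{target} to be $\mathcal{F}_G$-contractible. Neither $X$ (which in this proposition is merely a proper cocompact $G$-AR, with no hypothesis on fixed sets) nor the nerve $|N(\mathcal{U})|$ is known to be $\mathcal{F}_G$-contractible, so there is no version of Farrell--Jones available to produce any homotopy at all, let alone one that transfers back to $X$ with the prescribed endpoints. Moreover the proposition does not even claim equivariance of $A$ and $B$ in the general case, so aiming for an equivariant homotopy is solving the wrong problem; what must actually be produced is a \emph{bounded} (but not necessarily equivariant) homotopy. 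You acknowledge that the transfer step is the ``delicate point'' and the ``real obstacle,'' but that is exactly the part that needs an idea, and the sketch contains none.

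The paper's proof sidesteps Farrell--Jones entirely in the general case and instead uses coarse geometry: endow $X$ and $X'$ with proper $G$-invariant metrics so the actions are geometric, observe that $X$ and $X'$ are then uniformly contractible with finite macroscopic dimension, note (as you did) that $hf$ is $G$-equivariant and hence boundedly close to $\operatorname*{id}_X$ (so in particular a coarse equivalence), and then apply \cite[Cor.~5.3]{GuMo19}, which produces a bounded homotopy between boundedly close maps on a uniformly contractible space of finite macroscopic dimension. That corollary is the missing ingredient; your argument has no substitute for it.
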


\begin{proof}
As usual, choose proper metrics on $X$ and $X^{\prime}$ so that the actions
are geometric. It follows that $X$ and $X^{\prime}$ are uniformly contractible
and have finite macroscopic dimension. By cocompactness, since $hf$ is
$G$-equivariant, it is boundedly close to $\operatorname*{id}_{X}$. This
assures that $hf$ is a coarse equivalence, so we may apply \cite[Cor.5.3]%
{GuMo19} to obtain the bounded homotopy $A$. The same argument produces $B$.

When $X$ and $X^{\prime}$ are $\underline{E}G$-complexes, this proposition and
the $G$-equivariant conclusion follow from Theorem \ref{FarrellJonesTheorem}.
\end{proof}

\begin{proof}
[Proof of Theorem \ref{Theorem: Most general main theorem}]By hypothesis, we
have $\phi$-variant and $\phi^{-1}$-variant maps $f:X\rightarrow X$ and
$h:X\rightarrow X$, so we may apply Proposition
\ref{Prop: homotopies A and B in the torsion case} to obtain bounded
homotopies $A:X\times\left[  0,1\right]  \rightarrow X$ and $B:X\times\left[
0,1\right]  \rightarrow X$ with $A_{0}=\operatorname*{id}_{X}$, $A_{1}=hf$,
$B_{0}=\operatorname*{id}_{X}$ and $B_{1}=fh$. (If $X$ is an $\underline{E}%
G$-complex and $f$ and $h$ are cellular maps, then $A$ and $B$ can be chosen
so that $A_{t}$ and $B_{t}$ are $G$-equivariant for all $t$.)

Let $Y=\operatorname*{Tel}_{f}(X)$, endowed with the $\left(  G\rtimes_{\phi}%
\mathbb{Z}
\right)  $-action described in Theorem
\ref{Theorem: constructing a G-semi-Z AR}. Then construct a level-preserving
proper homotopy equivalence $v:Y\rightarrow X\times%
\mathbb{R}
$ by using formula (\ref{Definition of v}) directly (as opposed to obtaining
$v$ as a lift). In a similar manner, construct a proper homotopy inverse
$u:X\times%
\mathbb{R}
\rightarrow Y$ and homotopies $H:$ $u\circ v\simeq\operatorname*{id}_{Y}$;
$J:v\circ u\simeq\operatorname*{id}_{X\times%
\mathbb{R}
}$.

From this point on, the proof of Theorem \ref{Theorem A} can be used without changes.
\end{proof}

For the purpose of applications, the following variation on Theorem
\ref{Theorem: Most general main theorem} is probably the most useful.

\begin{theorem}
\label{Theorem: main theorem for groups with torsion}If a group $G$ (possibly
with torsion) admits a $\mathcal{Z}$-structure $(\overline{X},Z)$ where $X$ is
an $\underline{E}G_{AR}$-space, then every semidirect product of the form
$G\rtimes_{\phi}${$%
\mathbb{Z}
$} admits a $\mathcal{Z}$-structure $(\overline{Y},SZ)$ where $Y$ is an
$\underline{E}(G\rtimes_{\phi}{%
\mathbb{Z}
})_{AR}$ space. If $X$ is an $\underline{E}G$-complex, then $Y$ can be chosen
to be an $\underline{E}(G\rtimes_{\phi}{%
\mathbb{Z}
})$-complex.
\end{theorem}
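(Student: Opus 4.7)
The plan is to derive this theorem as an essentially mechanical corollary of the machinery assembled in Sections~\ref{sec:proof} and \ref{sec: proof with torsion permitted}. The hypothesis of Theorem~\ref{Theorem: Most general main theorem} demands the existence of $\phi$-variant and $\phi^{-1}$-variant self-maps of $X$, and the task of upgrading the conclusion to an $\underline{E}$-statement is handled by Proposition~\ref{prop:ebarg}.

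First, since $X$ is an $\underline{E}G_{\mathrm{AR}}$-space it is in particular $\mathcal{F}_G$-contractible, so Proposition~\ref{PhiVariantMap} applied with $X' = X$ produces a $\phi$-variant map $f : X \to X$. Reapplying the same proposition with $\phi$ replaced by $\phi^{-1}$ gives a $\phi^{-1}$-variant map $h : X \to X$. This is precisely the input required by Theorem~\ref{Theorem: Most general main theorem}, so I obtain a $\mathcal{Z}$-structure $(\overline{Y}, SZ)$ on $G \rtimes_\phi \mathbb{Z}$ with $Y = \operatorname{Tel}_f(X)$, where the $(G \rtimes_\phi \mathbb{Z})$-action is the one provided by Theorem~\ref{Theorem: constructing a G-semi-Z AR}.

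Next I would invoke Proposition~\ref{prop:ebarg} directly. Since $X$ is an $\underline{E}G_{\mathrm{AR}}$-space and $f$ is $\phi$-variant, the proposition states that $Y = \operatorname{Tel}_f(X)$ is an $\underline{E}(G \rtimes_\phi \mathbb{Z})_{\mathrm{AR}}$-space. This finishes the AR portion of the theorem.

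For the cellular strengthening, assume instead that $X$ is an $\underline{E}G$-complex. The cellular conclusion of Proposition~\ref{PhiVariantMap} delivers cellular $\phi$-variant and $\phi^{-1}$-variant maps $f, h : X \to X$, and Proposition~\ref{Prop: homotopies A and B in the torsion case} yields the associated ($G$-equivariant, cellular) homotopies $A$ and $B$. By Theorem~\ref{Theorem: constructing a G-semi-Z AR}(\ref{Assertion 3 of constructing a G-semi-Z AR}), $Y$ then carries a cell structure on which the $(G \rtimes_\phi \mathbb{Z})$-action is rigid and cellular, and the final clause of Proposition~\ref{prop:ebarg} confirms $Y$ is an $\underline{E}(G \rtimes_\phi \mathbb{Z})$-complex. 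There is no genuine obstacle here: all the technical weight lies in Theorem~\ref{Theorem: Most general main theorem} and in the nerve/barycentric construction behind Proposition~\ref{PhiVariantMap}; the present theorem is really a packaging result that verifies the equivariant fixed-set bookkeeping passes through the telescope construction.
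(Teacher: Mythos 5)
Your argument matches the paper's own proof, which also cites exactly Proposition~\ref{PhiVariantMap}, Theorem~\ref{Theorem: Most general main theorem}, and Proposition~\ref{prop:ebarg}; the only difference is that you spell out that Proposition~\ref{prop:ebarg} is needed already for the AR clause, which the paper's very terse two-sentence proof leaves slightly implicit. Your reading is correct and adds useful detail without deviating in approach.
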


\begin{proof}
The initial assertion combines Theorem
\ref{Theorem: Most general main theorem} and Proposition~\ref{PhiVariantMap}.
The latter assertion adds in Proposition \ref{prop:ebarg}.
\end{proof}

\begin{remark}
Theorem \ref{Theorem: main theorem for groups with torsion} can be applied
whenever $G$ is hyperbolic, CAT(0), or systolic. We save the details of that
discussion for Section \ref{Section: EZ-bar structures} where even stronger
results will be obtained.
\end{remark}

\subsection{Some closing comments on the proof of Theorem
\ref{Theorem: Most general main theorem}}

Unlike the torsion-free case, we did not claim that the map $v:Y\rightarrow
X\times%
\mathbb{R}
$, used in the proof of Theorem \ref{Theorem: Most general main theorem}, is
$G$-equivariant. That is due to the fact that $A:X\times\left[  0,1\right]
\rightarrow X$ was not constructed to be $G$-equivariant---except in cases
where $X$ is a CW-complex (see remark below). That is not an issue for the
proofs just completed. For later applications, however, it is be useful to
note that, since $\left.  A\right\vert _{X\times\left\{  0,1\right\}  }$ is
$G$-equivariant, $v$ is $G$-equivariant when restricted to the integer levels
of $Y$. For completeness, we add a quick proof.

\begin{lemma}
\label{lem:integralequivariance} Whenever $k$ is an integer, $v$ has the
property that
\[
v(g\cdot\lceil x,k\rceil)=g\cdot(v\lceil x,k\rceil).
\]

\end{lemma}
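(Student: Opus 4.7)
The plan is a direct calculation using the explicit formula (\ref{Definition of v}) for $v$, simplified at integer levels, combined with the defining formula (\ref{Defn: G acting on Y}) for the $G$-action on $Y$ and the $\phi$-variance (respectively $\phi^{-1}$-variance) of $f$ and $h$.

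First I would observe that at an integer level $r = k$ we have $\mathring{r} = 0$, so $A_{\mathring{r}} = A_0 = \operatorname{id}_X$ and the formula for $v$ collapses to
\[
v\lceil x,k\rceil = \begin{cases} (h^{k}(x),k) & \text{if } k \geq 0, \\ (f^{|k|}(x),k) & \text{if } k < 0. \end{cases}
\]
At the same time, by (\ref{Defn: G acting on Y}), $g\cdot\lceil x,k\rceil = \lceil \phi^{k}(g)\cdot x, k\rceil$. Assuming (as is standard, and as is built into the construction of $v$) that $G$ acts on $X \times \mathbb{R}$ by $g\cdot(x,r)=(g\cdot x, r)$, proving the lemma reduces to showing
\[
h^{k}(\phi^{k}(g)\cdot x) = g\cdot h^{k}(x) \quad (k \geq 0), \qquad f^{|k|}(\phi^{k}(g)\cdot x) = g\cdot f^{|k|}(x) \quad (k < 0).
\]

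Next, I would prove each identity by induction on $|k|$ using the twisted equivariance of $h$ and $f$. For $k \geq 0$, the $\phi^{-1}$-variance of $h$ gives $h(\phi^{k}(g)\cdot y) = \phi^{-1}(\phi^{k}(g))\cdot h(y) = \phi^{k-1}(g)\cdot h(y)$, so one iteration reduces the exponent of both $h$ and $\phi$ by one; after $k$ iterations one arrives at $\phi^{0}(g)\cdot h^{k}(x) = g\cdot h^{k}(x)$. Symmetrically, for $k < 0$, the $\phi$-variance of $f$ gives $f(\phi^{k}(g)\cdot y) = \phi(\phi^{k}(g))\cdot f(y) = \phi^{k+1}(g)\cdot f(y)$; after $|k|$ iterations we land at $\phi^{k+|k|}(g)\cdot f^{|k|}(x) = g\cdot f^{|k|}(x)$, since $k + |k| = 0$.

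There is no serious obstacle here: the entire content of the lemma is that the failure of $G$-equivariance in $v$ comes purely from the homotopy $A$ (and, on the negative side, would come from a corresponding homotopy on the $\phi^{-1}$-variant branch), both of which are trivial at integer times. The only thing to watch is a consistent choice of lifts $f, h$ and of the conventions for the $G$-action on $X \times \mathbb{R}$ versus on $Y$, so that the two sides are literally the same equivalence class $\lceil \cdot, k\rceil$ or pair $(\cdot, k)$ rather than merely equivalent under $\sim$. Once the bookkeeping is set up, the identity is immediate from the two inductions above.
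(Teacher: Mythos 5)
Your proof is correct and follows essentially the same route as the paper's: apply the defining formula for the $G$-action on $Y$, note that $A_{\mathring{r}}=\operatorname{id}_X$ at integer levels so $v\lceil x,k\rceil$ collapses to $(h^k(x),k)$ or $(f^{|k|}(x),k)$, and then transport $\phi^k(g)$ past $h^k$ (resp.\ $f^{|k|}$) using $\phi^{-1}$-variance of $h$ (resp.\ $\phi$-variance of $f$). The paper compresses the induction into the one-line identity $h^k(\phi^k(g)\cdot x)=\phi^{-k}(\phi^k(g))\cdot h^k(x)$ and waves at the negative case, so your version is just a more explicit rendering of the same argument.
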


\begin{proof}
First assume $k\geq0$. Then%

\begin{align*}
v(g\cdot\lceil x,k\rceil)  &  =v\lceil\phi^{k}\left(  g\right)  \cdot
x,k\rceil\\
&  =(h^{k}(\phi^{k}(g)\cdot x),k)\\
&  =(\phi^{-k}(\phi^{k}(g))\cdot h^{k}(x),k)\qquad\text{(by }\phi
^{-1}\text{-variance of }h\text{)}\\
&  =(g\cdot h^{k}(x),k)\\
&  =g\cdot v\lceil x,k\rceil.
\end{align*}
When $k\leq0$, the argument is similar.
\end{proof}

\begin{remark}
\label{Remark: Equivariance of maps and homotopies in CW cases}For later use,
we make some additional observations about the above proof in cases where $X$
is an $\underline{E}G$-complex. In that case, $f$ and $h$ can be chosen to be
cellular maps by Theorem \ref{FarrellJonesTheorem}. so, as noted in
Proposition \ref{Prop: homotopies A and B in the torsion case}, we may choose
$A$ and $B$ such that $A_{t}$ and $B_{t}$ are $G$-equivariant for all $t$. In
that case, the maps $v$ and $u$ are $G$-equivariant, so by another application
of Theorem~\ref{FarrellJonesTheorem}, we may choose $H$ and $J$ to be
$G$-equivariant as well.
\end{remark}


\section{$E\mathcal{Z}$-structures\label{Section: EZ-structures}}

\label{sec:equivariant}

We now turn to the study of $E\mathcal{Z}$-structures. Our primary goal is a
proof of the following theorem.

\begin{theorem}
\label{Theorem: EZ-structures}If a group $G$ admits an $E\mathcal{Z}%
$-structure $(\overline{X},Z)$, $\phi\in\operatorname*{Aut}(G)$, and there
exist $\phi$-variant and $\phi^{-1}$-variant maps $f:X\rightarrow X$ and
$g:X\rightarrow X$, respectively, which extend continuously to maps
$\overline{f}:\overline{X}\rightarrow\overline{X}$ and $\overline{g}%
:\overline{X}\rightarrow\overline{X}$, then $G\rtimes_{\phi}${$%
\mathbb{Z}
$} admits an $E\mathcal{Z}$-structure with boundary $SZ$.
\end{theorem}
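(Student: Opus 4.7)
The plan is to start from the $\mathcal{Z}$-structure $(\overline{Y}, SZ)$ on $G \rtimes_\phi \mathbb{Z}$ provided by Theorem~\ref{Theorem: Most general main theorem} and use the extensions $\overline{f}, \overline{g}: \overline{X} \to \overline{X}$ to promote the $(G \rtimes_\phi \mathbb{Z})$-action on $Y = \operatorname{Tel}_f(X)$ to a continuous action on $\overline{Y} = Y \sqcup SZ$. Since the topology on $\overline{Y}$ is the pull-back via $\overline{v}: \overline{Y} \to \overline{X \times \mathbb{R}}$, continuity will reduce to extending the associated self-maps of $X \times \mathbb{R}$ continuously to $\overline{X \times \mathbb{R}}$. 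Writing $\overline{h}$ for $\overline{g}$ to avoid clash with group elements, I would define the action on $SZ$ by
\[
g \cdot \langle z, \mu\rangle = \langle g \cdot z, \mu\rangle \quad (g \in G), \qquad t \cdot \langle z, \mu\rangle = \langle \overline{h}(z), \mu\rangle,
\]
with $\langle \pm\infty\rangle$ fixed and $t^{-1} \cdot \langle z, \mu\rangle = \langle \overline{f}(z), \mu\rangle$. The defining relation $t^{-1} g t = \phi(g)$ holds on $SZ$ because the $\phi^{-1}$-variance of $h$ passes to the boundary by continuity: $\overline{h}(\phi(g) \cdot z) = g \cdot \overline{h}(z)$. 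That $t$ and $t^{-1}$ are mutually inverse on $SZ$ follows from $\overline{h}\,\overline{f}|_Z = \mathrm{id}_Z$, which I would obtain by noting that the bounded homotopy $A$ of Proposition~\ref{Prop: homotopies A and B in the torsion case} extends by the identity on $Z$ (using controlled nullity applied to pairs $\{x_n, A_{t_n}(x_n)\}$), giving a homotopy rel $Z$ between $\mathrm{id}_{\overline{X}}$ and $\overline{h}\,\overline{f}$.

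The mechanism for continuity is the identity $v \circ a = \tau_a \circ v$ on $Y$, valid from the explicit formula \eqref{Definition of v} (and globally in the CW case by Remark~\ref{Remark: Equivariance of maps and homotopies in CW cases}), where $\tau_g(y, r) = (g \cdot y, r)$ for $g \in G$ and $\tau_t(y, r) = (h(y), r+1)$. It therefore suffices to produce continuous extensions $\overline{\tau}_a: \overline{X \times \mathbb{R}} \to \overline{X \times \mathbb{R}}$ agreeing with the proposed boundary action. Continuity at $\langle z, \mu\rangle \in SZ$ amounts to: whenever $(y_n, r_n)$ satisfies $y_n \to z$ in $\overline{X}$ and $r_n/p(y_n) \to \mu$, the image $\tau_a(y_n, r_n)$ converges to the designated image. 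For $\tau_g$ this is immediate: continuity of $\overline{g}$ gives $g \cdot y_n \to g \cdot z$, and the isometric action of $G$ on $(X, d)$ combined with the logarithmic growth of $p$ forces $p(g \cdot y_n)/p(y_n) \to 1$, preserving slope.

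The main obstacle is the analogous slope-preservation for $\tau_t$, namely $p(h(y_n))/p(y_n) \to 1$ as $y_n \to Z$, which does not follow from continuity of $\overline{h}$ alone since $h$ may have nontrivial coarse distortion. I would address this by strengthening the control function $\eta$ in Definition~\ref{DefineEta} so that, beyond its existing requirements, $\eta(|k|)$ also dominates the maximum $d$-displacement $\sup\{d(h(x), x_0) : x \in \mathcal{M}_{[k, k+1]}(f) \cap (G \rtimes_\phi \mathbb{Z}) \cdot C_Y\}$ (and the analogous quantity for $f$) at each level $k$. The $\psi$ built in Section~\ref{sec:controlled} then grows rapidly enough that $\psi^{-1}(d(h(y_n)))/\psi^{-1}(d(y_n)) \to 1$, yielding $p \circ h \sim p$ asymptotically and slope preservation for $\tau_t$. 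With this refinement in place, both $\overline{\tau}_g$ and $\overline{\tau}_t$ extend continuously, the extended action on $\overline{Y}$ is a continuous group action, and the nullity and AR properties already supplied by Theorem~\ref{Theorem: Most general main theorem} produce the desired $E\mathcal{Z}$-structure on $G \rtimes_\phi \mathbb{Z}$ with boundary $SZ$.
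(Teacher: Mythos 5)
Your high-level plan is the same as the paper's: act on $SZ$ by suspending the boundary actions of $G$ and of $\overline{h}=\overline{g}$, verify the conjugating relation there, and then reduce continuity of the extended action on $\overline{Y}$ to a slope-preservation statement for the induced maps of $X\times\mathbb{R}$. You also correctly identify the crux: showing $p(h(y_n))/p(y_n)\to 1$ along sequences converging to $SZ$, which does not follow from continuity of $\overline{h}$ alone. However, your proposed remedy does not work, and this is where the real content of the theorem lies.

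First, the modification of $\eta$ you suggest is ill-specified: the set $\mathcal{M}_{[k,k+1]}(f)\cap(G\rtimes_\phi\mathbb{Z})\cdot C_Y$ is all of $\mathcal{M}_{[k,k+1]}(f)$ (translates of $C_Y$ cover $Y$), so $\sup\{d(h(x),x_0):\dots\}=\infty$ and the condition cannot be imposed. More fundamentally, $\eta$ enters the construction only as a lower bound on $\psi$; inflating $\psi$ does not by itself yield the asymptotic identity $\psi^{-1}(d(h(y_n)))/\psi^{-1}(d(y_n))\to 1$, and you give no argument for this claim. The paper's mechanism is different and more delicate: it adds a derivative bound $\psi'\geq 1$ (condition (c)) so that $(\psi^{-1})'\leq 1$, then replaces $\psi$ by $\Psi=e^{\psi}$, changing the slope function to a double-logarithmic form. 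The point is that $h$ introduces \emph{multiplicative} distortion of $d(\cdot,x_0)$; the extra $\log$ converts it to additive distortion, which the derivative bound then absorbs — this is exactly Lemma~\ref{Lemma: technical limit for QI case}. Your proposal never introduces the extra logarithm or the derivative control, so the limit claim is unjustified. Second, you implicitly assume $h$ is a quasi-isometry. That requires $(X,d)$ to be quasi-geodesic, which is not part of the definition of an $E\mathcal{Z}$-structure. In general $h$ is only a coarse equivalence with arbitrary control functions, and the paper has to build an entirely separate argument for this case using the iterated-star construction $(\rho^{-1})^{\ast}$; your proposal does not address it at all. A smaller issue: the identity $v\circ a=\tau_a\circ v$ does not hold globally for $a=t$ (it fails off the integer levels, since $A_0=\operatorname{id}\neq hf$); the paper instead reduces to integer-level sequences via Lemma~\ref{Lemma: restricting to integers}. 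Your observations about $\overline{h}\,\overline{f}|_Z=\operatorname{id}_Z$ and about slope preservation for $\tau_g$ are essentially correct.
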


From here it is easy to deduce Theorem \ref{Theorem D} from the introduction.
We save that discussion for the end of this section.

The following pair of elementary lemmas helps to clarify some technical
questions. The second offers an alternative hypothesis for our main theorem.

\begin{lemma}
\label{Lemma: continuous extensions 1}Let $\overline{X}=X\sqcup Z$ and
$\overline{Y}=Y\sqcup Z^{\prime}$ be controlled compactifications of proper
metric spaces $X$ and $Y$ and let $f:X\rightarrow Y$ be a proper map which
admits a continuous extension $\overline{f}:\overline{X}\rightarrow
\overline{Y}$. Then $\overline{f}\left(  Z\right)  \subseteq Z^{\prime}$ and
every continuous map $f^{\prime}:X\rightarrow Y$ which is boundedly close to
$f$ (as measured in $\left(  Y,d_{Y}\right)  $) extends continuously to
$\overline{f^{\prime}}:\overline{X}\rightarrow\overline{Y}$ by letting
$\left.  \overline{f^{\prime}}\right\vert _{Z}=\left.  \overline{f}\right\vert
_{Z}$. Continuous extensions of this type are unique when they exist.
\end{lemma}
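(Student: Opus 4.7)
The plan is to verify the three claims in sequence. First, for the inclusion $\overline{f}(Z)\subseteq Z'$, I would argue by contradiction using properness of $f$. Given $z\in Z$, pick any sequence $(x_n)\subseteq X$ converging to $z$ in $\overline{X}$. If $\overline{f}(z)=y$ were in $Y$, then by continuity of $\overline{f}$ the sequence $f(x_n)$ would converge to $y$ inside $Y$, so $\{f(x_n)\}\cup\{y\}$ would lie in a compact subset $K\subseteq Y$. Properness of $f$ makes $f^{-1}(K)$ compact, and since every $x_n$ lies in $f^{-1}(K)$, some subsequence converges in $X$ to a point which, by the Hausdorff property of $\overline{X}$, must coincide with $z$---a contradiction since $z\in Z$ while the limit lies in $X$.

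For the second assertion, let $f'$ be continuous with $d_Y(f(x),f'(x))\leq R$ for all $x\in X$. I first observe that $f'$ is itself proper: if $K\subseteq Y$ is compact, then $(f')^{-1}(K)$ sits inside $f^{-1}(\overline{N_R(K)})$, which is compact because $Y$ is proper and $f$ is proper, and continuity of $f'$ then delivers closedness. Next, define $\overline{f'}$ to equal $f'$ on $X$ and $\overline{f}$ on $Z$; continuity off $Z$ is immediate, so I focus on a point $z\in Z$. Given an open neighborhood $V$ of $\overline{f}(z)$ in $\overline{Y}$, I use normality to pick an open $V'$ with $\overline{f}(z)\in V'\subseteq\overline{V'}\subseteq V$, and apply the controlled compactification property of $\overline{Y}$ to the open cover $\mathcal{U}=\{V,\overline{Y}\setminus\overline{V'}\}$ at diameter threshold $R+1$; this yields a compact $C\subseteq Y$ such that any subset of $Y\setminus C$ of diameter less than $R+1$ lies in a single element of $\mathcal{U}$. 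By continuity of $\overline{f}$ at $z$, together with properness of $f$ and of $f'$, there is a neighborhood $W$ of $z$ in $\overline{X}$ on which $\overline{f}(W)\subseteq V'$ and $f(W\cap X)\cup f'(W\cap X)$ avoids $C$. For $x\in W\cap X$ the two-point set $\{f(x),f'(x)\}$ has diameter at most $R$ and lies outside $C$, so it is contained in some $U\in\mathcal{U}$; the option $U=\overline{Y}\setminus\overline{V'}$ is excluded because $f(x)\in V'$, forcing $f'(x)\in V$. Since $\overline{f'}$ agrees with $\overline{f}$ on $W\cap Z\subseteq V'\subseteq V$, continuity at $z$ is established.

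Uniqueness is standard: any two continuous maps from $\overline{X}$ into the Hausdorff space $\overline{Y}$ that coincide on the dense subset $X$ must agree on $\overline{X}$. The main obstacle is the continuity argument at boundary points, where the raw bounded closeness of $f$ and $f'$ measured in the (uncompactified) metric of $Y$ must be converted into closeness in the compactified topology of $\overline{Y}$; this is precisely what the controlled compactification hypothesis is designed to do, since it permits me to enclose pairs of nearby points in a single cover element once they have been pushed far from every compact piece of $Y$.
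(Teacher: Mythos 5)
The paper states this lemma without giving a proof (calling the pair of lemmas ``elementary''), so there is no argument of the authors' own to compare against; your proof is correct and supplies the needed details. The structure is sound: properness of $f$ forces boundary to map to boundary; properness of $f'$ follows by sandwiching $(f')^{-1}(K)$ inside $f^{-1}(\overline{N_R(K)})$; and the controlled-compactification hypothesis, applied at a scale exceeding the bound $R$, is used exactly as intended to push the two-point set $\{f(x),f'(x)\}$ into a single cover element once $x$ is near $Z$, with the option $\overline{Y}\setminus\overline{V'}$ eliminated because $f(x)\in V'$. Uniqueness by density and Hausdorffness is standard.

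One mild caveat: your argument for $\overline{f}(Z)\subseteq Z'$ extracts convergent subsequences inside $\overline{X}$, which tacitly requires $\overline{X}$ to be first-countable (e.g., metrizable). In the paper's context this always holds, since the controlled compactifications in play are $\mathcal{Z}$-compactifications of ARs, hence metrizable. A sequence-free version, should you want one: if $\overline{f}(z)=y\in Y$, choose a compact neighborhood $K$ of $y$ in $Y$; then $\overline{f}^{-1}(\operatorname{int}_Y K)$ is an open neighborhood of $z$ in $\overline{X}$ whose intersection with the dense set $X$ lies in the compact (hence closed in $\overline{X}$) set $f^{-1}(K)\subseteq X$, forcing $z\in f^{-1}(K)\subseteq X$, a contradiction.
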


\begin{lemma}
\label{Lemma: continuous extensions 2}Let $\overline{X}=X\sqcup Z$ and
$\overline{Y}=Y\sqcup Z^{\prime}$ be controlled compactifications of proper
metric spaces $X$ and $Y$ and let $f:X\rightarrow Y$ be a coarse equivalence
with coarse inverse $h:Y\rightarrow X$. Then the following are equivalent.

\begin{enumerate}
\item There exists a continuous extension $\overline{f}:\overline
{X}\rightarrow\overline{Y}$ of $f$ which takes $Z$ homeomorphically onto
$Z^{\prime}$.

\item There exist continuous extensions $\overline{f}:\overline{X}%
\rightarrow\overline{Y}$ and $\overline{h}:\overline{Y}\rightarrow\overline
{X}$ of $f$ and $g$.
\end{enumerate}
\end{lemma}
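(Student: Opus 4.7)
The plan is to prove (1) $\Rightarrow$ (2) and (2) $\Rightarrow$ (1) separately, in each case leveraging \lemref{Lemma: continuous extensions 1} together with the coarse inverse relations $h\circ f\sim\operatorname{id}_X$ and $f\circ h\sim\operatorname{id}_Y$ (boundedly close in the respective metrics) and the hypothesis that both compactifications are \emph{controlled}.

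For (2) $\Rightarrow$ (1), suppose both $\overline{f}$ and $\overline{h}$ exist. By \lemref{Lemma: continuous extensions 1}, $\overline{f}(Z)\subseteq Z'$ and $\overline{h}(Z')\subseteq Z$. The composition $\overline{h}\circ\overline{f}:\overline{X}\to\overline{X}$ is a continuous extension of $h\circ f$; since $h\circ f$ is boundedly close to $\operatorname{id}_X$, the uniqueness clause of \lemref{Lemma: continuous extensions 1} forces $\left.\overline{h}\circ\overline{f}\right|_Z=\operatorname{id}_Z$. Symmetrically, $\left.\overline{f}\circ\overline{h}\right|_{Z'}=\operatorname{id}_{Z'}$. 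Hence $\left.\overline{f}\right|_Z$ is a continuous bijection from the compact Hausdorff space $Z$ onto $Z'$, and is therefore a homeomorphism.

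For (1) $\Rightarrow$ (2), define $\overline{h}:\overline{Y}\to\overline{X}$ to agree with $h$ on $Y$ and with $(\left.\overline{f}\right|_Z)^{-1}$ on $Z'$. Continuity on $Y$ is immediate, so we check continuity at a point $z'\in Z'$. Take any sequence $\{y_n\}\subseteq\overline{Y}$ converging to $z'$; after passing to a subsequence, we may assume $y_n\in Y$ (the case $y_n\in Z'$ follows from continuity of $(\left.\overline{f}\right|_Z)^{-1}$). Since $h$ is proper (being a coarse equivalence), the sequence $x_n:=h(y_n)$ eventually leaves every compact subset of $X$, so every subsequential limit of $\{x_n\}$ in $\overline{X}$ lies in $Z$. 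To identify this limit, fix such a subsequential limit $w\in Z$ with $x_{n_k}\to w$. Because $f\circ h\sim\operatorname{id}_Y$, there is a constant $C$ with $d_Y(f(x_{n_k}),y_{n_k})\leq C$ for all $k$. The control condition on $\overline{Y}$ implies that, for any basic neighborhood $U$ of $z'$, both $y_{n_k}$ and $f(x_{n_k})$ eventually lie in $U$; hence $f(x_{n_k})\to z'$ as well. On the other hand, continuity of $\overline{f}$ gives $f(x_{n_k})=\overline{f}(x_{n_k})\to\overline{f}(w)$, so $\overline{f}(w)=z'$, i.e.\ $w=(\left.\overline{f}\right|_Z)^{-1}(z')$. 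Since every subsequential limit agrees with $\overline{h}(z')$, the full sequence converges and $\overline{h}$ is continuous at $z'$.

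The only nontrivial step is the forward direction, and within it the crux is verifying $f(x_{n_k})\to z'$ in $\overline{Y}$. This is exactly where the ``controlled'' hypothesis on $\overline{Y}$ is indispensable: without it, bounded-distance pairs in $Y$ need not converge to the same boundary point, and the boundary value of $\overline{h}$ could not be pinned down via the coarse inverse relation. Everything else is formal once \lemref{Lemma: continuous extensions 1} and the properness of coarse equivalences are in hand.
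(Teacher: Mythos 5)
The paper states this lemma (and its companion Lemma~\ref{Lemma: continuous extensions 1}) without proof, calling them ``elementary,'' so there is no proof in the paper to compare against. Your argument is correct. The direction (2)~$\Rightarrow$~(1) is handled cleanly by the uniqueness clause of Lemma~\ref{Lemma: continuous extensions 1}: since $h\circ f$ is boundedly close to $\operatorname{id}_X$ and $\overline{h}\circ\overline{f}$ extends it, $\left.\overline{h}\circ\overline{f}\right|_Z=\operatorname{id}_Z$ and symmetrically $\left.\overline{f}\circ\overline{h}\right|_{Z'}=\operatorname{id}_{Z'}$, giving mutually inverse continuous bijections on compacta. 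For (1)~$\Rightarrow$~(2), the definition $\overline{h}=h\sqcup(\left.\overline{f}\right|_Z)^{-1}$ is the right one, and the key step---that $d_Y(f(h(y_{n_k})),y_{n_k})\leq C$ and $y_{n_k}\to z'$ forces $f(h(y_{n_k}))\to z'$ as well---is exactly where the ``controlled'' hypothesis enters, as you correctly flag. You do gloss a couple of routine points: that $h(y_n)$ escapes compacta requires the coarse-inverse relation (if $h(y_n)$ had a bounded subsequence, then $y_n$ would stay within bounded distance of the bounded set $f(h(y_n))$, contradicting $y_n\to z'$), and the step ``bounded-diameter sets outside a large compact are small in $\overline{Y}$'' should be invoked via a specific two-element open cover $\{\overline{Y}\setminus\operatorname{cl}(V'),\,V\}$ with $R=C+1$. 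Neither gap is serious. One small caveat: you implicitly use metrizability (or at least sequential characterization of topology) of $\overline{X}$ and $\overline{Y}$; this holds in all uses the paper makes of the lemma, but is not stated in the lemma's hypotheses, so a net-based variant would be needed in full generality.
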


Suppose now that $G$ admits an $E\mathcal{Z}$-structure $\left(  \overline
{X},Z\right)  $ and $\phi\in\operatorname*{Aut}(G)$. Assume also the existence
of $\phi$-variant and $\phi^{-1}$-variant maps $f:X\rightarrow X$ and
$h:X\rightarrow X$ which extend continuously to maps $\overline{f}%
:\overline{X}\rightarrow\overline{X}$ and $\overline{h}:\overline
{X}\rightarrow\overline{X}$. By Lemma \ref{Lemma: continuous extensions 2} the
restrictions $f_{Z}:Z\rightarrow Z$ and $h_{Z}:Z\rightarrow Z$ are
homeomorphisms. As usual, we let $Y=\operatorname*{Tel}_{f}\left(  X\right)  $
with the $\left(  G\rtimes_{\phi}%
\mathbb{Z}
\right)  $-action defined earlier. To prove Theorem
\ref{Theorem: EZ-structures}, we will first define a $\left(  G\rtimes_{\phi}%
\mathbb{Z}
\right)  $-action on $SZ$; then we will show that this action continuously
extends the action on $Y$. The latter of these tasks is surprisingly delicate.

The notational conventions established in Section
\ref{Subsection: Mapping cylinders, mapping tori, and mapping telescopes} will
be especially useful in this section.

\subsection{The $\left(  G\rtimes_{\phi}%
\mathbb{Z}
\right)  $-action on $SZ$}

Working with the presentation
\[
G\rtimes_{\phi}%
\mathbb{Z}
=\langle G,t\ |\ t^{-1}gt=\phi(g)\forall g\in G\}\rangle
\]
it suffices to specify a $G$-action on $SZ$ together with a self-homeomorphism
of $SZ$ corresponding to the generator $t$, and to check that the conjugating
relators hold. To begin with, let $g$ represent both an element of $G$ and the
corresponding self-homeomorphism of $X$; let $\overline{g}:\overline
{X}\rightarrow\overline{X}$ denote the extension of $g$ implied by the
$E\mathcal{Z}$-structure; and let $g_{Z}=\left.  \overline{g}\right\vert _{Z}%
$, a self-homeomorphism of $Z$.

We choose the $G$-action on $SZ$ to be the suspension of the $G$-action on
$Z$. In other words, for each $g\in G$, define $g_{SZ}:SZ\rightarrow SZ$ by
$g_{SZ}\cdot\left\langle z,r\right\rangle =\left\langle g_{Z}\cdot
z,r\right\rangle $. Then define $t_{SZ}:SZ\rightarrow SZ$ to be the suspension
of $h_{Z}$, i.e., $t_{SZ}\cdot\left\langle z,r\right\rangle =\left\langle
h_{Z}\left(  z\right)  ,r\right\rangle $. Since $h:X\rightarrow X$ is
$\phi^{-1}$-equivariant, $h^{-1}gh=\phi\left(  g\right)  $ as a
self-homeomorphism of $X$, for all $g\in G$. Then, since extensions over
$\overline{X}$ are unique when they exist, $h_{Z}^{-1}\circ g_{Z}\circ
h_{Z}=\phi\left(  g\right)  _{Z}$ as self-homeomorphisms of $Z$. It follows
that $t_{SZ}^{-1}\circ g_{SZ}\circ t_{SZ}=\phi\left(  g\right)  _{SZ}$, so the
conjugating relators of $G\rtimes_{\phi}%
\mathbb{Z}
$ are satisfied.

\subsection{The $\left(  G\rtimes_{\phi}%
\mathbb{Z}
\right)  $-action on $\overline{Y}$}

To save on notation, now let $g$ and $t$ denote the self-homeomorphisms of $Y$
defined in equations (\ref{Defn: G acting on Y}) and
(\ref{Defn: t acting on Y}), and let $\overline{g}:=g\sqcup g_{SZ}$ and
$\overline{t}:=t\sqcup t_{SZ}$. Our task is complete if we can show (or
arrange) that these maps are continuous on $\overline{Y}$. Recall that
$\overline{Y}=Y\sqcup SZ$ was defined by first compactifying $X\times%
\mathbb{R}
$ to $\overline{X\times%
\mathbb{R}
}=(X\times%
\mathbb{R}
)\sqcup SZ$ (in a very precise manner) then applying the pullback
compactification (see Remark \ref{Remark: pullback compactification}) to $Y$
using the map $v:Y\rightarrow X\times%
\mathbb{R}
$ described in formula (\ref{Definition of v}). To prove continuity, we will
use a pair of simple facts. The first is a general property of pullback
compactifications; the second follows directly from Definition
\ref{DefineProductBoundaryTopology}.

\begin{itemize}
\item A sequence $\left\{  \left\lceil x_{i},r_{i}\right\rceil \right\}  $ in
$Y$ converges in $\overline{Y}$ to $\left\langle z,\mu\right\rangle \in SX$ if
and only if $\left\{  v\left(  \left\lceil x_{i},r_{i}\right\rceil \right)
\right\}  $ converge to $\left\langle z,\mu\right\rangle $ in $\overline
{X\times%
\mathbb{R}
}$.

\item A sequence $\left\{  \left(  x_{i},r_{i}\right)  \right\}  $ in $X\times%
\mathbb{R}
$ converges in $\overline{X\times%
\mathbb{R}
}$ to $\left\langle z,\mu\right\rangle \in SX$ if and only if $\left\{
x_{i}\right\}  $ converges to $z$ in $\overline{X}$ and the sequence of slopes
$\left\{  \mu\left(  x_{i},r_{i}\right)  \right\}  $ converges to $\mu$.
\end{itemize}

The following lemma allows us to focus on sequences with integral second
coordinates which, in turn, allows us to make use of Lemma
\ref{lem:integralequivariance}.

\begin{lemma}
\label{Lemma: restricting to integers}Let $\left\{  \left\lceil x_{i}%
,r_{i}\right\rceil \right\}  $ be a sequence in $Y=\operatorname*{Tel}%
_{f}\left(  X\right)  $. Then $v\left(  \left\lceil x_{i},r_{i}\right\rceil
\right)  \rightarrow\left\langle z,\mu\right\rangle $ in $\overline{X\times%
\mathbb{R}
}$ if and only if $v\left(  \left\lceil x_{i},\left\lfloor r_{i}\right\rfloor
\right\rceil \right)  \rightarrow\left\langle z,\mu\right\rangle $.
\end{lemma}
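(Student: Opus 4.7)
The plan is to show that both points $v\lceil x_i, r_i\rceil$ and $v\lceil x_i, \lfloor r_i\rfloor\rceil$ always lie in a common subset of $X\times\mathbb{R}$ whose geometry is controlled by $\eta$, then to invoke the control condition $(\ddag)$ to force matching subsequential limits in the compact space $\overline{X\times\mathbb{R}}$.

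Set $k_i := \lfloor r_i\rfloor$. Both points lie on the mapping cylinder line $L_i$ through $\lceil x_i, k_i\rceil$, which is a single $(G\rtimes_\phi\mathbb{Z})$-translate of a mapping cylinder line contained in the fundamental domain $C_Y$. By Remark~\ref{Remark: about distortion function}, $\operatorname{diam}_{d_1}(v(L_i)) \leq \eta(|k_i|)$, and since $v$ is level-preserving, $v(L_i) \subseteq B_d[p_i, \eta(|k_i|)] \times [k_i, k_i+1]$ for some $p_i \in X$. In particular, $v\lceil x_i, r_i\rceil$ and $v\lceil x_i, k_i\rceil$ always occupy this common ball.

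For the forward implication, suppose $v\lceil x_i, r_i\rceil \to \langle z, \mu\rangle \in SZ$. Properness of $v$, combined with the uniform boundedness of $\rho(\lceil x_i, r_i\rceil, \lceil x_i, k_i\rceil)$, forces every subsequential limit $\langle z', \mu'\rangle$ of $\{v\lceil x_i, k_i\rceil\}$ to lie in $SZ$. Suppose for contradiction that $\langle z', \mu'\rangle \neq \langle z, \mu\rangle$; fix a metric $D$ on $\overline{X\times\mathbb{R}}$, set $\varepsilon := \tfrac{1}{3} D(\langle z, \mu\rangle, \langle z', \mu'\rangle)$, and apply $(\ddag)$ to an open cover $\mathcal{U}$ by sets of $D$-diameter less than $\varepsilon$, producing a compact $Q \times [-N, N]$. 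Let $Q'$ be the closed $\eta(N+1)$-neighborhood of $Q$ in $(X, d)$. For $j$ large along the subsequence, both $v\lceil x_{i_j}, r_{i_j}\rceil$ and $v\lceil x_{i_j}, k_{i_j}\rceil$ lie outside $Q' \times [-(N+1), N+1]$. Case-splitting on whether $|k_{i_j}| > N+1$ (so $[k_{i_j}, k_{i_j}+1]$ is disjoint from $[-N, N]$) or $|k_{i_j}| \leq N+1$ (so $p_{i_j} \notin Q'$ and $\eta(|k_{i_j}|) \leq \eta(N+1)$, making $B_d[p_{i_j}, \eta(|k_{i_j}|)]$ disjoint from $Q$) shows that the containing ball $B_d[p_{i_j}, \eta(|k_{i_j}|)] \times [k_{i_j}, k_{i_j}+1]$ is disjoint from $Q \times [-N, N]$. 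By $(\ddag)$ this ball lies in a single $U \in \mathcal{U}$, forcing $D(v\lceil x_{i_j}, r_{i_j}\rceil, v\lceil x_{i_j}, k_{i_j}\rceil) < \varepsilon$; letting $j \to \infty$ yields the contradiction $D(\langle z, \mu\rangle, \langle z', \mu'\rangle) \leq \varepsilon$. The reverse implication is symmetric.

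The main obstacle is that the ball radius $\eta(|k_i|)$ grows unboundedly with $|k_i|$, so naively pulling back any single compact subset of $\overline{X\times\mathbb{R}}$ fails to exile the entire containing ball. The remedy, mirroring the endgame of the proof of Theorem~\ref{Theorem A} in Section~\ref{sec:proof}, is the two-regime trick: inflating $Q$ by $\eta(N+1)$ handles small heights, while height-separation handles large heights.
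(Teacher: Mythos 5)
Your proof follows the same approach as the paper's: both rest on the observation that $\lceil x_i,r_i\rceil$ and $\lceil x_i,\lfloor r_i\rfloor\rceil$ lie in a common $(G\rtimes_\phi\mathbb{Z})$-translate of the fundamental domain, whose $v$-image has $\overline{X\times\mathbb{R}}$-diameter shrinking to $0$ as it escapes to infinity. The paper's proof simply cites the nullity argument already carried out in Section~\ref{sec:proof} and declares that "the conclusion follows easily," whereas you re-derive the diameter control directly from $(\ddag)$, reproducing the two-regime trick (inflating $Q$ by $\eta(N+1)$ for bounded heights, height-separation otherwise) that constitutes the substantive content of "follows easily."

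One small imprecision worth fixing: in Case~2, the inference $|k_{i_j}|\le N+1 \implies p_{i_j}\notin Q'$ is not valid for an arbitrary center $p_{i_j}$ of the containing ball, since only two specific points of $v(L_{i_j})$ are known to lie outside $Q'\times[-(N+1),N+1]$. Take $p_{i_j}$ to be the $X$-coordinate of $v\lceil x_{i_j},k_{i_j}\rceil$ itself; this is permissible because $\pi_X(v(L_{i_j}))$ has $d$-diameter at most $\eta(|k_{i_j}|)$, so it lies in the closed $\eta(|k_{i_j}|)$-ball about any of its own points. Then $(p_{i_j},k_{i_j})\notin Q'\times[-(N+1),N+1]$ together with $k_{i_j}\in[-(N+1),N+1]$ forces $p_{i_j}\notin Q'$, and the rest of the case analysis goes through as written.
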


\begin{proof}
For each $i$, the entire mapping cylinder line containing $\left\lceil
x_{i},r_{i}\right\rceil $ and $\left\lceil x_{i},\left\lfloor r_{i}%
\right\rfloor \right\rceil $ lies in some translate of the fundamental domain
defined at the beginning of Section \ref{sec:proof}. The nullity condition
arranged in that section assures that the diameters of the $v$-images of these
fundamental domains, measured in $\overline{X\times%
\mathbb{R}
}$, approach $0$ as they are pushed to infinity. The conclusion follows easily.
\end{proof}

\subsection{An important special case}

To complete the proof of Theorem \ref{Theorem: EZ-structures}, it remains to
verify the continuity of $\overline{g}$ and $\overline{t}$. This is a delicate
matter. In fact, without an adjustment to the earlier construction, continuity
could fail. The necessary adjustment involves the choice of slope function
defined in Section \ref{sec:controlled}. To make the new choice as intuitive
as possible, we start with a key special case:\medskip\ 

\noindent Assume that the original proper metric space $\left(  X,d\right)  $,
on which $G$ is acting geometrically, is a \emph{quasi-geodesic} space.
\medskip

This condition holds in nearly all commonly studied cases (it is built-in when
$G$ is hyperbolic or CAT(0) and can be arranged whenever $G$ is torsion-free),
but it is traditionally not required in the definition of a $\mathcal{Z}%
$-structure. After handling the special case, we will return to address the
generic case.

The usefulness of the quasi-geodesic hypothesis is that, by the
\v{S}varc-Milnor Lemma, it ensures that the $\phi$- and $\phi^{-1}$-variant
maps $f:X\rightarrow X$ and $h:X\rightarrow X$ are quasi-isometries. As such,
we may choose a single pair of constants $K\geq1$ and $\varepsilon\geq0$ such
that
\begin{equation}
\frac{1}{K}d\left(  x,y\right)  -\varepsilon\leq d\left(  f\left(  x\right)
,f\left(  y\right)  \right)  \leq Kd\left(  x,y\right)  +\varepsilon
\label{formula: QI inequalities for f}%
\end{equation}
and%
\begin{equation}
\frac{1}{K}d\left(  x,y\right)  -\varepsilon\leq d\left(  h\left(  x\right)
,h\left(  y\right)  \right)  \leq Kd\left(  x,y\right)  +\varepsilon
\label{formula: QI inequalities for h}%
\end{equation}
for all $x,y\in X$.

Now let us return to the continuous, monotone increasing function
$\psi:[0,\infty)\rightarrow\lbrack0,\infty)$ described in item
(\ref{Item: Defn of psi}) of Section \ref{sec:controlled}. The two properties
assigned to $\psi$ were the following:

\begin{enumerate}
\item[a)] $\psi(s)\geq\max\left\{  \eta(s),\lambda(s)\right\}  $ , and

\item[b)] $\psi(s+1)\geq3\psi(s)$ for all $s\geq0$
\end{enumerate}

\noindent For the purposes of this section, we introduce a third requirement
that can easily be added to the above.

\begin{enumerate}
\item[c)] \label{Item c)}$\psi$ is smooth with $\psi^{\prime}\left(  s\right)
\geq1$ for all $s\geq0$.
\end{enumerate}

Now define $\Psi:[0,\infty)\rightarrow\lbrack0,\infty)$ by $\Psi\left(
s\right)  =e^{\psi\left(  s\right)  }$ and notice that $\Psi$ also satisfies
conditions a)-c). As such, we can go back to Section \ref{sec:controlled} and
replace $\psi$ with $\Psi$. That will change the slope function (hence, the
way $SZ$ is glued to $X\times%
\mathbb{R}
$ to obtain $\overline{X\times%
\mathbb{R}
}$) but the proofs that follow remain valid. Given that $\Psi^{-1}\left(
s\right)  =\psi^{-1}\left(  \log s\right)  $, the new slope formula takes the
form%
\begin{align*}
\mu\left(  x,r\right)   &  =\frac{r}{\log\left(  \Psi^{-1}\left(  d\left(
x,x_{0}\right)  +\Psi\left(  0\right)  +1\right)  \right)  }\\
&  =\frac{r}{\log(\psi^{-1}(\log\left(  d\left(  x,x_{0}\right)  +\Psi\left(
0\right)  +1\right)  ))}%
\end{align*}
\label{new slope formula}

Most of our continuity arguments hinge on calculations of limits. The
following lemma will aid in several calculations.

\begin{lemma}
\label{Lemma: technical limit for QI case}Let $A_{1},A_{2},B_{1},B_{2}\in%
\mathbb{R}
$ with $A_{1},A_{2}>0$ and let $\theta:[0,\infty)\rightarrow\lbrack0,\infty)$
be a smooth monotone increasing function such that $\theta\left(  x\right)
\rightarrow\infty$ and $\theta^{\prime}\left(  x\right)  \leq1$ for all
sufficiently large $x$. Then%

\[
\lim_{x\rightarrow\infty}\frac{\theta(\log\left(  A_{1}x+B_{1}\right)
))}{\theta(\log\left(  A_{2}x+B_{2}\right)  ))}=1
\]

\end{lemma}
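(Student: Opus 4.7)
The plan is to show that the numerator and denominator both diverge to $\infty$ while their difference remains bounded; this suffices because if $N(x)=\theta(\log(A_1 x + B_1))$ and $D(x)=\theta(\log(A_2x+B_2))$, then
\[
\frac{N(x)}{D(x)} \;=\; 1 \;+\; \frac{N(x)-D(x)}{D(x)},
\]
so $N/D\to 1$ provided $D\to \infty$ and $N-D$ stays bounded. The divergence $D(x)\to\infty$ is immediate from the assumptions $A_2>0$, $\log(A_2 x + B_2)\to\infty$, and $\theta\to\infty$.

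For the boundedness of $N-D$, I would apply the Mean Value Theorem to $\theta$ on the interval whose endpoints are $\log(A_1 x+B_1)$ and $\log(A_2 x + B_2)$: there exists $\xi=\xi(x)$ between these two values with
\[
N(x)-D(x) \;=\; \theta'(\xi)\bigl[\log(A_1x+B_1)-\log(A_2 x+B_2)\bigr] \;=\; \theta'(\xi)\cdot\log\!\left(\frac{A_1 x+B_1}{A_2 x+B_2}\right).
\]
The logarithm on the right tends to $\log(A_1/A_2)$ as $x\to\infty$, so in particular it is bounded for large $x$. Meanwhile, for $x$ large, both endpoints $\log(A_ix+B_i)$ exceed any prescribed threshold, so $\xi$ does too, which places $\xi$ in the region where $\theta'(\xi)\le 1$ by hypothesis. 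Combining, $|N(x)-D(x)|$ is bounded for all sufficiently large $x$, completing the proof.

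There is no real obstacle here; the only subtle point is to be careful at the beginning with the sign and ordering of the two logarithmic endpoints (one could equally well write it with absolute values) and to invoke the hypothesis $\theta'\le 1$ only once $x$ is large enough that \emph{both} $\log(A_i x + B_i)$ lie in the regime where that bound is known to hold. Smoothness of $\theta$ is used only to justify the MVT; monotonicity is used implicitly in that $\xi$ lies between the two endpoints and still satisfies $\theta'(\xi)\ge 0$, though only the upper bound $\theta'(\xi)\le 1$ is actually needed for the estimate.
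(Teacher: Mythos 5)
Your proof is correct, and it takes a cleaner, slightly more direct route than the paper's. The paper's argument rests on a preliminary observation: for any smooth increasing $\sigma$ with $\sigma'\le 1$ eventually, one has $\sigma(x)-|c|\le\sigma(x+c)\le\sigma(x)+|c|$ for large $x$. This is applied twice --- once to $\log$ to sandwich each $\log(A_ix+B_i)$ between $\log(x)\pm d_i$ for constants $d_i$ depending on $A_i,B_i$, and then to $\theta$ to deduce $\theta(\log x)-d_i\le\theta(\log(A_ix+B_i))\le\theta(\log x)+d_i$ --- after which the ratio is squeezed between $\frac{\theta(\log x)-d_1}{\theta(\log x)+d_2}$ and $\frac{\theta(\log x)+d_1}{\theta(\log x)-d_2}$, both of which tend to $1$ because $\theta(\log x)\to\infty$. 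You instead bypass the intermediate reference quantity $\theta(\log x)$ entirely: writing $N/D = 1 + (N-D)/D$, you control $N-D$ in a single application of the Mean Value Theorem to $\theta$ on the interval between $\log(A_1x+B_1)$ and $\log(A_2x+B_2)$, so the two applications of the paper's ``$\sigma(x+c)$'' lemma collapse into one MVT step with the difference of logs appearing directly as $\log\bigl((A_1x+B_1)/(A_2x+B_2)\bigr)$, manifestly bounded. Both proofs turn on exactly the same two facts --- the eventual bound $\theta'\le 1$ and the divergence $\theta\circ\log\to\infty$ --- so they are close in spirit; your version is shorter and avoids the bookkeeping with $d_1,d_2$, while the paper's squeeze formulation is more elementary (no explicit appeal to MVT) and its ``$\sigma(x+c)$'' lemma is stated in a reusable form that the authors apply uniformly to $\theta$ and to $\log$.
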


\begin{proof}
First note that if $\sigma:[0,\infty)\rightarrow%
\mathbb{R}
$ is a smooth monotone increasing function with $\sigma^{\prime}\left(
x\right)  \leq1$ for all sufficiently large $x$, then
\[
\sigma\left(  x\right)  -\left\vert c\right\vert \leq\sigma\left(  x+c\right)
\leq\sigma\left(  x\right)  +\left\vert c\right\vert
\]
for sufficiently large $x$. This fact will be applied to both $\theta$ and the
log function.

In particular, since
\[
\log\left(  A_{i}x+B_{i}\right)  =\log\left(  A_{i}(x+\frac{B_{i}}{A_{i}%
})\right)  =\log\left(  x+\frac{B_{i}}{A_{i}}\right)  +\log A_{i}%
\]
then
\[
\log\left(  x\right)  +\log A_{i}-\left\vert \frac{B_{i}}{A_{i}}\right\vert
\leq\log\left(  A_{i}x+B_{i}\right)  \leq\log\left(  x\right)  +\log
A_{i}+\left\vert \frac{B_{i}}{A_{i}}\right\vert
\]
Letting $d_{i}:=\max\left\{  \left\vert \log A_{i}-\left\vert \frac{B_{i}%
}{A_{i}}\right\vert \right\vert ,\left\vert \log A_{i}+\left\vert \frac{B_{i}%
}{A_{i}}\right\vert \right\vert \right\}  $, we can conclude that
\[
\theta\left(  \log\left(  x\right)  \right)  -d_{i}\leq\theta\left(
\log\left(  A_{i}x+B_{i}\right)  \right)  \leq\theta\left(  \log\left(
x\right)  \right)  +d_{i}%
\]
Applying this inequality multiple times yields%
\[
\frac{\theta\left(  \log\left(  x\right)  \right)  -d_{1}}{\theta\left(
\log\left(  x\right)  \right)  +d_{2}}\leq\frac{\theta(\log\left(
A_{1}x+B_{1}\right)  ))}{\theta(\log\left(  A_{2}x+B_{2}\right)  ))}\leq
\frac{\theta\left(  \log\left(  x\right)  \right)  +d_{1}}{\theta\left(
\log\left(  x\right)  \right)  -d_{2}}%
\]
for sufficiently large $x$. Our main assertion follows easily.
\end{proof}

We are now ready to proceed with proofs of the continuity of $\overline{g}$
and $\overline{t}$ and, hence, the special case of Theorem
\ref{Theorem: EZ-structures}.

\begin{claim}
\label{Claim: continuity of g-bar}For each $g\in G$, the function
$\overline{g}:\overline{Y}\rightarrow\overline{Y}$, defined above, is continuous.
\end{claim}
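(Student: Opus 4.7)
The plan is to verify continuity of $\overline{g}$ via sequences, reducing everything to convergence in $\overline{X\times\mathbb{R}}$. Since $\overline{g}$ restricts to a homeomorphism on each of $Y$ and $SZ$, the only issue is sequences $\lceil x_i,r_i\rceil\in Y$ converging to a boundary point $\langle z,\mu\rangle\in SZ$. By the first of the two bulleted facts preceding Lemma~\ref{Lemma: restricting to integers}, this reduces to showing that
\[
v(g\cdot\lceil x_i,r_i\rceil)\longrightarrow\langle g_Z\cdot z,\mu\rangle
\]
in $\overline{X\times\mathbb{R}}$ whenever $v(\lceil x_i,r_i\rceil)\to\langle z,\mu\rangle$.

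First I would invoke Lemma~\ref{Lemma: restricting to integers} to replace each $r_i$ by $k_i:=\lfloor r_i\rfloor\in\mathbb{Z}$, placing the points at integer levels where equivariance is available. Then Lemma~\ref{lem:integralequivariance} gives
\[
v(g\cdot\lceil x_i,k_i\rceil)=g\cdot v(\lceil x_i,k_i\rceil)=(g\cdot y_i,k_i),
\]
where $y_i=h^{k_i}(x_i)$ for $k_i\geq 0$ and $y_i=f^{|k_i|}(x_i)$ for $k_i<0$. The $\overline{X}$-coordinate is then easy: the hypothesis $(y_i,k_i)\to\langle z,\mu\rangle$ forces $y_i\to z$ in $\overline{X}$, so the continuous extension $\overline{g}:\overline{X}\to\overline{X}$ (whose restriction to $Z$ is $g_Z$ by Lemma~\ref{Lemma: continuous extensions 2}) yields $g\cdot y_i\to g_Z\cdot z$.

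The main obstacle is convergence of slopes: I need $\mu(g\cdot y_i,k_i)\to\mu$. Because $G$ acts on $(X,d)$ by isometries, $d(g\cdot y_i,x_0)$ differs from $d(y_i,x_0)$ by at most the fixed constant $C:=d(x_0,g\cdot x_0)$. This bounded additive shift is exactly the perturbation the modified slope function was engineered to absorb. Writing the new slope in the form
\[
\mu(x,r)=\frac{r}{\theta(\log(d(x,x_0)+\Psi(0)+1))},\qquad \theta(u):=\log(\psi^{-1}(u)),
\]
condition (c) on $\psi$ forces $\theta'(u)\leq 1/\psi^{-1}(u)\to 0$ for large $u$, so Lemma~\ref{Lemma: technical limit for QI case}, applied with $A_1=A_2=1$, $B_1=0$, $B_2=\pm C$, immediately yields $\mu(g\cdot y_i,k_i)/\mu(y_i,k_i)\to 1$ as $d(y_i,x_0)\to\infty$ (which is forced by $y_i\to z\in Z$). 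The cases $\mu=\pm\infty$ are dispatched by the same estimate, showing slopes exceed any prescribed threshold.

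Combining the $\overline{X}$-convergence with the slope convergence completes the verification via the second bulleted fact. The hard part is precisely isolating the role of the exponential modification $\Psi=e^{\psi}$: without it, the denominator of the slope could react nontrivially to the additive shift $C$ and continuity of $\overline{g}$ at $SZ$ could fail, whereas with it the double logarithm built into the slope makes any bounded perturbation of $d(\cdot,x_0)$ asymptotically invisible, which is exactly the phenomenon the quasi-isometry framework was set up to exploit.
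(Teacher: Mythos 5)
Your proof is correct and follows the same structure as the paper's: reduce to integer levels via Lemma~\ref{Lemma: restricting to integers}, use integer-level equivariance (you cite Lemma~\ref{lem:integralequivariance}; the paper re-derives the same identity inline using $\phi^{-1}$-variance of $h$ for $k_i\geq 0$ and $\phi$-variance of $f$ for $k_i<0$), and then invoke Lemma~\ref{Lemma: technical limit for QI case} after observing that $d(g\cdot y_i,x_0)$ differs from $d(y_i,x_0)$ by at most the constant $C=d(x_0,g\cdot x_0)$. Your explicit verification that condition~(c) forces $\theta'(u)\leq 1/\psi^{-1}(u)\to 0$, hence $\theta'\leq 1$, is a welcome addition that the paper leaves implicit. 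One small caveat is worth flagging: your closing remark misattributes the role of $\Psi=e^{\psi}$. Since $g$ acts by isometries, the perturbation of $d(\cdot,x_0)$ here is only an \emph{additive} shift by $C$, and even the original single-$\log$ slope built from $\psi$ (with its built-in $\psi(s+1)\geq 3\psi(s)$ growth, making $\psi^{-1}$ sub-logarithmic) would absorb that. The exponential upgrade $\Psi=e^{\psi}$ is genuinely needed for the companion continuity claim about $\overline{t}$, where the \emph{multiplicative} quasi-isometry bounds on $h$ and $f$ force $A_2=K$ or $1/K$ in the limit lemma rather than $A_2=1$. Also, your $B_1=0$, $B_2=\pm C$ should really be $B_1=\Psi(0)+1$, $B_2=\Psi(0)+1\pm C$ to match the slope formula, though this is immaterial to the conclusion.
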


\begin{proof}
Since $\left.  \overline{g}\right\vert _{Y}=g$ is continuous, it suffices to
check continuity at points of $SZ$, and since $\left.  \overline{g}\right\vert
_{SZ}=g_{SZ}$ is continuous, it suffices consider the effect of $\overline{g}$
on sequences $\left\{  \left\lceil x_{i},r_{i}\right\rceil \right\}  $ in $Y$
which converge in $\overline{Y}$ to a point $\left\langle z,\mu\right\rangle
\in SX$. Specifically, we need to show that
\[
\{\lceil x_{i},r_{i}\rceil\}\rightarrow\langle z,\mu\rangle\implies
\{\overline{g}\cdot\lceil x_{i},r_{i}\rceil\}\rightarrow\langle g_{Z}\cdot
z,\mu\rangle
\]

By Lemma \ref{Lemma: restricting to integers}, we may replace each $r_{i}$
with $\left\lfloor r_{i}\right\rfloor $. To simplify notation, we simply
assume that each $r_{i}$ is an integer.\medskip

\textsc{Case 1.} $0<\mu\leq\infty$\medskip

Then $r_{i}$ is eventually non-negative, so we can assume $r_{i}\geq0$ for all
$i$. Applying formula (\ref{Definition of v}) and the fact that $A_{0}%
=\operatorname*{id}_{X}$, we have $v\left(  \lceil x_{i},r_{i}\rceil\right)
=\left(  h^{r_{i}}\left(  x_{i}\right)  ,r_{i}\right)  $. By the above bullet
points, $h^{r_{i}}\left(  x_{i}\right)  \rightarrow z$ in $\overline{X}$ and
$\mu\left(  h^{r_{i}}\left(  x_{i}\right)  ,r_{i}\right)  \rightarrow\mu$. If
we apply $\overline{g}$ to $\{\lceil x_{i},r_{i}\rceil\}$ and $\left\langle
z,\mu\right\rangle $, we get $\{\overline{g}\cdot\lceil x_{i},r_{i}%
\rceil\}=\left\{  \left\lceil \phi^{r_{i}}\left(  g\right)  \cdot x_{i}%
,r_{i}\right\rceil \right\}  $ and $\overline{g}\cdot\left\langle
z,\mu\right\rangle =\left\langle g_{Z}\cdot z,\mu\right\rangle $. By the first
bullet point, it remains to check that $v\left(  \left\lceil \phi^{r_{i}%
}\left(  g\right)  \cdot x_{i},r_{i}\right\rceil \right)  \rightarrow
\left\langle g_{Z}\cdot z,\mu\right\rangle $ in $\overline{X\times%
\mathbb{R}
}$. Now%
\begin{align*}
v\left(  \left\lceil \phi^{r_{i}}\left(  g\right)  \cdot x_{i},r_{i}%
\right\rceil \right)   &  =\left(  h^{r_{i}}\left(  \phi^{r_{i}}\left(
g\right)  \cdot x_{i}\right)  ,r_{i}\right) \\
&  =(\phi^{-r_{i}}(\phi^{r_{i}}(g))\cdot h^{r_{i}}(x),r_{i})\qquad\text{(by
}\phi^{-1}\text{-variance of }h\text{)}\\
&  =\left(  g\cdot h^{r_{i}}\left(  x_{i}\right)  ,r_{i}\right)
\end{align*}

Since $h^{r_{i}}\left(  x_{i}\right)  \rightarrow z$ and $\overline
{g}:\overline{X}\rightarrow\overline{X}$ is continuous, $g\cdot h^{r_{i}%
}\left(  x_{i}\right)  \rightarrow\overline{g}\cdot z=g_{Z}\cdot z$ in
$\overline{X}$, as desired.

It remains to show that $\mu\left(  g\cdot h^{r_{i}}\left(  x_{i}\right)
,r_{i}\right)  \rightarrow\mu$. We already know that $\mu\left(  h^{r_{i}%
}\left(  x_{i}\right)  ,r_{i}\right)  \rightarrow\mu$, so by comparing these
two sequences and applying formula (\ref{new slope formula}), it suffices to
show that
\begin{equation}
\frac{\log(\psi^{-1}(\log(d(h^{r_{i}}\left(  x_{i}\right)  ,x_{0}%
)+\Psi(0))+1)))}{\log(\psi^{-1}(\log(d(g\cdot h^{r_{i}}\left(  x_{i}\right)
,x_{0})+\Psi(0))+1)))}\rightarrow
1\label{formula: limit for continuity of g-bar}%
\end{equation}
Since $g$ is an isometry of $X$, the triangle inequality assures us that
\[
d\left(  h^{r_{i}}\left(  x_{i}\right)  ,x_{0}\right)  -d\left(  x_{0},g\cdot
x_{0}\right)  \leq d(g\cdot h^{r_{i}}\left(  x_{i}\right)  ,x_{0})\leq
d\left(  h^{r_{i}}\left(  x_{i}\right)  ,x_{0}\right)  +d\left(  x_{0},g\cdot
x_{0}\right)
\]
This allows us to squeeze limit \ref{formula: limit for continuity of g-bar}
between a pair of limits of the type addressed in Lemma
\ref{Lemma: technical limit for QI case}. In both cases $\theta=\log\circ
\psi^{-1}$, $A_{1}=A_{2}=1$, and $B_{1}=\Psi(0))+1$. For the lower limit, let
$B_{2}=\Psi(0)+1+d\left(  x_{0},g\cdot x_{0}\right)  $ and for the upper
limit, let $B_{2}=\Psi(0)+1-d\left(  x_{0},g\cdot x_{0}\right)  $.\medskip

\textsc{Case 2.} $-\infty\leq\mu<0$\medskip

Then $r_{i}$ is eventually negative, so we can assume $r_{i}<0$ for all $i$,
so formula (\ref{Definition of v}) yields $v\left(  \lceil x_{i},r_{i}%
\rceil\right)  =\left(  f^{\left\vert r_{i}\right\vert }\left(  x_{i}\right)
,r_{i}\right)  $. By the earlier bullet points, $f^{\left\vert r_{i}%
\right\vert }\left(  x_{i}\right)  \rightarrow z$ in $\overline{X}$ and
$\mu\left(  f^{\left\vert r_{i}\right\vert }\left(  x_{i}\right)
,r_{i}\right)  \rightarrow\mu$. As in Case 1, $\{\overline{g}\cdot\lceil
x_{i},r_{i}\rceil\}=\left\{  \left\lceil \phi^{r_{i}}\left(  g\right)  \cdot
x_{i},r_{i}\right\rceil \right\}  $ and $\overline{g}\cdot\left\langle
z,\mu\right\rangle =\left\langle g_{Z}\cdot z,\mu\right\rangle $, so it
remains to check that $v\left(  \left\lceil \phi^{r_{i}}\left(  g\right)
\cdot x_{i},r_{i}\right\rceil \right)  \rightarrow\left\langle \overline
{g}\cdot z,\mu\right\rangle $ in $\overline{X\times%
\mathbb{R}
}$. In this case,%
\begin{align*}
v\left(  \left\lceil \phi^{r_{i}}\left(  g\right)  \cdot x_{i},r_{i}%
\right\rceil \right)   &  =\left(  f^{\left\vert r_{i}\right\vert }\left(
\phi^{-\left\vert r_{i}\right\vert }\left(  g\right)  \cdot x_{i}\right)
,r_{i}\right) \\
&  =(\phi^{\left\vert r_{i}\right\vert }(\phi^{-\left\vert r_{i}\right\vert
}(g))\cdot f^{\left\vert r_{i}\right\vert }(x),r_{i})\qquad\text{(by }%
\phi\text{-variance of }g\text{)}\\
&  =\left(  g\cdot f^{\left\vert r_{i}\right\vert }\left(  x_{i}\right)
,r_{i}\right)
\end{align*}

The rest of the proof follows the reasoning used in Case 1, with$f^{\left\vert
r_{i}\right\vert }$ replacing $h^{r_{i}}$.\medskip

\textsc{Case 3.} $\mu=0$\medskip

Split the sequence $\{\lceil x_{i},r_{i}\rceil\}$ into a pair of subsequences,
one with all $r_{i}\geq0$ and the other with $r_{i}<0$. Then apply the
arguments used in Cases 1 and 2 to the subsequence individually.
\end{proof}

\begin{claim}
The function $\overline{t}:\overline{Y}\rightarrow\overline{Y}$, defined
above, is continuous.
\end{claim}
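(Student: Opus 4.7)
The plan is to mirror the case-analysis used for $\overline{g}$ in Claim~\ref{Claim: continuity of g-bar}. Since $\overline{t}|_{Y}=t$ and $\overline{t}|_{SZ}=t_{SZ}$ are continuous, it suffices to check continuity at points of $SZ$: for any sequence $\{\lceil x_i,r_i\rceil\}$ in $Y$ with $\lceil x_i,r_i\rceil\to\langle z,\mu\rangle$, we must show $\{\lceil x_i,r_i+1\rceil\}\to\langle h_Z(z),\mu\rangle$ in $\overline{Y}$. By Lemma~\ref{Lemma: restricting to integers}, I may assume every $r_i$ is an integer. Then by the bullet-point characterizations of convergence in $\overline{X\times\mathbb{R}}$, it is enough to check separately that the first coordinate of $v\lceil x_i,r_i+1\rceil$ converges to $h_Z(z)$ in $\overline{X}$ and that the slopes $\mu(v\lceil x_i,r_i+1\rceil)$ converge to $\mu$.

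First I would handle Case~1, $0<\mu\leq\infty$, where eventually $r_i\geq0$ and $r_i\to\infty$. Here formula (\ref{Definition of v}) gives $v\lceil x_i,r_i\rceil=(h^{r_i}(x_i),r_i)$ and $v\lceil x_i,r_i+1\rceil=(h(h^{r_i}(x_i)),r_i+1)$. The first coordinate converges to $h_Z(z)$ immediately from continuity of $\overline{h}$ applied to $h^{r_i}(x_i)\to z$. For the slope, writing $y_i=h^{r_i}(x_i)$,
\[
\mu(h(y_i),r_i+1)=\frac{r_i}{p(y_i)}\cdot\frac{r_i+1}{r_i}\cdot\frac{p(y_i)}{p(h(y_i))},
\]
and the first factor tends to $\mu$, the second to $1$ since $r_i\to\infty$, and the third to $1$ by applying Lemma~\ref{Lemma: technical limit for QI case} with $\theta=\log\circ\psi^{-1}$ after using the quasi-isometry inequality (\ref{formula: QI inequalities for h}) to squeeze $d(h(y_i),x_0)$ between linear functions of $d(y_i,x_0)$. (Note $\theta'(x)\to 0$ as $x\to\infty$ since $\psi'\geq 1$, so the lemma applies.)

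Case~2, $-\infty\leq\mu<0$, requires one new idea. Eventually $r_i\leq-2$, so $v\lceil x_i,r_i\rceil=(f^{|r_i|}(x_i),r_i)$ and $v\lceil x_i,r_i+1\rceil=(f^{|r_i|-1}(x_i),r_i+1)$. Let $w_i=f^{|r_i|-1}(x_i)$; then $f(w_i)=f^{|r_i|}(x_i)\to z$. By compactness of $\overline{X}$, any subsequential limit $w$ of $\{w_i\}$ satisfies $\overline{f}(w)=z\in Z$, and since $\overline{f}(X)\subseteq X$ we must have $w\in Z$. Using Lemma~\ref{Lemma: continuous extensions 1} and the bounded closeness of $h\circ f$ and $f\circ h$ to $\operatorname{id}_X$ (Proposition~\ref{Prop: homotopies A and B in the torsion case}), the extensions satisfy $f_Z\circ h_Z=h_Z\circ f_Z=\operatorname{id}_Z$, so $f_Z^{-1}=h_Z$ and $w=h_Z(z)$. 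Since all subsequential limits agree, $w_i\to h_Z(z)$. The slope argument then proceeds as in Case~1 with $f$ in place of $h$, using (\ref{formula: QI inequalities for f}) and Lemma~\ref{Lemma: technical limit for QI case}.

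Case~3, $\mu=0$, is handled by splitting $\{\lceil x_i,r_i\rceil\}$ into the two subsequences on which $r_i\geq 0$ and $r_i<0$ and invoking the first two cases (with the understanding that if $\{r_i\}$ is bounded on one of these subsequences, the slope goes to $0$ because $d(y_i,x_0)\to\infty$ forces $p(y_i)\to\infty$, and shifting $r$ by $1$ leaves the limit unchanged). The main obstacle is Case~2: it is the one place where we cannot simply apply $\overline{h}$ to the known convergent sequence, and we must instead recover $h_Z(z)$ as the unique boundary preimage under $\overline{f}$ using the $E\mathcal{Z}$-structure's $f_Z$--$h_Z$ inverse relationship together with the fact that $\overline{f}$ preserves $X$ and $Z$. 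Once this identification is in hand, the slope estimates in both cases are routine consequences of the quasi-isometry hypothesis and Lemma~\ref{Lemma: technical limit for QI case}.
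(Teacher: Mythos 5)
Your proof is correct and follows the same overall organization as the paper (reduction to integer second coordinates via Lemma~\ref{Lemma: restricting to integers}, trichotomy on the sign of $\mu$, separate verification of the $\overline{X}$-coordinate limit and the slope limit). Cases~1 and~3 match the paper's argument essentially verbatim.

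The real interest is Case~2, where you have supplied a step the paper elides. The paper's text reads ``we rely on the continuity of $\overline{f}$ and inequalities~(\ref{formula: QI inequalities for f}) instead of the analogs for $h$. Everything else follows as in Case~1,'' but this is not a literal parallel: in Case~1 one knows $h^{r_i}(x_i)\to z$ and applies $\overline{h}$ directly, whereas in Case~2 one knows $f^{|r_i|}(x_i)\to z$ and needs $f^{|r_i|-1}(x_i)\to h_Z(z)$, which is the \emph{inverse} direction for $\overline{f}$. You correctly isolate this as ``the one new idea'' and resolve it by: (i) compactness of $\overline{X}$ to extract subsequential limits of $w_i=f^{|r_i|-1}(x_i)$; (ii) continuity of $\overline{f}$ and $\overline{f}(X)\subseteq X$, $\overline{f}(Z)\subseteq Z$ to force any such limit $w$ into $Z$ with $f_Z(w)=z$; (iii) the identity $f_Z\circ h_Z=h_Z\circ f_Z=\operatorname{id}_Z$, obtained from bounded closeness of $hf, fh$ to $\operatorname{id}_X$ together with uniqueness of continuous extensions over a controlled compactification (Lemma~\ref{Lemma: continuous extensions 1}), to conclude $w=h_Z(z)$. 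This is exactly the missing justification, and it is worth noting that an equivalent route is to apply $\overline{h}$ to $f^{|r_i|}(x_i)\to z$ and then use bounded closeness of $h(f^{|r_i|}(x_i))$ to $f^{|r_i|-1}(x_i)$ (via $hf\simeq\operatorname{id}_X$ boundedly) plus the controlled-compactification property to transfer the limit; both versions lean on the same ingredients. Your parenthetical observation in Case~3 about bounded $\{r_i\}$ is likewise a correct and necessary detail that the paper leaves implicit. In short: same approach, but your Case~2 is more complete than the paper's and identifies a genuine subtlety.
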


\begin{proof}
Following the same strategy used above, we will show that
\[
\{\lceil x_{i},r_{i}\rceil\}\rightarrow\langle z,\mu\rangle\implies
\{\overline{t}\cdot\lceil x_{i},r_{i}\rceil\}\rightarrow\langle h_{Z}\left(
z\right)  ,\mu\rangle
\]

As before, we may assume each $r_{i}$ is an integer.\medskip

\textsc{Case 1.} $0<\mu\leq\infty$\medskip

Then $r_{i}$ is eventually non-negative, so we can assume $r_{i}\geq0$ for all
$i$. Then $v\left(  \lceil x_{i},r_{i}\rceil\right)  =\left(  h^{r_{i}}\left(
x_{i}\right)  ,r_{i}\right)  $ so $h^{r_{i}}\left(  x_{i}\right)  \rightarrow
z$ in $\overline{X}$ and $\mu\left(  h^{r_{i}}\left(  x_{i}\right)
,r_{i}\right)  \rightarrow\mu$. Applying $\overline{t}$ to $\{\lceil
x_{i},r_{i}\rceil\}$ and $\left\langle z,\mu\right\rangle $, we get $\{\lceil
x_{i},r_{i}+1\rceil\}$ and $\left\langle h_{Z}(z),\mu\right\rangle $,
respectively. By the first bullet point, it remains to show that $v\left(
\left\lceil x_{i},r_{i}+1\right\rceil \right)  \rightarrow\left\langle
h_{Z}(z),\mu\right\rangle $ in $\overline{X\times%
\mathbb{R}
}$. By formula (\ref{Definition of v}), we must show that $\left(  h^{r_{i}%
+1}\left(  x_{i}\right)  ,r_{i}+1\right)  \rightarrow\left\langle h_{Z}%
(z),\mu\right\rangle $ in $\overline{X\times%
\mathbb{R}
}$.

Since $h^{r_{i}}\left(  x_{i}\right)  \rightarrow z$ and $\overline
{h}:\overline{X}\rightarrow\overline{X}$ is continuous, $h^{r_{i}+1}\left(
x_{i}\right)  \rightarrow h_{Z}(z)$ in $\overline{X}$, as desired. It remains
to show that $\mu\left(  h^{r_{i}+1}\left(  x_{i}\right)  ,r_{i}+1\right)
\rightarrow\mu$. We already know that $\mu\left(  h^{r_{i}}\left(
x_{i}\right)  ,r_{i}\right)  \rightarrow\mu$, so by comparing these two
sequences and applying formula (\ref{new slope formula}), it suffices to show
that\
\begin{equation}
\frac{\log(\psi^{-1}(\log(d(h^{r_{i}}\left(  x_{i}\right)  ,x_{0}%
)+\Psi(0))+1)))}{\log(\psi^{-1}(\log(d(h^{r_{i}+1}\left(  x_{i}\right)
,x_{0})+\Psi(0))+1)))}\rightarrow1
\label{formula: limit for continuity of t-bar}%
\end{equation}
By applying (\ref{formula: QI inequalities for h}) we have%
\[
\frac{1}{K}d(h^{r_{i}}\left(  x_{i}\right)  ,x_{0})-\varepsilon\leq
d(h^{r_{i}+1}\left(  x_{i}\right)  ,x_{0})\leq Kd(h^{r_{i}}\left(
x_{i}\right)  ,x_{0})+\varepsilon
\]
which allows us squeeze limit (\ref{formula: limit for continuity of t-bar})
between a pair of limits like those addressed in Lemma
\ref{Lemma: technical limit for QI case}. For both limits $\theta=\log
\circ\psi^{-1}$, $A_{1}=1$, and $B_{1}=\Psi(0))+1$. For the lower limit
$A_{2}=K$ and $B_{2}=\Psi(0))+1+\varepsilon$, while for the upper limit
$A_{2}=1/K$ and $B_{2}=\Psi(0))+1-\varepsilon$.\medskip

\textsc{Case 2.} $0<\mu\leq\infty$\medskip

Then $r_{i}$ is eventually negative so we assume $r_{i}<0$ for all $i$. Now
$v\left(  \left\lceil x_{i},r_{i}\right\rceil \right)  =\left(  f^{\left\vert
r_{i}\right\vert }\left(  x_{i}\right)  ,r_{i}\right)  $ and $v\left(
\left\lceil x_{i},r_{i}+1\right\rceil \right)  =\left(  f^{\left\vert
r_{i}+1\right\vert }\left(  x_{i}\right)  ,r_{i}+1\right)  $, so we rely on
the continuity of $\overline{f}:\overline{X}\rightarrow\overline{X}$ and
inequalities (\ref{formula: QI inequalities for f}) instead of the analogs for
$h$. Everything else follows as in Case 1.\medskip

\textsc{Case 3.} $\mu=0$\medskip

As we did earlier, split $\{\lceil x_{i},r_{i}\rceil\}$ into subsequences to
which the arguments of Cases 1 and 2 can be applied.
\end{proof}

\begin{remark}
\label{Remark: Extension of G-action to XxR-bar}In proving Claim
\ref{Claim: continuity of g-bar}, we showed that if a sequence $\left\{
\left(  x_{i},r_{i}\right)  \right\}  $ converges to $\left\langle
z,\mu\right\rangle $ in $X\times%
\mathbb{R}
$ and $g\in G$, then $\left\{  \left(  g\cdot x_{i},r_{i}\right)  \right\}
\rightarrow\left\langle g_{Z}\cdot z,\mu\right\rangle $. A useful, and not
entirely obvious, corollary is that, when we have an $E\mathcal{Z}$-structure
$\left(  \overline{X},Z\right)  $ on $G$, the product $G$-action on $X\times%
\mathbb{R}
$ extends to our compactification $\overline{X\times%
\mathbb{R}
}$ by suspending the given $G$-action on $Z$.
\end{remark}

\subsection{Spaces that are not quasi-geodesic}

It is possible to meet the hypotheses of Theorem~\ref{Theorem: EZ-structures}
with respect to a space $X$ such that the $\phi$-variant map is not a
quasi-isometry. Indeed, when $(X,d)$ is not quasi-geodesic, we can only
conclude that $\phi$-variant maps are coarse equivalences. See \cite{BDM07}
for a discussion of this topic.

\begin{definition}
Let $X,W$ be metric spaces. A map $\phi:X\rightarrow W$ is a \emph{coarse
embedding} if there exist increasing \emph{control functions} $\rho_{-}%
,\rho_{+}:[0,\infty)\rightarrow\lbrack0,\infty)$ with $\lim_{r\rightarrow
\infty}\rho_{-}(r)=\infty$ such that for all $x,x^{\prime}\in X$%

\[
\rho_{-}(d_{X}(x,x^{\prime}))\leq d_{W}(\phi(x),\phi(x^{\prime}))\leq\rho
_{+}(d_{X}(x,x^{\prime})).
\]
A coarse embedding is called a \emph{coarse equivalence} if it is quasi-onto,
i.e., there exists $C>0$ such that for all $w\in W$, $d_{W}(w,\phi(X))<C$.
\end{definition}

\begin{example}
As a simple illustration, consider $\left(
\mathbb{R}
,d^{\prime}\right)  $ where $d^{\prime}\left(  r,s\right)  =\log\left(
1+\left\vert x-y\right\vert \right)  $. The usual $%
\mathbb{Z}
$-action is still by isometries, but the standard orbit map $\lambda:%
\mathbb{Z}
\rightarrow%
\mathbb{R}
$ is not a quasi-isometry. For a more extreme example, let $d^{\prime\prime
}=\log\left(  1+d^{\prime}\right)  $, etc.
\end{example}

Faced with more general control functions---imagine $\rho_{+}$ being
super-exponential and $\rho_{-}$ growing slower than an iterated
logarithm---the adjustment made to the function $\psi$ in the earlier argument
(where we replaced $\psi$ with $e^{\psi}$) may not suffice. In this more
general context, we will make an adjustment that depends on the \emph{growth
rates }of control functions $\rho_{-}$ and $\rho_{+}$. Without loss of
generality, we assume those functions are continuous. As another
simplification, we may make the following substitution.

\begin{lemma}
\label{lem:control functions} Let $X,Y$ be spaces which are coarsely
equivalent with control functions $\rho_{-},\rho_{+}$. Then there exists a
function $\rho:[0,\infty)\rightarrow\lbrack0,\infty)$ such that $X,Y$ are also
coarsely equivalent with respect to functions $\rho^{-1},\rho$.
\end{lemma}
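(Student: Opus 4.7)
The plan is to fuse the two control functions $\rho_-, \rho_+$ into a single $\rho$ so that the coarse-equivalence inequalities read
\[
\rho^{-1}(d_X(x,x')) \le d_Y(\phi(x),\phi(x')) \le \rho(d_X(x,x')).
\]
Inverting monotonically, the lower inequality is equivalent to $d_X(x,x') \le \rho(d_Y(\phi(x),\phi(x')))$, which follows once we arrange $\rho \ge \rho_-^{-1}$; the upper inequality needs $\rho \ge \rho_+$. So the target is any single $\rho$ dominating both $\rho_+$ and $\rho_-^{-1}$ and admitting a genuine inverse on $[0,\infty)$.

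First I would massage the given data. Without loss of generality, replace $\rho_-$ by a continuous, strictly increasing function $\tilde{\rho}_-:[0,\infty)\to[0,\infty)$ with $\tilde{\rho}_-(0)=0$, $\tilde{\rho}_-(r)\to\infty$, and $\tilde{\rho}_-\le \rho_-$. Such a surrogate can be produced by piecewise-linear interpolation below $\rho_-$ at a sequence of points tending to infinity; shrinking $\rho_-$ only weakens the lower bound, so the coarse-equivalence inequality survives. In the same way, replace $\rho_+$ by a continuous, strictly increasing $\tilde{\rho}_+\ge \rho_+$. Now $\tilde{\rho}_-$ is a homeomorphism of $[0,\infty)$ onto itself, and $\tilde{\rho}_-^{-1}$ is a continuous, strictly increasing function of the same type.

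Next I would define
\[
\rho(t)=\max\{\tilde{\rho}_+(t),\tilde{\rho}_-^{-1}(t)\}+t.
\]
The $+t$ summand forces strict monotonicity and unboundedness; combined with continuity and the fact that $\rho(0)=0$, this makes $\rho$ a self-homeomorphism of $[0,\infty)$. In particular $\rho^{-1}$ is well-defined on all of $[0,\infty)$, is increasing, and tends to infinity, so it is a legitimate lower control function. Then I would verify the two inequalities directly: the upper bound reads
\[
d_Y(\phi(x),\phi(x'))\le \rho_+(d_X(x,x'))\le \tilde{\rho}_+(d_X(x,x'))\le \rho(d_X(x,x'));
\]
for the lower bound, the hypothesis gives $\tilde{\rho}_-(d_X(x,x'))\le d_Y(\phi(x),\phi(x'))$, hence
\[
d_X(x,x')\le \tilde{\rho}_-^{-1}(d_Y(\phi(x),\phi(x')))\le \rho(d_Y(\phi(x),\phi(x'))),
\]
and applying $\rho^{-1}$ yields $\rho^{-1}(d_X(x,x'))\le d_Y(\phi(x),\phi(x'))$. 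The quasi-onto condition is untouched, so $\phi$ remains a coarse equivalence.

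The only real obstacle is the initial reduction: fabricating the strictly increasing continuous surrogate $\tilde{\rho}_-$ lying under an arbitrary nondecreasing control function while keeping it surjective onto $[0,\infty)$. Once that routine construction is in hand, everything else is algebraic manipulation, and the rest of the proof amounts to bookkeeping.
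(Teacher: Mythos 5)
Your proof is correct and reflects the same basic idea as the paper's: fuse the two given control functions into a single function $\rho$ that serves as an upper bound while its functional inverse serves as a lower bound. The paper's proof is the one-liner $\rho(x):=\min\bigl(\rho_-(x),\rho_+^{-1}(x)\bigr)$, which produces the \emph{small} member of the pair (so that $\rho$ is the lower control and $\rho^{-1}$ the upper); your $\rho(t)=\max\{\tilde{\rho}_+(t),\tilde{\rho}_-^{-1}(t)\}+t$ produces the \emph{large} member, which is the one actually wanted where the lemma is applied in the paper (there one assumes $\rho^{-1}<\rho$ and $\rho(x)>3x$). So your choice matches the downstream usage better than the paper's literal formula, which appears to have the roles of $\rho$ and $\rho^{-1}$ swapped. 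Your version is also more careful: the preliminary continuous, strictly increasing surrogates $\tilde{\rho}_\pm$ and the added $+t$ term make invertibility airtight, details the paper leaves implicit.

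One small inaccuracy: you assert $\rho(0)=0$, but this fails whenever $\rho_+(0)>0$, since then $\tilde{\rho}_+\ge\rho_+$ forces $\tilde{\rho}_+(0)>0$ and hence $\rho(0)>0$. In that case $\rho$ is a homeomorphism of $[0,\infty)$ onto $[\rho(0),\infty)$ rather than onto $[0,\infty)$. The fix is harmless---extend $\rho^{-1}$ by $0$ on $[0,\rho(0))$, or simply observe that only large-argument behavior of control functions is relevant---but the sentence as written overstates what the construction gives.
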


\begin{proof}
Let $\rho(x):=\text{min}\bigl(\rho_{-}(x),\rho_{+}^{-1}(x)\bigr)$.
\end{proof}

\begin{definition}
Let $\phi:[0,\infty)\rightarrow\lbrack0,\infty)$ be a continuous, increasing
function such that $\phi(x)\leq\frac{x}{2}$ and define $\phi^{\ast}%
:[0,\infty)\rightarrow\lbrack0,\infty)$ by the following rule:%

\[
\phi^{\ast}(x)=%
\begin{cases}
1 & \text{if $x\leq1$}\\
1+\phi^{\ast}(\phi(x)) & \text{else.}%
\end{cases}
\]

\end{definition}

One may view the output $\phi^{\ast}\left(  x\right)  $ to be
\textquotedblleft one more than the number of times one needs to apply $\phi$
to $x$ to achieve a value less than $1$\textquotedblright. This is
well-defined and finite by the assumption that $\phi(x)\leq\frac{x}{2}$. The
construction of the star function is inspired by the \emph{iterated
logarithm}, which is traditionally denoted as $\log^{\ast}$. The $\log^{\ast}$
function (not to be confused with probability's \textquotedblleft Law of the
Iterated Logarithm\textquotedblright) has roots in complexity theory and
logic; for an example see \cite{succinct}. Clearly this function is not
continuous, but rather looks like a floor function which steps one greater at
intervals of length $\phi(k)$, $k\in\mathbb{N}$.

The following is a direct consequence of the definition of $\phi^{*}$.

\begin{lemma}
\label{lem:star function} Let $\phi: [0, \infty) \rightarrow[0, \infty)$ be a
continuous, increasing function such that $\phi(x) \leq\frac{x}{2}$. Then for
all $z \in[0, \infty)$, the following hold:

\begin{itemize}
\item $\phi^{*}(\phi(z)) = \phi^{*}(z) - 1$

\item $\phi^{*}(\phi^{-1}(z)) = \phi^{*}(z) + 1$.
\end{itemize}
\end{lemma}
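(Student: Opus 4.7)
The plan is to derive both identities from the recursive clause of the definition, which states that $\phi^{*}(x) = 1 + \phi^{*}(\phi(x))$ for $x > 1$. Each of the two claimed identities is simply a rearrangement of this recursion, so the proof should be essentially immediate from unfolding the definition.

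First I would handle the identity $\phi^{*}(\phi(z)) = \phi^{*}(z) - 1$: assuming $z > 1$, the recursion reads $\phi^{*}(z) = 1 + \phi^{*}(\phi(z))$, which rearranges to the claim. For the second identity, I would set $w = \phi^{-1}(z)$, which is well-defined because $\phi$ is continuous and strictly increasing, and note that $\phi(w) = z$; then applying the first identity to $w$ (provided $w > 1$, i.e.\ $z > \phi(1)$) gives
\[
\phi^{*}(z) \;=\; \phi^{*}(\phi(w)) \;=\; \phi^{*}(w) - 1 \;=\; \phi^{*}(\phi^{-1}(z)) - 1,
\]
which rearranges to $\phi^{*}(\phi^{-1}(z)) = \phi^{*}(z) + 1$.

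The only point requiring care is the boundary of the recursion's domain. For $z \leq 1$ one has $\phi^{*}(z) = 1$, and since $\phi$ is increasing with $\phi(1) \leq 1/2$ one also has $\phi(z) \leq 1$, hence $\phi^{*}(\phi(z)) = 1$; so the literal identity $\phi^{*}(\phi(z)) = \phi^{*}(z) - 1$ would demand the nonexistent value $0$. I therefore expect the lemma to be understood as holding whenever $\phi^{*}(z) \geq 2$, which corresponds to $z > 1$ for the first identity and $z > \phi(1)$ for the second. This is the regime in which the lemma will be applied downstream (where $\phi^{*}$ is being used to count iterations of $\phi$ from large inputs), so I anticipate no substantive obstacle; the statement is effectively just the defining recursion written in shifted form.
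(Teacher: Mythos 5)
Your argument is exactly what the paper has in mind: the paper offers no proof, simply stating that the lemma is a direct consequence of the definition, and your unfolding of the recursive clause $\phi^{*}(x) = 1 + \phi^{*}(\phi(x))$ (with the second identity obtained by substituting $w = \phi^{-1}(z)$) is the intended reading. Your observation that the literal claim fails for $z \leq 1$ (resp.\ $z \leq \phi(1)$), since $\phi^{*}$ never takes the value $0$, is a correct and worthwhile catch about an imprecision in the stated lemma; as you note, the downstream application (the limits $(\dagger)$ and $(\ddagger)$ as $x \to \infty$) only ever invokes it in the regime where the recursion fires, so no harm results.
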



We are now ready for the general (non-quasi-geodesic) version of Theorem
\ref{Theorem: EZ-structures}.

\begin{proof}
[Proof of Theorem~\ref{Theorem: EZ-structures}]Most of the necessary work has
been done in the preceding subsections, with the sole exception of an analog
of Lemma~\ref{Lemma: technical limit for QI case}. Thus, we need to, for an
arbitrary pair of control functions $\rho_{-},\rho_{+}$, develop a function
$\psi$ satisfying analogous limit laws to the above. Before doing this, we
invoke Lemma~\ref{lem:control functions} to use control functions of the form
$\rho^{-1},\rho$, assuming without loss of generality $\rho^{-1}<\rho$.
Furthermore, we can also assume that $\rho\left(  x\right)  >3x$. Let
$(\rho^{-1})^{\ast}$ be defined as above with respect to $\rho^{-1}$.

To tackle the limits, we first show the following.\medskip

\noindent\textbf{Claim.} For all $A,B\geq0$
\[
\lim_{x\rightarrow\infty}\frac{(\rho^{-1})^{\ast}(x+A)}{(\rho^{-1})^{\ast
}\left(  \rho_{-}(x)+B\right)  }=1\quad(\dagger)
\]

\noindent\textit{Proof of Claim. }By applying Lemma~\ref{lem:star function},
we obtain:
\[
\frac{(\rho^{-1})^{\ast}(x+A)}{(\rho^{-1})^{\ast}\left(  \rho_{-}(x)+B\right)
}\geq\frac{(\rho^{-1})^{\ast}(x+A)}{(\rho^{-1})^{\ast}\left(  \rho
_{-}(x)\right)  +B}=\frac{(\rho^{-1})^{\ast}(x+A)}{(\rho^{-1})^{\ast}%
(x)+B-1}\geq\frac{(\rho^{-1})^{\ast}(x)}{(\rho^{-1})^{\ast}(x)+B-1}%
\]
The last ratio clearly approaches $1$. \qed\medskip

The following inequality, for all $A,B\geq0$, proceeds mutatis mutandis:%

\[
\lim_{x\rightarrow\infty}\frac{(\rho^{-1})^{\ast}(x+A)}{(\rho^{-1})^{\ast
}\left(  \rho_{+}(x)+B\right)  }=1 \quad(\ddagger)
\]

With these limits in hand, the function $(\rho^{-1})^{\ast}$ almost serves our
purpose. Earlier arguments require $\psi$ (and thus $\psi^{-1}$) to be
continuous and bijective. To accomplish that take $\psi^{-1}$ to be the
function which linearly connects the points
\[
(0,0),(1,(\rho^{-1})^{\ast}(1)),(2,(\rho^{-1})^{\ast}(2)),\ldots
\]
Observe that $\psi^{-1}$ is continuous and bijective, and also satisfies
$|\psi^{-1}(x)-(\rho^{-1})^{\ast}(x)|\leq1$. This closeness of $\psi^{-1}$ to
$(\rho^{-1})^{\ast}$ guarantees that inequalities $(\dagger)$ and $(\ddagger)$
are satisfied by $\psi^{-1}$ as well. We note that the original condition on
$\psi$ given by $\psi(s+1)\geq3\psi(s)$ is satisfied by the assumption that
$\rho\left(  x\right)  \geq3x$, as in this case we know that $\rho^{-1}\left(
x\right)  \leq\frac{x}{3}$, so $(\rho^{-1})^{\ast}(x)\leq(\frac{x}{3}^{\ast
})=\log_{3}(x)$. This means that $\psi^{-1}$ is bounded above by $\log_{3}%
(x)$, ensuring that $\psi$ grows at least as fast as $3^{x}$. Requirement
\ref{Item c)}), from the previous section, can arranged by carefully rounding
the corners on the piecewise-linear graph.
\end{proof}

\subsection{Proof of Theorem \ref{Theorem D}}

Theorem \ref{Theorem D} simply identifies some interesting special cases where
the continuous maps in the hypothesis of Theorem \ref{Theorem: EZ-structures}
always exist. In the case of hyperbolic groups, this follows from
\cite{Gro87}. For $G=%
\mathbb{Z}
^{n}$, one can begin with the standard Euclidean $E\mathcal{Z}$-structure
$\left(  \overline{%
\mathbb{R}
^{n}},\mathbb{S}^{n-1}\right)  $ and note that every element of
$\operatorname*{Aut}\left(
\mathbb{Z}
^{n}\right)  $ can be realized by a linear map which has a natural extension
to the sphere at infinity. For abelian groups with torsion, one can simply let
the torsion elements act trivially on $%
\mathbb{R}
^{n}$ and repeat the previous construction. For CAT(0) groups with the
isolated flats property, the existence of these maps is an application of
\cite{HrKl05}. The appendix of that paper includes additional cases that can
be added to this collection.

\section{$\protect\underline{E\mathcal{Z}}$%
-structures\label{Section: EZ-bar structures}}

In Section \ref{sec: proof with torsion permitted} we saw that, when working
on groups that have torsion, it can be useful to use classifying spaces
($\underline{E}G_{AR}$-spaces and $\underline{E}G$-complexes) which allow
fixed points but place restrictions on the fixed-point sets. In that spirit,
we introduce analogous definitions for $\mathcal{Z}$- and $E\mathcal{Z}%
$-structures. Motivation is contained in work by Rosenthal, which allows us to
use these structures to make conclusions about the Novikov Conjecture.
Specific applications of that type will be addressed in Section
\ref{subsec:novikov}.

\begin{definition}
\label{Defn: E-bar-Z structure}An $\underline{E\mathcal{Z}}$\emph{-structure}
on $G$ is an $E$\textbf{$\mathcal{Z}$}-structure $\left(  \overline
{X},Z\right)  $ with the following additional properties:

\begin{enumerate}
\item \label{Item 1 of E-bar-Z definition}$X$ is an $\underline{E}%
G_{\text{AR}}$-space, and

\item \label{Item 2 of E-bar-Z definition}for each $H\in\mathcal{F}_{G}$,
$\overline{X}^{H}$ is a $\mathbf{\mathcal{Z}}$-compactification of $X^{H}$.
\end{enumerate}
\end{definition}

\noindent Notice that, when this definition is satisfied, then

\begin{enumerate}
\item[a)] $\overline{X}^{H}$, i.e., the subset of $\overline{X}$ fixed by $H$,
is equal to the closure of $X^{H}$ in $\overline{X}$,

\item[b)] $\overline{X}^{H}\cap Z=Z^{H}$, and

\item[c)] $\left(  \overline{X}^{H},Z^{H}\right)  $ is an $E$%
\textbf{$\mathcal{Z}$}-structure for $N_{G}\left(  H\right)  $ (the normalizer
of $H$ in $G$) and also for $N_{G}\left(  H\right)  /H$.
\end{enumerate}

\noindent If, in addition to the above, $X$ is an $\underline{E}G$-complex, we
call $\left(  \overline{X},Z\right)  $ a \emph{cellular }%
$\underline{E\mathcal{Z}}$ \emph{-structure}.

For a variation on the above definition, we can relax equivariance by
requiring only that $\left(  \overline{X},Z\right)  $ be a
\textbf{$\mathcal{Z}$}-structure, while keeping conditions
\ref{Item 1 of E-bar-Z definition} and \ref{Item 2 of E-bar-Z definition} in
place. We call this a $\underline{\mathcal{Z}}$\emph{-structure} on $G$. Under
this definition, observations a)-c) remain valid, except that $\left(
\overline{X}^{H},Z^{H}\right)  $ is only a \textbf{$\mathcal{Z}$}-structure
for $N_{G}\left(  H\right)  $ and $N_{G}\left(  H\right)  /H$ rather than an
$E$\textbf{$\mathcal{Z}$}-structure. A \emph{cellular }$\underline{\mathcal{Z}%
}$\emph{-structure} is defined in the obvious way.

For the purposes of this paper, we are especially interested in
$\underline{E\mathcal{Z}}$-structures. That will be the focus of the remainder
of this section.

\begin{example}
\label{Example: EZ-bar for hyperbolic groups}Every hyperbolic group $G$ admits
a cellular $\underline{E\mathcal{Z}}$-structure $\left(  \overline{X},\partial
G\right)  $, where $X$ is an appropriately chosen Rips complex for $G$ and
$\partial G$ is the Gromov boundary. This is precisely the content of
\cite{RoSc05}.
\end{example}

\begin{example}
\label{Example: EZ-bar for systolic groups}Every systolic group $G$ admits a
cellular $\underline{E\mathcal{Z}}$-structure $\left(  \overline{X},\partial
X\right)  $, where $X$ is the implied systolic simplicial complex acted upon
by $G$, and $\partial X$ is the systolic boundary as defined in \cite{OsPr09}.
This observation follows from the main theorem of that paper along with their
Theorem 14.1, Claim 14.2, and its proof.
\end{example}

\begin{example}
\label{Example: EZ-bar for CAT(0) groups}Every CAT(0) group $G$ admits an
$\underline{E\mathcal{Z}}$-structure $\left(  \overline{X},\partial_{\infty
}X\right)  $, where $X$ is the implied proper CAT(0) space acted upon by
geometrically by $G$, and $\partial_{\infty}X$ is its visual boundary. It is
well documented that $\left(  \overline{X},\partial_{\infty}X\right)  $ is an
$E$\textbf{$\mathcal{Z}$}-structure for $G$. Conditions
\ref{Item 1 of E-bar-Z definition}) and \ref{Item 2 of E-bar-Z definition})
hold because the fixed set of every finite subgroup is nonempty and convex in
$X$. See \cite[Cor.II.2.8]{BrHa99}.
\end{example}

Our primary applications of $\underline{E\mathcal{Z}}$-structures (see Section
\ref{subsec:novikov}) requires that they be cellular. For that reason, the
following refinement of Example \ref{Example: EZ-bar for CAT(0) groups} will
be useful.

\begin{theorem}
\label{Theorem: cellular EZ-bar for CAT(0) groups}Every CAT(0) group $G$
admits a cellular $\underline{E\mathcal{Z}}$-structure.
\end{theorem}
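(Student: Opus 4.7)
The plan is to combine the non-cellular $\underline{E\mathcal{Z}}$-structure $(\overline{X},\partial_\infty X)$ provided by Example~\ref{Example: EZ-bar for CAT(0) groups} with a rigid cellular replacement of the CAT(0) space $X$, and then transfer the visual compactification to the replacement via the equivariant boundary-swapping machinery of \cite{GuMo19} already used throughout the paper. The target is an $\underline{E}G$-complex $X'$ together with a $G$-equivariant $\mathcal{Z}$-compactification $\overline{X'} = X' \sqcup \partial_\infty X$ whose closure of every fixed set $(X')^H$ is again a $\mathcal{Z}$-compactification.

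The first main step is to construct $X'$ as a nerve, adapting the argument in the proof of Proposition~\ref{PhiVariantMap}. Choose orbit representatives $x_1, \dots, x_k$ for the geometric $G$-action on $X$ together with radii $r_i$ so small that the balls $B(x_i, r_i/4)$ and their $G$-translates form a cover $\mathcal{U}$ of $X$ whose nerve is rigid. The essential refinement needed beyond Proposition~\ref{PhiVariantMap} is to enumerate the finitely many conjugacy classes of finite subgroups $H \leq G$ that appear as cell stabilizers and to include, for each such $H$, basepoints lying in $X^H$, with radii chosen so that every cover element is either contained in some $X^H$ or is $H$-disjoint from all of its $H$-translates. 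Because each $X^H$ is convex (hence CAT(0) and contractible) by \cite[Cor.~II.2.8]{BrHa99}, the induced cover of $X^H$ is a good cover and its nerve is contractible; the resulting $X' := |N(\mathcal{U})|$ is a locally finite, cocompact, rigid $G$-CW complex in which $(X')^H$ is the contractible subcomplex spanned by $H$-fixed vertices, making $X'$ an $\underline{E}G$-complex.

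The barycentric map $\beta:X \to X'$ and a geodesic nerve map $\alpha: X' \to X$ (taking each vertex to the center of the corresponding ball and extending across simplices via iterated CAT(0) barycenters) are $G$-equivariant proper homotopy inverses of each other and restrict to proper homotopy equivalences between every $X^H$ and $(X')^H$. Applying equivariant boundary swapping (\cite[Lemma~3.1]{GuMo19}, as in our Lemma~\ref{Lemma: Boundary swap}) to the pair $(\alpha,\beta)$ pulls the $E\mathcal{Z}$-structure $(\overline{X},\partial_\infty X)$ back to a cellular $E\mathcal{Z}$-structure $(\overline{X'},\partial_\infty X)$ on $G$. The same argument applied to the restricted pair $(\alpha|_{(X')^H},\beta|_{X^H})$ identifies $\overline{X'}^H$ with $(X')^H \sqcup \partial_\infty X^H$, with $\partial_\infty X^H$ a $\mathcal{Z}$-set there because it is already a $\mathcal{Z}$-set in $\overline{X^H}$, which verifies condition (\ref{Item 2 of E-bar-Z definition}) of Definition~\ref{Defn: E-bar-Z structure}.

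The main obstacle will be the simultaneous control demanded in the nerve construction: arranging a single $G$-invariant cover whose nerve is rigid \emph{and} whose induced cover on every $X^H$ is good. This is handled by a finite iterative refinement, exploiting that only finitely many conjugacy classes of finite stabilizers appear (by properness and cocompactness) and that a convex CAT(0) set admits good metric covers of arbitrarily small mesh. A secondary subtlety is checking that the boundary swap interacts coherently with the fixed-set filtration so that the closure operation commutes with taking $H$-fixed sets; this reduces to the observation that both $\alpha$ and $\beta$ were constructed to be $H$-equivariant on nose, so the continuity extensions produced by boundary swapping automatically restrict to the corresponding extensions on fixed sets.
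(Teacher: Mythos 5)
Your proposal follows the same high-level scheme as the paper---produce a cocompact rigid $\underline{E}G$-complex $X'$ with a $G$-equivariant comparison map to $X$, transfer the visual compactification via boundary swapping, and verify Condition~(\ref{Item 2 of E-bar-Z definition}) of Definition~\ref{Defn: E-bar-Z structure} by restricting the swap to each $X^H$---but the constructive first step is handled quite differently. The paper simply cites Ontaneda \cite[Prop.~A]{Ont05}, which already delivers a rigid cocompact $\underline{E}G$-simplicial complex $K$ together with a $G$-equivariant map $f:K\to X$; giving $K$ the path metric makes $f$ a quasi-isometry by \v{S}varc--Milnor, and the $E\mathcal{Z}$-boundary swapping theorem of \cite{GuMo19} is then applied at the quasi-isometry level, with the fixed-set condition handled by observing that $N_G(H)$ acts cocompactly on both $K^H$ and $X^H$ so that $f|_{K^H}$ is again a quasi-isometry. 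You instead attempt to re-derive Ontaneda's existence result from scratch by refining the nerve argument of Proposition~\ref{PhiVariantMap}, and then you build an explicit $G$-equivariant proper homotopy inverse $\alpha$ so as to invoke Lemma~\ref{Lemma: Boundary swap} directly rather than a coarse swapping theorem.

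The nerve construction can be made to work, but the place you flag as ``the main obstacle'' is exactly where the real content sits, and it is left as a sketch. Beyond the covering/refinement issues you mention, there is a point you do not address at all: to conclude that $N(\mathcal{U})^H$ is contractible you must show that a nonempty intersection of $H$-fixed cover elements actually meets $X^H$---only then does $N(\mathcal{U})^H$ coincide with the nerve of the induced (good, convex) cover of $X^H$. This requires the Bruhat--Tits fixed point theorem applied to the $H$-action on the nonempty convex intersection. You also rely on finiteness of the conjugacy classes of finite subgroups, which in the CAT(0) setting follows from the fixed point theorem plus properness and cocompactness, but should be stated. Provided these details are supplied, your route arrives at the same conclusion; the paper's choice to outsource the $\underline{E}G$-complex to \cite{Ont05} is simply far more economical, and working at the quasi-isometry level avoids the need to construct explicit barycentric retractions and verify that they are proper homotopy inverses.
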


\begin{proof}
Let $\left(  \overline{X},\partial_{\infty}X\right)  $ be the $E\mathcal{Z}%
$-structure implied by the definition of CAT(0) group, then apply
\cite[Prop.A]{Ont05} to obtain a rigid $\underline{E}G$-simplicial complex $K$
and a $G$-equivariant map $f:K\rightarrow X$. If we give $K$ the path-length
metric, then $f$ is a quasi-isometry, so we may use the map $f$ and the
$E\mathcal{Z}$-boundary swapping theorem from \cite{GuMo19} to obtain an
$E\mathcal{Z}$-structure of the form $\left(  \overline{K},\partial_{\infty
}X\right)  $ and a continuous extension $\overline{f}:\overline{K}%
\rightarrow\overline{X}$ which is the identity on $\partial_{\infty}X$.

Since $f$ is $G$-equivariant, it maps $K^{H}$ into $X^{H}$ for every
$H\in\mathcal{F}_{G}$. Moreover, since $N_{G}\left(  H\right)  $ acts properly
and cocompactly on both $K^{H}$ and $X^{H}$, $\left.  f\right\vert _{K^{H}%
}:K^{H}\rightarrow X^{H}$ is a quasi-isometry. We know that $\overline{X}%
^{H}=X^{H}\sqcup\partial_{\infty}(X^{H})$, so the boundary swap between $K$
and $X$ restricts to a boundary swap between $K^{H}$ and $X^{H}$. In
particular, $\overline{K}^{H}=K^{H}\sqcup\partial_{\infty}(X^{H})$ is a
$\mathcal{Z}$-compactification. Therefore $\left(  \overline{K},\partial
_{\infty}X\right)  $ is a cellular $\underline{E\mathcal{Z}}$-structure.
\end{proof}

We now state and prove our main theorem about $\underline{E\mathcal{Z}}%
$-structures on groups of the form $G\rtimes_{\phi}%
\mathbb{Z}
$. Due to the delicate nature of the argument, we begin with the assumption of
a \emph{cellular} $\underline{E\mathcal{Z}}$-structure on $G$. That allows us
to choose $G$-equivariant maps and homotopies in a number of places---a
property that will be used in the proof. It also leads to the conclusion of a
\emph{cellular} $\underline{E\mathcal{Z}}$-structure on $G\rtimes_{\phi}%
\mathbb{Z}
$ ---a property that is required for our main applications. It seems likely
that a more generic version of this theorem is true, but the argument would be
even more delicate.

\begin{theorem}
\label{Theorem: Main theorem about EZ-bar-structures}Suppose $G$ admits a
cellular $\underline{E\mathcal{Z}}$-structure $\left(  \overline{X},Z\right)
$, $\phi\in\operatorname*{Aut}\left(  G\right)  $, and the corresponding
$\phi$-variant map(s) $f:X\rightarrow X$ extends to a continuous map
$\overline{f}:\overline{X}\rightarrow\overline{X}$ which is a homeomorphism on
$Z$. Then $G\rtimes_{\phi}${$%
\mathbb{Z}
$} admits a cellular $\underline{E\mathcal{Z}}$-structure with boundary equal
to $SZ$.
\end{theorem}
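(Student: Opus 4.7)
My plan is to layer a fixed-set verification on top of the $E\mathcal{Z}$-structure supplied by Theorem~\ref{Theorem: EZ-structures}, by running every ingredient of the construction through the $F$-fixed-point sets for each finite $F$. First I would invoke Proposition~\ref{PhiVariantMap} (in its cellular form) together with the cellular hypothesis on $(\overline{X},Z)$ to select cellular $\phi$-variant and $\phi^{-1}$-variant self-maps $f,h:X\to X$. Any two $\phi$-variant maps between proper cocompact $G$-spaces are boundedly close (cocompactness plus the isometry property of the $G$-action), so Lemma~\ref{Lemma: continuous extensions 1}, combined with the hypothesis on $\overline{f}$ and Lemma~\ref{Lemma: continuous extensions 2}, gives that these cellular $f$ and $h$ both extend continuously to $\overline{X}$. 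Theorem~\ref{Theorem: EZ-structures} then yields an $E\mathcal{Z}$-structure $(\overline{Y},SZ)$ on $G\rtimes_\phi\Z$ with $Y=\operatorname{Tel}_f(X)$, while Proposition~\ref{prop:ebarg} supplies that $Y$ is also an $\underline{E}(G\rtimes_\phi\Z)$-complex. All that remains of Definition~\ref{Defn: E-bar-Z structure} is condition~(\ref{Item 2 of E-bar-Z definition}).

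Since $\Z$ is torsion-free, every finite $F\le G\rtimes_\phi\Z$ lies in $G$. Using (\ref{Defn: G acting on Y}) and the suspended $G$-action on $SZ$, one checks $(SZ)^F=S(Z^F)$ and
\[
Y^F=\operatorname{Tel}\bigl(\cdots\to X^{\phi^{-1}(F)}\to X^F\to X^{\phi(F)}\to\cdots\bigr)
\]
with connecting maps given by restrictions of $f$. The limit analysis from Lemma~\ref{Lemma: restricting to integers} together with the $\phi^{\pm 1}$-variances of $f$ and $h$ then identifies $\overline{Y}^F=Y^F\sqcup S(Z^F)$ as a set. The $\underline{E\mathcal{Z}}$-hypothesis makes each $X^{\phi^i(F)}$ an AR, so Lemma~\ref{Lemma: basic facts about cylinders and telescopes} makes $Y^F$ an AR as well.

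For the $\mathcal{Z}$-compactification condition, my plan is to rerun the two-step proof of Theorem~\ref{Theorem A}---the controlled $\mathcal{Z}$-compactification of $X\times\R$ from Section~\ref{sec:controlled} followed by the pullback along $v$ from Section~\ref{sec:proof}---at the $F$-fixed-set level. The cellular hypothesis is the workhorse: by Remark~\ref{Remark: Equivariance of maps and homotopies in CW cases} one can choose $f,h,A,B,v,u,H,J$ all $G$-equivariant, and this is exactly what lets each of them restrict to a compatible map among the spaces $X^{\phi^i(F)}$, $X^F\times\R$, and $Y^F$; in particular $v$ restricts to a proper homotopy equivalence $v|:Y^F\to X^F\times\R$. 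Since control passes to closed subspaces, $(\overline{X}^F,Z^F)$ is a controlled $\mathcal{Z}$-compactification of the contractible proper metric space $X^F$ and therefore supplies the input for Theorem~\ref{Theorem: Controlled Z-compactification of X x R}; applying that theorem (with the same $\psi$ as for $X$) produces a $\mathcal{Z}$-compactification of $X^F\times\R$ with boundary $S(Z^F)$. Pulling this back along $v|$ via the argument of Proposition~\ref{Prop: Y-hat is a Z-compactification} then yields a $\mathcal{Z}$-compactification of $Y^F$ with boundary $S(Z^F)$.

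The step I expect to be the main obstacle is reconciling this constructed compactification of $Y^F\sqcup S(Z^F)$ with the subspace topology that $\overline{Y}^F$ inherits from $\overline{Y}$. The reconciliation should succeed because both arise as pullbacks, along (a restriction of) $\overline{v}$, of the same topology on $\overline{X^F\times\R}$: Definition~\ref{DefineProductBoundaryTopology} is specified entirely in terms of the metric $\overline{d}$, the slope function $\mu$, the basepoint $x_0$, and the function $\psi$, all of which restrict tautologically from $\overline{X\times\R}$ to its closed subspace $\overline{X^F\times\R}$, so the basic open sets on the restricted compactification literally coincide with those produced by the fixed-set-level construction. Once this identification is in hand, condition~(\ref{Item 2 of E-bar-Z definition}) of Definition~\ref{Defn: E-bar-Z structure} is verified for every finite $F\le G\rtimes_\phi\Z$, completing the cellular $\underline{E\mathcal{Z}}$-structure $(\overline{Y},SZ)$ on $G\rtimes_\phi\Z$.
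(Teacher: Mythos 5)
Your proposal is correct and tracks the paper's own proof step for step: reduce to condition~(\ref{Item 2 of E-bar-Z definition}) of Definition~\ref{Defn: E-bar-Z structure} after invoking Theorem~\ref{Theorem: EZ-structures} and Proposition~\ref{prop:ebarg}, identify $Y^F$ as the sub-mapping-telescope of the $X^{\phi^i(F)}$ and $\overline{Y}^F = Y^F\sqcup S(Z^F)$, use the $G$-equivariance of $v,u,H,J$ available in the cellular setting (Remark~\ref{Remark: Equivariance of maps and homotopies in CW cases}) to restrict the boundary-swap to the $F$-fixed sets, and observe that the slope function and basic open sets of $\overline{X\times\mathbb{R}}$ restrict to exactly the right structure on $(X^F\times\mathbb{R})\sqcup S(Z^F)$ so that $\overline{Y}^F$ is the pull-back $\mathcal{Z}$-compactification of $Y^F$. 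The only cosmetic difference is that you phrase the product step as re-running Theorem~\ref{Theorem: Controlled Z-compactification of X x R} on $X^F\times\mathbb{R}$ with the same $\psi$ and then reconciling topologies, whereas the paper directly observes that the closure of $X^F\times\mathbb{R}$ in $\overline{X\times\mathbb{R}}$ is already a $\mathcal{Z}$-compactification (citing Lemma~\ref{ReparameterizedRays} and Remark~\ref{Remark: defining a Z-homotopy}); these amount to the same thing, and your extra care in checking that the cellular $f,h$ from Proposition~\ref{PhiVariantMap} extend (via Lemmas~\ref{Lemma: continuous extensions 1} and~\ref{Lemma: continuous extensions 2}) is a correct fleshing-out of a point the paper leaves implicit.
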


\begin{proof}
Propositions \ref{PhiVariantMap} and \ref{Theorem: constructing a G-semi-Z AR}
guarantee the existence of a cellular $\phi$-variant map $f$ and a
corresponding rigid $\left(  G\rtimes_{\phi}{%
\mathbb{Z}
}\right)  $-complex $Y=\operatorname*{Tel}_{f}\left(  X\right)  $. Proposition
\ref{prop:ebarg} assures that $Y$ is an $\underline{E}(G\rtimes_{\phi}%
\mathbb{Z}
)$-complex. The assumption of the existence of $\overline{f}:\overline
{X}\rightarrow\overline{X}$ implies a corresponding $E\mathcal{Z}$-structure
$\left(  \overline{Y},SZ\right)  $ for $G\rtimes_{\phi}%
\mathbb{Z}
$, as proved in Theorem \ref{Theorem: EZ-structures}. It remains only to
verify condition \ref{Item 2 of E-bar-Z definition}) of Definition
\ref{Defn: E-bar-Z structure}.

Let $H\in\mathcal{F}_{G\rtimes_{\phi}%
\mathbb{Z}
}$ and recall from our discussion in Section
\ref{Subsection: fixed sets and phi-variant maps} that $H\leq G$, and $Y^{H}$
is the sub-mapping telescope defined by the following subspaces and
restriction maps.
\[
\cdots\overset{\left.  f\right\vert }{\longrightarrow}X^{\phi^{-2}%
(H)}\overset{\left.  f\right\vert }{\longrightarrow}X^{\phi^{-1}%
(H)}\overset{\left.  f\right\vert }{\longrightarrow}X^{H}\overset{\left.
f\right\vert }{\longrightarrow}X^{\phi(H)}\overset{\left.  f\right\vert
}{\longrightarrow}X^{\phi^{2}(H)}\overset{\left.  f\right\vert
}{\longrightarrow}\cdots
\]

Clearly $\left(  X\times%
\mathbb{R}
\right)  ^{H}=X^{H}\times%
\mathbb{R}
$, and since $v:Y\rightarrow X\times%
\mathbb{R}
$ and $u:X\times%
\mathbb{R}
\rightarrow Y$ are $G$-equivariant (see Remark
\ref{Remark: Equivariance of maps and homotopies in CW cases}), $v\left(
Y^{H}\right)  \subseteq X^{H}\times%
\mathbb{R}
$ and $u(X\times%
\mathbb{R}
)\subseteq Y^{H}$. By our hypothesis, $\overline{X}^{H}$ is a $\mathcal{Z}%
$-compactification $X^{H}\sqcup Z^{H}$ of $X^{H}$ where $Z^{H}\subseteq Z$. By
the definition of the topology on $\overline{X\times%
\mathbb{R}
}$, the closure of $X^{H}\times%
\mathbb{R}
$ in $\overline{X\times%
\mathbb{R}
}$ is precisely $\left(  X^{H}\times%
\mathbb{R}
\right)  \sqcup S(Z^{H})$ topologized according to the same rules (i.e., the
restriction of the same slope function used to topologize $\overline{X\times%
\mathbb{R}
}$) where $S(Z^{H})$ is also $SZ^{H}$. From here, the same argument used in
proving \ref{ProductWithJoinIsANR} (See, in particular, Lemma
\ref{ReparameterizedRays} and the remark that follows it.) shows that
$\overline{X\times%
\mathbb{R}
}^{H}=\left(  X^{H}\times%
\mathbb{R}
\right)  \sqcup S(Z^{H})$ is a $\mathcal{Z}$-compactification of $(X\times%
\mathbb{R}
)^{H}$. Recalling that the $\mathcal{Z}$-compactification $\overline
{Y}=Y\sqcup SZ$ was obtained as the pull-back of the compactification
$\overline{X\times%
\mathbb{R}
}=\left(  X\times%
\mathbb{R}
\right)  \sqcup SZ$ via the map $v:Y\rightarrow X\times%
\mathbb{R}
$, it is clear that $\overline{Y}^{H}$ is the pull-back compactification of
$\overline{X\times%
\mathbb{R}
}^{H}=\left(  X^{H}\times%
\mathbb{R}
\right)  \sqcup S(Z^{H})$ via $\left.  v\right\vert _{Y^{H}}:Y^{H}\rightarrow
X^{H}\times%
\mathbb{R}
$. Now the same boundary swapping proof used in Proposition
\ref{Prop: Y-hat is a Z-compactification}\ applies to show that $\overline
{Y}^{H}=Y^{H}\sqcup S(Z^{H})$ is a $\mathcal{Z}$-compactification. Here one
should note that, by $G$-equivariance (see Remark
\ref{Remark: Equivariance of maps and homotopies in CW cases} again), the
homotopy $\overline{H}$ used in the earlier argument restricts to an
appropriate homotopy between self-maps of $\overline{Y}^{H}$.
\end{proof}

\begin{corollary}
Let $G$ be a hyperbolic group and $\phi\in\operatorname*{Aut}\left(  G\right)
$. Then $G\rtimes_{\phi}%
\mathbb{Z}
$ admits a cellular $\underline{E\mathcal{Z}}$-structure with boundary the
suspension of the Gromov boundary of $G$.
\end{corollary}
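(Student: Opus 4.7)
The plan is to invoke Theorem \ref{Theorem: Main theorem about EZ-bar-structures} directly; this reduces the corollary to checking two things: first, that every hyperbolic group carries a cellular $\underline{E\mathcal{Z}}$-structure whose boundary is the Gromov boundary $\partial G$, and second, that the $\phi$-variant self-map of the underlying space can be arranged to extend to a continuous map on the compactification which restricts to a homeomorphism of $\partial G$.

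The first point is handed to us by Example \ref{Example: EZ-bar for hyperbolic groups}: the Rips complex $P_d(G)$ for sufficiently large $d$, together with the Gromov boundary $\partial G$, forms a cellular $\underline{E\mathcal{Z}}$-structure $(\overline{X},\partial G)$ by the work of Rosenthal--Schütz \cite{RoSc05}. Here $X = P_d(G)$ is a rigid $\underline{E}G$-complex on which $G$ acts properly, cocompactly, and cellularly, and fixed sets of finite subgroups are subcomplexes whose closures in $\overline{X}$ give $\mathcal{Z}$-compactifications.

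For the second point, Proposition \ref{PhiVariantMap} produces a cellular $\phi$-variant map $f:X\to X$ (and similarly a $\phi^{-1}$-variant map). Because the $G$-action on $X$ is geometric and $f$ is $\phi$-variant, $f$ is boundedly close to the map induced on $X$ by $\phi$ viewed as a self-quasi-isometry of $G$ (through the orbit correspondence from the \v{S}varc--Milnor lemma). Consequently $f$ is itself a $(G\rtimes_\phi\mathbb{Z})$-equivariant quasi-isometry of the proper geodesic $\delta$-hyperbolic space $X$. By the classical theorem that every quasi-isometry of a proper $\delta$-hyperbolic space extends canonically to a homeomorphism of the Gromov boundary \cite{Gro87}, we obtain a continuous extension $\overline{f}:\overline{X}\to\overline{X}$ whose restriction to $\partial G$ is a homeomorphism; the same applies to the $\phi^{-1}$-variant companion.

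With both hypotheses verified, Theorem \ref{Theorem: Main theorem about EZ-bar-structures} delivers a cellular $\underline{E\mathcal{Z}}$-structure on $G\rtimes_\phi\mathbb{Z}$ with boundary $S(\partial G)$, as claimed. The only step that requires any thought is the boundary extension of $f$; since $X$ is already a genuinely hyperbolic space and $f$ a quasi-isometry, this is routine, so I do not anticipate any real obstacle here beyond citing the standard extension theorem.
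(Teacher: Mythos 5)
Your proposal follows the same route as the paper's proof: start from the cellular $\underline{E\mathcal{Z}}$-structure on $G$ given by Example~\ref{Example: EZ-bar for hyperbolic groups} (Rips complex plus Gromov boundary), obtain a cellular $\phi$-variant self-map $f$ from Proposition~\ref{PhiVariantMap}, observe that $f$ is a quasi-isometry of a proper hyperbolic space and hence extends to a boundary homeomorphism, and then invoke Theorem~\ref{Theorem: Main theorem about EZ-bar-structures}. The only slip is terminological --- $f$ is $\phi$-variant, not ``$(G\rtimes_\phi\mathbb{Z})$-equivariant'' (the larger group doesn't act on $X$) --- but this does not affect the argument.
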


\begin{proof}
Begin with the cellular $\underline{E\mathcal{Z}}$-structure $\left(
\overline{X},\partial G\right)  $ on $G$ discussed in Example
\ref{Example: EZ-bar for hyperbolic groups}. A $\phi$-variant map
$f:X\rightarrow X$ exists by Theorem \ref{PhiVariantMap}. Since $f$ is a
quasi-isometry, the well-known theory of hyperbolic spaces ensures a
continuous extension $\overline{f}:\overline{X}\rightarrow\overline{X}$ which
takes $\partial G$ homeomorphically onto $\partial G$.
\end{proof}

\begin{corollary}
Let $G$ be a CAT(0) or systolic group with corresponding
$\underline{E\mathcal{Z}}$-structure $(\overline{X},\partial_{\infty}X)$,
$\phi\in\operatorname*{Aut}\left(  G\right)  $, and $f:X\rightarrow X$ a
corresponding $\phi$-variant map. If there exists a continuous extension
$\overline{f}:\overline{X}\rightarrow\overline{X}$ which is a homeomorphism on
$\partial_{\infty}X$, then $G\rtimes_{\phi}%
\mathbb{Z}
$ admits a cellular $\underline{E\mathcal{Z}}$-structure with boundary the
suspension of $\partial_{\infty}X$.
\end{corollary}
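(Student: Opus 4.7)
The plan is to reduce the corollary to Theorem~\ref{Theorem: Main theorem about EZ-bar-structures}. That theorem requires a \emph{cellular} $\underline{E\mathcal{Z}}$-structure on $G$ together with a $\phi$-variant cellular self-map of the underlying complex that extends continuously to the boundary as a homeomorphism. The hypothesis furnishes the extension on $\overline{X}$ itself, but in the CAT(0) case the space $X$ need not be a CW complex, so some boundary-swapping is required first.

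For systolic $G$, Example~\ref{Example: EZ-bar for systolic groups} already provides a cellular $\underline{E\mathcal{Z}}$-structure on $X$. Proposition~\ref{PhiVariantMap} lets me replace $f$ by a cellular $\phi$-variant map $f'\colon X\to X$; since any two $\phi$-variant maps between cocompact $G$-spaces are boundedly close (equivariance reduces the estimate to a compact fundamental domain), Lemma~\ref{Lemma: continuous extensions 1} ensures that $f'$ inherits a continuous extension whose restriction to $\partial_{\infty}X$ equals $\overline{f}|_{\partial_{\infty}X}$, a homeomorphism by hypothesis. Theorem~\ref{Theorem: Main theorem about EZ-bar-structures} then applies directly.

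For CAT(0) $G$, I first invoke Theorem~\ref{Theorem: cellular EZ-bar for CAT(0) groups} to obtain a cellular $\underline{E\mathcal{Z}}$-structure $(\overline{K},\partial_{\infty}X)$, together with a $G$-equivariant quasi-isometry $\pi\colon K\to X$ extending to $\overline{\pi}\colon\overline{K}\to\overline{X}$ that fixes $\partial_{\infty}X$ pointwise. Choose a cellular $\phi$-variant map $f_{K}\colon K\to K$ via Proposition~\ref{PhiVariantMap}. The key claim is that $f_{K}$ extends continuously to $\overline{f_{K}}\colon\overline{K}\to\overline{K}$ with $\overline{f_{K}}|_{\partial_{\infty}X}=\overline{f}|_{\partial_{\infty}X}$. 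To see this, observe that $\pi\circ f_{K}$ and $f\circ\pi$ are two $\phi$-variant maps $K\to X$ and are therefore boundedly close. Since $f\circ\pi$ admits the continuous extension $\overline{f}\circ\overline{\pi}\colon\overline{K}\to\overline{X}$, Lemma~\ref{Lemma: continuous extensions 1} yields a continuous extension of $\pi\circ f_{K}$ with the same restriction to $\partial_{\infty}X$, namely $\overline{f}|_{\partial_{\infty}X}$. Because the topology of $\overline{K}$ is the pull-back of $\overline{X}$ along $\overline{\pi}$ (which is the identity on the boundary), convergence of $\pi(f_{K}(x_{i}))$ to a boundary point in $\overline{X}$ is equivalent to convergence of $f_{K}(x_{i})$ to the same point in $\overline{K}$, yielding the desired extension.

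The main obstacle is this transfer of continuity across the quasi-isometry $\pi$; it hinges on the pull-back nature of $\overline{K}$ produced by the boundary swap and on the bounded-closeness principle captured in Lemma~\ref{Lemma: continuous extensions 1}. Once $\overline{f_{K}}$ is in hand, its restriction to $\partial_{\infty}X$ is the hypothesized homeomorphism, and Theorem~\ref{Theorem: Main theorem about EZ-bar-structures} produces a cellular $\underline{E\mathcal{Z}}$-structure on $G\rtimes_{\phi}\mathbb{Z}$ with boundary $S(\partial_{\infty}X)$.
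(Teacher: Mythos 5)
Your proof is correct and follows the same route as the paper, only more explicitly. The paper's own proof is just two sentences: for systolic $G$ the structure is already cellular so Theorem~\ref{Theorem: Main theorem about EZ-bar-structures} applies directly, and for CAT(0) $G$ one first swaps to the cellular structure of Theorem~\ref{Theorem: cellular EZ-bar for CAT(0) groups} and then applies the same theorem. What you have done is fill in the (genuinely nontrivial but left implicit) step of transferring the continuous-extension hypothesis from the given map $f$ to the \emph{cellular} $\phi$-variant map that Theorem~\ref{Theorem: Main theorem about EZ-bar-structures} actually uses. Your appeals to bounded closeness of $\phi$-variant maps together with Lemma~\ref{Lemma: continuous extensions 1} are exactly the right mechanism, and the CAT(0) step of transferring the extension across the quasi-isometry $\pi\colon K\to X$ is sound. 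One small refinement: rather than appealing to a pull-back description of the topology of $\overline{K}$ (which the paper does not explicitly assert for the swap done in \cite{GuMo19}), you could instead use Lemma~\ref{Lemma: continuous extensions 2} to produce a continuous extension of a coarse inverse $\pi'\colon X\to K$, note that $\pi'\circ(\pi\circ f_K)$ is boundedly close to $f_K$, and then apply Lemma~\ref{Lemma: continuous extensions 1} once more; this stays entirely within the paper's explicitly stated toolkit and yields the same conclusion.
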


\begin{proof}
For systolic $G$, the $\underline{E\mathcal{Z}}$-structure $(\overline
{X},\partial_{\infty}X)$ is automatically cellular, so the conclusion is
immediate. For CAT(0) $G$, we must first swap the $\underline{E\mathcal{Z}}%
$-structure $(\overline{X},\partial_{\infty}X)$ for the cellular version
$\left(  \overline{K},\partial_{\infty}X\right)  $ promised in Theorem
\ref{Theorem: cellular EZ-bar for CAT(0) groups}.
\end{proof}

\section{Applications of the main theorems}

\label{sec:applications}

In this section we look at some concrete applications of the main results of
this paper. We begin with a look at $E\mathcal{Z}$-structures and
$\underline{E\mathcal{Z}}$-structures and their relationship to the Novikov
Conjecture. We point out situations where our methods can add to the
collection of groups for which the Novikov Conjecture is known to be true, and
provide new proofs that other groups belong to that collection.

Next we examine polycyclic groups (a class which contains all finitely
generated nilpotent groups) from the perspective of group boundaries. After
that, we show that fundamental groups of \emph{all} closed 3-manifolds admit
$\mathcal{Z}$-structures. For these latter two applications, we also discuss
$E\mathcal{Z}$-structures and $\underline{E\mathcal{Z}}$-structures.

\subsection{Applications of $\protect\underline{E\mathcal{Z}}$-structures to
the Novikov Conjecture}

\label{subsec:novikov}

Notice that for torsion-free groups there is no difference between a
$E\mathcal{Z}$-structure and an $\underline{E\mathcal{Z}}$-structure. Work by
Carllson and Pedersen \cite{CaPe95} and Farrell and Lafont \cite{FaLa05}
showed that the existence of an $E\mathcal{Z}$-structure on a torsion-free
group $\Gamma$ implies the Novikov Conjecture for $\Gamma$. In fact, Farrell
and Lafont's motivation for defining an $E\mathcal{Z}$-structure was precisely
that application. Rosenthal \cite{Ros04}, \cite{Ros06}, \cite{Ros12} expanded
upon that work to provide a similar approach to the Novikov Conjecture for
groups with torsion. His conditions motivated our definition of an
$\underline{E\mathcal{Z}}$-structure. When the $\underline{E\mathcal{Z}}%
$-structure is cellular, all of Rosenthal's conditions are satisfied, so we have:

\begin{theorem}
[after Rosenthal]\label{thm:rosenthal} If a group $G$ admits a cellular
$\underline{E\mathcal{Z}}$-structure, then the Baum-Connes map, $KK_{i}%
^{G}\left(  C_{0}\left(  \underline{E}G\right)  ;%
\mathbb{C}
\right)  \rightarrow K_{i}\left(  C_{r}^{\ast}G\right)  $, is split injective.
In particular, the Novikov Conjecture holds for $G$.
\end{theorem}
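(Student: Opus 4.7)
The plan is to recognize this as essentially a translation between our definition of a cellular $\underline{E\mathcal{Z}}$-structure and the hypotheses Rosenthal uses in \cite{Ros12} (building on \cite{Ros04}, \cite{Ros06}) to establish split injectivity of the Baum-Connes assembly map. The heavy machinery (equivariant continuous control, controlled $K$-theory, and descent arguments that ultimately produce the splitting) is entirely Rosenthal's; our task is only to verify that his hypotheses are met.

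First I would recall the precise form of Rosenthal's theorem. It requires a finite $G$-CW model for $\underline{E}G$ together with an equivariant compactification $\overline{\underline{E}G}$ satisfying: the compactification is metrizable; the $G$-action extends continuously; compact subsets of $\underline{E}G$ form a null family in $\overline{\underline{E}G}$ under the $G$-action; and, crucially for the torsion case, for every finite subgroup $H\leq G$ the fixed set $(\overline{\underline{E}G})^H$ is the $\mathcal{Z}$-set compactification of a finite model for $\underline{E}N_G(H)$. Under these hypotheses, Rosenthal shows that the Baum--Connes map $KK_i^G(C_0(\underline{E}G);\mathbb{C})\to K_i(C_r^*G)$ is split injective, and split injectivity of this map is known to imply the Novikov conjecture for $G$.

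Second, I would verify each hypothesis directly from Definition~\ref{Defn: E-bar-Z structure} together with the cellularity assumption. By item~(\ref{Item 1 of E-bar-Z definition}) of the definition, $X$ is an $\underline{E}G$-complex on which $G$ acts properly, cocompactly, rigidly, and cellularly; this makes $G\backslash X$ a finite CW complex and $X$ a cocompact model for $\underline{E}G$. The $E\mathcal{Z}$-structure axioms (specifically the $E$-clause of Definition~\ref{DefineZStructure}) give the equivariant continuous extension of the action to $\overline{X}$, and metrizability plus the nullity condition are built into the definition of a $\mathcal{Z}$-structure. Item~(\ref{Item 2 of E-bar-Z definition}) of Definition~\ref{Defn: E-bar-Z structure}, combined with observation~(c) following that definition, supplies exactly the fixed-set condition: for each $H\in\mathcal{F}_G$, $\overline{X}^H=X^H\sqcup Z^H$ is a $\mathcal{Z}$-compactification of $X^H$, and $X^H$ is itself a cocompact model for $\underline{E}N_G(H)$.

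Third, I would invoke Rosenthal's theorem to conclude split injectivity, and then note the standard implication that rational injectivity of the Baum--Connes assembly map gives the Novikov conjecture. The main obstacle, such as it is, is purely expository: one has to compare Rosenthal's formulation (phrased in the language of equivariant continuous control) with our more topological definition and confirm that no additional regularity (e.g.\ on the CW structure of the compactification or on metrizability of fixed-set boundaries) is being slipped in. The cellularity hypothesis in our theorem is precisely what allows us to meet any such bookkeeping requirements uniformly across all finite subgroups, since rigidity ensures $X^H$ is a subcomplex and $\mathcal{Z}$-compactifications of ANRs are ANRs. No genuinely new geometry is needed beyond what the definition already packages.
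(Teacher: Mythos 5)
Your proposal matches the paper's (extremely terse) argument exactly: the paper simply observes, in the sentence preceding the theorem statement, that a cellular $\underline{E\mathcal{Z}}$-structure satisfies all of Rosenthal's hypotheses, and you have unpacked precisely that verification. The only nitpick is that split injectivity already directly implies the Novikov conjecture without needing to pass through the intermediate remark about rational injectivity, but this is cosmetic.
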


\noindent For example, this theorem, combined with work discussed in the
previous section, implies the Novikov Conjecture for all hyperbolic, CAT(0),
and systolic groups, including those with torsion. (In many cases, other
proofs are known.)

For the purposes of this paper, we are interested in groups of the form
$G\rtimes_{\phi}%
\mathbb{Z}
$. Work presented above yields the following.

\begin{theorem}
\label{Theorem: EZ structures for hyperbolic-by-Z}Let $G$ be a hyperbolic
group and $\phi\in\operatorname*{Aut}\left(  G\right)  $. Then the Novikov
Conjecture holds for $G\rtimes_{\phi}%
\mathbb{Z}
$.
\end{theorem}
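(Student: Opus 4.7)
The plan is to assemble this result directly from the machinery already built up in the paper, with essentially no new argumentation required. The strategy is to produce a cellular $\underline{E\mathcal{Z}}$-structure on $G\rtimes_{\phi}\mathbb{Z}$ and then invoke Rosenthal's criterion (Theorem~\ref{thm:rosenthal}).

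First, I would start from the cellular $\underline{E\mathcal{Z}}$-structure $(\overline{X},\partial G)$ on the hyperbolic group $G$ provided by Example~\ref{Example: EZ-bar for hyperbolic groups} (after \cite{RoSc05}), where $X$ is a suitable Rips complex. By Proposition~\ref{PhiVariantMap} applied to the $\underline{E}G$-complex $X$, there exists a cellular $\phi$-variant map $f:X\to X$. Because $X$ is quasi-geodesic and $G$ acts geometrically, the \v{S}varc--Milnor Lemma forces $f$ to be a quasi-isometry, and the standard theory of hyperbolic spaces then guarantees a continuous extension $\overline{f}:\overline{X}\to\overline{X}$ which restricts to a self-homeomorphism of $\partial G$.

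Second, with these data in hand, Theorem~\ref{Theorem: Main theorem about EZ-bar-structures} applies verbatim (indeed, this is precisely the content of the corollary preceding the statement) to produce a cellular $\underline{E\mathcal{Z}}$-structure on $G\rtimes_{\phi}\mathbb{Z}$ with boundary $S(\partial G)$.

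Finally, an appeal to Theorem~\ref{thm:rosenthal} immediately yields split injectivity of the Baum--Connes assembly map for $G\rtimes_{\phi}\mathbb{Z}$, and hence the Novikov Conjecture. There is no genuine obstacle in this proof; the entire content has been packaged into the earlier theorems, and the deduction here is a one-line concatenation. If anything subtle could go wrong, it would be ensuring that the $\underline{E\mathcal{Z}}$-structure produced is genuinely cellular (so that Rosenthal's hypotheses are met), but this is precisely why Theorem~\ref{Theorem: Main theorem about EZ-bar-structures} was formulated in the cellular category and why Example~\ref{Example: EZ-bar for hyperbolic groups} was cited in its cellular form.
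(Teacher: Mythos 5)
Your proof is correct and follows exactly the paper's intended route: Example~\ref{Example: EZ-bar for hyperbolic groups} gives the cellular $\underline{E\mathcal{Z}}$-structure on $G$, Proposition~\ref{PhiVariantMap} gives the cellular $\phi$-variant quasi-isometry $f$ with its continuous boundary extension, Theorem~\ref{Theorem: Main theorem about EZ-bar-structures} (as packaged in the corollary at the end of Section~\ref{Section: EZ-bar structures}) yields a cellular $\underline{E\mathcal{Z}}$-structure on $G\rtimes_{\phi}\mathbb{Z}$, and Theorem~\ref{thm:rosenthal} concludes the Novikov Conjecture. This matches the paper's argument, which is likewise a direct concatenation of those ingredients.
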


\begin{theorem}
\label{Theorem: EZ structures for CAT(0) or systolic-by-Z}Let $G$ be a CAT(0)
or systolic group with corresponding $E\mathcal{Z}$-structure $(\overline
{X},\partial_{\infty}X)$, $\phi\in\operatorname*{Aut}\left(  G\right)  $, and
$f:X\rightarrow X$ a $\phi$-variant map. If $f$ extends continuously to a map
$\overline{f}:\overline{X}\rightarrow\overline{X}$ which is a homeomorphism on
$\partial_{\infty}X$, then the Novikov Conjecture holds for $G\rtimes_{\phi}%
\mathbb{Z}
$.
\end{theorem}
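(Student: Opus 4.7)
The plan is to reduce the statement to Rosenthal's criterion (Theorem \ref{thm:rosenthal}) by producing a cellular $\underline{E\mathcal{Z}}$-structure on $G\rtimes_{\phi}\mathbb{Z}$, which we will obtain by applying Theorem \ref{Theorem: Main theorem about EZ-bar-structures}. The work thus reduces to verifying that theorem's hypotheses: a cellular $\underline{E\mathcal{Z}}$-structure on $G$, together with a $\phi$-variant map (and its $\phi^{-1}$-variant partner) extending continuously to a boundary homeomorphism. Note that by the \v{S}varc--Milnor Lemma any $\phi$-variant self-map of $X$ is automatically a quasi-isometry; Proposition \ref{PhiVariantMap} combined with Proposition \ref{Prop: homotopies A and B in the torsion case} produces a $\phi^{-1}$-variant coarse inverse $h:X\rightarrow X$, and Lemma \ref{Lemma: continuous extensions 2} then converts the hypothesis that $\overline{f}|_{\partial_{\infty}X}$ is a homeomorphism into a corresponding continuous extension $\overline{h}:\overline{X}\rightarrow\overline{X}$.

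For the systolic case, the $\underline{E\mathcal{Z}}$-structure $(\overline{X},\partial_{\infty}X)$ of Example \ref{Example: EZ-bar for systolic groups} is already cellular, so the required input feeds directly into Theorem \ref{Theorem: Main theorem about EZ-bar-structures}. For the CAT(0) case, $(\overline{X},\partial_{\infty}X)$ need not be cellular, so I would first invoke Theorem \ref{Theorem: cellular EZ-bar for CAT(0) groups} to swap to a cellular model $(\overline{K},\partial_{\infty}X)$ on a rigid $\underline{E}G$-simplicial complex $K$, with the boundary swap effected by a $G$-equivariant quasi-isometry $\iota:K\rightarrow X$. Then I would apply Proposition \ref{PhiVariantMap} to obtain cellular $\phi$-variant and $\phi^{-1}$-variant self-maps $f_{K},h_{K}:K\rightarrow K$ and argue that each inherits a continuous extension to $\overline{K}$.

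The key compatibility step is this last verification. The $E\mathcal{Z}$-boundary swap of \cite{GuMo19} supplies continuous extensions $\overline{\iota}:\overline{K}\rightarrow\overline{X}$ and a coarse pseudo-inverse $\overline{\iota}^{\,\prime}:\overline{X}\rightarrow\overline{K}$, each restricting to the identity on $\partial_{\infty}X$. The composite $\iota^{\,\prime}\circ f\circ \iota:K\rightarrow K$ is a $\phi$-variant quasi-isometry boundedly close to $f_{K}$ (by cocompactness of the $G$-actions and $G$-equivariance of $\iota$), and it inherits a continuous extension to $\overline{K}$ by composing $\overline{f}$ with the swap extensions. Lemma \ref{Lemma: continuous extensions 1} then transfers that extension to $f_{K}$ itself, with boundary restriction equal to $\overline{f}|_{\partial_{\infty}X}$, a homeomorphism; the same argument handles $h_{K}$. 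With all inputs secured, Theorem \ref{Theorem: Main theorem about EZ-bar-structures} delivers a cellular $\underline{E\mathcal{Z}}$-structure on $G\rtimes_{\phi}\mathbb{Z}$ with boundary $S(\partial_{\infty}X)$, and Theorem \ref{thm:rosenthal} concludes the Novikov Conjecture. The main obstacle is precisely this boundedness/compatibility check in the CAT(0) boundary-swap step; it is routine but requires careful bookkeeping with the equivariance of $\iota$ and the fundamental-domain estimates underlying the swap.
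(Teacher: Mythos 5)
Your proposal is correct and tracks the paper's argument very closely: both proofs reduce to Rosenthal's criterion (Theorem \ref{thm:rosenthal}) by producing a cellular $\underline{E\mathcal{Z}}$-structure on $G\rtimes_{\phi}\mathbb{Z}$ via Theorem \ref{Theorem: Main theorem about EZ-bar-structures}, handling the systolic case directly (the model is already cellular) and the CAT(0) case by swapping to the cellular model of Theorem \ref{Theorem: cellular EZ-bar for CAT(0) groups}. The one place you go beyond the paper's own write-up is the compatibility check in the CAT(0) case — verifying that the cellular $\phi$-variant map $f_K$ on the swapped complex $K$ also inherits a continuous extension over $\overline{K}$, via $G$-equivariant quasi-isometry comparison with $\iota^{\prime}\circ f\circ\iota$ and Lemma \ref{Lemma: continuous extensions 1}. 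The paper's proof of the corresponding corollary leaves this step implicit, so your explicit argument is a welcome addition rather than a deviation; just note that the same bounded-closeness observation is also implicitly needed in the systolic case if the given $f$ is not already cellular (replace it by a cellular $\phi$-variant map via Proposition \ref{PhiVariantMap} and transfer the extension by Lemma \ref{Lemma: continuous extensions 1} again).
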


\begin{remark}
Our assertion about the Novikov Conjecture in Theorem
\ref{Theorem: EZ structures for hyperbolic-by-Z} is not new. An existing proof
goes as follows: groups with finite asymptotic dimension satisfy the Novikov
Conjecture; hyperbolic groups have finite asymptotic dimension; and extensions
of groups with finite asymptotic dimension by groups with finite asymptotic
dimension have finite asymptotic dimension. See \cite{BeDr01}.

It is an open question whether CAT(0) or systolic groups have finite
asymptotic dimension. As such, to the best of our knowledge, the assertions
about the Novikov Conjecture in Theorem
\ref{Theorem: EZ structures for CAT(0) or systolic-by-Z} are new. Notice that
the hypothesis about the existence of $\overline{f}:\overline{X}%
\rightarrow\overline{X}$ is not vacuous. See \cite{BoRu96} for a relevant example.
\end{remark}



\subsection{Polycyclic groups, nilpotent groups, and groups of polynomial
growth}

\label{subsec:nilpotent}

A group $G$ is \emph{polycyclic} if it admits a subnormal series
\[
G=G_{k}\vartriangleright G_{n-1}\vartriangleright\cdots\vartriangleright
G_{0}=\left\{  1\right\}
\]
for which each quotient group $G_{i+1}/G_{i}$ is cyclic. If it admits such a
series for which each quotient is infinite cyclic, then $G$ is called
\emph{strongly polycyclic }(or sometimes \emph{poly-}$%
\mathbb{Z}
$). The \emph{Hirsch length} of polycyclic $G$ is the number of infinite
cyclic factors in its subnormal series. It is a standard fact that Hirsch
length is an invariant of $G$.

The following comes from inducting on the Hirsch length, applying Theorem
\ref{Theorem A} at each step.

\begin{theorem}
\label{polycyclic} Every strongly polycyclic group $G$ admits a $\mathcal{Z}%
$-structure. If the Hirsch length of $G$ is $n$, the $\mathcal{Z}$-structure
$(\overline{X},Z)$ can be chosen so that $Z=\mathbb{S}^{n-1}$.
\end{theorem}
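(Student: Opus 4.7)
The plan is induction on the Hirsch length $n$ of $G$. For the base case $n=1$, the group $G$ is infinite cyclic and acts geometrically on $\mathbb{R}$; the two-point compactification $\overline{\mathbb{R}}=[-\infty,\infty]$ is a $\mathcal{Z}$-structure with boundary $\mathbb{S}^{0}=\{-\infty,+\infty\}$. (Starting at $n=1$ avoids dealing with what the suspension of the empty set means under the convention of Definition~\ref{Defn: Suspension}.)

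For the inductive step, suppose the result holds for strongly polycyclic groups of Hirsch length $n-1$, and let $G$ be strongly polycyclic of Hirsch length $n\geq 2$. The top of the subnormal series provides a normal subgroup $N\triangleleft G$ of Hirsch length $n-1$ that is itself strongly polycyclic, with $G/N\cong\mathbb{Z}$. Because $\mathbb{Z}$ is free, the extension
\[
1\rightarrow N\rightarrow G\rightarrow\mathbb{Z}\rightarrow 1
\]
splits, so $G\cong N\rtimes_{\phi}\mathbb{Z}$ for some $\phi\in\operatorname{Aut}(N)$. A short induction along the subnormal series shows that strongly polycyclic groups are torsion-free: any finite-order element projects trivially to each $\mathbb{Z}$ quotient and so is pushed down to the trivial group at the bottom. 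Hence $N$ is torsion-free.

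By the inductive hypothesis, $N$ admits a $\mathcal{Z}$-structure with boundary $\mathbb{S}^{n-2}$. Applying Theorem~\ref{Theorem A} to $N$ with the automorphism $\phi$ yields a $\mathcal{Z}$-structure on $N\rtimes_{\phi}\mathbb{Z}\cong G$ whose boundary is the suspension $S\mathbb{S}^{n-2}\cong\mathbb{S}^{n-1}$, closing the induction.

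There is no real obstacle here; the argument is a straightforward bookkeeping combination of the structure theory of strongly polycyclic groups with Theorem~\ref{Theorem A}. The two items that deserve a sentence of care are (i) the splitting of the extension, which uses freeness of $\mathbb{Z}$, and (ii) verifying that each $N$ along the way is torsion-free, which is what permits Theorem~\ref{Theorem A} (rather than its torsion-allowing refinement) to be invoked repeatedly.
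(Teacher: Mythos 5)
Your proof is correct and follows exactly the approach the paper intends: the paper's own justification for Theorem~\ref{polycyclic} is the single sentence ``The following comes from inducting on the Hirsch length, applying Theorem~\ref{Theorem A} at each step,'' and your argument supplies precisely the missing bookkeeping (the splitting of the top extension, the torsion-freeness check, and the suspension computation $S\mathbb{S}^{n-2}\cong\mathbb{S}^{n-1}$).
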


A group $G$ is \emph{nilpotent} if there exists a finite sequence of normal
subgroups $G=G_{k}\vartriangleright G_{n-1}\vartriangleright\cdots
\vartriangleright G_{0}=\left\{  1\right\}  $ such that $[G,G_{i+1}]$ is
contained in $G_{i}$ (where brackets indicate the commutator). Observe that a
finitely generated nilpotent group is polycyclic.

\begin{theorem}
\label{nilpotent} Every finitely generated nilpotent group admits a
$\mathcal{Z}$-structure with spherical boundary.
\end{theorem}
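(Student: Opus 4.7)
The plan is to reduce to the strongly polycyclic case (Theorem \ref{polycyclic}) by passing to the torsion-free quotient. The key classical input is that in a finitely generated nilpotent group $G$, the set $T = T(G)$ of torsion elements forms a finite normal subgroup, proved by induction on nilpotency class from the analogous statement for finitely generated abelian groups. Consequently $\bar{G} := G/T$ is torsion-free finitely generated nilpotent, hence strongly polycyclic, with Hirsch length $n$ equal to that of $G$.

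First I would apply Theorem \ref{polycyclic} to obtain a $\mathcal{Z}$-structure $(\overline{X}, \mathbb{S}^{n-1})$ on $\bar{G}$. Then I would pull this back to $G$ by letting $G$ act on $X$ through the quotient map $\pi : G \twoheadrightarrow \bar{G}$, i.e., $g \cdot x := \pi(g) \cdot x$. The claim is that $(\overline{X}, \mathbb{S}^{n-1})$, now regarded as a pair carrying this $G$-action, already satisfies Definition \ref{DefineZStructure}.

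Conditions (1) and (2) concern only the topology of the pair $(\overline{X}, \mathbb{S}^{n-1})$ and are inherited untouched. For condition (\ref{Defn: Z-structure Item3}), cocompactness is immediate since the $G$-orbits on $X$ coincide with the $\bar{G}$-orbits. Properness requires the one substantive observation: for compact $K \subseteq X$, the set $\{g \in G : gK \cap K \neq \varnothing\}$ is the $\pi$-preimage of the analogous set for $\bar{G}$, which is finite; since $\ker \pi = T$ is finite, this preimage is a finite union of $T$-cosets, hence finite. The nullity condition (4) transfers by the same orbit-equality: $\{gK \mid g \in G\} = \{\bar{g}K \mid \bar{g} \in \bar{G}\}$, which is already a null family in $\overline{X}$ by hypothesis.

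The main potential obstacle is the classical input that $T(G)$ is a finite normal subgroup of $G$; once taken as given, what remains is the essentially routine verification that a $\mathcal{Z}$-structure on a quotient by a finite normal subgroup pulls back through the quotient homomorphism to a $\mathcal{Z}$-structure on the parent group. Notably, the boundary sphere $\mathbb{S}^{n-1}$ is preserved verbatim, so the theorem comes with no loss of dimension information from the strongly polycyclic case.
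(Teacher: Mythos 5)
Your proposal is correct and follows essentially the same route as the paper: pass to the torsion-free quotient $G/T(G)$, note it is strongly polycyclic of the same Hirsch length and apply Theorem~\ref{polycyclic}, then pull the $\mathcal{Z}$-structure back through the quotient map, using finiteness of $T(G)$ to preserve properness. The only difference is that you spell out the verification of each clause of Definition~\ref{DefineZStructure} a bit more explicitly than the paper does.
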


\begin{proof}
Let $\Gamma$ be a finitely generated nilpotent group. The set of all torsion
elements forms a finite, characteristic subgroup $T(\Gamma)\vartriangleleft
\Gamma$, which gives us a short exact sequence
\[
1\rightarrow T(\Gamma)\rightarrow\Gamma\rightarrow\Gamma/T(\Gamma
)\rightarrow1
\]
where $\Gamma/T(\Gamma)$ is a torsion-free, nilpotent group \cite[Cor.
1.10]{Seg83}. Recall then that a \emph{torsion-free} nilpotent group is
strongly polycyclic group (see \cite[5.2.20]{Rob96}). Therefore $\Gamma
/T(\Gamma)$ admits a $\mathcal{Z}$ structure $(\overline{X},\mathbb{S}^{k})$
by Theorem~\ref{polycyclic}. The proper cocompact action of $\Gamma/T(\Gamma)$
on $X$ can be extended to a cocompact $\Gamma$-action using the homomorphism
$\Gamma\rightarrow\Gamma/T(\Gamma)$. Since the kernel is finite, this action
is also proper, so $(\overline{X},\mathbb{S}^{k})$ is a $\mathcal{Z}%
$-structure for $\Gamma$.
\end{proof}

Already, the above results expand greatly on the class of groups known to
admit a $\mathcal{Z}$-structures. That is because non-elementary nilpotent
groups are never hyperbolic (they have polynomial growth, \cite{Wol68}) and
seldom CAT(0) (nilpotent groups which are CAT(0) are virtually abelian). See
\cite[Theorem~7.8, p. 249]{BrHa99}. By invoking some powerful theorems, we can
obtain more.

\begin{theorem}
\label{Theorem:polynomial growth theorem}Every group of polynomial growth
admits a $\mathcal{Z}$-structure with spherical boundary.
\end{theorem}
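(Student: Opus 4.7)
The plan is to bootstrap Theorem~\ref{nilpotent} up to all groups of polynomial growth using Gromov's classical structure theorem together with a finite-index transfer result. Let $G$ be a finitely generated group of polynomial growth. By Gromov's theorem \cite{Gro81}, $G$ contains a nilpotent subgroup $H$ of finite index. Replacing $H$ by its normal core in $G$---the intersection of the finitely many $G$-conjugates of $H$, which is again finitely generated, nilpotent, and of finite index---we may assume $H \trianglelefteq G$ with $G/H$ finite.

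Applying Theorem~\ref{nilpotent} to $H$ produces a $\mathcal{Z}$-structure $(\overline{X}, \mathbb{S}^{k-1})$ for $H$, where the boundary is a sphere of some dimension $k-1 \geq -1$. To push this $\mathcal{Z}$-structure up from $H$ to $G$, I would invoke the main result of \cite{KMN09}, which allows one to transfer a $\mathcal{Z}$-structure from a finite-index subgroup to the parent group; the new structure is typically obtained by constructing a $G$-space from $X$ (for example via an induced-space construction) and checking that the compactification $\overline{X}$ extends compatibly. Once a $\mathcal{Z}$-structure on $G$ is produced this way, the boundary-swapping apparatus of \cite{GuMo19} can be used to replace the underlying AR with whichever proper cocompact $G$-AR is most convenient, without disturbing the boundary.

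The main obstacle is the transfer step itself. In promoting the $H$-action on $X$ to a $G$-action on a related AR, one must simultaneously preserve (a) properness and cocompactness, (b) the $\mathcal{Z}$-set character of the boundary inside the new compactification, (c) the nullity condition for $G$-translates of compacta, and---most delicately for this application---(d) the homeomorphism type of the boundary as a sphere, rather than as some more elaborate compactum. The role of \cite{KMN09}, combined with the boundary-swap of \cite{GuMo19}, is exactly to secure all four properties simultaneously; once they are in hand, $(\overline{X'}, \mathbb{S}^{k-1})$ is the desired $\mathcal{Z}$-structure on $G$.
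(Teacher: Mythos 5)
Your skeleton and choice of tools match the paper's, but the roles you assign to the two cited results are reversed, and that reversal is a genuine gap. The main theorem of \cite{KMN09} is \emph{not} a transfer theorem for $\mathcal{Z}$-structures across finite-index extensions; it is a list of equivalent cohomological finiteness conditions for elementary amenable groups, and in particular it shows that a polycyclic-by-finite group $G$ admits a cocompact $\underline{E}G$-complex $Y$. That is the only thing the paper extracts from it: once $G$ is seen to be polycyclic-by-finite (which does not require passing to a normal core), \cite[Th.1.1]{KMN09} produces a $G$-AR $Y$ on which $G$ acts properly and cocompactly. Nothing in \cite{KMN09} says anything about compactifications, boundaries, or nullity.

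The actual transfer from $H$ to $G$ is done by the Generalized Boundary Swapping Theorem \cite[Cor.7.2]{GuMo19}, which you relegate to a cosmetic afterthought (``replace the underlying AR with whichever proper cocompact $G$-AR is most convenient''). It is in fact the load-bearing step. The point is that since $[G:H]<\infty$, the $G$-complex $Y$ is also a proper cocompact $H$-complex, hence $H$-equivariantly quasi-isometric to the space $X$ underlying your $\mathcal{Z}$-structure $(\overline{X},\mathbb{S}^{k})$. Boundary swapping then glues $\mathbb{S}^{k}$ onto $Y$ to yield a controlled $\mathcal{Z}$-compactification $\overline{Y}$, and because $G$ itself acts geometrically on $Y$, the pair $(\overline{Y},\mathbb{S}^{k})$ is a $\mathcal{Z}$-structure for $G$, not merely for $H$. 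Your proposed ``induced-space construction'' is not needed and is not how the argument goes; with the roles of \cite{KMN09} and \cite{GuMo19} straightened out, your outline becomes the paper's proof.
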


\begin{proof}
Suppose $G$ has polynomial growth. Then, by \cite{Gro81}, $G$ contains a
finite index nilpotent subgroup $H$. Since $G$ is finitely generated, $H$ is
finitely generated, so by Theorem \ref{nilpotent}, $H$ admits a $\mathcal{Z}%
$-structure of the form $\left(  \overline{X},\mathbb{S}^{k}\right)  $ for
some $k$.

Since $G$ contains a finite index nilpotent group, it contains a finite index
normal nilpotent group, assuring us that $G$ is elementary amenable. Since
finitely generated nilpotent groups are polycyclic, $G$ is also
polycyclic-by-finite, so it satisfies condition (x) of \cite[Th.1.1]{KMN09}.
As such, $G$ satisfies condition (i) of that theorem, meaning that there
exists a cocompact $\underline{E}G$-complex $Y$. From here, we may apply the
Generalized Boundary Swapping Theorem \cite[Cor.7.2]{GuMo19} to obtain a
$\mathcal{Z}$-structure for $G$ of the form $\left(  \overline{Y}%
,\mathbb{S}^{k}\right)  $.
\end{proof}

\begin{corollary}
\label{Corollary: consequences of Z-structures}For every group $G$ that is
strongly polycyclic or of polynomial growth, there exists an integer $k\geq-1$
such that $H^{\ast}\left(  G;%
\mathbb{Z}
G\right)  \cong H^{\ast-1}\left(  \mathbb{S}^{k};%
\mathbb{Z}
\right)  $. In addition, $G$ has the same homotopy at infinity as $%
\mathbb{Z}
^{k+1}$, in particular, $G$ is semistable and $\operatorname*{pro}$-$\pi
_{i}\left(  G\right)  $ is stably isomorphic to $\pi_{i}\left(  \mathbb{S}%
^{k}\right)  $ for all $i$.
\end{corollary}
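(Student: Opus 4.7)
The plan is to combine the $\mathcal{Z}$-structures constructed in Theorems~\ref{polycyclic} and~\ref{Theorem:polynomial growth theorem} with standard machinery that converts properties of the $\mathcal{Z}$-boundary into properties of the group. Specifically, Theorems~\ref{polycyclic} and~\ref{Theorem:polynomial growth theorem} hand us, for every $G$ of the stated type, a $\mathcal{Z}$-structure $(\overline{X},\mathbb{S}^{k})$ for some $k\geq -1$ (with $k=-1$ meaning $\mathbb{S}^{k}=\varnothing$, which covers the finite case). So the task reduces to feeding the sphere $\mathbb{S}^{k}$ into general results that depend only on the homeomorphism type of the boundary.

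For the cohomological statement, I would invoke Bestvina's computation from \cite{Bes96}: for a torsion-free group with $\mathcal{Z}$-structure $(\overline{X},Z)$, one has $H^{n}(G;\mathbb{Z}G)\cong \check{H}^{n-1}(Z;\mathbb{Z})$. For groups with torsion (which occurs in the polynomial-growth case via finite torsion in finitely generated nilpotent subgroups) one uses the extension of this theorem given in \cite{GuMo21}. Because $\mathbb{S}^{k}$ is an ANR, Čech cohomology and singular cohomology of $\mathbb{S}^{k}$ coincide, giving $H^{n}(G;\mathbb{Z}G)\cong H^{n-1}(\mathbb{S}^{k};\mathbb{Z})$ as stated. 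In the torsion-free case, the additional claim that $G$ is a Poincaré duality group is immediate from the cohomology computation together with the fact that $\mathbb{S}^{k}$ has the cohomology of a sphere, since Poincaré duality groups are precisely those with $\mathbb{Z}G$-cohomology of a sphere concentrated in one degree.

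For the statement about the homotopy at infinity, I would appeal to the general principle (again going back to \cite{Bes96} and refined in \cite{GuMo21}) that the pro-homotopy type at infinity of a space $X$ admitting a $\mathcal{Z}$-compactification $\overline{X}=X\sqcup Z$ is determined by the shape of $Z$. Since $\mathbb{S}^{k}$ is an ANR, its shape type coincides with its homotopy type, so the pro-homotopy groups at infinity are stably isomorphic to $\pi_{i}(\mathbb{S}^{k})$. The model $\mathbb{Z}^{k+1}$ acting on $\mathbb{R}^{k+1}$ has exactly the same boundary $\mathbb{S}^{k}$, so $G$ and $\mathbb{Z}^{k+1}$ have the same pro-homotopy at infinity. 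Semistability is then a consequence of $\mathbb{S}^{k}$ being connected and a compact ANR for $k\geq 0$, which yields movable shape and hence a stable pro-$\pi_{1}$; the degenerate cases ($k=-1$ and $k=0$) are handled separately as trivialities.

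The only real subtlety --- and the main obstacle --- is justifying that these \textquotedblleft group-level\textquotedblright\ invariants (cohomology with $\mathbb{Z}G$ coefficients and pro-homotopy at infinity of $G$) are genuinely computed by the $\mathcal{Z}$-structure when $G$ has torsion, since the original formulation of Bestvina's theorem is phrased for type F groups acting freely. This is exactly what is addressed in \cite{GuMo21}, where the constructions in this paper (particularly the use of cocompact $\underline{E}G$-spaces with AR fixed sets) provide the right input; I would cite those extensions directly rather than reprove them. Once that reference is in place, the corollary is a direct readout from the shape and cohomology of $\mathbb{S}^{k}$.
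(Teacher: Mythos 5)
Your proposal is correct and follows essentially the same route as the paper: both read off the conclusions by feeding the sphere boundary from Theorems~\ref{polycyclic} and~\ref{Theorem:polynomial growth theorem} into the standard $\mathcal{Z}$-structure machinery of \cite{Bes96}, using \cite{GuMo21} (and \cite{Dra06}, \cite{GuMo19}) to cover the torsion cases. The paper's proof is just a terse list of those references, so your more explicit account is a faithful unpacking of the same argument.
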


\begin{proof}
These observations follow easily from standard applications of $\mathcal{Z}%
$-structures. See \cite{Bes96} for torsion-free cases and \cite{Dra06},
\cite{GuMo19} and \cite{GuMo21} for extensions to groups with torsion. Earlier
proofs of some of the group cohomology assertions can be found at
\cite[p.213]{Bro82}. For analogous conclusions regarding semistability and
$\operatorname*{pro}$-$\pi_{i}\left(  G\right)  $ for strongly polycyclic
groups one can inductively apply \cite[Prop.17.3.1]{Geo08}.
\end{proof}

\begin{remark}
Unlike the class of groups with polynomial growth, a group $G$ quasi-isometric
to a strongly polycyclic group is not known to be strongly polycyclic or to
admit a cocompact $\underline{E}G$-complex. As such, we cannot mimic the above
strategies to endow $G$ with an $\mathcal{Z}$-structure. We can, however,
place a \emph{coarse} $\mathcal{Z}$\emph{-structure} on $G$ which allows for
many of the same applications as a genuine $\mathcal{Z}$-structure. See
\cite{GuMo21} for details.
\end{remark}

By combining Theorem \ref{Theorem: main theorem for groups with torsion} with
\ref{Theorem:polynomial growth theorem} and its proof, we get a little more.

\begin{theorem}
Every group of the form $G\rtimes_{\phi}%
\mathbb{Z}
$, where $G$ is of polynomial growth, admits a $\mathcal{Z}$-structure with
spherical boundary.
\end{theorem}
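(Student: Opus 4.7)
The plan is to combine Theorem~\ref{Theorem:polynomial growth theorem} (more precisely, its proof) with Theorem~\ref{Theorem: main theorem for groups with torsion}. The key observation is that the $\mathcal{Z}$-structure produced for a group of polynomial growth in the proof of Theorem~\ref{Theorem:polynomial growth theorem} is not merely abstract: the underlying AR comes equipped with additional structure that places it in the hypothesis of Theorem~\ref{Theorem: main theorem for groups with torsion}.

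First I would recall that, in the proof of Theorem~\ref{Theorem:polynomial growth theorem}, the invocation of \cite[Th.1.1]{KMN09} supplies not just any contractible space but a cocompact $\underline{E}G$-complex $Y$; the Generalized Boundary Swapping Theorem \cite[Cor.7.2]{GuMo19} then places a $\mathcal{Z}$-structure $(\overline{Y},\mathbb{S}^{k})$ on $G$ with this very $Y$ as its AR. In particular, $Y$ is an $\underline{E}G$-complex, so the hypothesis of Theorem~\ref{Theorem: main theorem for groups with torsion} is satisfied. Applying that theorem, for any $\phi\in\operatorname{Aut}(G)$ we obtain a $\mathcal{Z}$-structure $(\overline{Y'},S\mathbb{S}^{k})$ on $G\rtimes_{\phi}\mathbb{Z}$, where $Y'$ can be taken to be an $\underline{E}(G\rtimes_{\phi}\mathbb{Z})$-complex.

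Finally, I would observe that the suspension of the $k$-sphere is the $(k+1)$-sphere, i.e. $S\mathbb{S}^{k}\cong\mathbb{S}^{k+1}$, so the resulting boundary is indeed spherical. This completes the argument.

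There is essentially no hard step: the technical content has already been absorbed into Theorem~\ref{Theorem: main theorem for groups with torsion} and into the proof of Theorem~\ref{Theorem:polynomial growth theorem}. The only point deserving attention is the slight but crucial strengthening of the statement of Theorem~\ref{Theorem:polynomial growth theorem}, namely that its underlying AR can be chosen to be an $\underline{E}G$-complex (so that the existence of cocompact $\underline{E}G$-spaces is preserved under passing to the semidirect product with $\mathbb{Z}$). This is the reason the author's remark emphasizes \textit{Theorem~\ref{Theorem:polynomial growth theorem} and its proof} rather than the statement alone.
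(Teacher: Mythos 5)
Your argument is exactly the one the paper intends: use the fact that the proof of Theorem~\ref{Theorem:polynomial growth theorem} actually hands you a $\mathcal{Z}$-structure whose underlying AR is a cocompact $\underline{E}G$-complex, then feed that into Theorem~\ref{Theorem: main theorem for groups with torsion} and note $S\mathbb{S}^{k}\cong\mathbb{S}^{k+1}$. You also correctly identified why the paper phrases its appeal as being to ``Theorem~\ref{Theorem:polynomial growth theorem} and its proof'' rather than the bare statement.
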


Existence of $E\mathcal{Z}$-structures and $\underline{E\mathcal{Z}}%
$-structures for the groups discussed in this section is an interesting open
question. For a strongly polycyclic group, each step in the inductive proof of
Theorem \ref{polycyclic} involves some $\phi_{i}\in\operatorname*{Aut}\left(
G_{i}\right)  $ whose realization as $\phi_{i}$-variant map $f:X_{i}%
\rightarrow X_{i}$ (implicit in that proof) would need to be extended over
$\overline{X}_{i}$ in order to move from an $E\mathcal{Z}$-structure on
$G_{i}$ to an $E\mathcal{Z}$-structure on $G_{i+1}$. By using work found in
\cite{Wei21}, one sees that this is always possible when the Hirsch length is
$\leq3$. For higher Hirsch length, the corresponding sequence of progressively
more complicated groups provides an interesting test case for Question
\ref{Question: phi-variant extensions}.

As for a finitely generated nilpotent group $\Gamma$, the $\mathcal{Z}%
$-structure described above depends entirely on the $\mathcal{Z}$-structure
$\left(  \overline{X},\mathbb{S}^{n}\right)  $ on $\Gamma/T(\Gamma)$, where we
simply allow $T\left(  \Gamma\right)  $ to act trivially on $X$. This means
that whenever $\Gamma/T(\Gamma)$ admits an $E\mathcal{Z}$-structure, so does
$\Gamma$. Moreover, if $H\leq\Gamma$ is finite, then $H\leq T\left(
\Gamma\right)  $, so $X^{H}=X$ and $\overline{X}^{H}=\overline{X}$. Therefore,
we have an $\underline{E\mathcal{Z}}$-structure.

\subsection{$\mathcal{Z}$-structures on 3-manifold groups}

\label{subsec:3mfd}

We now prove Theorem \ref{Theorem C} from the introduction---that the
fundamental groups of every closed 3-manifold admits a $\mathcal{Z}%
$-structure. Our proof brings together a tremendous amount established
knowledge from 3-manifold topology, beginning with the classical theory and
extending through Perelman's proof of the Geometrization Conjecture. It also
uses tools from geometric group theory, such as theorems by Dahmani and Tirel
regarding boundaries of free products, as well as boundary swapping techniques
introduced by Bestvina and expanded upon in \cite{GuMo19}. As for new
ingredients, Theorem \ref{Theorem A} plays a decisive role in handling
fundamental groups of manifolds with $Nil$ and $Sol$ geometries. For a general
discussion of 3-manifold groups, see \cite{AFW15}.

A closed 3-manifold is \emph{prime} if it admits no nontrivial connected sum
decomposition; it is \emph{irreducible} if every tamely embedded 2-sphere
bounds a 3-ball; it is $\mathbb{P}^{2}$\emph{-irreducible }if it is
irreducible and contains no 2-sided projective planes. Note that every
orientable irreducible 3-manifold is $\mathbb{P}^{2}$-irreducible.

\begin{theorem}
\label{3ManifoldTheorem} Every closed 3-manifold group admits a $\mathcal{Z}$-structure.
\end{theorem}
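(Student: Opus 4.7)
The plan is to reduce Theorem~\ref{3ManifoldTheorem} to prime factors via Kneser--Milnor and the known closure of the class of $\mathcal{Z}$-structured groups under free products, then apply Thurston's Geometrization and treat each geometry (together with the JSJ pieces in the non-geometric case) either via our Theorem~\ref{Theorem A} and Theorem~\ref{Theorem E} or by invoking known results on hyperbolic, CAT(0), and relatively hyperbolic groups.

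First, I would decompose $M = M_{1} \# \cdots \# M_{n}$ into prime factors, so that $\pi_{1}(M) \cong \pi_{1}(M_{1}) * \cdots * \pi_{1}(M_{n})$. Tirel's theorem (and Dahmani's) ensures the class of $\mathcal{Z}$-structured groups is closed under free products, so it suffices to produce a $\mathcal{Z}$-structure for each prime factor. Each such factor is either $S^{2} \times S^{1}$ (whose fundamental group $\mathbb{Z}$ is hyperbolic) or irreducible. To each irreducible factor $N$, I would apply Geometrization and dispatch the cases: when $\pi_{1}(N)$ is finite, a singleton serves as a trivial $\mathcal{Z}$-structure; when $N$ is hyperbolic, $\pi_{1}(N)$ is word-hyperbolic with the Gromov $\mathbb{S}^{2}$ as its $\mathcal{Z}$-boundary; when the geometry is $E^{3}$ or $Nil$, $\pi_{1}(N)$ has polynomial growth and Theorem~\ref{Theorem E} delivers a $\mathcal{Z}$-structure with spherical boundary; when the geometry is $Sol$, some finite-index subgroup of $\pi_{1}(N)$ is isomorphic to $\mathbb{Z}^{2} \rtimes_{\phi} \mathbb{Z}$ for a hyperbolic $\phi \in \mathrm{GL}_{2}(\mathbb{Z})$, and Theorem~\ref{Theorem A} (applied to the standard $\mathcal{Z}$-structure $(\overline{\mathbb{R}^{2}}, \mathbb{S}^{1})$ on $\mathbb{Z}^{2}$) produces a $\mathcal{Z}$-structure with boundary the suspension $S \mathbb{S}^{1} \cong \mathbb{S}^{2}$, which the boundary-swapping results of \cite{GuMo19} transfer to $\pi_{1}(N)$; when the geometry is $H^{2} \times \mathbb{R}$, $\pi_{1}(N)$ contains a finite-index subgroup isomorphic to $\pi_{1}(\Sigma_{g}) \times \mathbb{Z}$, which is CAT(0), and boundary swapping again transfers the visual $\mathcal{Z}$-structure.

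This leaves the $\widetilde{SL_{2}(\mathbb{R})}$-geometric case and the non-geometric case (graph and mixed manifolds arising from a nontrivial JSJ decomposition). For these I would appeal to the structural results collected in \cite{AFW15}: every closed Seifert fibered 3-manifold group is CAT(0), so the $\widetilde{SL_{2}(\mathbb{R})}$ case reduces to the visual $\mathcal{Z}$-structure on a CAT(0) group; and the fundamental group of a closed mixed 3-manifold is hyperbolic relative to its maximal Seifert-fibered pieces, whose groups have $\mathcal{Z}$-structures by the preceding analysis, so Dahmani's theorem \cite{Dah03} assembles a $\mathcal{Z}$-structure on the whole group. The hardest part, I expect, is the $\widetilde{SL_{2}(\mathbb{R})}$-case: these groups are nonsplit central $\mathbb{Z}$-extensions of hyperbolic 2-orbifold groups with nonzero Euler number, so no finite-index subgroup splits as a semidirect product with $\mathbb{Z}$ and Theorem~\ref{Theorem A} cannot be applied directly; one must instead invoke the separately established fact that all Seifert fibered 3-manifold groups are CAT(0). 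A secondary difficulty is checking that the peripheral structures arising from the JSJ decomposition of a mixed 3-manifold satisfy the precise hypotheses of Dahmani's theorem; this compatibility ultimately rests on the explicit $\mathcal{Z}$-structures constructed for each Seifert piece in the preceding paragraph.
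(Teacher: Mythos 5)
Your overall architecture (Kneser--Milnor decomposition, Tirel/Dahmani for free products, Geometrization, geometry-by-geometry analysis) matches the paper's. Some of your individual moves are legitimate alternatives: handling $\mathbb{E}^3$ and $Nil$ simultaneously by invoking polynomial growth and Theorem~\ref{Theorem E} is a clean shortcut (the paper treats $\mathbb{E}^3$ as CAT(0) and $Nil$ by applying Theorem~\ref{Theorem A} to a lattice and then boundary-swapping via \v{S}varc--Milnor), and for non-geometric mixed manifolds, relative hyperbolicity plus Dahmani is a plausible alternative to the paper's appeal to Leeb's nonpositive-curvature theorem. However, there are two genuine gaps.

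First and most serious: the claim that \emph{every closed Seifert fibered 3-manifold group is CAT(0)} is false, and your own observation points to the problem. You correctly note that $\widetilde{SL_2(\mathbb{R})}$-manifold groups are nonsplit central $\mathbb{Z}$-extensions of hyperbolic 2-orbifold groups. But the Gromoll--Wolf/flat torus theorem (see \cite[Ch.~II.7]{BrHa99}) shows that a CAT(0) group with an infinite cyclic center must virtually split as a direct product with $\mathbb{Z}$; nonzero Euler number rules this out. So these groups are \emph{not} CAT(0), and neither are $Nil$-manifold groups. Your resolution of the $\widetilde{SL_2(\mathbb{R})}$ case therefore collapses. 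The paper instead uses the fact that $\widetilde{SL_2(\mathbb{R})}$ is quasi-isometric to $\mathbb{H}^2\times\mathbb{R}$ (Gersten, Rieffel) together with the boundary-swapping machinery of \cite{GuMo19} to transfer the visual $\mathcal{Z}$-structure. Something like that workaround is required.

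Second: you never address non-geometric \emph{graph} manifolds. Relative hyperbolicity with respect to Seifert pieces is vacuous when there are no hyperbolic JSJ blocks, and graph manifold groups are not in general CAT(0). The paper handles this case via the Kapovich--Leeb bi-Lipschitz equivalence of $\widetilde{P_i^3}$ with the universal cover of a nonpositively curved 3-manifold, followed by another boundary swap. A lesser omission is the prime factor $\mathbb{P}^2\times\mathbb{S}^1$ (irreducible but not $\mathbb{P}^2$-irreducible), which the paper treats separately since Geometrization in the form you are using applies to $\mathbb{P}^2$-irreducible manifolds; this one is easy ($\pi_1\cong\mathbb{Z}\times\mathbb{Z}_2$) but should be acknowledged.
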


\begin{proof}
Let $M^{3}$ be a closed connected 3-manifold and $G=\pi_{1}(M^{3})$. By
Kneser's existence theorem for prime decompositions \cite{Kne29} (see also
\cite[Th.3.15]{Hem76}) we may express $M^{3}$ and a finite connected sum
$P_{1}^{3}\#P_{2}^{3}\#\cdots\#P_{n}^{3}$ of closed prime 3-manifolds and $G$
as a free product $G_{1}\ast G_{2}\ast\cdots\ast G_{n}$, where $G_{i}=\pi
_{1}(P_{i}^{3})$. By \cite{Dah03} or \cite{Tir11}, it suffices to show that
each $G_{i}$ admits a $\mathcal{Z}$-structure. That will be accomplished by
examining the possible structures for the individual $P_{i}^{3}$. \medskip

\noindent\textbf{Case 1.} $P_{i}^{3}$ \emph{is not irreducible. }\medskip

Then $P_{i}^{3}$ is either the trivial 2-sphere bundle over $\mathbb{S}^{1}$
or the twisted (nonorientable) 2-sphere bundle $\mathbb{S}^{2}%
\widetilde{\times}\mathbb{S}^{1}$. In either case, $G_{i}\cong%
\mathbb{Z}
$ which admits the $\mathcal{Z}$-structure $\left(  \overline{%
\mathbb{R}
},\left\{  \pm\infty\right\}  \right)  $.\medskip

\noindent\textbf{Case 2.} $P_{i}^{3}$ \emph{is }$\mathbb{P}^{2}$%
\emph{-irreducible.}\medskip

By the Geometrization Conjecture, this class of 3-manifolds can be divided
into three disjoint subclasses: geometric manifolds; non-geometric mixed
manifolds; and non-geometric graph manifolds. We will discuss each of these in
some detail.\smallskip

\noindent\textsc{subcase 2a).} $P_{i}^{3}$ \emph{is a geometric manifold.}%
\smallskip

Of the eight 3-dimensional geometries allowed by the Geometrization Theorem,
seven remain possible (the geometry $\mathbb{S}^{2}\times%
\mathbb{R}
$ having been taken care of in Case 1).

If $P_{i}^{3}$ admits the geometry of $\mathbb{S}^{3}$ then $G_{i}$ is finite,
and we can obtain a $\mathcal{Z}$-structure, with empty boundary, by letting
$G_{i}$ act on a one-point space.

The geometries $\mathbb{E}^{3},\ \mathbb{H}^{3},\ \mathbb{H}^{2}\times%
\mathbb{R}
$ are all CAT(0), so if $P_{i}^{3}$ admits one of these geometries, then
$G_{i}$ admits a $\mathcal{Z}$-structure with the corresponding visual
2-sphere at infinity serving as its $\mathcal{Z}$-boundary.

The geometry $\widetilde{SL_{2}(%
\mathbb{R}
)}$ is not CAT(0), but by \cite{Ger92} (see also \cite{Rie01} for a short
elegant proof) it is quasi-isometric to $\mathbb{H}^{2}\times%
\mathbb{R}
$. So by the boundary swapping trick described in \cite{GuMo19},
$\widetilde{SL_{2}(%
\mathbb{R}
)}$ admits a controlled $\mathcal{Z}$-compactification. Since $G_{i}$ acts
geometrically on $\widetilde{SL_{2}(%
\mathbb{R}
)}$, this is a $\mathcal{Z}$-structure for $G_{i}$.

The remaining geometries $Nil$ and $Sol$ are Lie groups homeomorphic to $%
\mathbb{R}
^{3}$ which contain cocompact lattices of the form $%
\mathbb{Z}
^{2}\rtimes_{\phi_{1}}%
\mathbb{Z}
$ and $%
\mathbb{Z}
^{2}\rtimes_{\phi_{2}}%
\mathbb{Z}
$. (For example, let $\phi_{1}$ and $\phi_{2}$ be induced by matrices $%
\begin{bmatrix}
1 & 1\\
0 & 1
\end{bmatrix}
$ and $%
\begin{bmatrix}
2 & 1\\
1 & 1
\end{bmatrix}
$, respectively.) For simplicity, let us begin with $Nil$. Since $%
\mathbb{R}
^{2}$ with the Euclidean metric and the visual $\mathbb{S}^{1}$ at infinity
provide a $\mathcal{Z}$-structure for $%
\mathbb{Z}
^{2}$, Theorem \ref{Theorem A} allows us to build a $\mathcal{Z}$-structures
$\left(  \overline{Y},\mathbb{S}^{2}\right)  $ for the lattice $%
\mathbb{Z}
^{2}\rtimes_{\phi_{1}}%
\mathbb{Z}
$. (With a little effort, we can arrange that $Y$ is a geodesic space
homeomorphic to $%
\mathbb{R}
^{3}$, but that is not essential.) By the \v{S}varc-Milnor Lemma, $Nil$ is
quasi-isometric to $Y$, so we can again use the boundary swapping trick to
obtain a controlled $\mathcal{Z}$-compactification $\left(  \overline
{Nil},\mathbb{S}^{2}\right)  $ which serves as a $\mathcal{Z}$-structure for
all cocompact lattices in $Nil$. A similar argument yields a controlled
$\mathcal{Z}$-compactification $\left(  \overline{Sol},\mathbb{S}^{2}\right)
$ which handles the case of $Sol$.\smallskip\ 

\noindent\textsc{subcase 2b). }$P_{i}^{3}$ \emph{is a non-geometric mixed
manifold.}\smallskip

Recall that a \emph{mixed manifold} is one whose prime JSJ-decomposition
includes at least one hyperbolic block. Leeb proved that Haken mixed manifolds
admit nonpositively curved Riemannian metrics \cite{Lee95}. Since $P_{i}^{3}$
is prime and non-geometric, it contains at least one JSJ-torus, and is
therefore Haken. So Leeb's theorem tells us that $G_{i}$ is a CAT(0) group
with a 2-sphere boundary.\smallskip

\noindent\textsc{subcase 2c).} $P_{i}^{3}$ \emph{is a non-geometric graph
manifold.}\smallskip

A \emph{graph manifold} is one whose prime JSJ decomposition contains no
hyperbolic blocks. Reasoning as above, $P_{i}^{3}$ is Haken, so by
Kapovich-Leeb \cite{KaLe98}, there exists a nonpositively curved 3-manifold
$N^{3}$ and a bi-Lipschitz homeomorphism $h:\widetilde{P_{i}^{3}}%
\rightarrow\widetilde{N^{3}}$. Another boundary swap places the visual
boundary of $\widetilde{N^{3}}$ (necessarily a topological 2-sphere) onto
$\widetilde{P_{i}^{3}}$ thereby giving $G_{i}$ a $\mathcal{Z}$%
-structure.\medskip

\noindent\textbf{Case 3.} $P_{i}^{3}$ \emph{is irreducible but not
}$\mathbb{P}^{2}$\emph{-irreducible.}\medskip

Let $p:Q_{i}^{3}\rightarrow P_{i}^{3}$ be the orientable double covering. Then
$%
\mathbb{Z}
_{2}$ acts on $Q_{i}^{3}$ by covering transformations, and by \cite{MSY82} or
\cite{Dun85}, $Q_{i}^{3}$ contains a $%
\mathbb{Z}
_{2}$-equivariant collection of pairwise disjoint essential 2-spheres
$\left\{  \Sigma_{k}\right\}  _{k=1}^{n}$ which generates $\pi_{2}( Q_{i}^{3})
$ as a $%
\mathbb{Z}
\pi_{1}$-module.\smallskip

\noindent\textsc{Claim.} No $\Sigma_{k}$ separates $Q_{i}^{3}$. \smallskip

The claim breaks into two cases, depending on whether $p( \Sigma_{k}) $ is a
2-sphere or a projective plane.

First assume that $p( \Sigma_{k}) =\Pi$ is a projective plane. Since
$P_{i}^{3}$ is prime and nonorientable, $\Pi$ is 2-sided (see \cite[Lemma~2]%
{Heil}). By an Euler characteristic argument, a single projective plane cannot
be the boundary of a compact 3-manifold, so $\Pi$ cannot separate $P_{i}^{3}$.
Let $\gamma$ be a path in $P_{i}^{3}-\Pi$ which begins at a point on one side
of a product neighborhood of $\Pi$ and ends at a point on the other side. A
lift of $\gamma$ will lie in $Q_{i}^{3}-\Sigma_{k}$ and connect points on
opposite sides of a collar neighborhood of $\Sigma_{k}$, therefore $\Sigma
_{k}$ does not separate $Q_{i}^{3}$.

Next suppose that $p(\Sigma_{k})$ is a 2-sphere $\Sigma$, and assume
$\Sigma_{k}$ separates $Q_{i}^{3}$. Write $Q_{i}^{3}=A\cup B$ where $A$ and
$B$ are connected codimension 0 submanifolds of $Q_{i}^{3}$ intersecting in a
common boundary $\Sigma_{k}$. Without loss of generality, assume $B$ contains
the $%
\mathbb{Z}
_{2}$ translate of $\Sigma_{k}$. Notice that $\left.  p\right\vert
_{A}:A\rightarrow p(A)$ is a covering map. (Just check the definition
locally.) Since points of $\Sigma$ have only one preimage in $A$ then $\left.
p\right\vert _{A}:A\rightarrow p(A)$ is a homeomorphism. It follows that
$\Sigma$ separates $P_{i}^{3}$ so, by irreducibility, $\Sigma$ bounds a 3-ball
in $P_{i}^{3}$. That 3-ball can be lifted to a 3-ball in $P_{i}^{3}$ bounded
by $\Sigma_{k}$, contradicting the inessentiality of $\Sigma_{k}$. The claim
follows.\smallskip

Since no $\Sigma_{k}$ separates it, $Q_{i}^{3}$ is a prime manifold. By its
definition, $Q_{i}^{3}$ is orientable and contains at least one essential
2-sphere. It follows that $Q_{i}^{3}\approx\mathbb{S}^{1}\times\mathbb{S}^{2}%
$. The only nonorientable manifold double covered by $\mathbb{S}^{1}%
\times\mathbb{S}^{2}$ is $\mathbb{P}^{2}\times\mathbb{S}^{1}$. It follows that
$G_{i}\cong%
\mathbb{Z}
\times%
\mathbb{Z}
_{2}$, a group which admits a geometric action on $%
\mathbb{R}
$ and a corresponding $\mathcal{Z}$-structure $\left(  \overline{%
\mathbb{R}
},\left\{  \pm\infty\right\}  \right)  $
\end{proof}

\begin{remark}
By the same strategy applied above (but using the equivariant versions of
\cite{Dah03} or \cite{Tir11}), one can prove that a given closed 3-manifold
group $G=\pi_{1}(M^{3})$ admits an $E\mathcal{Z}$-structure by showing that
the fundamental group $G_{i}=\pi_{1}(P_{i}^{3})$ of each of its prime factors
does. Revisiting the above proof, we see that in many cases, the work has
already been done. Cases 1 and 3 involve the groups $%
\mathbb{Z}
$ and $%
\mathbb{Z}
\times%
\mathbb{Z}
_{2}$, both of which admit $E\mathcal{Z}$-structures. Of the many groups
$G_{i}$ that arise in Subcase 2a), the $\mathcal{Z}$-structures associated to
fundamental groups of manifolds with geometries of $\mathbb{S}^{3}$,
$\mathbb{E}^{3},\ \mathbb{H}^{3},\ $and $\mathbb{H}^{2}\times%
\mathbb{R}
$ are immediately $E\mathcal{Z}$-structures. As for the remaining geometries
$\widetilde{SL_{2}(%
\mathbb{R}
)}$, $Nil$, and $Sol$, many of the $Nil$ and $Sol$ groups (those isomorphic to
a semidirect product of the form $%
\mathbb{Z}
^{2}\rtimes_{\phi}%
\mathbb{Z}
$) have been shown, in this paper, to admit $E\mathcal{Z}$-structures.
Unfortunately, there may be cocompact lattices in $Nil$ and $Sol$ that do not
fall into this category. We are not yet sure if they admit $E\mathcal{Z}%
$-structures. For similar reasons, the existence of $E\mathcal{Z}$-structures
for groups arising in Subcase 2c)---fundamental groups of non-geometric graph
manifolds---is still an open question. On the other hand, all groups arising
in Subcase 2b)---fundamental groups of non-geometric mixed manifolds---admit
$E\mathcal{Z}$-structures by virtue of being CAT(0).

Without formulating a detailed statement, we can say that \emph{many }(even
most) closed 3-manifold groups admit $E\mathcal{Z}$-structures. For the reader
interested in a specific 3-manifold or collection of 3-manifolds, the above
discussion provides a roadmap for checking whether an $E\mathcal{Z}$-structure
is known to exist.
\end{remark}

\section{Further Questions}

The work presented in this paper raises a number of questions. We close by
highlighting two of them.

\begin{question}
\label{Question: phi-variant extensions}Given a $\mathcal{Z}$-structure
$\left(  \overline{X},Z\right)  $ on a group $G$ and $\phi\in
\operatorname*{Aut}\left(  G\right)  $, when does there exist a $\phi$-variant
map $f:X\rightarrow X$ which can be continuously extended to a map
$\overline{f}:\overline{X}\rightarrow\overline{X}$. Given a $\phi$ for which
the answer is no, is there a different $\mathcal{Z}$-structure for which the
answer is yes?
\end{question}

When $\left(  \overline{X},Z\right)  $ is a canonical $\mathcal{Z}$-structure
on a hyperbolic group, the answer is \textquotedblleft
always\textquotedblright. The same holds for finitely generated abelian
groups. Work by Hruska, Kleiner and Hindawi \cite{HrKl05} identifies other
interesting classes of CAT(0) groups (e.g., those with the isolated flats
property) for which the answer is \textquotedblleft always\textquotedblright.
But for general CAT(0) groups, such as $F_{2}\times%
\mathbb{Z}
^{n}$, there are difficulties. See \cite{BoRu96}. What more can be said about
about these groups? What about strongly polycyclic and finitely generated
nilpotent groups? Systolic groups? Baumslag-Solitar groups?\medskip

In a different direction, we ask for generalizations of our main theorems.

\begin{question}
\label{Question: more general group extensions}Given a short exact sequence of
groups
\[
1\rightarrow N\rightarrow G\rightarrow Q\rightarrow1
\]
where $N$ and $Q$ admit $\mathcal{Z}$- or $E\mathcal{Z}$- or
$\underline{E\mathcal{Z}}$-structures, what can be said about $G$? In
particular, when does $G$ admit an analogous structure? Does it help to assume
that $G$ is a semidirect product?
\end{question}

Special cases of this question are of interest. For example, what if $N$ and
$Q$ are hyperbolic? CAT(0)? free? As a starting point, one might look at
\cite{Gui14} where it is shown that, whenever $N$ and $Q$ are nontrivial and
of type F, then $G$ admits a \emph{weak }$\mathcal{Z}$\emph{-structure},
meaning that all conditions for a $\mathcal{Z}$-structure are satisfied,
except for the nullity condition.

\appendix

\section{Example: A $\mathcal{Z}$-structure for the Discrete Heisenberg Group}


In this appendix we take a quick look at the main construction, within the
narrow context of a well-known group realizable as an infinite cyclic
extension of {$%
\mathbb{Z}
$}$^{2}$. In particular, we analyze the issues involved in placing a
$\mathcal{Z}$-structure on the discrete Heisenberg group $H_{3}(${$%
\mathbb{Z}
$}$)$. The reader who might otherwise be put off by the abstractions found in
the body of this paper should consider first reading this appendix as a
warm-up exercise (while using Section \ref{sec:background}, as needed, for
definitions and notation).

Begin with {$%
\mathbb{Z}
$}$^{2}$, its standard geometric action on $%
\mathbb{R}
^{2}$, and the well-known $\mathcal{Z}$-structure obtained by adding the a
circle at infinity. Let $\phi:$ {$%
\mathbb{Z}
$}$^{2}\rightarrow${$%
\mathbb{Z}
$}$^{2}$ be the automorphism induced by $A=%
\begin{bmatrix}
1 & 1\\
0 & 1
\end{bmatrix}
$ and construct the semidirect product
\[
{%
\mathbb{Z}
}^{2}\rtimes_{\phi}%
\mathbb{Z}
=\left\langle x,y,t\mid xy=yx,t^{-1}xt=x,t^{-1}yt=xy\right\rangle
\]
This is one realization of \ the discrete Heisenberg group $H_{3}(${$%
\mathbb{Z}
$}$)$. The torus $T^{2}$ is a $K(%
\mathbb{Z}
^{2},1)$ and the standard Dehn twist homeomorphism $f:T^{2}\rightarrow T^{2}$
induces $\phi$ on fundamental groups, so $\operatorname*{Tor}_{f}(T^{2})$ has
fundamental group {$%
\mathbb{Z}
$}$^{2}\rtimes_{\phi}%
\mathbb{Z}
$. The infinite cyclic cover of $\operatorname*{Tor}_{f}(T^{2})$ is the
mapping telescope $\operatorname*{Tel}_{f}(T^{2})$. Since $f$ is a
homeomorphism, each $\mathcal{M}_{[n,n+1]}(f)$ is homeomorphic to $T^{2}%
\times\left[  n,n+1\right]  $ and $\operatorname*{Tel}_{f}(T^{2})\approx
T^{2}\times%
\mathbb{R}
$. The universal cover is homeomorphic to $%
\mathbb{R}
^{2}\times%
\mathbb{R}
$, but for the purposes of geometry should also be viewed as
$\operatorname*{Tel}_{f}(%
\mathbb{R}
^{2})$, where $f:${$%
\mathbb{R}
$}$^{2}\rightarrow${$%
\mathbb{R}
$}$^{2}$ (the lift of $f$) is the linear isomorphism defined by matrix $A$.

Associate each $\mathcal{M}_{[n,n+1]}(f)$ with $%
\mathbb{R}
^{2}\times\left[  n,n+1\right]  $ by sending the domain and range copies of $%
\mathbb{R}
^{2}$ to $%
\mathbb{R}
^{2}\times n$ and $%
\mathbb{R}
^{2}\times(n+1)$, respectively, via identity maps, and each interval
$x\times\left[  n,n+1\right]  $ (as in the definition of mapping cylinder)
linearly to the line segment from $(x,n)$ to $(f(x),n+1)$. A little linear
algebra shows that no two of these segments intersect, so we have a
homeomorphism. Under this realization of $\operatorname*{Tel}_{f}(%
\mathbb{R}
^{2})$, $\operatorname*{Cay}({%
\mathbb{Z}
}^{2}\rtimes_{\phi}%
\mathbb{Z}
)$ (for generating set $\left\{  x,y,t\right\}  $) is made up of the standard
$1\times1$ grids in each hyperplane $%
\mathbb{R}
^{2}\times n$ together with the line segments connecting each $((i,j),n)$ to
$(f(i,j),n+1)$. Here the vertex $((i,j),n)$ represents the group element
$t^{n}x^{i}y^{j}=\phi^{n}(x^{i}y^{j})t^{n}$, and the action of $t$ is by
translation of $%
\mathbb{R}
^{2}\times%
\mathbb{R}
$ along the $z$-axis.

It is tempting to use the Euclidean metric on $%
\mathbb{R}
^{2}\times%
\mathbb{R}
$ and its corresponding visual boundary in an attempt at a $\mathcal{Z}%
$-structure for {$%
\mathbb{Z}
$}$^{2}\rtimes_{\phi}%
\mathbb{Z}
$. Although this gives a $\mathcal{Z}$-compactification of
$\operatorname*{Tel}_{f}(%
\mathbb{R}
^{2})$, the nullity condition fails badly. For example, $y^{n}$-translations
of the segment connecting $(0,0,0)$ and $(0,0,1)$ grow linearly with $n$.
Those segments look small when viewed from the origin since they lie within $%
\mathbb{R}
^{2}\times\left[  0,1\right]  $; however, if we now translate them by powers
of $t$, they cast large shadows in the $2$-sphere at infinity.

It is possible to \textquotedblleft straighten\textquotedblright\ the $%
\mathbb{Z}
^{2}$-portion of the above {$%
\mathbb{Z}
$}$^{2}\rtimes_{\phi}%
\mathbb{Z}
$ action on $%
\mathbb{R}
^{2}\times%
\mathbb{R}
$ by conjugating with a homeomorphism $v:%
\mathbb{R}
^{2}\times%
\mathbb{R}
\rightarrow%
\mathbb{R}
^{2}\times%
\mathbb{R}
$ that sends the cosets of $\left\langle t\right\rangle \leq${$%
\mathbb{Z}
$}$^{2}\rtimes_{\phi}%
\mathbb{Z}
$ to vertical lines. (That is roughly what the map $v$ in the body of this
paper does.) This solves the problem uncovered in the previous paragraph. But
now, if we translate the unit square $\left[  0,1\right]  \times\left[
0,1\right]  \times0$ upward or downward using $t^{n}$, the effect is to apply
larger and larger (positive and negative) powers of $f$ to the 2-dimensional
hyperplanes. Since the diameters of the resulting parallelograms grow linearly
with $n$, their shadows in the $2$-sphere at infinity are large.

At this point, we have a pair of (non-geometric) proper cocompact $({%
\mathbb{Z}
}^{2}\rtimes_{\phi}%
\mathbb{Z}
)$-action on $%
\mathbb{R}
^{2}\times%
\mathbb{R}
$ for which the natural $\mathcal{Z}$-compactification fails the nullity
condition. To solve this problem, we look for an alternative way to attach a
2-sphere at infinity. Determining the right gluing is the delicate task at the
heart of Theorem \ref{Theorem A}.

\begin{remark}
Due to the simplicity of this example, we were able to realize
$\operatorname*{Tel}_{f}(%
\mathbb{R}
^{2})$ topologically as $%
\mathbb{R}
^{2}\times\mathbb{R}$. In general, however, the relevant mapping cylinders and
telescopes will only be \emph{(proper) homotopy equivalent }to, and not
\emph{homeomorphic to}, products. Our special case does generalize to
semidirect products of the form $%
\mathbb{Z}
^{n}\rtimes_{\phi}%
\mathbb{Z}
$. But for a more typical situation, consider free-by-$%
\mathbb{Z}
$ groups, illustrated in Figure~\ref{fig:freeauto}.
\end{remark}

\begin{figure}[th]
\begin{center}
\includegraphics[scale=0.5]{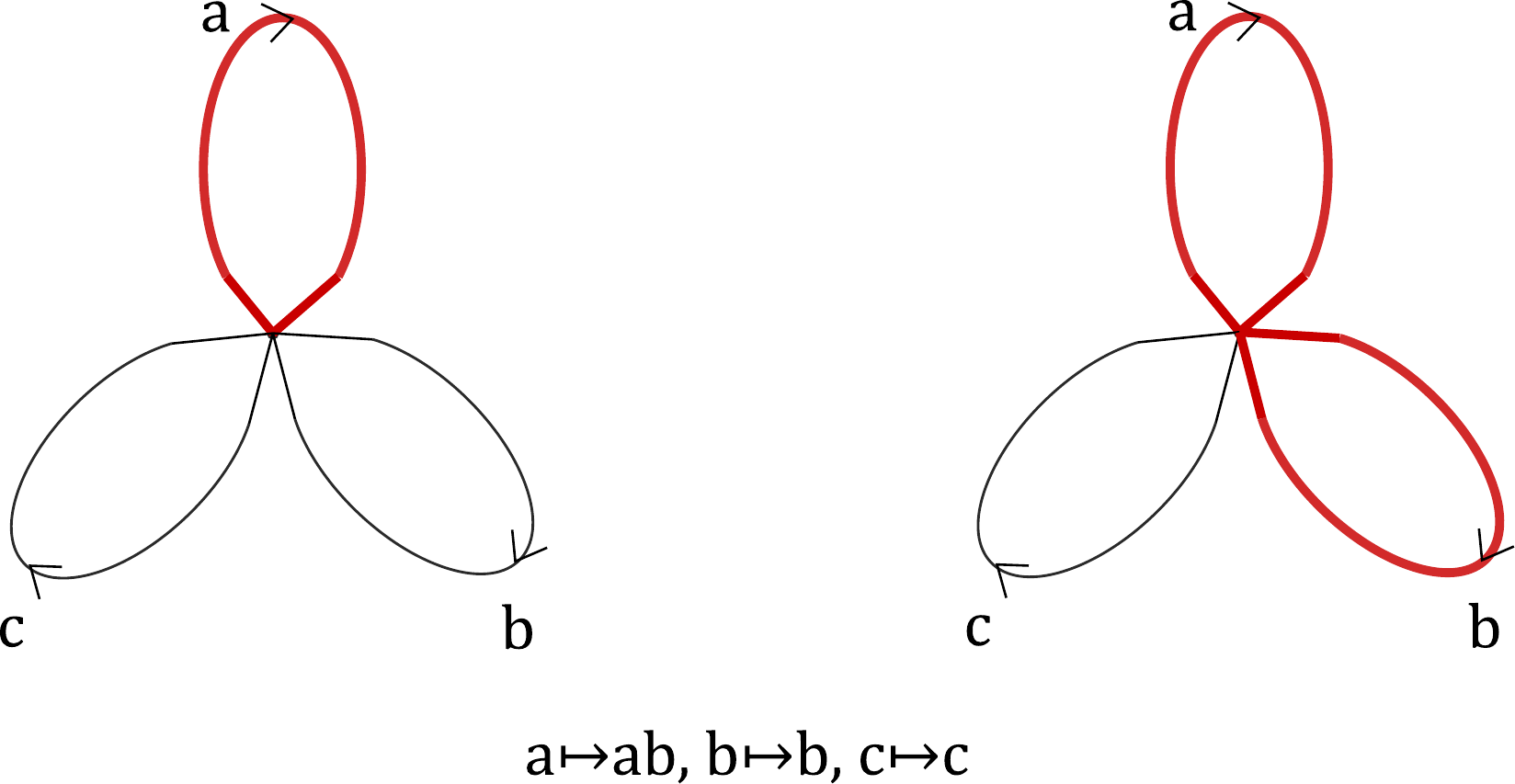} \label{fig:rose}
\end{center}
\caption{A free group automorphism need not be induced by a homemorphism of a
$K(F_{3},1)$.}%
\label{fig:freeauto}%
\end{figure}
\bibliographystyle{amsalpha}
\bibliography{Biblio}

\providecommand{\bysame}{\leavevmode\hbox to3em{\hrulefill}\thinspace}
\providecommand{\MR}{\relax\ifhmode\unskip\space\fi MR }
\providecommand{\MRhref}[2]{%
  \href{http://www.ams.org/mathscinet-getitem?mr=#1}{#2}
}
\providecommand{\href}[2]{#2}
\begin{thebibliography}{KMPN09}

\bibitem[AE16]{AnEd16}
Fredric~D. Ancel and Robert~D. Edwards, \emph{Is a monotone union of
  contractible open sets contractible?}, Topology Appl. \textbf{214} (2016),
  89--93. \MR{3571038}

\bibitem[AFW15]{AFW15}
Matthias Aschenbrenner, Stefan Friedl, and Henry Wilton, \emph{3-manifold
  groups}, EMS Series of Lectures in Mathematics, European Mathematical Society
  (EMS), Z\"{u}rich, 2015. \MR{3444187}

\bibitem[BD01]{BeDr01}
G.~Bell and A.~Dranishnikov, \emph{On asymptotic dimension of groups}, Algebr.
  Geom. Topol. \textbf{1} (2001), 57--71. \MR{1808331}

\bibitem[BDM07]{BDM07}
N.~Brodskiy, J.~Dydak, and A.~Mitra, \emph{\v svarc-{M}ilnor lemma: a proof by
  definition}, Topology Proc. \textbf{31} (2007), no.~1, 31--36. \MR{2363149}

\bibitem[Bes96]{Bes96}
Mladen Bestvina, \emph{Local homology properties of boundaries of groups},
  Michigan Math. J. \textbf{43} (1996), no.~1, 123--139. \MR{1381603}

\bibitem[BF92]{BeFe92}
M.~Bestvina and M.~Feighn, \emph{A combination theorem for negatively curved
  groups}, J. Differential Geom. \textbf{35} (1992), no.~1, 85--101.
  \MR{1152226}

\bibitem[BH99]{BrHa99}
M.~Bridson and A.~Haefliger, \emph{Metric spaces of non-positive curvature},
  Grundlehren der Mathematischen Wissenschaften [Fundamental Principles of
  Mathematical Sciences], vol. 319, Springer-Verlag, Berlin, 1999.

\bibitem[BM91]{BeMe91}
Mladen Bestvina and Geoffrey Mess, \emph{The boundary of negatively curved
  groups}, J. Amer. Math. Soc. \textbf{4} (1991), no.~3, 469--481. \MR{1096169}

\bibitem[BR96]{BoRu96}
Philip~L. Bowers and Kim Ruane, \emph{Boundaries of nonpositively curved groups
  of the form {$G\times{\bf Z}^n$}}, Glasgow Math. J. \textbf{38} (1996),
  no.~2, 177--189. \MR{1397173}

\bibitem[Bro82]{Bro82}
Kenneth~S. Brown, \emph{Cohomology of groups}, Graduate Texts in Mathematics,
  vol.~87, Springer-Verlag, New York-Berlin, 1982. \MR{672956}

\bibitem[CP95]{CaPe95}
Gunnar Carlsson and Erik~Kj{\ae}r Pedersen, \emph{Controlled algebra and the
  {N}ovikov conjectures for {$K$}- and {$L$}-theory}, Topology \textbf{34}
  (1995), no.~3, 731--758. \MR{1341817}

\bibitem[Dah03]{Dah03}
F.~Dahmani, \emph{Classifying spaces and boundaries for relatively hyperbolic
  groups}, Proc. London Math Soc. \textbf{86} (2003), no.~3, 666--684.

\bibitem[Dav86]{Dav86}
Robert~J. Daverman, \emph{Decompositions of manifolds}, Pure and Applied
  Mathematics, vol. 124, Academic Press, Inc., Orlando, FL, 1986. \MR{872468}

\bibitem[Dra06]{Dra06}
A.~Dranishnikov, \emph{On {B}estvina-{M}ess formula}, Contemporary Mathematics
  \textbf{394} (2006), no.~1, 77--85.

\bibitem[Dun85]{Dun85}
M.~J. Dunwoody, \emph{An equivariant sphere theorem}, Bull. London Math. Soc.
  \textbf{17} (1985), no.~5, 437--448. \MR{806009}

\bibitem[Fer00]{Fer00}
Steven~C. Ferry, \emph{Stable compactifications of polyhedra}, Michigan Math.
  J. \textbf{47} (2000), no.~2, 287--294. \MR{1793625}

\bibitem[FH99]{FeHa99}
Mark Feighn and Michael Handel, \emph{Mapping tori of free group automorphisms
  are coherent}, Ann. of Math. (2) \textbf{149} (1999), no.~3, 1061--1077.
  \MR{1709311}

\bibitem[FJ93]{FaJo93}
F.~T. Farrell and L.~E. Jones, \emph{Isomorphism conjectures in algebraic
  {$K$}-theory}, J. Amer. Math. Soc. \textbf{6} (1993), no.~2, 249--297.
  \MR{1179537}

\bibitem[FL05]{FaLa05}
F.~T. Farrell and J.-F. Lafont, \emph{E{Z}-structures and topological
  applications}, Comment. Math. Helv. \textbf{80} (2005), no.~1, 103--121.
  \MR{2130569}

\bibitem[FP90]{FrPi90}
Rudolf Fritsch and Renzo~A. Piccinini, \emph{Cellular structures in topology},
  Cambridge Studies in Advanced Mathematics, vol.~19, Cambridge University
  Press, Cambridge, 1990. \MR{1074175}

\bibitem[Geo08]{Geo08}
R.~Geoghegan, \emph{Topological methods in group theory}, Graduate Texts in
  Mathematics, vol. 243, Springer, New York, 2008. \MR{2365352}

\bibitem[Ger92]{Ger92}
S.~M. Gersten, \emph{Bounded cocycles and combings of groups}, Internat. J.
  Algebra Comput. \textbf{2} (1992), no.~3, 307--326. \MR{1189238}

\bibitem[GM19]{GuMo19}
Craig~R. Guilbault and Molly~A. Moran, \emph{Proper homotopy types and
  {$\mathcal Z$}-boundaries of spaces admitting geometric group actions}, Expo.
  Math. \textbf{37} (2019), no.~3, 292--313. \MR{4007577}

\bibitem[GM21]{GuMo21}
Craig~R. Guilbault and Molly~A. Moran, \emph{Coarse $\mathcal{Z}$-boundaries
  for groups}, 2021.

\bibitem[GMS22]{GMS22}
Craig Guilbault, Molly Moran, and Kevin Schreve, \emph{Compressible spaces and
  {$\mathcal{E}\mathcal{Z}$}-structures}, Fund. Math. \textbf{256} (2022),
  no.~1, 47--75. \MR{4361585}

\bibitem[GMT19]{GMT19}
Craig~R. Guilbault, Molly~A. Moran, and Carrie~J. Tirel, \emph{Boundaries of
  {B}aumslag-{S}olitar groups}, Algebr. Geom. Topol. \textbf{19} (2019), no.~4,
  2077--2097. \MR{3995025}

\bibitem[GO07]{GeOn07}
Ross Geoghegan and Pedro Ontaneda, \emph{Boundaries of cocompact proper {${\rm
  CAT}(0)$} spaces}, Topology \textbf{46} (2007), no.~2, 129--137. \MR{2313068
  (2008c:57004)}

\bibitem[Gro81]{Gro81}
Mikhael Gromov, \emph{Groups of polynomial growth and expanding maps}, Inst.
  Hautes \'{E}tudes Sci. Publ. Math. (1981), no.~53, 53--73. \MR{623534}

\bibitem[Gro87]{Gro87}
M.~Gromov, \emph{Hyperbolic groups}, Essays in Group Theory (S.~Gersten, ed.),
  MSRI Publications, vol.~8, Springer, New York, 1987, pp.~75--263.

\bibitem[Gui14]{Gui14}
Craig~R. Guilbault, \emph{Weak {Z}-structures for some classes of groups},
  Algebr. Geom. Topol. \textbf{14} (2014), no.~2, 1123--1152. \MR{3180829}

\bibitem[Gui16]{Gui16}
\bysame, \emph{Ends, shapes, and boundaries in manifold topology and geometric
  group theory}, Topology and geometric group theory, Springer Proc. Math.
  Stat., vol. 184, Springer, [Cham], 2016, pp.~45--125. \MR{3598162}

\bibitem[Han51]{Han51}
O.~Hanner, \emph{Some theorems on absolute neighborhood retracts}, Ark. Mat. 1
  (1951), 389--408.

\bibitem[Hei73]{Heil}
Wolfgang Heil, \emph{Testing {$3$}-manifolds for projective planes}, Pacific J.
  Math. \textbf{44} (1973), 139--145. \MR{324704}

\bibitem[Hem76]{Hem76}
John Hempel, \emph{{$3$}-{M}anifolds}, Princeton University Press, Princeton,
  N. J.; University of Tokyo Press, Tokyo, 1976, Ann. of Math. Studies, No. 86.
  \MR{0415619}

\bibitem[HK05]{HrKl05}
G.~Christopher Hruska and Bruce Kleiner, \emph{Hadamard spaces with isolated
  flats}, Geom. Topol. \textbf{9} (2005), 1501--1538, With an appendix by the
  authors and Mohamad Hindawi. \MR{2175151}

\bibitem[Hu65]{Hu65}
Sze-tsen Hu, \emph{Theory of retracts}, Wayne State University Press, Detroit,
  1965. \MR{0181977}

\bibitem[KL98]{KaLe98}
M.~Kapovich and B.~Leeb, \emph{{$3$}-manifold groups and nonpositive
  curvature}, Geom. Funct. Anal. \textbf{8} (1998), no.~5, 841--852.
  \MR{1650098}

\bibitem[KMPN09]{KMN09}
P.~H. Kropholler, C.~Martinez-P\'{e}rez, and B.~E.~A. Nucinkis,
  \emph{Cohomological finiteness conditions for elementary amenable groups}, J.
  Reine Angew. Math. \textbf{637} (2009), 49--62. \MR{2599081}

\bibitem[Kne29]{Kne29}
Hellmuth Kneser, \emph{Zur {D}ifferentialgeometrie zweier komplexer
  {V}er\"{a}nderlicher: \"{U}berfl\"{a}chen im vierdimensionalen {R}aume}, Abh.
  Math. Sem. Univ. Hamburg \textbf{7} (1929), no.~1, 342--354. \MR{3069534}

\bibitem[Lee95]{Lee95}
Bernhard Leeb, \emph{{$3$}-manifolds with(out) metrics of nonpositive
  curvature}, Invent. Math. \textbf{122} (1995), no.~2, 277--289. \MR{1358977}

\bibitem[LN03]{LeNu03}
Ian~J. Leary and Brita E.~A. Nucinkis, \emph{Some groups of type {$VF$}},
  Invent. Math. \textbf{151} (2003), no.~1, 135--165. \MR{1943744}

\bibitem[MSY82]{MSY82}
William Meeks, III, Leon Simon, and Shing~Tung Yau, \emph{Embedded minimal
  surfaces, exotic spheres, and manifolds with positive {R}icci curvature},
  Ann. of Math. (2) \textbf{116} (1982), no.~3, 621--659. \MR{678484}

\bibitem[Ont05]{Ont05}
P.~Ontaneda, \emph{Cocompact {CAT}(0) spaces are almost geodesically complete},
  Topology \textbf{44} (2005), no.~1, 47--62. \MR{2104000}

\bibitem[OP09]{OsPr09}
D.~Osajda and P.~Przytycki, \emph{Boundaries of systolic groups}, Geom. Topol.
  \textbf{13} (2009), no.~5, 2807--2880.

\bibitem[Pie18]{Pie18}
Brian Pietsch, \emph{Z-structures and {S}emidirect {P}roducts with an
  {I}nfinite {C}yclic {G}roup}, ProQuest LLC, Ann Arbor, MI, 2018, Thesis
  (Ph.D.)--The University of Wisconsin - Milwaukee. \MR{3864892}

\bibitem[PSV06]{succinct}
Oleg Pikhurko, Joel Spencer, and Oleg Verbitsky, \emph{Succinct definitions in
  the first order theory of graphs}, Ann. Pure Appl. Logic \textbf{139} (2006),
  no.~1-3, 74--109. \MR{2206252}

\bibitem[Rie01]{Rie01}
Eleanor~G. Rieffel, \emph{Groups quasi-isometric to {$\bold H^2\times\bold
  R$}}, J. London Math. Soc. (2) \textbf{64} (2001), no.~1, 44--60.
  \MR{1840770}

\bibitem[Rob96]{Rob96}
Derek J.~S. Robinson, \emph{A course in the theory of groups}, second ed.,
  Graduate Texts in Mathematics, vol.~80, Springer-Verlag, New York, 1996.
  \MR{1357169}

\bibitem[Ros04]{Ros04}
David Rosenthal, \emph{Splitting with continuous control in algebraic
  {$K$}-theory}, $K$-Theory \textbf{32} (2004), no.~2, 139--166. \MR{2083578}

\bibitem[Ros06]{Ros06}
\bysame, \emph{Continuous control and the algebraic {$L$}-theory assembly map},
  Forum Math. \textbf{18} (2006), no.~2, 193--209. \MR{2218417}

\bibitem[Ros12]{Ros12}
\bysame, \emph{Split injectivity of the {B}aum-{C}onnes assembly map}, Pure
  Appl. Math. Q. \textbf{8} (2012), no.~2, 451--477. \MR{2900174}

\bibitem[RS05]{RoSc05}
David Rosenthal and Dirk Sch\"{u}tz, \emph{On the algebraic {$K$}- and
  {$L$}-theory of word hyperbolic groups}, Math. Ann. \textbf{332} (2005),
  no.~3, 523--532. \MR{2181761}

\bibitem[Seg83]{Seg83}
Daniel Segal, \emph{Polycyclic groups}, Cambridge Tracts in Mathematics,
  vol.~82, Cambridge University Press, Cambridge, 1983. \MR{713786}

\bibitem[Tir11]{Tir11}
C.~Tirel, \emph{Z-structures on product groups}, Algebr. Geom. Topol.
  \textbf{11} (2011), no.~5, 2587--2625.

\bibitem[Wei21]{Wei21}
Madeline Weinstein, \emph{On the structure of a poly-$\mathbb{Z}$ group}, 2021.

\bibitem[Wes77]{Wes77}
James~E. West, \emph{Mapping {H}ilbert cube manifolds to {ANR}'s: a solution of
  a conjecture of {B}orsuk}, Ann. of Math. (2) \textbf{106} (1977), no.~1,
  1--18. \MR{0451247}

\bibitem[Wol68]{Wol68}
Joseph~A. Wolf, \emph{Growth of finitely generated solvable groups and
  curvature of {R}iemannian manifolds}, J. Differential Geometry \textbf{2}
  (1968), 421--446. \MR{248688}

\end{thebibliography}

\end{document}